\theoremstyle{plain}
\newtheorem{theorem}{Theorem}[section]
\newtheorem{proposition}[theorem]{Proposition}
\newtheorem{corollary}[theorem]{Corollary}
\newtheorem{claim}[theorem]{Claim}
\newtheorem{definition}[theorem]{Definition}
\newtheorem{remark}[theorem]{Remark}
\newcommand{\bfC}{{\mathbb C}}
\newcommand{\bfP}{{\mathbb P}}
\newcommand{\bfR}{{\mathbb R}}
\newcommand{\bfZ}{{\mathbb Z}}
\newcommand{\bfQ}{{\mathbb Q}}
\newcommand{\barj}{{\overline j}}
\newcommand{\barl}{{\overline \ell}}
\newcommand{\barpartial}{{\overline \partial}}
\newcommand{\mapright}[1]{\smash{\mathop{   \hbox to 0.7cm{\rightarrowfill}}
  \limits^{#1}}}
\newcommand{\Ric}{\operatorname{Ric}}
\def\C{\mathbb C}
\newcommand{\diam}{\operatorname{diam}}
\newcommand{\grad}{\operatorname{grad}}
\renewcommand{\emph}[1]{{\color{red} \it #1}}
\newcommand{\Q}{\mathbb{Q}}
\renewcommand{\epsilon}{\varepsilon}
\renewcommand{\P}{\mathbb{P}}
\title{Fano-Ricci limit spaces and spectral convergence}
\author{Akito Futaki, Shouhei Honda and Shunsuke Saito}
\address{Graduate School of Mathematical Sciences, The University of Tokyo, 3-8-1 Komaba Meguro-ku Tokyo 153-8914, Japan}
\email{afutaki@ms.u-tokyo.ac.jp}
\address{Mathematical Institute, Tohoku University, Aoba-ku, Sendai, 980-8578, Japan}
\email{shonda@m.tohoku.ac.jp}
\address{Graduate School of Mathematical Sciences, The University of Tokyo, 3-8-1 Komaba Meguro-ku Tokyo 153-8914, Japan}
\email{ssaito@ms.u-tokyo.ac.jp}
\begin{document}
\begin{abstract}
We study the behavior under Gromov-Hausdorff convergence 
of the spectrum of weighted $\barpartial$-Laplacian on compact K\"ahler manifolds.
This situation typically occurs for a sequence of Fano manifolds with anticanonical
K\"ahler class. We apply it to show that, if an almost smooth Fano-Ricci limit space
admits a K\"ahler-Ricci limit soliton and
the space of all $L^2$ holomorphic vector fields with smooth potentials is a Lie algebra with respect to the Lie bracket, 
then the Lie algebra has the same structure as smooth K\"ahler-Ricci
solitons. In particular if a $\Q$-Fano variety admits a K\"ahler-Ricci limit soliton and all holomorphic 
vector fields are $L^2$ with smooth potentials then the Lie algebra has the same structure as smooth K\"ahler-Ricci
solitons. If the sequence consists of K\"ahler-Ricci solitons then the Ricci limit space 
is a weak K\"ahler-Ricci
soliton on a $\mathbb{Q}$-Fano variety and the space of 
limits of $1$-eigenfunctions for the weighted $\barpartial$-Laplacian
forms a Lie algebra with respect to the Poisson bracket and admits a similar decomposition 
as smooth K\"ahler-Ricci solitons.
\end{abstract}
\maketitle
\section{Introduction}
In this paper we study the behavior of the spectrum of a weighted $\barpartial$-Laplacian 
under Gromov-Hausdorff convergence of a sequence of compact K\"ahler manifolds. 
Typically we consider a sequence $(X_i, g_i)$ of Fano manifolds $X_i$ and K\"ahler metrics $g_i$
where the K\"ahler form $\omega_i$ of $g_i$ represents $2\pi c_1(X_i)$, i.e. $2\pi$ times the
anti-canonical class. 
Since the Ricci form $\mathrm{Ric}(\omega_i)$ also represents $2\pi c_1(X_i)$
there is a real valued smooth function $F_i$, called the Ricci potential, 
given by
$$ \mathrm{Ric}(\omega_i) - \omega_i = i\partial\barpartial F_i.$$
The weighted $\barpartial$-Laplacian $\Delta^{F_i}_\barpartial$ we consider is given for a smooth 
function $u$ by
$$ \Delta^{F_i}_\barpartial u = e^{-F_i}\barpartial^\ast(e^{F_i}\barpartial u).$$
This is a self-adjoint elliptic operator with respect to the weighted measure $e^{F_i}dH$.
Assuming the one side bound of the Ricci curvature 
$$  \mathrm{Ric}(g_i) \ge Kg_i $$
for a constant $K$, upper diameter bound, uniform bound of  $||F_i||_{L^{\infty}}$ and 
$L^2$-strong convergence of $F_{i}$ and complex structures $J_i$ (see subsection 2.1 for
more detail),
we consider non-collapsing K\"ahler-Ricci limit space. When the sequence consists of
Fano manifolds with anti-canonical K\"ahler class the limit is called a {\it Fano-Ricci limit space}.

In the Riemannian 
case the behavior of the spectrum of the Laplacian 
under Gromov-Hausdorff convergence was studied by Fukaya \cite{Fukaya} and Cheeger and Colding 
\cite{CheegerColding3}. 
We see in section 3.1 that the spectral behavior for the weighted $\barpartial$-Laplacian is
continuous with respect to the Gromov-Hausdorff topology (Proposition 3.13).

On the Fano manifold $X_i$ we have the following Weitzenb\"ock formula:
\begin{align}
\int_{X_i}|\Delta^{F_i}_{\overline{\partial}}f_i|^2\,dH^n_{F_i} = \int_{X_i}|\nabla '' \mathrm{grad}'f_i|^2\,dH^n_{F_i}+\int_{X_i}|\overline{\partial}f_i|^2\,dH^n_{F_i}.
\end{align}
In Theorem \ref{weith} we show the following Weitzenb\"ock inequality on the Fano-Ricci limit
space:
\begin{align}\label{weitzon}
\int_X|\Delta^F_{\overline{\partial}}f|^2\,dH^n_F \ge \int_X|\nabla '' \mathrm{grad}'f|^2\,dH^n_F+\int_X|\overline{\partial}f|^2\,dH^n_F.
\end{align}
This implies the first non-zero eigenvalue $\lambda_1(\Delta_{\overline{\partial}}^F, X)$ of the weighted 
$\barpartial$-Laplacian $\Delta_{\overline{\partial}}^F$ on the limit space $X$ satisfies
$$\lambda_1(\Delta_{\overline{\partial}}^F, X) \ge 1,$$
and if $f$ is in the domain $\mathcal{D}^2_{\bfC}(\Delta_{\overline{\partial}}^F, X)$ of 
$\Delta_{\overline{\partial}}^F$ 
with $\Delta^F_{\overline{\partial}}f=f$, then
$\nabla''\mathrm{grad}'f=0$. In particular if $U$ is an open subset of $X$ and 
$(U, g_X|_U, J|_U)$ is a smooth K\"ahler manifold with $F|_U \in C^{\infty}(U)$, then $f|_U \in C^{\infty}_{\bfC}(U)$ and $\mathrm{grad}'f$ is a holomorphic vector field on $U$ (Corollary \ref{1steigen}).

For a smooth Fano manifold $M$, 
the Lie algebra $\mathfrak{hol}(M)$ of all holomorphic vector fields on $M$ 
is isomorphic to the space $\Lambda_1$ of the eigenfunctions corresponding
to the eigenvalue $1$ ($1$-eigenfunctions for short) 
for $\Delta_{\overline{\partial}}^F$ with the Poisson structure, see \cite{Futaki87, Futaki88, Futaki15}.
The Poisson structure can be defined
on the noncollapsed K\"ahler-Ricci limit space $X$ as in Definition \ref{poissondefinition}. 
However there is a difficulty in finding if the space of all $\mathrm{grad}'f$ obtained as above forms a 
Lie algebra since
it is not clear if the space of $1$-eigenfunctions of $\Delta_{\overline{\partial}}^F$ on the limit space
is closed under Poisson bracket. A key to overcome this difficulty is to see when the Lie bracket of
two $L^2$ vector fields of the form $\mathrm{grad}'f$ as obtained above becomes $L^2$ again. 

If a smooth Fano manifold $M$ admits a K\"ahler-Ricci soliton $\omega$, i.e. 
$\mathrm{Ric}(\omega) - \omega = i \partial\barpartial F$ with $\mathrm{grad}'F$ is a 
holomorphic vector field, then the Lie algebra $\mathfrak{hol}(M)$ 
of all holomorphic vector fields on $M$ is known to have the following structure (\cite{TianZhu00}):
\[\mathfrak{hol}(M)=\mathfrak{hol}_0(M) \oplus \bigoplus_{\alpha >0}\mathfrak{hol}_{\alpha}(M),\]
where $\mathfrak{hol}_{\alpha}(M)$ is the $\alpha$-eigenspace of the adjoint action of $-\mathrm{grad}'F$.
Furthermore,  $\mathfrak{hol}_0(M)$ is a maximal reductive Lie subalgebra. Note that the direct sum decomposition is meant 
as a vector space and 
$[\mathfrak{hol}_{\alpha}(M), \mathfrak{hol}_{\beta}(M)] \subset 
\mathfrak{hol}_{\alpha + \beta}(M)$
holds as a Lie algebra.

If a sequence of K\"ahler-Ricci solitons $X_i$ converges to $X$ then the limit $X$ 
is a K\"ahler-Ricci limit soliton
which is also a $\mathbb{Q}$-Fano variety. Then the vector space $\Lambda$ consisting 
of the limit of the 1-eigenfunctions (eigenfunctions with eigenvalue $1$) on $X_i$ form a Lie algebra, 
and a similar decomposition theorem holds for $\Lambda$ on 
the limit $X$ as in the case of smooth K\"ahler-Ricci solitons as described above (Theorem \ref{decompconti}). This is proved by showing that the 
$L^2$-Lie bracket
property as mentioned above is satisfied.

If the Ricci limit space $X$ is a $\mathbb{Q}$-Fano variety then holomorphic vector fields
on the regular part extend
to $X$ and they form a Lie algebra $\mathfrak{hol}(X)$. 
If we assume they are all $L^2$ with smooth potentials, then they are gradient vector
fields of  $1$-eigenfunctions (Proposition \ref{qfano}). Thus, assuming that the limit 
$X$ is 
a K\"ahler-Ricci soliton on the regular part and that the Lie algebra $\mathfrak{hol}(X)$ 
consists of $L^2$ holomorphic vector fields with smooth potentials on the regular part,  $\mathfrak{hol}(X)$ has a
similar decomposition as in the smooth K\"ahler-Ricci solitons (Theorem \ref{mainth}).

The theory of 
Cheeger-Colding \cite{CheegerColding1, CheegerColding2, CheegerColding3} and Cheeger-Colding-Tian \cite{CheegerColdingTian} has been applied to
complex geometry by Donaldson-Sun \cite{DonaldsonSun2012, DonaldsonSun2015}
where the two side bound of Ricci curvature 
$$ - C g \le \mathrm{Ric} \le Cg $$
for some constant $C > 0$ is assumed. Further the theory of Cheeger, Colding and Tian 
was used to prove Yau-Tian-Donaldson conjecture by
Chen-Donaldson-Sun \cite{CDS1, CDS2, CDS3}, Tian \cite{Tian12},
and to study the compactification of the moduli space of K\"ahler-Einstein manifolds
in \cite{OSS}, \cite{SSY14}, \cite{Odaka14}, \cite{LiWangXu2015}.
The difference between these works and ours is that we employ one side lower bound of Ricci curvature,
and the two side bound ensures a $C^{2,\alpha}$ differentiable structure on an open set 
of the limit
while the one side lower bound only ensures a weak $C^{1, 1}$ 
structure outside the singular set of measure
zero.
In particular, except for section 5 and section 6, we do not require the openness of the regular set.

This paper is organized as follows.

In Section 2, we consider $L^p$-convergence for $\bfC$-valued tensor fields in the Gromov-Hausdorff setting, which is known on the real setting in \cite{Honda13}.
Main results are Rellich compactness (Theorem \ref{Rellich}) and Proposition \ref{L2norm}, which play key roles to prove the spectral convergence of the weighted $\barpartial$-Laplacian.

In Section 3, we first establish the spectral convergence of the weighted $\barpartial$-Laplacian on general setting. 
Second, we define the covariant derivative $\nabla''$ on a noncollapsed K\"ahler-Ricci limit space, which is a key notion to establish the Weitzenb\"ock inequality (\ref{weitzon}) on a Fano-Ricci limit space. The essential idea of the definition of $\nabla''$ is based on Gigli's approach to nonsmooth differential geometry discussed in \cite{Gigli} via the regularity theory of the heat flow on $RCD$-spaces by 
Ambrosio-Gigli-Savar\'e \cite{AGS}.
A main property of $\nabla''$ is the $L^2$-weak stability (Proposition \ref{stability}), which plays a key role in the proof of the Weitzenb\"ock inequality (\ref{weitzon}). 

In Section 4, we first prove the Weitzenb\"ock inequality (\ref{weitzon}). Next we discuss the regularity of the Ricci potential on a Fano manifold with a lower Ricci curvature bound.
As a corollary, we establish a compactness with respect to the Gromov-Hausdorff topology, which states 
that a sequence of Fano manifolds with uniform lower bound of the Ricci curvature, uniform lower 
bound of the volumes, uniform lower bound of the Ricci potentials, and uniform upper bound of 
the diameters, has a convergent subsequence to a Fano-Ricci limit space (Corollary \ref{compactness}).

We also discuss in Section 4 the behavior of holomorphic vector fields and the Futaki invariant with respect to the Gromov-Hausdorff topology. As a corollary, we give a new uniform bound of the dimension of the space of all holomorphic vector fields on a Fano manifold (Corollary \ref{dimensionbound}).
The final subsection of Section 4 is devoted to constructing a Lie algebra consisting of $L^2$-holomorphic vector fields with smooth potentials on a nonsmooth Fano-Ricci limit space. 
As we mentioned above, this has a difficulty in showing that 
the Lie bracket of two $L^2$-vector fields is $L^2$.
Proposition \ref{poissonstr} and Corollary \ref{liestr} are used to overcome this difficulty by 
considering the limit holomorphic vector fields.

In Section 5, under an additional assumption that a Fano-Ricci limit space is almost smooth, we study holomorphic vector fields in further detail.
In particular, we establish a similar decomposition as smooth K\"ahler-Ricci solitons 
as in \cite{TianZhu00} (Theorem \ref{decomp}).
It is worth pointing out that $\mathfrak{hol}_0(X)$ being reductive Lie subalgebra in the decomposition theorem comes from a Cheeger-Colding's result in  \cite{CheegerColding2} that the isometry group of a noncollapsed 
Ricci limit space is a Lie group.

In the final section, Section 6, we consider the case that a Fano-Ricci limit space is a $\bfQ$-Fano variety.
Then we prove that the space is an almost smooth in the sense of Section 5 (Proposition \ref{qfano}).
Thus we can apply the decomposition theorem in Section 5 to this case (Theorem \ref{mainth}).
We also check that combining results above with Phong-Song-Sturm's recent 
work \cite{PhongSongSturm}, this situation typically occurs if a sequence we consider 
consists of K\"ahler-Ricci solitons (Theorem \ref{decompconti}).

\section{Noncollapsed weighted K\"ahler-Ricci limit spaces}
In this section we discuss the spectral behavior of K\"ahler manifolds with respect 
to the Gromov-Hausdorff topology. Recall that 
a sequence of compact metric spaces $(X_i, d_{X_i})$ is said to Gromov-Hausdorff converges 
to a compact metric space $(X, d_X)$ if there exist a sequence of positive numbers $\epsilon_i$ 
with $\epsilon_i \to 0$ and a sequence of maps $\phi_i:X_i \to X$ 
such that 
\begin{enumerate}
\item[(i)]
$X=B_{\epsilon_i}(\phi_i(X_i))$ where $B_r(A)$ is the $r$-neighborhood of $A$, and 
\item[(ii)]
$|d_{X_i}(x, y)-d_X(\phi_i(x), \phi_i(y))|<\epsilon_i$ holds for any $i$ 
and $x, y \in X_i$ where $d_X$ is the distance function of $X$.
\end{enumerate}
Then for a sequence $x_i \in X_i$ and a point $x \in X$ 
we denote $x_i \stackrel{GH}{\to} x$ if $\phi_i(x_i) \to x$ in $X$.

Moreover for a sequence of Borel regular measures $\upsilon_i$ on $X_i$ and a Borel regular measure $\upsilon$ on $X$, $(X_i, d_{X_i}, \upsilon_i)$ is said to converge to $(X, d_X, \upsilon)$ in the measured Gromov-Hausdorff sense if 
\[\lim_{i \to \infty}\upsilon_i(B_r(x_i))=\upsilon (B_r(x))\]
holds for any $r>0$ and $x_i \stackrel{GH}{\to} x$.
\subsection{Setting}
Our setting in this section is the following;
\begin{enumerate}
\item[(2.1a)] Let $K \in \bfR$ and let $d>0$.
\item[(2.1b)] Let $(X_i, g_{X_i}, J_i)$ be a sequence of $m$-dimensional compact K\"ahler manifolds with $\Ric_{X_i} \ge Kg_{X_i}$ and $\diam X_i \le d$ where $g_{X_i}$, $J_i$ and $\Ric_{X_i}$ respectively 
denote a Riemannian metric, a complex structure and
the Ricci curvature of $X_i$ with $(g_i, J_i)$ giving a K\"ahler structure. We put $n = 2m$.
\item[(2.1c)] Let $X$ be the Gromov-Hausdorff limit of $(X_i, g_{X_i})$ and let $g_X$ denotes the (canonical) Riemannian metric in a weak sense (we give an explanation below).
\item[(2.1d)] Let $F_i$ be a sequence of real valued functions $F_i \in L^{\infty}(X_i)$ with $L:=\sup_i||F_i||_{L^{\infty}}<\infty$.
\item[(2.1e)] Let $F, J$ be the $L^2$-strong limits of $F_{i}, J_i$ on $X$, respectively. See Definition \ref{Lpconv} and \ref{Lpreal} below for the meaning of strong convergence.
\end{enumerate}
In this setting it was shown in \cite[Theorem $6.19$]{Honda14b}  that $(X, g_X)$ is the noncollapsed limit of $(X_i, g_{X_i})$, i.e. the Hausdorff (or topological) dimension of $X$ is equal to $n$ and that $J \circ J=-id$ in $L^{\infty}(TX \otimes T^*X) \simeq L^{\infty}(\mathrm{End}TX)$.
In particular it follows from \cite[Theorem $5.9$]{CheegerColding1} that $(X_i, g_{X_i}, H^n)$ converges to $(X, g_{X}, H^n)$ in the measured Gromov-Hausdorff sense with $0<H^n(X)<\infty$ where $H^n$ denotes the 
$n$-dimensional Hausdorff measure.
Note that for every sequence $G_i \in C^0(\bfR)$ which converges uniformly to $G$ on every compact subset of $\bfR$, $G_i(F_i)$ $L^p$-converges strongly to $G(F)$ on $X$ for every $p \in (1, \infty)$ (c.f. \cite[Proposition $4.1$]{Honda11}).
In particular, $(X_i, g_{X_i}, e^{F_i}H^n)$ converges to $(X, g_X, e^F H^n)$ in the measured Gromov-Hausdorff sense.

From now on we give a short introduction of the study of Ricci limit spaces which are Gromov-Hausdorff limit spaces with Ricci curvature bounded below.

Cheeger and Colding proved that $(X, H^n)$ is rectifiable (see \cite{CheegerColding3}, section 5, (i), (ii), (iii)).
In particular 
we can construct the (real) tangent bundle
\[\pi: TX \to X\]
and define the canonical metric $g_X$ on each fibers.
Note that the fibers $T_xX$ are well-defined at a.e. $x \in X$ and that
$g_X$ is compatible with the metric structure in the following sense; 
For every Lipschitz function $f$ on an open subset $U$ of $X$ there exists a gradient vector field $\grad f(x)$ which is well-defined at a.e. $x \in U$ such that
\[|\grad f|(x):=\sqrt{g_X(\grad f, \grad f)(x)}=\lim_{y \to x}\left( \frac{|f(x)-f(y)|}{d(x, y)}\right)\]
holds for a.e. $x \in U$.
Similarly we can define the cotangent bundle $T^*X$ with the canonical metric $g_X^*$, more generally, the tensor bundle 
\[\pi: T^r_sX := \bigotimes_{i=1}^rTX \otimes \bigotimes_{i=1}^s T^*X \to X,\]
for any $r, s \in \bfZ_{\ge 0}$,
the differential $df$ and so on in an ordinary way of Riemannian geometry.
We denote by $(g_X)^r_s$ the canonical metric on $T^r_sX$ defined by $g_X$.

Moreover it was proven in \cite{Honda14a, Honda14b} that $(X, H^n)$ has the canonical 
(weakly) second order (or weak $C^{1, 1}$-) differentiable structure which is compatible with Gigli's one \cite{Gigli}.
In particular we can define the Levi-Civita connection, the Hessian of a twice differentiable function, the covariant derivative of a differentiable tensor field and so on.
We give a quick introduction of the second order differentiable structure on our setting only for reader's convenience.

In general, the singular set of a Gromov-Hausdorff limit of a sequence of Riemannian manifolds with uniform lower Ricci curvature bound has measure zero (see \cite[Theorem 2.1]{CheegerColding1}), however, even if the limit is noncollapsed, we do not know whether the singular set is closed.
In fact, Otsu-Shioya showed in \cite{OtsuShioya} that there exist a  sequence of two dimensional compact nonnegatively curved manifolds and the noncollapsed compact Gromov-Hausdorff limit $Y$ such that the singular set of $Y$ is dense in $Y$. See Example (2) in page 632 of \cite{OtsuShioya}

Cheeger-Colding proved in \cite{CheegerColding3} that there exist a countable family of Borel subsets $C_i$ of $X$ and a family of bi-Lipschitz embeddings $\phi_i$ from $C_i$ to $\bfR^n$ such that 
\[H^n\left(X \setminus \bigcup_iC_i\right)=0\]
(which means that $(X, H^n)$ is rectifiable).

Since each transition map
\[\phi_j \circ (\phi_i)^{-1}: \phi_i (C_i\cap C_j) \to \phi_j(C_i \cap C_j)\]
is bi-Lipschitz, Rademacher's theorem yields that there exists a Borel subset $D_{i, j}$ of $\phi_i (C_i\cap C_j)$ such that 
\[H^n\left( \phi_i( C_i \cap C_j)\setminus D_{i, j}\right)=0\]
and that $\phi_j \circ (\phi_i)^{-1}$ is differentiable at every $x \in D_{i, j}$ (see Section 3 in \cite{Honda14a} for the definition of differentiability of a function defined on a Borel subset of a Euclidean space).
Thus the Jacobi matrix of $\phi_j \circ (\phi_i)^{-1}$:
\[J\left(\phi_j \circ (\phi_i)^{-1}\right)(x)\]
is well defined for every $x \in D_{i, j}$.

It is known that, for any $i, j$, there exists a countable family of Borel subsets $E_{i, j, k}$ of $D_{i, j}$ such that 
\[H^n\left( D_{i, j} \setminus \bigcup_k E_{i, j, k}\right)=0\]
and that each restriction
\[J\left(\phi_j \circ (\phi_i)^{-1}\right)|_{E_{i, j, k}}\]
is a Lipschitz map. We say that the family 
\[\left\{\left((\phi_i)^{-1}(E_{i, j, k}), \phi_i\right)\right\}\]
is a second order differentiable structure of $(X, H^n)$.

It was also shown in \cite[Theorem $6.19$]{Honda14b} that 
$J$ is compatible with $g_X$ 
and that $J$ is differentiable at a.e.  $x \in X$ with $\nabla J \equiv 0$. These mean that $(g_X, J)$ gives a K\"ahler structure in some weak sense. 
We here do not discuss further detail of the above results and just refer to \cite{Honda14a, Honda14b}
because one of our main applications will be devoted to almost smooth setting and 
the assumptions above are satisfied trivially
under almost $C^2$-setting with the $C^1$-Riemannian metric, e.g. under the condition that
the Ricci curvature has two-side bound and the limit is noncollapsing. 
We shall explain $L^p$-convergence with respect to the Gromov-Hausdorff topology in section $2.2$.

We use the standard notations:
$$T_{\bfC}X:=TX \otimes_{\bfR} {\bfC} = T'X \oplus T''X,$$ 
where $T'X$ and $T''X$ are 
respectively $\sqrt{-1}$ and $-\sqrt{-1}$-eigenspaces of $J$ (note that we extended $g_X$ and $J$ in the 
$\bfC$-linear way to $T_{\bfC}X$ respectively. Define the Hermitian metric $h_X$ by  
$$h_X(u, v):=g_X(u, \overline{v}),$$ 
where $\overline{v}$ is the conjugate of $v$.
$$T_{\bfC}^*X:=T^*X \otimes_{\bfR} {\bfC} = (T^*X)' \oplus (T^*X)'',$$
where $(T^*X)'$ and 
$(T^*X)''$ are $\sqrt{-1}$ and $-\sqrt{-1}$-eigenspaces of $J^*$ which is the conjugate complex structure of $J$ and is extended $\bfC$-linearly to $T_{\bfC}^*X$. Define the Hermitian metric $h_X^*$ by 
$$h_X^*(u, v):=g_X^*(u, \overline{v}).$$
$$(T^r_s)_{\bfC}X:= \bigotimes_{i=1}^rT_{\bfC}X \otimes \bigotimes_{i=1}^s T_{\bfC}^*X \simeq  \left(\bigotimes_{i=1}^rTX \otimes \bigotimes_{i=1}^s T^*X\right) \otimes_{\bfR} \bfC.$$ 
We denote the canonical Hermitian metric on this space by $(h_X)^r_s$.

 For a Borel subset $A$ of $X$ and $p \in [1, \infty]$, let $L^p_{\bfC}((T^r_s)_{\bfC}A)$ be the set of complex valued Borel $L^p$-tensor fields on $A$.
In particular  when $r=s=0$, that is, the case of functions, we denote by $L^p_{\bfC}(A)$ 
the space of $\bfC$-valued $L^p$-functions on $A$.
For a Borel subset $A$ of $X$ and a $\bfC$-valued Borel tensor field $T$ of type $(r, s)$ on $A$ we say that $T$ is differentiable at a.e. $x \in A$ if $T^i$ is differentiable at a.e. $x \in A$ in the sense of \cite{Honda14a}, where $T=T^1 \oplus \sqrt{-1}T^2$ and $T^i$ is an $\bfR$-valued tensor field for each $i=1, 2$.
Then we put $\nabla T:=\nabla T^1 \oplus \sqrt{-1}\nabla T^2$.
Similarly for a.e. differentiable function $f$, we define $\partial f, \overline{\partial} f, \mathrm{grad}'f, \mathrm{grad}''f$ by 
$df=\partial f+\overline{\partial}f$ as $T_{\bfC}^*X = (T^*X)' \oplus (T^*X)''$ and $\grad f= \mathrm{grad}'f \oplus \mathrm{grad}''f$ as $T_{\bfC}X=T'X \oplus T''X$.

For an open subset $U$ of $X$ and $p \in (1, \infty)$ let $H^{1, p}_{\bfC}(U)$ be the completion of the space  $\mathrm{LIP}_{\mathrm{loc}, \bfC}(U)$, which is the set of $\bfC$-valued locally Lipschitz functions on $U$, with respect to the norm
\begin{align}\label{sobolevnorm}
||f||_{H^{1, p}_{\bfC}(U)}:=\left( \int_U\left(|f|^p+|d f|^p\right)\,dH^n\right)^{1/p}.
\end{align}
It is easy to check that a $\bfC$-valued function $f$ on $U$ is in $H^{1, p}_{\bfC}(U)$ if and only if $f^i \in H^{1, p}(U)$ for each $i=1, 2$, where $f=f^1+\sqrt{-1}f^2$ and $H^{1, p}(U)$ is the Sobolev space for $\bfR$-valued functions defined by the completion with respect to the norm (\ref{sobolevnorm}) 
of the space $\mathrm{LIP}_{\mathrm{loc}}(U)$ of all $\bfR$-valued locally Lipschitz functions on $U$.   
In particular for every $f \in H^{1, p}_{\bfC}(U)$, $f$ is differentiable a.e. on $U$ and $||f||_{H^{1, p}_{\bfC}}=(||f||_{L^p}^p+||d f||_{L^p}^p)^{1/p}$.

 Recall that the Levi-Civita connection and Chern connection coincide on a smooth K\"ahler manifold.
 The following  is a nonsmooth analogue of this fact.
 
\begin{proposition}\label{connection}
Let $A$ be a Borel subset of $X$ and let $V$ be a vector field on $A$ which is differentiable at a.e. $x \in A$.
If $V(x) \in T'X$ (resp. $\in T''X$) holds a.e. $x \in A$, then $\nabla V(x) \in T'X \otimes T^*_{\bfC}X$ (resp. $ \in T''X \otimes T^*_{\bfC}X$) holds a.e. $x \in A$.
\end{proposition}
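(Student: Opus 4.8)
The plan is to reproduce the standard smooth argument, in which this statement is exactly the assertion that the Levi-Civita connection preserves the $(1,0)$/$(0,1)$ splitting of $T_{\bfC}X$ because the complex structure is parallel. The only nonsmooth ingredient I need is the fact recalled above from \cite{Honda14b} that $J$ is differentiable at a.e.\ $x \in X$ with $\nabla J \equiv 0$. First I would rephrase the hypothesis $V(x) \in T'X$ as the pointwise eigenvalue identity $JV = \sqrt{-1}\,V$, valid at a.e.\ $x \in A$ since $T'X$ is by definition the $\sqrt{-1}$-eigenspace of the $\bfC$-linear extension of $J$.

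Next I would differentiate this identity on the full-measure subset of $A$ where $V$, $J$, and the relevant bi-Lipschitz charts are simultaneously differentiable. Applying the Leibniz rule for the weak covariant derivative to the contraction $JV$ gives $\nabla(JV) = (\nabla J)V + J(\nabla V) = J(\nabla V)$ a.e., the last equality using $\nabla J \equiv 0$. Since $\nabla$ is defined componentwise by $\nabla T = \nabla T^1 \oplus \sqrt{-1}\,\nabla T^2$ and $J$ is the $\bfC$-linear extension of a real endomorphism field of $TX$, this computation reduces, for $V = V^1 + \sqrt{-1}\,V^2$, to the two real identities $\nabla(JV^i) = J(\nabla V^i)$ on $V^1$ and $V^2$, so no genuinely complex phenomenon intervenes. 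Differentiating instead the right-hand side of $JV = \sqrt{-1}\,V$ yields $\nabla(JV) = \sqrt{-1}\,\nabla V$. Comparing the two expressions produces $J(\nabla V) = \sqrt{-1}\,\nabla V$ a.e.\ on $A$, where $J$ acts on the $T_{\bfC}X$-slot of $\nabla V \in T_{\bfC}X \otimes T^*_{\bfC}X$. This says precisely that the vector part of $\nabla V(x)$ lies in the $\sqrt{-1}$-eigenspace of $J$, i.e.\ $\nabla V(x) \in T'X \otimes T^*_{\bfC}X$ at a.e.\ $x \in A$. The $T''X$ case is identical, with $\sqrt{-1}$ replaced by $-\sqrt{-1}$ throughout.

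The one step I expect to require genuine care is the Leibniz rule in the weak differentiable setting: I must know that the tensor contraction of two a.e.-differentiable fields (here the real endomorphism field $J$ and the real vector field $V^i$) is again a.e.\ differentiable and that its weak covariant derivative obeys the product formula $\nabla(JV^i) = (\nabla J)V^i + J(\nabla V^i)$. In the smooth category this is automatic, but on $(X, H^n)$ it must be read off from the construction of the second-order differentiable structure via the bi-Lipschitz charts and Rademacher's theorem described above, or quoted directly from \cite{Honda14a, Honda14b}. Granting this, the remainder is a pointwise linear-algebra identity on a set of full measure, hence routine.
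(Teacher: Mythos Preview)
Your argument is correct and is exactly the standard computation the paper has in mind: the paper's proof consists solely of the remark that the result is standard (cf.\ page~4 of \cite{Tian99}) together with $\nabla J\equiv 0$, and you have spelled out precisely that computation, correctly flagging the Leibniz rule in the weak setting as the one point requiring the input from \cite{Honda14a, Honda14b}.
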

\begin{proof}
The proof is standard.
See for instance page $4$ of \cite{Tian99} with $\nabla J\equiv 0$.
\end{proof}


\subsection{$L^p$-convergence on complex setting}
In \cite{Honda13, KS} the notion of $L^p$-convergence of $\bfR$-valued functions, or more generally, $\bfR$-valued tensor fields, with respect to the Gromov-Hausdorff topology was introduced.
In this section we extend this to the $\bfC$-valued case and discuss its applications.

For the reader's convenience we first recall the definition of $L^p$-convergence of $\bfR$-valued functions \cite{Honda13, KS}.
Let $p \in (1, \infty)$, let $R>0$ and let $x_i \stackrel{GH}{\to} x$, where $x_i \in X_i$ and $x \in X$.
\begin{definition}[$L^p$-convergence of $\bfR$-valued functions]\label{Lpconv}
Let $f_i$ be a sequence in $L^p(B_R(x_i))$.
\begin{enumerate}
\item[(i)] We say that $f_i$ $L^p$-converges weakly to $f \in L^p(B_R(x))$ on $B_R(x)$ if $\sup_i||f_i||_{L^p}<\infty$ and
\[\lim_{i \to \infty}\int_{B_r(y_i)}f_i\,dH^n=\int_{B_r(y)}f\,dH^n\]
hold for any sufficiently small $r>0$ and $y_i \stackrel{GH}{\to} y$, where $y_i \in B_R(x_i)$ and $y \in B_R(x)$.
\item[(ii)] We say that  $f_i$ $L^p$-converges strongly to $f \in L^p(B_R(x))$ on $B_R(x)$ if  $f_i$ $L^p$-converges weakly to $f \in L^p(B_R(x))$ on $B_R(x)$ and
\[\limsup_{i \to \infty}\int_{B_R(x_i)}|f_i|^p\,dH^n = \int_{B_R(x)}|f|^p\,dH^n.\]
\end{enumerate}
\end{definition}

Next we consider the case of vector fields:
\begin{definition}[$L^p$-convergence of $\bfR$-valued vector fields]\label{Lpreal}
Let $V_i$ be a sequence in $L^p(TB_R(x_i))$.
\begin{enumerate}
\item[(i)] We say that \textit{$V_i$ $L^p$-converges weakly to $V \in L^p(TB_R(x))$ on $B_R(x)$} if $\sup_i||V_i||_{L^p}<\infty$ and 
\[\lim_{i \to \infty}\int_{B_r(y_i)}g_{X_i}(V_i, \grad\,r_{z_i})\,dH^n=\int_{B_r(y)}g_X(V, \grad\,r_z)\,dH^n\]
holds for any sufficiently small $r>0$ and $y_i, z_i \stackrel{GH}{\to} y, z$, respectively, where $y_i, z_i \in B_R(x_i)$,  $y, z \in B_R(x)$ and $r_z$ denotes the distance function from $z$.
\item[(ii)] We say that \textit{$V_i$ $L^p$-converges strongly 
to $V \in L^p(TB_R(x))$ on $B_R(x)$} if it is an $L^p$-weak convergent sequence and 
\[\limsup_{i \to \infty}\int_{B_R(x_i)}|V_i|^p\,dH^n = \int_{B_R(x)}|V|^p\,dH^n.\]
\end{enumerate}
\end{definition}

The following proposition shows that 
the weighted version of $L^p$-convergence is equivalent to the unweighted version:
\begin{proposition}\label{Lpremark}
Let $V_i$ be a sequence in $L^p(TB_R(x_i))$.
Then $V_i$ $L^p$-converges weakly to $V$ on $B_R(x)$ if and only if $\sup_i||V_i||_{L^p}<\infty$ and 
\begin{align}\label{equiv1}
\lim_{i \to \infty}\int_{B_r(y_i)}g_{X_i}(V_i, \grad\,r_{z_i})\,dH^n_{F_i}=\int_{B_r(y)}g_X(V, \grad\,r_z)\,dH^n_F
\end{align}
hold for any sufficiently $r>0$ and $y_i, z_i \stackrel{GH}{\to} y, z$, respectively.
Moreover  $V_i$ $L^p$-converges strongly to $V$ on $B_R(x)$ if and only if $V_i$ $L^p$-converges weakly to $V$ on $B_R(x)$ and 
\begin{align}\label{equiv2}
\limsup_{i \to \infty}\int_{B_R(x_i)}|V_i|^p\,dH^n_{F_i} = \int_{B_R(x)}|V|^p\,dH^n_F
\end{align}
holds.
\end{proposition}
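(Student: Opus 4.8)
The plan is to reduce both equivalences to a single structural fact: the weight $e^{F_i}$ and its reciprocal $e^{-F_i}$ are uniformly bounded, $e^{-L} \le e^{F_i} \le e^L$, and each converges $L^q$-strongly to $e^F$ (resp. $e^{-F}$) for every $q \in (1, \infty)$. The latter is the special case $G_i = G = \exp$ (resp. $\exp(-\,\cdot\,)$) of the fact recalled above (cf. \cite[Proposition 4.1]{Honda11}). The only other ingredient is the weak--strong pairing property of $L^p$-convergence from \cite{Honda13}: if $a_i$ $L^p$-converges weakly to $a$ and $b_i$ $L^{p'}$-converges strongly to $b$ with $1/p + 1/p' = 1$, then $\int_{B_r(y_i)} a_i b_i\,dH^n \to \int_{B_r(y)} ab\,dH^n$ for all sufficiently small $r>0$ and $y_i \stackrel{GH}{\to} y$.

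For the weak statement I would first record a reformulation. Fix $z$ and $z_i \stackrel{GH}{\to} z$. Since $|\grad r_{z_i}| \le 1$ almost everywhere, the scalar functions $u_i := g_{X_i}(V_i, \grad r_{z_i})$ lie in $L^p$ with $\sup_i||u_i||_{L^p} \le \sup_i||V_i||_{L^p}$, and Definition \ref{Lpreal}(i) for this fixed $z$ says exactly that $u_i$ $L^p$-converges weakly to $u := g_X(V, \grad r_z)$ in the sense of Definition \ref{Lpconv}(i). Granting $V_i \to V$ weakly in the unweighted sense, the pairing property applied to $a_i = u_i$ and $b_i = e^{F_i}$ gives $\int_{B_r(y_i)} u_i e^{F_i}\,dH^n \to \int_{B_r(y)} u e^F\,dH^n$, which is precisely (\ref{equiv1}). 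Conversely, assuming (\ref{equiv1}), I would set $W_i := e^{F_i} V_i$, which is uniformly $L^p$-bounded since $|W_i| \le e^L |V_i|$; rereading (\ref{equiv1}) as $\int_{B_r(y_i)} g_{X_i}(W_i, \grad r_{z_i})\,dH^n \to \int_{B_r(y)} g_X(e^F V, \grad r_z)\,dH^n$ shows $W_i \to e^F V$ weakly in the unweighted sense, and pairing $g_{X_i}(W_i, \grad r_{z_i})$ with the bounded strong weight $e^{-F_i}$ recovers the unweighted weak convergence of $V_i = e^{-F_i} W_i$. Thus the two weak notions coincide.

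For the strong statement I would work conditionally on weak convergence, now known in both the weighted and the unweighted sense, and transfer the norm condition. From unweighted to weighted: unweighted strong convergence gives, by the stability of strong $L^p$-convergence under continuous operations \cite{Honda13}, that $|V_i|^p$ converges strongly (in $L^1$) to $|V|^p$; pairing this with the bounded strong weight $e^{F_i}$ produces $\int_{B_R(x_i)} |V_i|^p e^{F_i}\,dH^n \to \int_{B_R(x)} |V|^p e^F\,dH^n$, i.e. (\ref{equiv2}). For the converse I would first upgrade the global condition (\ref{equiv2}) to local convergence $\int_{B_r(y_i)} |V_i|^p\,dH^n_{F_i} \to \int_{B_r(y)} |V|^p\,dH^n_F$ on every ball: lower semicontinuity of the weighted $L^p$-norm under weighted weak convergence gives $\liminf_i \int_{B} |V_i|^p\,dH^n_{F_i} \ge \int_B |V|^p\,dH^n_F$ on each ball $B$ and on its complement in $B_R(x)$, and adding these inequalities against the equality (\ref{equiv2}) forces equality on each piece. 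This says that $|V_i|^p e^{F_i}$ converges strongly to $|V|^p e^F$ in the unweighted sense, and pairing with the bounded strong weight $e^{-F_i}$ yields $\int_{B_R(x_i)} |V_i|^p\,dH^n \to \int_{B_R(x)} |V|^p\,dH^n$, which with weak convergence is unweighted strong convergence.

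The main obstacle I anticipate is this converse of the strong statement: unlike the weak case, the hypothesis (\ref{equiv2}) controls only the total norm, so it cannot be paired against the weight directly. The localization step via lower semicontinuity is what converts the single global equality into the genuine local strong convergence of $|V_i|^p$ needed before the bounded weight $e^{-F_i}$ can be pulled through the pairing. Care is required because the weight sits inside the reference measure, so one must pass between strong convergence of $|V_i|^p$ relative to $dH^n_{F_i}$ and relative to $dH^n$, which is exactly what the uniform two-sided bound together with the strong convergence of $e^{\pm F_i}$ makes possible.
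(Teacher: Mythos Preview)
Your treatment of the weak equivalence matches the paper's exactly: recognize that (\ref{equiv1}) is precisely the statement that $e^{F_i}V_i$ converges $L^p$-weakly (unweighted) to $e^FV$, then multiply by the bounded strongly convergent factor $e^{-F_i}$ via \cite[Proposition~3.48]{Honda13}.

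For the strong equivalence your argument is correct but takes a longer route than the paper. You localize the global identity (\ref{equiv2}) to every sub-ball via lower semicontinuity, read this as an $L^1$-type strong convergence of $|V_i|^p e^{F_i}$, and then pair with $e^{-F_i}$ to recover the unweighted norm convergence. The paper instead absorbs the weight into the vector field by a fractional power: since $|e^{F_i/p}V_i|^p = e^{F_i}|V_i|^p$, condition (\ref{equiv2}) says exactly that the unweighted $L^p$-norm of $e^{F_i/p}V_i$ converges to that of $e^{F/p}V$. Together with the weak convergence $e^{F_i/p}V_i \to e^{F/p}V$ (weak $\times$ bounded strong, again \cite[Proposition~3.48]{Honda13}), this gives $L^p$-\textit{strong} convergence of $e^{F_i/p}V_i$ directly from the definition, and one then strips off $e^{-F_i/p}$ via \cite[Proposition~3.70]{Honda13}. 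This rescaling keeps the whole argument inside the $L^p$, $p\in(1,\infty)$, framework of \cite{Honda13} and bypasses your localization step entirely. Note also that your weighted lower-semicontinuity step is not literally available from unweighted LSC; the cleanest way to justify it is precisely the same rescaling $e^{F_i/p}V_i$, so the paper's trick is in effect hiding inside your argument anyway. (The paper writes the exponent as $F_i/(2p)$, which appears to be a minor slip for $F_i/p$.)
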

\begin{proof}
We only give a proof of `if' part because the proof of the converse is similar.
Suppose that $\sup_i||V_i||_{L^p}<\infty$ and (\ref{equiv1}) hold.
Then by definition, $e^{F_i}V_i$ $L^p$-converges weakly to $e^FV$ on $B_R(x)$.
Thus  $V_i$, which is equal to $e^{-F_i}(e^{F_i}V_i)$, $L^p$-converges weakly to $V=e^{-F}(e^{F}V)$ on $B_R(x)$ (c.f. \cite[Proposition $3.48$]{Honda13}).

Next suppose that $V_i$ $L^p$-converges weakly to $V$ on $B_R(x)$ and that (\ref{equiv2}) holds. 
Then by definition,  $e^{F_i/(2p)}V_i$ $L^p$-converges strongly to $e^{F/(2p)}V$ on $B_R(x)$.
Thus  $V_i=e^{-F_i/(2p)}(e^{F_i/(2p)}V_i)$ $L^p$-converges strongly
to
$$V=e^{-F/(2p)}(e^{F/(2p)}V)$$
on $B_R(x)$ (c.f. \cite[Proposition $3.70$]{Honda13}).
\end{proof}
We skip the introduction of the definition of $L^p$-convergence of general tensor fields.
However note that we can prove the equivalence as in Proposition \ref{Lpremark} for $L^p$-tensor fields.
See \cite{Honda13} for the detail.

Let $r, s \in {\bf{Z}}_{\ge 0}$.  
\begin{definition}[$L^p$-convergence of $\bfC$-valued tensor fields]\label{Lpcomplex}
Let $T_i$ be a sequence in $L^p_{\bfC}((T^r_s)_{\bfC}B_R(x_i))$.
We say that \textit{$T_i$ $L^p$-converges weakly (or strongly, respectively) to $T \in L^p_{\bfC}((T^r_s)_{\bfC}B_R(x))$ on $B_R(x)$} if $T_{i}^j$ $L^p$-converges weakly (or strongly, respectively) to $T^j$ on $B_R(x)$ for each $j =1, 2$, where $T_i=T_{i}^1+\sqrt{-1}T_{i}^2$ and $T=T^{1} + \sqrt{-1}T^{2}$.
\end{definition}
From the definition we see that many properties for $L^p$-convergence in real setting given in \cite{Honda13} can be extended canonically to the complex setting.
For example we have the following:
\begin{enumerate}
\item[(2.2a)] An $L^p$-bounded sequence has an $L^p$-weak convergent subsequence (c.f. \cite[Proposition $3.50$]{Honda13}).
\item[(2.2b)] The $L^p$-norms of an $L^p$-weak convergent sequence is lower semicontinuous (c.f. \cite[Proposition $3.64$]{Honda13}).
\item[(2.2c)] If $\sup_i||T_i||_{L^{\infty}}<\infty$, then $T_i$ $L^p$-converges weakly (or strongly, respectively) to $T$ on $B_R(x)$ for some $p \in (1, \infty)$ if and only if $T_i$ $L^p$-converges weakly (or strongly, respectively) to $T$ on $B_R(x)$ for every $p \in (1, \infty)$ (c.f. \cite[Proposition $3.69$]{Honda13}).
\item[(2.2d)] The equivalence as in Proposition \ref{Lpremark} also holds for complex valued tensor fields by the same reason.
\item[(2.2e)] Let $f$ be a complex valued Lipschitz function on $X$.
Then by \cite[Theorem $4.2$]{Honda11} there exists a sequence of $f_i \in \mathrm{LIP}_{\bf{C}}(X_i)$ 
with 
$$\sup_i||df_i||_{L^{\infty}}<\infty$$ 
such that $f_i, d f_i$ $L^2$-converge strongly to $f, d f$ on $X$, respectively.
\item[(2.2f)] Let $T \in L^p_{\bfC}((T^r_s)_{\bfC}B_R(x))$. Then there exists a sequence of $T_i \in  L^p_{\bfC}((T^r_s)_{\bfC}B_R(x_i))$ such that $T_i$ $L^p$-converges strongly to $T$ on $B_R(x)$.
\end{enumerate}

The following Rellich Lemma plays a key role in establishing the spectral convergence of weighted ($\overline{\partial}$-) Laplacian:
\begin{theorem}[Rellich compactness]\label{Rellich}
Let $f_i$ be a sequence in $H^{1, p}_{\bf{C}}(B_R(x_i))$ with $\sup_i||f_i||_{H^{1, p}_{\bf{C}}}<\infty$.
Then there exist a subsequence $f_{i(j)}$ and $f \in H^{1, p}_{\bf{C}}(B_R(x))$ such that $f_{i(j)}$ $L^p$-converges strongly to $f$ on $B_R(x)$ and that $d f_{i(j)}$ $L^p$-converges weakly to $d f$ on $B_R(x)$. 
\end{theorem}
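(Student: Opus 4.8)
The plan is to reduce the assertion to its $\bfR$-valued counterpart, which is available in \cite{Honda13}, by the canonical real/imaginary decomposition used throughout this subsection. Write $f_i = f_i^1 + \sqrt{-1}\,f_i^2$ with $f_i^1, f_i^2 \in H^{1,p}(B_R(x_i))$ real-valued. Since the conjugation-compatible Hermitian norms satisfy $|f_i|^2 = |f_i^1|^2 + |f_i^2|^2$ and $|d f_i|^2 = |d f_i^1|^2 + |d f_i^2|^2$ pointwise a.e., we have $|f_i^k| \le |f_i|$ and $|d f_i^k| \le |d f_i|$, whence
\[
\|f_i^k\|_{H^{1,p}} \le \|f_i\|_{H^{1,p}_{\bfC}} \qquad (k = 1, 2).
\]
In particular the uniform bound $\sup_i \|f_i\|_{H^{1,p}_{\bfC}} < \infty$ forces $\sup_i \|f_i^1\|_{H^{1,p}} < \infty$ and $\sup_i \|f_i^2\|_{H^{1,p}} < \infty$.

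Next I would apply the $\bfR$-valued Rellich compactness theorem (the real analogue of the present statement, from \cite{Honda13}) twice, extracting nested subsequences. Applying it to $\{f_i^1\}$ produces a subsequence along which $f_i^1$ $L^p$-converges strongly to some $f^1 \in H^{1,p}(B_R(x))$ and $d f_i^1$ $L^p$-converges weakly to $d f^1$. Passing to this subsequence and applying the $\bfR$-valued theorem once more to $\{f_i^2\}$ yields a further subsequence $\{f_{i(j)}\}$ along which $f_{i(j)}^2$ $L^p$-converges strongly to some $f^2 \in H^{1,p}(B_R(x))$ and $d f_{i(j)}^2$ $L^p$-converges weakly to $d f^2$; the convergences of the real parts, being both weak and norm convergences, are inherited by this sub-subsequence.

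Finally I would recombine. Set $f := f^1 + \sqrt{-1}\,f^2$; by the characterization of $H^{1,p}_{\bfC}(B_R(x))$ in terms of its real and imaginary parts, $f \in H^{1,p}_{\bfC}(B_R(x))$. Since $f_{i(j)}^k$ $L^p$-converges strongly to $f^k$ for $k=1,2$, Definition \ref{Lpcomplex} gives that $f_{i(j)}$ $L^p$-converges strongly to $f$; and since $d f_{i(j)} = d f_{i(j)}^1 + \sqrt{-1}\,d f_{i(j)}^2$ and $d f = d f^1 + \sqrt{-1}\,d f^2$ with $d f_{i(j)}^k$ $L^p$-converging weakly to $d f^k$, the same definition gives that $d f_{i(j)}$ $L^p$-converges weakly to $d f$. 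This is exactly the claim.

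The genuinely hard content lies entirely in the $\bfR$-valued Rellich compactness invoked in the second step, namely upgrading the $L^p$-weak subconvergence of an $H^{1,p}$-bounded sequence to $L^p$-strong subconvergence across the varying base spaces $B_R(x_i) \stackrel{GH}{\to} B_R(x)$. This compactness is not formal: it rests on the measured Gromov-Hausdorff convergence $(X_i, g_{X_i}, H^n) \to (X, g_X, H^n)$ together with the uniform Poincar\'e and segment inequalities afforded by the lower bound $\Ric_{X_i} \ge K g_{X_i}$, and the identification of the weak limit of $d f_{i(j)}$ as $d f$ uses the closability of the differential under this convergence. Once that real statement is granted from \cite{Honda13}, the complex case is a purely formal consequence of the decomposition, in the spirit of properties (2.2a)--(2.2f).
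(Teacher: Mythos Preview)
Your proof is correct and takes essentially the same approach as the paper: the paper's proof is the single sentence ``This is a direct consequence of the real version shown in \cite[Theorem $4.9$]{Honda13},'' and you have spelled out exactly what that direct consequence means, namely splitting into real and imaginary parts, applying the real Rellich compactness twice along nested subsequences, and recombining via Definition \ref{Lpcomplex}.
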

\begin{proof}
This is a direct consequence of the real version shown in \cite[Theorem $4.9$]{Honda13}.
\end{proof}
For every $l \in \{1, \ldots, r+s\}$ let $J^l$ be the complex structure on $(T^r_s)_xX$ defined by
\begin{align*}
&J^l(v_1\otimes \cdots \otimes v_r \otimes v_{r+1}^*\otimes \cdots \otimes v_{r+s}^*) \\
&:=
\begin{cases} v_1 \otimes \cdots \otimes v_{l-1} \otimes Jv_l \otimes v_{l+1} \cdots \otimes v_r \otimes v_{r+1}^*\otimes \cdots \otimes v_{r+s}^* \,\,\,\,\,\mathrm{if}\,l \le r, \\
v_1 \otimes \cdots \otimes v_r \otimes v_{r+1}^*\otimes \cdots  \otimes v_{l-1}^* \otimes J^*v_l^* 
\otimes v_{l+1}^* \cdots \otimes v_{r+s}^* \,\,\,\,\,\mathrm{if}\,l \ge r+1.
\end{cases}
\end{align*}
\begin{proposition}\label{convergencedecomp}
Let $T_i$ be a sequence in $L^p_{\bfC}((T^r_s)_{\bfC}B_R(x_i))$ and let $T \in L^p_{\bf{C}}((T^r_s)_{\bf{C}}B_R(x))$.
Then the following are equivalent.
\begin{enumerate}
\item[(1)] $T_i$ $L^p$-converges weakly (or strongly, respectively) to $T$ on $B_R(x)$.
\item[(2)] For every $l \in \{1, \ldots, r+s\}$, $T_i'$ and $T_i''$ $L^p$-converge weakly (or strongly, respectively) to $T'$ and $T''$ on $B_R(x)$, respectively, where $T_i=T_i^{'} \oplus T_i^{''}$ and $T=T{'} \oplus T^{''}$ with respect to the decompositions by $\pm\sqrt{-1}$-eigenspaces of $J_{i}^l$ and $J^l$, respectively.
\item[(3)] For some $l \in \{1, \ldots, r+s\}$, $T_i^{'}$ and $T_i^{''}$ $L^p$-converge weakly (or strongly, respectively) to $T^{'}$ and $T^{''}$ on $B_R(x)$, respectively.
\end{enumerate}
\end{proposition}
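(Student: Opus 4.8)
The plan is to realise each eigenspace decomposition as the action of an explicit projection built from the complex structure, and then to transport $L^p$-convergence through these projections. Fix $l \in \{1,\dots,r+s\}$. Since $(J^l)^2=-\mathrm{id}$, the projections onto the $\pm\sqrt{-1}$-eigenspaces of $J^l$ are
$$P'_{(l)}=\frac{1}{2}\left(\mathrm{id}-\sqrt{-1}\,J^l\right),\qquad P''_{(l)}=\frac{1}{2}\left(\mathrm{id}+\sqrt{-1}\,J^l\right),$$
and we define $P'_{(l),i},P''_{(l),i}$ on $B_R(x_i)$ from $J_i^l$ by the same formulas. These are $\bfC$-valued endomorphism fields of $(T^r_s)_{\bfC}X$ with $P'_{(l),i}+P''_{(l),i}=\mathrm{id}$, and by construction $T_i'=P'_{(l),i}T_i$, $T_i''=P''_{(l),i}T_i$ and $T_i'+T_i''=T_i$, with the same identities on the limit. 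Thus the proposition reduces to the statement that these projections preserve $L^p$-convergence.

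First I would assemble three ingredients. (a) By assumption (2.1e) the complex structures $J_i$ $L^2$-converge strongly to $J$, and $\sup_i\|J_i\|_{L^{\infty}}<\infty$ because $J_i^2=-\mathrm{id}$; hence, by (2.2c), $J_i$ $L^p$-converges strongly to $J$ for every $p\in(1,\infty)$. Since $J^l$ is obtained by applying $J$ (for $l\le r$), or the dual structure $J^*$ (for $l\ge r+1$, and $J_i^*\to J^*$ strongly for the same reason), in the $l$-th slot and the identity elsewhere, it follows that $J_i^l$ $L^p$-converges strongly to $J^l$ with $\sup_i\|J_i^l\|_{L^{\infty}}<\infty$, and therefore $P'_{(l),i}\to P'_{(l)}$ and $P''_{(l),i}\to P''_{(l)}$ strongly with uniform $L^{\infty}$-bound. (b) The complex analogue, via Definition \ref{Lpcomplex}, of the multiplier results of \cite{Honda13} already used in the proof of Proposition \ref{Lpremark}: the action of a uniformly $L^{\infty}$-bounded, $L^p$-strongly convergent endomorphism field sends an $L^p$-weakly (resp.\ strongly) convergent sequence to an $L^p$-weakly (resp.\ strongly) convergent one. (c) Weak and strong $L^p$-convergence are preserved under finite sums, again by the complex version of \cite{Honda13} as in (2.2d).

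Granting these, the three implications follow. For (1)$\Rightarrow$(2), fix any $l$: by (a) and (b) the sequence $T_i'=P'_{(l),i}T_i$ $L^p$-converges weakly (resp.\ strongly) to $P'_{(l)}T=T'$, and likewise $T_i''\to T''$. The implication (2)$\Rightarrow$(3) is trivial, and for (3)$\Rightarrow$(1) the distinguished index $l$ gives $T_i=T_i'+T_i''\to T'+T''=T$ weakly (resp.\ strongly) by (c). I expect the only genuine content to lie in ingredients (b) and (c) in the strong case, namely that multiplication by an endomorphism field which is merely $L^{\infty}$-bounded (and not uniformly convergent) preserves strong $L^p$-convergence, and that strong convergence is additive; these are the $\bfC$-valued versions of the strong-convergence stability results of \cite{Honda13}, obtained by splitting into real and imaginary parts as dictated by Definition \ref{Lpcomplex} and invoking the $\bfR$-valued statements, exactly as in (2.2d). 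The weak case is routine from the defining integrals.
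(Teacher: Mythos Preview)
Your proposal is correct and follows essentially the same approach as the paper: both arguments write out the projection formulas $T_i'=\tfrac{1}{2}(T_i-\sqrt{-1}J_i^lT_i)$, $T_i''=\tfrac{1}{2}(T_i+\sqrt{-1}J_i^lT_i)$, note that $J_i^l$ $L^2$-converges strongly to $J^l$ with $\sup_i\|J_i^l\|_{L^\infty}<\infty$, and then invoke the multiplier stability results \cite[Propositions~3.48 and~3.70]{Honda13} to push convergence through. Your write-up is more explicit about the cycle (1)$\Rightarrow$(2)$\Rightarrow$(3)$\Rightarrow$(1), but the content is the same.
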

\begin{proof}
Since $J_{i}^l$ $L^2$-converges strongly to $J^{l}$ on $X$ with $\sup_i||J_i^l||_{L^{\infty}}<\infty$, the assertion follows from \cite[Propositions $3.48$ and $3.70$]{Honda13} and equalities
\[T_i^{'}=\frac{1}{2}(T_i-\sqrt{-1}J_{i}^lT_i),\,T_i^{''}=\frac{1}{2}(T_i+\sqrt{-1}J_{i}^lT_i).\] 
\end{proof}
\begin{remark}\label{decompremark}
It is a direct consequence of Proposition \ref{convergencedecomp} that the \textit{type} of tensor fields is preserved with respect to the $L^p$-weak convergence.
For example the $L^p$-weak limit of a sequence of $(q, r)$-forms is also a $(q, r)$-form.
\end{remark}
\begin{proposition}\label{L2norm}
Let $f$ and $g$ be in the set $\mathrm{LIP}_{\bfC}(X)$ of all Lipschitz functions on $X$.
Then 
\begin{align}\label{innerproduct}
\int_Xh^*_X(df, dg)dH^n=2\int_Xh_X^*(\barpartial f, \barpartial g)dH^n=2\int_Xh^*_X(\partial f, \partial g)dH^n.
\end{align}
In particular,
\[\int_X|d f|^2\,dH^n=2\int_X|\partial f|^2\,dH^n=2\int_X|\overline{\partial} f|^2\,dH^n\]
and
\[\int_X|d f|^2\,dH_F^n \stackrel{L}{\asymp} \int_X|\partial f|^2\,dH^n_F \stackrel{L}{\asymp} \int_X|\overline{\partial} f|^2\,dH_F^n,\]
where for any nonnegative real numbers $a, b$, $a \stackrel{L}{\asymp} b$ means that there exists a positive constant $C:=C(L)>1$ depending only on $L$ such that $C^{-1}b\le a \le Cb$ holds, $L$ being
the constant in (2.1d).
\end{proposition}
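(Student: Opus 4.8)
The plan is to transport the identity from the smooth approximants $X_i$, where it is classical, to the limit $X$ via the $L^2$-convergence results of Section 2.2. I would begin by recording the one genuinely pointwise fact available: since $J$ is compatible with $g_X$ at a.e. $x \in X$, the eigenspaces $(T^*X)'$ and $(T^*X)''$ of $J^*$ are $h^*_X$-orthogonal there, so writing $df = \partial f + \overline{\partial} f$ gives $|df|^2 = |\partial f|^2 + |\overline{\partial} f|^2$ a.e. I would stress that the further equality $\int_X|\partial f|^2 = \int_X|\overline{\partial} f|^2$ is \emph{not} pointwise for complex-valued $f$; it is an integrated, K\"ahler phenomenon, which is precisely why an approximation argument is needed rather than a purely local one on the regular set.

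For complex-valued $f_i \in \mathrm{LIP}_{\bfC}(X_i)$ on the smooth compact K\"ahler manifold $X_i$, the K\"ahler identity $\Delta_d = 2\Delta_{\overline{\partial}} = 2\Delta_{\partial}$ on functions gives, after integration by parts, $\int_{X_i}|df_i|^2\,dH^n = 2\int_{X_i}|\overline{\partial} f_i|^2\,dH^n = 2\int_{X_i}|\partial f_i|^2\,dH^n$. Now, given $f \in \mathrm{LIP}_{\bfC}(X)$, I invoke (2.2e) to choose $f_i \in \mathrm{LIP}_{\bfC}(X_i)$ with $\sup_i\|df_i\|_{L^{\infty}}<\infty$ such that $f_i$ and $df_i$ converge $L^2$-strongly to $f$ and $df$ on $X$ (taking $R > \diam X$ so that $B_R = X$). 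By Proposition \ref{convergencedecomp}, applied with $(r,s)=(0,1)$ and the unique decomposition $l=1$ induced by $J^*$, the $L^2$-strong convergence $df_i \to df$ forces $\partial f_i \to \partial f$ and $\overline{\partial} f_i \to \overline{\partial} f$ strongly in $L^2$. Since $L^2$-strong convergence preserves the $L^2$-norm, in the sense that $\lim_i \int_{X_i}|V_i|^2 = \int_X|V|^2$ (Definition \ref{Lpreal}(ii) combined with the lower semicontinuity (2.2b)), each of the three integrals over $X_i$ converges to the corresponding integral over $X$; passing to the limit in the smooth identity yields the unweighted identity $\int_X|df|^2\,dH^n = 2\int_X|\partial f|^2\,dH^n = 2\int_X|\overline{\partial} f|^2\,dH^n$.

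The polarized identity (\ref{innerproduct}) then follows by complex polarization, applying the norm identity to $f\pm g$ and to $f\pm\sqrt{-1}g$ and using that $d$, $\partial$ and $\overline{\partial}$ are $\bfC$-linear, so that polarizing the three quadratic forms produces the three sesquilinear forms with the same factors. For the weighted statement, the bound $\|F\|_{L^{\infty}}\le L$ gives $e^{-L}dH^n \le dH^n_F \le e^L dH^n$; combining this with the pointwise inequalities $|\partial f|^2 \le |df|^2$ and $|\overline{\partial} f|^2 \le |df|^2$ from the first paragraph together with the unweighted identity, one obtains $\int_X|\partial f|^2\,dH^n_F \le \int_X|df|^2\,dH^n_F \le 2e^{2L}\int_X|\partial f|^2\,dH^n_F$, and likewise with $\overline{\partial} f$ in place of $\partial f$, so that $C := 2e^{2L}$ realizes the three relations $\stackrel{L}{\asymp}$.

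The crux is the passage to the limit: the whole argument rests on the fact that $L^2$-strong convergence of $df_i$ descends to strong convergence of both type-components and preserves their norms, so that the global K\"ahler identity survives in the limit. I would therefore check with care that Proposition \ref{convergencedecomp} applies verbatim here, using that $J_i^1$ converges strongly to $J^1$ with a uniform $L^{\infty}$-bound, and that the approximants furnished by (2.2e) can indeed be arranged globally on the compact space $X$; these are the points where a subtle error could hide, whereas the smooth identity and the weight manipulation are routine.
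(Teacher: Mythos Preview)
Your proof is correct and follows essentially the same route as the paper: approximate $f$ (and $g$) by Lipschitz functions on the smooth $X_i$ via (2.2e), use the classical K\"ahler identity $\Delta=2\Delta_{\overline{\partial}}=2\Delta_{\partial}$ there, and pass to the limit using Proposition~\ref{convergencedecomp} to get strong $L^2$-convergence of the type components. The only cosmetic differences are that the paper inserts a heat-flow smoothing step so that the integration by parts on $X_i$ is literally valid for $C^\infty$ functions, and that it proves the polarized identity directly by approximating both $f$ and $g$ rather than proving the norm identity first and polarizing afterwards.
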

\begin{proof}
By (2.2e), there exist sequences $f_i$ and $g_i \in \mathrm{LIP}_{\bfC}(X_i)$ such that 
$$\sup_i\left(||d f_i||_{L^{\infty}}+||d g_i||_{L^{\infty}}\right)<\infty$$ 
and that $f_i, d f_i, g_i$ and $dg_i$ $L^2$-converge strongly to $f, df, g$ and $dg$ on $X$ respectively.
By the smoothing via the heat flow (c.f. \cite{AGS, Grig}) without loss of generality we can assume $f_i, g_i \in C^{\infty}_{\bfC}(X_i)$  for every $i<\infty$, where $C^{\infty}_{\bfC}(X_i)$ is the set of $\bfC$-valued smooth functions on $X_i$.
Since $\Delta=2\Delta_{\overline{\partial}}$ holds on smooth setting, we have
\begin{eqnarray}\label{1}
\int_{X_i}h^*_{X_i}(df_i, dg_i)\,dH^n &=& \int_{X_i}(\Delta f_i)\overline{g_i}\,dH^n\\
&=&2\int_{X_i}(\Delta_{\overline{\partial}}f_i)\overline{g_i}\,dH^n\nonumber\\
&=&2\int_{X_i}h^*_{X_i}(\barpartial f_i, \barpartial g_i)\,dH^n.\nonumber
\end{eqnarray}
Thus since Proposition \ref{convergencedecomp} yields that $\overline{\partial}f_i$ and $\barpartial g_i$ $L^2$-converge strongly to $\overline{\partial}f$ and $\barpartial g$ on $X$ respectively by letting $i \to \infty$ in (\ref{1}), we have
\[\int_Xh^*_X(df, dg)\,dH^n=2\int_Xh_X^*(\barpartial f, \barpartial g)\,dH^n.\]
Similarly we have 
\[\int_Xh^*_X(df, dg)\,dH^n=2\int_Xh_X^*(\partial f, \partial g)\,dH^n.\] 
This completes the proof.
\end{proof}
\begin{remark}
By Proposition \ref{L2norm} the completion of $\mathrm{LIP}_{\bfC}(X)$ with respect to the norm
\[\left(\int_X\left(|f|^2+|\overline{\partial}f|^2\right)\,dH^n_F\right)^{1/2}\]
coincides with $H^{1, 2}_{\bfC}(X)$ (however the norms are different).
\end{remark}
\begin{corollary}\label{localconv}
Let $f_i$ be a sequence of $H^{1, 2}_{\bfC}(B_R(x_i))$ with
\[\sup_i\left( \int_{B_R(x_i)}(|f_i|^2+|\overline{\partial}f_i|^2)\,dH^n \right)<\infty,\]
and let $f$ be the $L^2$-weak limit of them on $B_R(x)$.
Then for every $r<R$ we see that $f|_{B_r(x)} \in H^{1, 2}_{\bfC}(B_r(x))$, that $f_i$ $L^2$-converges strongly to $f$ on $B_r(x)$ and that $df_i$ $L^2$-converges weakly to $df$ on $B_r(x)$.
Moreover if $\overline{\partial}f_i$ $L^2$-converges strongly to $\overline{\partial}f$ on $B_s(x)$ for some $s<R$, then $df$ $L^2$-converges strongly to $df$ on $B_r(x)$ for every $r<s$.
\end{corollary}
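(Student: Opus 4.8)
The plan is to reduce everything to the global identity
\[\int_{X_i}|du|^2\,dH^n = 2\int_{X_i}|\overline{\partial}u|^2\,dH^n,\]
valid for all $u \in H^{1,2}_{\bfC}(X_i)$ on the smooth compact K\"ahler manifold $X_i$ (this is the content underlying Proposition \ref{L2norm}, proved there by passing to the limit from exactly this identity on $X_i$). The essential difficulty is that for complex-valued $u$ there is no pointwise comparison between $|du|$ and $|\overline{\partial}u|$; only the integrated, \textit{global} identity above holds. Note also that the hypothesis bounds $\overline{\partial}f_i$ rather than $df_i$, so the whole point is to convert an $\overline{\partial}$-bound into a genuine $H^{1,2}$-bound on interior balls. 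Hence the argument rests on a cut-off device: multiply $f_i$ by a cut-off, extend by zero to all of $X_i$, apply the global identity, and restrict back.

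First I would fix $r<R$, pick a profile $\psi\colon\bfR\to[0,1]$ with $\psi\equiv 1$ on $[0,r]$ and $\psi\equiv 0$ on $[R',\infty)$ for some $r<R'<R$, and set $\phi_i:=\psi(r_{x_i})$, $\phi:=\psi(r_x)$. These are uniformly Lipschitz, compactly supported in $B_{R'}(x_i)$, and satisfy $\phi_i\to\phi$ and $d\phi_i\to d\phi$ strongly in $L^2$ (using $r_{x_i}\to r_x$ and $dr_{x_i}\to dr_x$ strongly, which is standard for distance functions under Gromov-Hausdorff convergence). Since $\phi_if_i\in H^{1,2}_{\bfC}(X_i)$ after extension by zero, the global identity applied to $\phi_if_i$, together with the expansion $\overline{\partial}(\phi_if_i)=\phi_i\overline{\partial}f_i+f_i\overline{\partial}\phi_i$, bounds $\|d(\phi_if_i)\|_{L^2}$ by $C(\|\overline{\partial}f_i\|_{L^2(B_{R'})}+\|f_i\|_{L^2(B_{R'})})$, uniformly in $i$. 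As $\phi_i\equiv 1$ on $B_r$, this gives $\sup_i\|df_i\|_{L^2(B_r)}<\infty$, hence $\sup_i\|f_i\|_{H^{1,2}_{\bfC}(B_r)}<\infty$.

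With this uniform $H^{1,2}$-bound, Theorem \ref{Rellich} on $B_r(x)$ yields a subsequence converging strongly in $L^2$ and weakly in $H^{1,2}$ to some $\tilde f\in H^{1,2}_{\bfC}(B_r(x))$. Since the restriction to $B_r$ of the given weak limit is $f|_{B_r}$, uniqueness of weak limits forces $\tilde f=f|_{B_r}$, so in particular $f|_{B_r}\in H^{1,2}_{\bfC}(B_r(x))$. A standard subsequence argument then upgrades this to the full sequence: every subsequence has a further subsequence converging strongly in $L^2$ to $f|_{B_r}$ and with $df_i$ converging weakly to $df$ (the latter by closedness of the differential under $L^2$-convergence, c.f. \cite{Honda13}); hence $f_i\to f$ strongly and $df_i\to df$ weakly on $B_r$, which is the first assertion.

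For the last assertion, assume $\overline{\partial}f_i\to\overline{\partial}f$ strongly on $B_s$, fix $r<s$, and take the cut-off supported in $B_{s'}$ with $r<s'<s$. On this region $f_i\to f$ and $\overline{\partial}f_i\to\overline{\partial}f$ strongly, while $\phi_i,d\phi_i$ converge strongly with uniform $L^\infty$-bounds, so each term of $\overline{\partial}(\phi_if_i)=\phi_i\overline{\partial}f_i+f_i\overline{\partial}\phi_i$ is a bounded-times-strong product and converges strongly; thus $\overline{\partial}(\phi_if_i)\to\overline{\partial}(\phi f)$ strongly on $X$. Letting $i\to\infty$ in the global identity on $X_i$ and then invoking Proposition \ref{L2norm} on $X$ gives
\[\lim_{i\to\infty}\int_{X_i}|d(\phi_if_i)|^2\,dH^n=2\int_X|\overline{\partial}(\phi f)|^2\,dH^n=\int_X|d(\phi f)|^2\,dH^n,\]
which is precisely the norm convergence needed for $d(\phi_if_i)\to d(\phi f)$ strongly on $X$. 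Since strong $L^2$-convergence on $X$ restricts to strong $L^2$-convergence on the open set $B_r$ (by lower semicontinuity on $B_r$ and on $X\setminus B_r$ combined with the global norm convergence), and $d(\phi_if_i)=df_i$ there, we conclude $df_i\to df$ strongly on $B_r$. The only real obstacle throughout is the globalization forced by the lack of a pointwise $|du|$–$|\overline{\partial}u|$ comparison; the remainder is the bookkeeping of bounded-times-strong products and the restriction of strong convergence to an open subset.
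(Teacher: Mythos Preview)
Your proposal is correct and follows essentially the same route as the paper: a Lipschitz cut-off built from the distance function globalizes $f_i$ so that the identity $\int_{X_i}|du|^2=2\int_{X_i}|\overline{\partial}u|^2$ can be invoked, yielding the uniform interior $H^{1,2}$-bound needed for Theorem \ref{Rellich}; the strong-convergence part is likewise obtained by expanding $\overline{\partial}(\phi_if_i)$, passing to the limit term by term, and using Proposition \ref{L2norm} on $X$. The paper packages the first step as a separate claim and computes the norm limit directly on the expanded integrand rather than first asserting strong convergence of $\overline{\partial}(\phi_if_i)$, but the substance is identical; if anything, your justification of why strong convergence on $X$ restricts to $B_r(x)$ is more explicit than the paper's.
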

\begin{proof}
This follows directly from Theorem \ref{Rellich} and the following claim:
\begin{claim}\label{bound} 
Let $f \in H^{1, 2}_{\bfC}(B_R(x))$ with 
\[\int_{B_R(x)}(|f|^2+|\overline{\partial}f|^2)\,dH^n\le \hat{L}.\]
Then for every $r<R$ we have
\[\int_{B_r(x)}|df|^2\,dH^n\le C(r, R, \hat{L}).\]
\end{claim}

\noindent
The proof is as follows:
Let $r<R$ and $u := (r+R)/2$. Let $g_{r, R}$ be the Lipschitz function on $\bfR$ defined by 
\begin{align*}
g_{r, R}(t):=
\begin{cases} 1 \,\,\,\,\,\mathrm{if}\,t \le r, \\
\frac{u-t}{u-r} \,\,\,\,\,\mathrm{if}\, r \le t \le u, \\
0 \,\,\,\,\,\mathrm{if}\, u \le t,
\end{cases}
\end{align*}
and let $G=G^x_{r, R}$ be the Lipschitz function on $X$ defined by $G(y):=g_{r, R}(d_X(x, y))$.
Then since $|\nabla G| \le C(r, R)$, we have $G f \in H^{1, 2}_{\bfC}(X)$ and 
\[\int_X|\overline{\partial}(G f)|^2\,dH^n\le C(r, R, \hat{L}),\]
Proposition \ref{L2norm} gives
\[\int_{B_r(x)}|df|^2\,dH^n\le \int_X|d(G f)|^2\,dH^n =2 \int_X|\overline{\partial}(G f)|^2\,dH^n\le C(r, R, \hat{L}).\]
This completes the proof of Claim \ref{bound}.

Claim \ref{bound} with Theorem \ref{Rellich} yields that  for every $r<R$ we see that $f|_{B_r(x)} \in H^{1, 2}_{\bfC}(B_r(x))$, that $f_i$ $L^2$-converges strongly to $f$ on $B_r(x)$ and that $df_i$ $L^2$-converges weakly to $df$ on $B_r(x)$.

Next we suppose that $\overline{\partial}f_i$ $L^2$-converges strongly to $\overline{\partial}f$ on $B_s(x)$ for some $s<R$.
Let $r<s$, let $G_i:=G_{r, s}^{x_i}$ and let $G:=G_{r, s}^x$.
Then since $dr_{x_i}$ $L^2$-converges strongly to $dr_x$ on $X$ (c.f. \cite[Proposition $3.44$]{Honda13}), we see that $G_i, dG_i$ $L^2$-converge strongly to $G, dG$ on $X$, respectively.
Note that  $G_i f_i \in H^{1, 2}_{\bfC}(X_i)$ and $G f \in H^{1, 2}_{\bfC}(X)$ hold. 
Proposition \ref{L2norm} and the assumption give
\begin{align*}
&\lim_{i \to \infty}\int_{X_i}|d(G_if_i)|^2\,dH^n\\
&=2\lim_{i \to \infty}\int_{X_i}|\overline{\partial}(G_if_i)|^2\,dH^n\\
&=2\lim_{i \to \infty}\int_{B_r(x_i)}\left( |f_i|^2|\overline{\partial}G_i|^2+\overline{f_i}G_ih_{X_i}(\overline{\partial}f_i, \overline{\partial}G_i)\right.\\
& \qquad\qquad\qquad\qquad\qquad  \left.\qquad +\ f_iG_ih_{X_i}(\overline{\partial}G_i, \overline{\partial}f_i)+|G_i|^2|\overline{\partial}f_i|^2\right)\,dH^n \\
&=2\int_{B_r(x)}\left( |f|^2|\overline{\partial}G|^2+\overline{f}Gh_{X}(\overline{\partial}f, \overline{\partial}G)+fGh_{X}(\overline{\partial}G, \overline{\partial}f)+|G|^2|\overline{\partial}f|^2\right)\,dH^n \\
&=2\int_X|\overline{\partial}(Gf)|^2\,dH^n\\
&=\int_X|d(Gf)|^2\,dH^n.
\end{align*}
Thus $d(G_if_i)$ $L^2$-converges strongly to $d(Gf)$ on $X$.
By restricting this on $B_r(x)$ we have the assertion.
\end{proof}

\section{Weighted Laplacian on the limit space}
\subsection{Weighted Laplacian and weighted $\overline{\partial}$-Laplacian}
From now on we consider the weighted measure:
\[dH^n_{F_i}:=e^{F_i}\,dH^n.\] 
As stated in 2.1, this measure converges to $e^F dH^n$ in our setting. 
Let $U$ be an open subset of $X$.
\begin{remark}\label{weightremark}
The completion of $\mathrm{LIP}_{\bfC}(U)$ with respect to the weighted norm
\[\left( \int_U\left(|f|^p+|d f|^p\right)\,dH^n_F\right)^{1/p}.\]
coincides with $H^{1, p}_{\bfC}(U)$ (but the norms differ)  because $0<C_1(L) \le e^F\le C_2(L)<\infty$ holds, where $C_i(L)$ is a positive constant depending only on $L$ in (2.1d).
\end{remark}
\begin{definition}[Weighted Laplacian]
Let $\mathcal{D}^2_{\bfC}(\Delta^F, U)$ be the set of $f \in H^{1, 2}_{\bfC}(U)$ such that there exists $g \in L^2_{\bfC}(U)$ satisfying
\begin{align}\label{laplaciandefinition}
\int_Uh_X^\ast (d f, d \phi)\ dH^n_F=\int_U g\overline{\phi}\ dH^n_F
\end{align}
for every $\phi \in \mathrm{LIP}_{c, \bfC}(U)$, where $\mathrm{LIP}_{c, \bfC}(U)$ is the space of $\bfC$-valued Lipschitz functions on $U$ with compact support.
Since $g$ is unique we denote it by $\Delta^{F}f$.
\end{definition}
If $F\equiv 0$, then we put $\Delta:=\Delta^0$.

Recall that we can define $\Delta^F$ as real operator as follows:
Let $\mathcal{D}^2(\Delta^F, U)$ be the set of $f \in H^{1, 2}(U)$ such that there exists a real valued $L^2$-function $g \in L^2(U)$ satisfying (\ref{laplaciandefinition}) for every $\phi \in \mathrm{LIP}_c(U)$, where $L^2(U)$ is the set of $\bfR$-valued Borel $L^2$-functions on $U$ and  $\mathrm{LIP}_c(U)$ is the set of $\bfR$-valued Lipschitz functions on $U$ with compact support. 
In this case since $g$ is unique we denote it by $\Delta^F_{\bfR}f$, or $\Delta^Ff$ because the following proposition holds:
\begin{proposition}\label{Laplacianreal}
Let $f=f^1 +\sqrt{-1}f^2$ be a function on $U$.
We see that $f \in \mathcal{D}^2_{\bfC}(\Delta^F, U)$ holds if and only if $f^i \in \mathcal{D}^2(\Delta^F, U)$ holds for each $i=1, 2$.
Moreover if  $f \in \mathcal{D}^2_{\bfC}(\Delta^F, U)$, then $\Delta^Ff=\Delta^F_{\bfR}f^1 + \sqrt{-1}\Delta^F_{\bfR}f^2$.
\end{proposition}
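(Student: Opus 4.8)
The plan is to separate the complex weak-Laplacian identity into its real and imaginary parts and to exploit that the defining functional is conjugate-linear in the test function, so that it decouples into the two real conditions. I first record the integrability bookkeeping, which is immediate from material already in the excerpt: $f \in H^{1,2}_{\bfC}(U)$ if and only if $f^1, f^2 \in H^{1,2}(U)$, and $g \in L^2_{\bfC}(U)$ decomposes as $g = g^1 + \sqrt{-1}g^2$ with $g^1, g^2 \in L^2(U)$. The essential structural observation is that the Hermitian metric $h_X^*$ is $\bfC$-linear in its first argument and conjugate-linear in its second, by the definition $h_X^*(u, v) = g_X^*(u, \overline{v})$ with $g_X^*$ extended $\bfC$-bilinearly. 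Consequently both sides of
\[
\int_U h_X^*(df, d\phi)\,dH^n_F = \int_U g\,\overline{\phi}\,dH^n_F
\]
are conjugate-linear in $\phi$, so it suffices to test the identity against \textit{real} $\phi \in \mathrm{LIP}_c(U)$; the case of a general $\phi \in \mathrm{LIP}_{c, \bfC}(U)$ follows by conjugate-linearity in $\phi$.

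For a real test function $\phi$ we have $\overline{\phi} = \phi$, so the right side is $\int_U g\phi\,dH^n_F = \int_U g^1\phi\,dH^n_F + \sqrt{-1}\int_U g^2\phi\,dH^n_F$, while $\bfC$-linearity of $h_X^*$ in the first slot together with $df = df^1 + \sqrt{-1}df^2$ gives
\[
h_X^*(df, d\phi) = h_X^*(df^1, d\phi) + \sqrt{-1}\,h_X^*(df^2, d\phi),
\]
in which each term $h_X^*(df^k, d\phi)$ is the ordinary real inner product of the real one-forms $df^k$ and $d\phi$. Comparing real and imaginary parts of the complex identity for every real $\phi$ therefore splits it into exactly the two real identities
\[
\int_U h_X^*(df^k, d\phi)\,dH^n_F = \int_U g^k\phi\,dH^n_F, \qquad k = 1, 2,
\]
each being precisely the statement that $f^k \in \mathcal{D}^2(\Delta^F, U)$ with $\Delta^F_{\bfR}f^k = g^k$.

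Both assertions follow. For the ``if'' direction, assuming $f^1, f^2 \in \mathcal{D}^2(\Delta^F, U)$ I set $g := \Delta^F_{\bfR}f^1 + \sqrt{-1}\,\Delta^F_{\bfR}f^2 \in L^2_{\bfC}(U)$; the two real identities then hold for all real $\phi$, hence the complex identity holds for all $\phi \in \mathrm{LIP}_{c, \bfC}(U)$ by conjugate-linearity, so $f \in \mathcal{D}^2_{\bfC}(\Delta^F, U)$ with $\Delta^F f = g$, which is the claimed formula. For the ``only if'' direction, assuming $f \in \mathcal{D}^2_{\bfC}(\Delta^F, U)$ with $\Delta^F f = g = g^1 + \sqrt{-1}g^2$, restricting to real $\phi$ and comparing real and imaginary parts recovers the two real identities, whence $f^1, f^2 \in \mathcal{D}^2(\Delta^F, U)$. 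There is no genuine analytic obstacle here; the only step needing care is the conjugation bookkeeping, namely respecting that $g_X^*$ is extended $\bfC$-bilinearly whereas $h_X^*$ carries the conjugation in its second slot, which is exactly what makes testing against real functions decouple the real and imaginary components.
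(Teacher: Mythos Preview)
Your proof is correct and is essentially the same approach as the paper's, which simply says to substitute $f=f^1+\sqrt{-1}f^2$ and $\phi=\phi^1+\sqrt{-1}\phi^2$ into the defining identity. Your version is a bit more streamlined in that you first observe conjugate-linearity in $\phi$ to reduce to real test functions before splitting into real and imaginary parts, but the underlying computation is the same.
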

\begin{proof}
It is a direct consequence to substitute $f=f^1 + \sqrt{-1}f^2$ and $\phi=\phi^1+\sqrt{-1}\phi^2$ in (\ref{laplaciandefinition}).
\end{proof}
\begin{corollary}
Let $f \in  \mathcal{D}^2_{\bf{C}}(\Delta^F, U)$.
Then $\overline{f} \in \mathcal{D}^2_{\bf{C}}(\Delta^F, U)$ with $\Delta^F\overline{f}=\overline{\Delta^Ff}$.
Moreover we have the following:
\begin{enumerate}
\item[(1)] The eigenvalues of $\Delta^F$ are nonnegative real numbers.
\item[(2)] For any $f \in \mathcal{D}^2_{\bfC}(\Delta^F, X)$ and $\lambda \ge 0$, $f$ is a $\lambda$-eigenfunction of $\Delta^F$ if and only if $f^i$ is a  $\lambda$-eigenfunction of $\Delta^F$ for each 
$i=1, 2$,
\end{enumerate}
\end{corollary}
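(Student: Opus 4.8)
The plan is to reduce the whole statement to the real-operator formalism of Proposition \ref{Laplacianreal} and to the defining weak identity (\ref{laplaciandefinition}), so that no new analytic input is needed beyond a density argument on the compact space $X$.

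First, for the conjugation claim I would write $f = f^1 + \sqrt{-1}f^2$ and invoke Proposition \ref{Laplacianreal}: membership $f \in \mathcal{D}^2_{\bfC}(\Delta^F, U)$ gives $f^1, f^2 \in \mathcal{D}^2(\Delta^F, U)$, and since this real domain is a linear subspace, $-f^2$ lies in it as well. Applying Proposition \ref{Laplacianreal} in the reverse direction to $\overline{f} = f^1 + \sqrt{-1}(-f^2)$ then yields $\overline{f} \in \mathcal{D}^2_{\bfC}(\Delta^F, U)$ together with $\Delta^F\overline{f} = \Delta^F_{\bfR}f^1 - \sqrt{-1}\Delta^F_{\bfR}f^2 = \overline{\Delta^Ff}$. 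Equivalently, one may conjugate (\ref{laplaciandefinition}) directly, using that $dH^n_F$ is real and that the $\bfC$-bilinear symmetric form $g_X^*$ satisfies $\overline{h_X^*(df, d\phi)} = h_X^*(d\overline{f}, d\overline{\phi})$, and then replace the test function $\phi$ by $\overline{\phi}$, which again ranges over all of $\mathrm{LIP}_{c, \bfC}(U)$.

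For (1), assume $\Delta^Ff = \lambda f$ with $f \neq 0$ on the compact space $X$, so $\mathrm{LIP}_{c, \bfC}(X) = \mathrm{LIP}_{\bfC}(X)$ is dense in $H^{1, 2}_{\bfC}(X)$ and (\ref{laplaciandefinition}) extends by continuity to the admissible test function $\phi = f$. This produces
\[\int_X h_X^*(df, df)\, dH^n_F = \int_X (\Delta^Ff)\,\overline{f}\, dH^n_F = \lambda \int_X |f|^2\, dH^n_F,\]
and writing $df = \alpha + \sqrt{-1}\beta$ with $\alpha, \beta$ real and using the symmetry of $g_X^*$ shows $h_X^*(df, df) = g_X^*(\alpha, \alpha) + g_X^*(\beta, \beta) \geq 0$ is real; since $\int_X |f|^2\, dH^n_F > 0$, this forces $\lambda$ to be a nonnegative real number. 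For (2), now that $\lambda$ is known to be real, I would apply Proposition \ref{Laplacianreal} once more: the identity $\Delta^Ff = \Delta^F_{\bfR}f^1 + \sqrt{-1}\Delta^F_{\bfR}f^2$ exhibits $\Delta^Ff$ with real and imaginary parts $\Delta^F_{\bfR}f^1$ and $\Delta^F_{\bfR}f^2$, so equating real and imaginary parts in $\Delta^Ff = \lambda f = \lambda f^1 + \sqrt{-1}\lambda f^2$ shows this single complex equation is equivalent to the two real equations $\Delta^F_{\bfR}f^i = \lambda f^i$.

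The only step that is not purely formal is the density argument in (1), namely extending (\ref{laplaciandefinition}) from compactly supported Lipschitz test functions to $\phi = f$ itself; this rests on compactness of $X$, so that $\mathrm{LIP}_{\bfC}(X)$ has dense image in $H^{1,2}_{\bfC}(X)$ and both sides of (\ref{laplaciandefinition}) are $H^{1,2}$-continuous in $\phi$, the right side because $\Delta^Ff = \lambda f \in L^2$. The remaining care is in checking that the Hermitian pairing $h_X^*(df, df)$ is genuinely real and nonnegative, which follows from the symmetry of the $\bfC$-bilinear extension $g_X^*$.
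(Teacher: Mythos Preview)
Your proof is correct and follows essentially the same line as the paper's: both derive the conjugation formula from Proposition \ref{Laplacianreal}, both test the defining identity against $\phi=f$ to obtain $\lambda\ge 0$, and both reduce (2) to the real/imaginary decomposition. The only cosmetic difference is that for (2) the paper invokes the conjugation formula just established to write $\Delta^F f^1=\Delta^F\bigl((f+\overline f)/2\bigr)=\lambda f^1$, whereas you go directly through Proposition \ref{Laplacianreal}; these are equivalent, and your explicit justification of the density step and of the nonnegativity of $h_X^*(df,df)$ is a welcome addition.
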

\begin{proof}
Proposition \ref{Laplacianreal} yields $\overline{f} \in \mathcal{D}^2_{\bfC}(\Delta^F, U)$ with $\Delta^F\overline{f}=\Delta^Ff^1-\sqrt{-1}\Delta^Ff^2=\overline{\Delta^Ff}$.

Let $f$ be a $\lambda$-eigenfunction of $\Delta^F$.
Since
\[0 \le \int_Xh_X^\ast (d f, d f)\,dH^n_F=\int_X(\Delta^Ff)\overline{f}\,dH^n_F=\lambda \int_X|f|^2\,dH^n_F,\]
$\lambda$ is a nonnegative real number.
Therefore
\[\Delta^F f^1=\Delta^F \left(\frac{f+\overline{f}}{2}\right)=\frac{\Delta^Ff+\overline{\Delta^Ff}}{2}=\lambda f^1.\]
Similarly we have $\Delta^Ff^2=\lambda f^2$.
This completes the proof.
\end{proof}
We now give the definition of weighted $\overline{\partial}$-Laplacian.
\begin{definition}[Weighted $\overline{\partial}$-Laplacian]\label{dlaplacian}
Let $\mathcal{D}^2_{\bfC}(\Delta^F_{\overline{\partial}}, U)$ be the set of $f \in H^{1, 2}_{\bfC}(U)$ such
 that there exists $g \in L^2_{\bfC}(U)$ satisfying
\begin{align}\label{weightedlap}
\int_Uh_X^*(\overline{\partial}f, \overline{\partial}\phi)\,dH^n_F=\int_Ug\overline{\phi}\,dH^n_F
\end{align}
for every $\phi \in \mathrm{LIP}_{c, \bfC}(U)$.
Since $g$ is unique we denote it by $\Delta^F_{\overline{\partial}}f$.
\end{definition}
If $F \equiv 0$, then we put $\Delta_{\overline{\partial}}:=\Delta^0_{\overline{\partial}}$.

The following relationship between $\Delta$ and $\Delta_{\barpartial}$ is well known on smooth setting:
\begin{proposition}\label{Laplacianequal}
We have $\mathcal{D}^2_{\bfC}(\Delta, U)=\mathcal{D}^2_{\bfC}(\Delta_{\overline{\partial}}, U)$ for every open subset $U$ of $X$.
Moreover for every $f \in \mathcal{D}^2_{\bfC}(\Delta, U)$ we have
\[\Delta f=2\Delta_{\overline{\partial}}f.\]
\end{proposition}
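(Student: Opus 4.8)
The plan is to reduce the statement to Proposition \ref{L2norm} by upgrading the latter to a local bilinear identity and then comparing the two weak formulations. Since $\Delta = \Delta^0$ and $\Delta_{\overline{\partial}} = \Delta^0_{\overline{\partial}}$, the whole assertion follows once we prove
\begin{align}\label{localpolarized}
\int_U h_X^*(df, d\phi)\, dH^n = 2\int_U h_X^*(\overline{\partial}f, \overline{\partial}\phi)\, dH^n
\end{align}
for every $f \in H^{1,2}_{\bfC}(U)$ and every $\phi \in \mathrm{LIP}_{c,\bfC}(U)$. Indeed, granting \eqref{localpolarized}, the defining condition \eqref{laplaciandefinition} for $\Delta$ with datum $g$ is equivalent to \eqref{weightedlap} for $\Delta_{\overline{\partial}}$ with datum $g/2$; hence $f \in \mathcal{D}^2_{\bfC}(\Delta, U)$ if and only if $f \in \mathcal{D}^2_{\bfC}(\Delta_{\overline{\partial}}, U)$, and in that case $\Delta f = 2\Delta_{\overline{\partial}}f$.

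To prove \eqref{localpolarized} when $f \in \mathrm{LIP}_{\bfC}(X)$, I would extend $\phi$ by zero to a function in $\mathrm{LIP}_{\bfC}(X)$, which is legitimate since $\mathrm{supp}\,\phi$ is a compact subset of the open set $U$, hence a positive distance from $X \setminus U$. The integrands $h_X^*(df, d\phi)$ and $h_X^*(\overline{\partial}f, \overline{\partial}\phi)$ vanish off $\mathrm{supp}\,\phi \subset U$, so the integrals over $U$ and over $X$ coincide, and the polarized identity of Proposition \ref{L2norm} applied to $f$ and $g = \phi$ yields \eqref{localpolarized}.

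For general $f \in H^{1,2}_{\bfC}(U)$ I would first localize and then approximate. Fix $\phi$, set $K := \mathrm{supp}\,\phi$, and choose $\eta \in \mathrm{LIP}_c(U)$ with $\eta \equiv 1$ on a neighborhood of $K$. Then $\eta f \in H^{1,2}_{\bfC}(U)$ has compact support, so it extends by zero to $H^{1,2}_{\bfC}(X)$, while $d(\eta f) = df$ and $\overline{\partial}(\eta f) = \overline{\partial}f$ on $K$; thus both sides of \eqref{localpolarized} are unchanged upon replacing $f$ by $\eta f$, and I may assume $f \in H^{1,2}_{\bfC}(X)$. By definition of $H^{1,2}_{\bfC}(X)$ as a completion, pick $f_k \in \mathrm{LIP}_{\bfC}(X)$ with $f_k \to f$ in $H^{1,2}$. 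The $L^2$-norm equality $\int_X |df|^2\,dH^n = 2\int_X |\overline{\partial}f|^2\,dH^n$ of Proposition \ref{L2norm}, applied to $f_k - f_l$, shows that $\overline{\partial}$ is bounded from $H^{1,2}_{\bfC}(X)$ into $L^2$, so $df_k \to df$ and $\overline{\partial}f_k \to \overline{\partial}f$ in $L^2$. Since $d\phi, \overline{\partial}\phi \in L^2$, the Cauchy--Schwarz inequality lets me pass to the limit in \eqref{localpolarized} for the $f_k$ and obtain it for $f$.

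I expect the only genuine subtlety to be this passage from the global, everywhere-Lipschitz identity of Proposition \ref{L2norm} to the local statement on $U$ for merely $H^{1,2}_{\bfC}$ functions: the cutoff $\eta$ handles locality in $U$, while completion together with the $L^2$-boundedness of $\overline{\partial}$ (again furnished by Proposition \ref{L2norm}) bridges the regularity gap. Everything else is a direct comparison of the two weak formulations.
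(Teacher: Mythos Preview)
Your proof is correct and follows essentially the same route as the paper's: both reduce the local identity \eqref{localpolarized} to the global bilinear identity of Proposition \ref{L2norm} by multiplying $f$ against a Lipschitz cutoff supported in $U$ that equals $1$ on $\mathrm{supp}\,\phi$, so that the $U$-integrals become $X$-integrals. The only difference is that you spell out the density step (approximating $\eta f \in H^{1,2}_{\bfC}(X)$ by Lipschitz functions and passing to the limit), whereas the paper simply invokes \eqref{innerproduct} for $\phi f \in H^{1,2}_{\bfC}(X)$ and $g \in \mathrm{LIP}_{c,\bfC}(U)$, leaving the extension of Proposition \ref{L2norm} from $\mathrm{LIP}_{\bfC}(X)$ to $H^{1,2}_{\bfC}(X)$ implicit.
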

\begin{proof} 
This is a direct consequence of the following:
\begin{claim}
Let $f \in H^{1, 2}_{\bfC}(U)$ and let $g \in \mathrm{LIP}_{c, \bfC}(U)$.
Then
\[\int_Uh^*_X(df, dg)dH^n=2\int_Uh_X^*(\barpartial f, \barpartial g)dH^n.\]
\end{claim}
The proof is as follows.
There exists $\phi \in \mathrm{LIP}_c(U)$ such that $\phi|_{\mathrm{supp}\,g}\equiv 1$.
Then since it is easy to check that $\phi f \in H^{1, 2}(X)$, (\ref{innerproduct}) gives
\begin{align*}
\int_Uh_X^*(df, dg)dH^n&=\int_Xh^*_X(d(\phi f), dg)dH^n\\
&=2\int_Xh^*_X(\barpartial (\phi f), \barpartial g)dH^n=2\int_Uh^*_X(\barpartial f, \barpartial g)dH^n.
\end{align*}
\end{proof}

The eigenvalues of $\Delta^F_{\overline{\partial}}$ on $X$ are also nonnegative real numbers because
\[\int_X(\Delta^F_{\overline{\partial}}u)\overline{v}\,dH^n_F=\int_Xh_X^*(\overline{\partial}u, \overline{\partial}v)\,dH^n_F=\int_Xu\overline{\Delta^F_{\overline{\partial}}v}\,dH^n_F\]
holds for any $u, v \in \mathcal{D}^2_{\bfC}(\Delta^F_{\overline{\partial}}, X)$.

\begin{proposition}\label{lapdom}
Assume that $H^n(X \setminus U)=0$ and that
the inclusion
\[H^{1, 2}_{c}(U) \hookrightarrow H^{1, 2}(X)\]
is isomorphic, where $H^{1, 2}_{c}(U)$ is the closure of $\mathrm{LIP}_c(U)$ in $H^{1, 2}(X)$.
Let $f \in H^{1, 2}_{\bfC}(X)$ with $f|_U \in \mathcal{D}^2_{\bfC}(\Delta_{\overline{\partial}}, U)$ (or $f|_U \in \mathcal{D}^2_{\bfC}(\Delta^F, U)$, respectively). 
Then $f \in \mathcal{D}^2_{\bfC}(\Delta_{\overline{\partial}}, X)$ (or $f \in \mathcal{D}^2_{\bfC}(\Delta^F, X)$, respectively).
\end{proposition}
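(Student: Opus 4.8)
The plan is to promote the defining variational identity from test functions supported in $U$ to all compactly supported Lipschitz test functions on $X$, using the two hypotheses in tandem: $H^n(X\setminus U)=0$ lets me identify integrals over $U$ with integrals over $X$, and the isomorphism $H^{1,2}_c(U)\cong H^{1,2}(X)$ supplies the density needed to enlarge the class of test functions. I treat the $\Delta_{\overline{\partial}}$ case; the $\Delta^F$ case is verbatim with $\barpartial$ replaced by $d$ and $dH^n$ by $dH^n_F$.

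First I would fix $g:=\Delta_{\overline{\partial}}(f|_U)\in L^2_\bfC(U)$ and extend it by $0$ across the $H^n$-null set $X\setminus U$ to obtain $\tilde g\in L^2_\bfC(X)$ of the same norm. Since $f\in H^{1,2}_\bfC(X)$ restricts to $f|_U$ with $\barpartial f$ unchanged a.e., and $H^n(X\setminus U)=0$, the defining identity $\int_U h_X^*(\barpartial f,\barpartial\phi)\,dH^n=\int_U g\,\overline{\phi}\,dH^n$ valid for $\phi\in\mathrm{LIP}_{c,\bfC}(U)$ is, term by term, the same as an identity of integrals over $X$ tested against $\tilde g$.

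The core step is the approximation of a general test function. Given $\psi\in\mathrm{LIP}_{c,\bfC}(X)$, I split $\psi=\psi^1+\sqrt{-1}\psi^2$ into real Lipschitz parts; each $\psi^j\in\mathrm{LIP}_c(X)\subset H^{1,2}(X)$, so by the hypothesis $H^{1,2}_c(U)\cong H^{1,2}(X)$ it is an $H^{1,2}(X)$-limit of functions in $\mathrm{LIP}_c(U)$. Recombining gives $\psi_k\in\mathrm{LIP}_{c,\bfC}(U)$ with $\psi_k\to\psi$ in $H^{1,2}_\bfC(X)$. Applying the identity above to each $\psi_k$ and letting $k\to\infty$, the right-hand side converges because $\psi_k\to\psi$ in $L^2$ and $\tilde g\in L^2$; for the left-hand side, $d\psi_k\to d\psi$ in $L^2$ forces $\barpartial\psi_k\to\barpartial\psi$ in $L^2$ (by Proposition \ref{L2norm} applied to the Lipschitz differences $\psi_k-\psi$, which gives $\|\barpartial(\psi_k-\psi)\|_{L^2}^2=\tfrac12\|d(\psi_k-\psi)\|_{L^2}^2$), and then Cauchy--Schwarz against $\barpartial f\in L^2$ closes the estimate. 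The outcome is $\int_X h_X^*(\barpartial f,\barpartial\psi)\,dH^n=\int_X\tilde g\,\overline{\psi}\,dH^n$ for every $\psi\in\mathrm{LIP}_{c,\bfC}(X)$, i.e. $f\in\mathcal{D}^2_\bfC(\Delta_{\overline{\partial}},X)$ with $\Delta_{\overline{\partial}}f=\tilde g$. For $\Delta^F$ one argues identically, noting that $e^F$ is bounded above and below (Remark \ref{weightremark}), so weighted and unweighted $L^2$-convergence agree and the weighted measure also vanishes on $X\setminus U$.

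The step I expect to be the genuine obstacle --- and the reason the isomorphism hypothesis is assumed rather than derived --- is precisely this enlargement of the test-function class. Mere measure-theoretic negligibility of $X\setminus U$ does not by itself make functions vanishing near $X\setminus U$ dense in $H^{1,2}(X)$; the assumption $H^{1,2}_c(U)\cong H^{1,2}(X)$ encodes that $X\setminus U$ carries zero Sobolev capacity, a strictly stronger condition, and this is exactly what legitimizes the approximation and hence the whole extension.
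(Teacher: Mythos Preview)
Your proof is correct and follows essentially the same approach as the paper: use the isomorphism $H^{1,2}_c(U)\cong H^{1,2}(X)$ to approximate an arbitrary Lipschitz test function on $X$ by Lipschitz functions compactly supported in $U$, apply the defining identity on $U$ (identified with an integral over $X$ since $H^n(X\setminus U)=0$), and pass to the limit. You are somewhat more careful than the paper in splitting into real and imaginary parts to invoke the (real-valued) isomorphism hypothesis and in citing Proposition~\ref{L2norm} to pass from $d$-convergence to $\barpartial$-convergence, but the underlying argument is identical.
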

\begin{proof}
We only give a proof in the case of $\Delta^F_{\overline{\partial}}$.

Let $g \in \mathrm{LIP}_{\bfC}(X)$.
By the assumption, there exists a sequence $g_i \in \mathrm{LIP}_{c, \bfC}(U)$ such that $g_i \to g$ in $H^{1, 2}_{\bfC}(X)$.
Then since
\[\int_Xh_X(\overline{\partial} f, \overline{\partial} g_i)\,dH^n_F=\int_X(\Delta^F_{\overline{\partial}}f)\overline{g_i}\,dH^n_F,\]
letting $i \to \infty$ shows that $f \in \mathcal{D}^2_{\bfC}(\Delta_{\overline{\partial}}^F, X)$.
\end{proof}
\begin{remark}
In general, if $\mathrm{dim}_H(X \setminus U)<n-2$, then 
the inclusion
\[H^{1, 2}_{c}(U) \hookrightarrow H^{1, 2}(X)\]
is isomorphic.
See for instance \cite[Theorem $4.6$]{KKM}, \cite[Theorem $4.13$]{KM} and \cite[Theorem $4.8$]{Shanm}.
Moreover if $X \setminus U$ satisfies a good regularity (e.g. it is a submanifold), then the above isometry hold even if $\mathrm{dim}_H(X \setminus U)=n-2$.
\end{remark}
We end this section by giving a relationship between $\Delta^F, \Delta^F_{\overline{\partial}}$ and $\Delta, \Delta_{\overline{\partial}}$, respectively, which are well-known on smooth setting.
\begin{proposition}\label{Laplacianexplicite}
Suppose $F|_U \in H^{1, 2}(U)$.
Then for every $f \in H^{1, 2}_{\bfC}(U)$,  we have the following equivalence:
\begin{enumerate}
\item[(1)] If $g_X^\ast (d f, d F) \in L^2_{\bfC}(U)$, then $f \in \mathcal{D}^2_{\bfC}(\Delta^F, U)$ holds if and only if $f \in \mathcal{D}^2_{\bfC}(\Delta, U)$ holds.
In this case $\Delta^Ff=\Delta f-g_X^\ast (d f, d F)$.
\item[(2)] If $h_X(\overline{\partial} f, \overline{\partial}F) \in L^2_{\bfC}(U)$, then $f \in \mathcal{D}^2_{\bfC}(\Delta_{\overline{\partial}}^F, U)$ holds if and only if $f \in \mathcal{D}^2_{\bfC}(\Delta_{\overline{\partial}}, U)$ holds.
In this case $\Delta^F_{\overline{\partial}}f=\Delta_{\overline{\partial}}f-h_X^\ast (\overline{\partial}f, \overline{\partial}F)$.
\end{enumerate}
\end{proposition}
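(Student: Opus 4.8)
The plan is to reduce the statement to the weighted and unweighted integration-by-parts characterizations of the respective domains, matching the definitions of $\Delta^F$ and $\Delta_{\overline{\partial}}^F$ against those of $\Delta$ and $\Delta_{\overline{\partial}}$. Since the two parts are formally identical in structure (one for the full Laplacian, one for its $\overline{\partial}$-part), I would carry out (1) in detail and remark that (2) follows by the same computation with $h_X^*(\overline{\partial}\,\cdot,\overline{\partial}\,\cdot)$ replacing $h_X^*(d\,\cdot,d\,\cdot)$, together with the identity $\int_U h_X^*(df,dg)\,dH^n = 2\int_U h_X^*(\overline{\partial}f,\overline{\partial}g)\,dH^n$ from the Claim inside the proof of Proposition \ref{Laplacianequal}.

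First I would fix $f \in H^{1,2}_{\bfC}(U)$ with $g_X^*(df,dF) \in L^2_{\bfC}(U)$, and write out the defining weighted weak formulation. For any $\phi \in \mathrm{LIP}_{c,\bfC}(U)$, expand the weight: since $dH^n_F = e^F\,dH^n$, the key step is to move the factor $e^F$ onto the test function. Writing $\phi$ against the weighted measure and using $d(e^F\overline{\phi}) = e^F\,d\overline{\phi} + e^F\overline{\phi}\,dF$ formally, one obtains
\begin{align*}
\int_U h_X^*(df, d\phi)\,dH^n_F = \int_U h_X^*(df, d\phi)\,e^F\,dH^n.
\end{align*}
The aim is to relate this to the unweighted pairing $\int_U h_X^*(df, d(e^F\phi))\,dH^n$. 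I would justify that $e^F\phi \in \mathrm{LIP}_{c,\bfC}(U)$ (using $F|_U \in H^{1,2}(U)$ together with the bounds $0 < C_1(L) \le e^F \le C_2(L)$ from Remark \ref{weightremark}; if $F$ is only $H^{1,2}$ rather than Lipschitz, I would instead test against a density of Lipschitz functions and pass to the limit using the $L^2$-hypothesis $g_X^*(df,dF)\in L^2_{\bfC}(U)$). Then the product rule for the differential gives
\begin{align*}
h_X^*(df, d(e^F\phi)) = e^F\,h_X^*(df, d\phi) + e^F\overline{\phi}\,g_X^*(df, dF),
\end{align*}
where the conjugate on $\phi$ appears because $h_X^*$ is Hermitian and $F$ is real-valued. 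Integrating and rearranging isolates the weighted pairing in terms of the unweighted one plus the correction term $\int_U g_X^*(df,dF)\,\overline{\phi}\,e^F\,dH^n$.

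The equivalence then falls out by inspection: $f \in \mathcal{D}^2_{\bfC}(\Delta, U)$ means the unweighted pairing $\int_U h_X^*(df, d(e^F\phi))\,dH^n$ equals $\int_U (\Delta f)\,\overline{e^F\phi}\,dH^n = \int_U (\Delta f)\,\overline{\phi}\,dH^n_F$, so combining with the product-rule identity yields
\begin{align*}
\int_U h_X^*(df, d\phi)\,dH^n_F = \int_U \bigl(\Delta f - g_X^*(df, dF)\bigr)\,\overline{\phi}\,dH^n_F,
\end{align*}
which is exactly the defining equation for $f \in \mathcal{D}^2_{\bfC}(\Delta^F, U)$ with $\Delta^F f = \Delta f - g_X^*(df,dF)$; and the hypothesis $g_X^*(df,dF) \in L^2_{\bfC}(U)$ guarantees the right-hand side lies in $L^2_{\bfC}(U)$, so the implication runs in both directions. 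I expect the main obstacle to be the regularity bookkeeping for $e^F\phi$ as a legitimate test function when $F$ is only in $H^{1,2}$ and not Lipschitz: here one cannot directly substitute $e^F\phi$ into the Lipschitz-test-function definition, and the cleanest route is to approximate $F$ by Lipschitz functions, use the $L^\infty$-bound on $F$ to control $e^F$ uniformly, and invoke the $L^2$-membership of $g_X^*(df,dF)$ to pass the correction term to the limit; the mollification via the heat flow (as used in the proof of Proposition \ref{L2norm}) provides the required approximants. Part (2) is handled identically, with $\overline{\partial}$ in place of $d$ throughout, using that $h_X^*(\overline{\partial}f, \overline{\partial}(e^F\phi)) = e^F h_X^*(\overline{\partial}f, \overline{\partial}\phi) + e^F\overline{\phi}\,h_X^*(\overline{\partial}f, \overline{\partial}F)$ and that $F$ real-valued gives $\overline{\partial}(e^F) = e^F\overline{\partial}F$.
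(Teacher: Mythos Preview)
Your proposal is correct and takes essentially the same approach as the paper: the paper proves the ``only if'' direction of (2) by substituting $e^{-F}\phi$ as the test function in the weighted formulation and expanding via the Leibniz rule, which is the mirror image of your substitution of $e^{F}\phi$ into the unweighted formulation. Your discussion of the regularity bookkeeping for $e^{\pm F}\phi$ when $F$ is only $H^{1,2}\cap L^{\infty}$ is in fact more careful than the paper, which simply asserts $e^{-F}\phi \in \mathrm{LIP}_{c,\bfC}(U)$ without further comment.
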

\begin{proof}
We give a proof of `only if' part of (2) because the proofs of the other cases are similar. 

Let  $f \in \mathcal{D}^2_{\bfC}(\Delta_{\overline{\partial}}^F, U)$ and let $\phi \in \mathrm{LIP}_{c, \bfC}(U)$.
Since $e^{-F}\phi \in \mathrm{LIP}_{c, \bfC}(U)$, we have
\begin{align*}
\int_U(\Delta^F_{\overline{\partial}}f)\overline{\phi}\,dH^n &= \int_U(\Delta^F_{\overline{\partial}}f)\overline{e^{-F}\phi}\,dH^n_F \\
&=\int_Uh_X^*\left(\overline{\partial}f, \overline{\partial}(e^{-F}\phi)\right)\,dH^n_F \\
&=\int_Uh_X^*\left(\overline{\partial}f, -e^{-F}\phi \overline{\partial}F+ e^{-F}\overline{\partial} \phi\right)e^F\,dH^n \\
&=-\int_Uh_X^*\left(\overline{\partial}f,  \overline{\partial}F\right)\overline{\phi}\,dH^n+\int_Uh_X^*\left(\overline{\partial}f,  \overline{\partial}\phi \right)\,dH^n.
\end{align*}
Thus
\[\int_Uh_X^*\left(\overline{\partial}f,  \overline{\partial}\phi \right)\,dH^n=\int_U\left(\Delta^F_{\overline{\partial}}f+h_X^*\left(\overline{\partial}f,  \overline{\partial}F\right)\right) \overline{\phi}\,dH^n.\]
This completes the proof.
\end{proof}
\begin{remark}
Similarly we can define the weighted $\partial$-Laplacian, $\Delta^F_{\partial}$, and prove similar results above.
By combining Remark 2.10 with Theorem \ref{Rellich} we see that the spectrums of $\Delta^F_{\overline{\partial}}$, $\Delta_{\partial}^F$ and $\Delta^F$ are discrete and unbounded, and that each eigenspace is finite dimensional.

\end{remark}
\begin{remark}\label{L1lap}
For any $q \in (1, \infty)$ and $p \in [1, \infty)$, let $\mathcal{D}^{q, p}_{\bfC}(\Delta^F_{\barpartial}, X)$ be the set of $f \in H^{1, q}(U)$ such that there exists $g \in L^p_{\bfC}(U)$ such that (\ref{weightedlap}) holds for every $\phi \in \mathrm{LIP}_{\bfC}(X)$.
Since $g$ is unique, we also denote it by $\Delta^F_{\barpartial}f$.
Then by the proof of Proposition \ref{Laplacianexplicite}, for every $f \in \mathcal{D}^2_{\bfC}(\Delta_{\barpartial}^{F}, U)$ (or $f \in \mathcal{D}^2_{\bfC}(\Delta_{\overline{\partial}}, U)$, respectively), we have $f \in \mathcal{D}^{2, 1}_{\bfC}(\Delta_{\overline{\partial}}, U)$  (or $f \in \mathcal{D}^{2, 1}_{\bfC}(\Delta_{\barpartial}^{F}, U)$, respectively) with $\Delta^F_{\overline{\partial}}f=\Delta_{\overline{\partial}}f-h_X^\ast (\overline{\partial}f, \overline{\partial}F)$.
Note that $\mathcal{D}^{2, 2}_{\bfC}(\Delta^F_{\barpartial}, U)=\mathcal{D}^2_{\bfC}(\Delta^F_{\barpartial}, U)$.
\end{remark}

\subsection{Spectral convergence}
From now on we will discuss the behavior of $\Delta^F_{\overline{\partial}}$ with respect to the Gromov-Hausdorff topology.
We first show the spectral convergence. 
Note that by Proposition \ref{L2norm} the smallest eigenvalue of $\Delta^{F}_{\overline{\partial}}$ on $X$ is $0$.
\begin{proposition}\label{spectralconv}
For every $k \ge 1$ we have
\[\lim_{i \to \infty}\lambda_k(\Delta^{F_i}_{\overline{\partial}}, X_i)=\lambda_k(\Delta^{F}_{\overline{\partial}}, X),\]
where $\lambda_k(\Delta^F_{\overline{\partial}}, X)$ denotes the $k$-th positive eigenvalue of $\Delta^F_{\overline{\partial}}$ on $X$
 counted with multiplicity.
\end{proposition}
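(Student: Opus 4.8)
The plan is to prove the two one-sided inequalities
\[
\limsup_{i\to\infty}\lambda_k(\Delta^{F_i}_{\overline{\partial}},X_i)\le \lambda_k(\Delta^F_{\overline{\partial}},X),
\qquad
\liminf_{i\to\infty}\lambda_k(\Delta^{F_i}_{\overline{\partial}},X_i)\ge\lambda_k(\Delta^F_{\overline{\partial}},X)
\]
separately, via the Courant--Fischer min--max principle. Since the spectrum of each $\Delta^F_{\overline{\partial}}$ is discrete with finite-dimensional eigenspaces (Remark following Proposition \ref{Laplacianexplicite}) and the associated closed quadratic form is $\mathcal E^F_{\overline{\partial}}(f)=\int_X|\overline{\partial}f|^2\,dH^n_F$ with form domain $H^{1,2}_{\bfC}(X)$ (Remark following Proposition \ref{L2norm}), I enumerate all eigenvalues with multiplicity as $0=\mu_0\le\mu_1\le\cdots$, so that
\[
\mu_k=\inf_{\dim W=k+1}\ \sup_{f\in W\setminus\{0\}}\frac{\int_X|\overline{\partial}f|^2\,dH^n_F}{\int_X|f|^2\,dH^n_F},
\]
the infimum being over $(k+1)$-dimensional subspaces $W\subset H^{1,2}_{\bfC}(X)$. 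As the zero eigenvalue is realized by constants (which converge to constants), it suffices to prove $\mu_k(X_i)\to\mu_k(X)$.

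For the upper bound, fix $\epsilon>0$ and choose a near-optimal $(k+1)$-dimensional space $W\subset H^{1,2}_{\bfC}(X)$ for $\mu_k(X)$; by density of $\mathrm{LIP}_{\bfC}(X)$ in the form domain (Remark after Proposition \ref{L2norm}) I may assume $W$ is spanned by Lipschitz functions $f^{(0)},\dots,f^{(k)}$. Using (2.2e) I lift each to $f_i^{(l)}\in\mathrm{LIP}_{\bfC}(X_i)$ with $\sup_i\|df_i^{(l)}\|_{L^\infty}<\infty$ and $f_i^{(l)},df_i^{(l)}$ converging $L^2$-strongly to $f^{(l)},df^{(l)}$; Proposition \ref{convergencedecomp} then upgrades this to $L^2$-strong convergence of $\overline{\partial}f_i^{(l)}$ to $\overline{\partial}f^{(l)}$. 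By the equivalence of weighted and unweighted $L^2$-convergence (Proposition \ref{Lpremark} and (2.2d)) together with polarization, the weighted Gram matrices $\int_{X_i}f_i^{(l)}\overline{f_i^{(m)}}\,dH^n_{F_i}$ and energy matrices $\int_{X_i}h^*_{X_i}(\overline{\partial}f_i^{(l)},\overline{\partial}f_i^{(m)})\,dH^n_{F_i}$ converge to their limit-space counterparts. Since the limiting Gram matrix is nondegenerate, for large $i$ the $f_i^{(l)}$ span a $(k+1)$-dimensional space $W_i$, and the supremum of the Rayleigh quotient over $W_i$ converges to that over $W$; min--max gives $\limsup_i\mu_k(X_i)\le\mu_k(X)+\epsilon$, and letting $\epsilon\to0$ yields the upper bound. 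This also furnishes the uniform bound $\sup_i\mu_k(X_i)<\infty$ needed below.

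For the lower bound, pass to a subsequence realizing $\liminf_i\mu_k(X_i)$ and take $L^2(dH^n_{F_i})$-orthonormal eigenfunctions $f_i^{(0)},\dots,f_i^{(k)}$ with $\Delta^{F_i}_{\overline{\partial}}f_i^{(l)}=\mu_l(X_i)f_i^{(l)}$. From $\int_{X_i}|\overline{\partial}f_i^{(l)}|^2\,dH^n_{F_i}=\mu_l(X_i)\le\mu_k(X_i)\le C$ and the $L^2$-norm equivalence $\int_X|df|^2\,dH^n_F\stackrel{L}{\asymp}\int_X|\overline{\partial}f|^2\,dH^n_F$ of Proposition \ref{L2norm}, I obtain $\sup_i\|f_i^{(l)}\|_{H^{1,2}_{\bfC}}<\infty$, so Rellich compactness (Theorem \ref{Rellich}) yields, after a further subsequence, $L^2$-strong limits $f^{(l)}\in H^{1,2}_{\bfC}(X)$ with $df_i^{(l)}\to df^{(l)}$ $L^2$-weakly, hence (Proposition \ref{convergencedecomp}) $\overline{\partial}f_i^{(l)}\to\overline{\partial}f^{(l)}$ $L^2$-weakly, and $\mu_l(X_i)\to\mu_l^\infty$. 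Strong $L^2$-convergence preserves the weighted inner products, so the $f^{(l)}$ stay orthonormal, hence linearly independent. To see each is an eigenfunction, test $\int_{X_i}h^*_{X_i}(\overline{\partial}f_i^{(l)},\overline{\partial}\phi_i)\,dH^n_{F_i}=\mu_l(X_i)\int_{X_i}f_i^{(l)}\overline{\phi_i}\,dH^n_{F_i}$ against lifts $\phi_i$ of an arbitrary $\phi\in\mathrm{LIP}_{\bfC}(X)$ (via (2.2e)) and let $i\to\infty$: the weak--strong pairing property of $L^2$-convergence (a basic property in \cite{Honda13}) sends the left side to $\int_Xh^*_X(\overline{\partial}f^{(l)},\overline{\partial}\phi)\,dH^n_F$ and the right side to $\mu_l^\infty\int_Xf^{(l)}\overline{\phi}\,dH^n_F$, so by Definition \ref{dlaplacian} $f^{(l)}\in\mathcal D^2_{\bfC}(\Delta^F_{\overline{\partial}},X)$ with $\Delta^F_{\overline{\partial}}f^{(l)}=\mu_l^\infty f^{(l)}$. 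Orthonormal eigenfunctions are automatically $\mathcal E^F_{\overline{\partial}}$-orthogonal, so on $V=\mathrm{span}(f^{(0)},\dots,f^{(k)})$ the form is diagonal with entries $\mu_l^\infty$; min--max then gives $\mu_k(X)\le\max_l\mu_l^\infty=\mu_k^\infty=\liminf_i\mu_k(X_i)$, which combined with the upper bound proves the claim.

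I expect the lower-bound step to be the main obstacle, and within it two delicate points. First, the uniform $H^{1,2}$ bound on eigenfunctions genuinely requires the K\"ahler identity of Proposition \ref{L2norm} to promote the $\overline{\partial}$-energy bound into a full gradient bound; without the compatibility of $J$ with $g_X$ this conversion fails and Rellich cannot be applied. Second, ensuring no loss of dimension in the limit hinges on the interplay of type preservation (Proposition \ref{convergencedecomp}) with the weak--strong pairing: these are precisely what force the weak $L^2$-limit of $\overline{\partial}f_i^{(l)}$ to equal $\overline{\partial}$ of the strong limit, so that orthonormality of the $f^{(l)}$ survives and they are genuine eigenfunctions rather than merely weak accumulation points of the energy.
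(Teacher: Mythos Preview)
Your proof is correct and follows essentially the same strategy as the paper: both arguments combine the min--max principle with the Rellich compactness of Theorem \ref{Rellich}, using (2.2e) to lift test functions for the upper bound and Proposition \ref{L2norm} to convert the $\overline\partial$-energy bound into an $H^{1,2}$ bound for the lower bound. The paper only writes out the case $k=1$ in detail and, for the lower semicontinuity there, appeals directly to lower semicontinuity of $L^2$-norms under weak convergence rather than identifying the limits as eigenfunctions; your version carries out the full general-$k$ argument via Gram matrices and shows explicitly that the limit functions are eigenfunctions (an argument the paper records separately in Proposition \ref{Laplacianweakconv}), which is the natural way to handle $k>1$ but not a genuinely different route.
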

\begin{proof}
This is a direct consequence of Theorem \ref{Rellich} and min-max principle.
However we give a proof in the case $k=1$ for reader's convenience (c.f. \cite[Theorem $1.5$]{Honda14b}).

We first prove the upper semicontinuity of $\lambda_1(\Delta^{F_i}_{\overline{\partial}}, X_i)$.
Recall that
\begin{align*}
\lambda_1(\Delta^F_{\overline{\partial}}, X)=\inf_{f}\frac{\int_X|\barpartial f|^2\,dH^n_F}{\int_X|f|^2\,dH^n_F}
\end{align*}
where $f$ runs over all nonconstant Lipschitz functions with
\begin{align}\label{2}
\int_Xf\,dH^n_F=0.
\end{align}
Let $f$ be a nonconstant complex valued Lipschitz function on $X$ with (\ref{2}).
Then by (2.2e), there exists a sequence of $f_i \in \mathrm{LIP}_{\bfC}(X_i)$ with 
\[\int_{X_i}f_i\,dH^n_{F_i}=0\]
such that $f_i, d f_i$ $L^2$-converge strongly to $f, d f$ on $X$, respectively.
Thus
\[\limsup_{i \to \infty}\lambda_1(\Delta^{F_i}_{\overline{\partial}}, X_i)\le \lim_{i \to \infty}\frac{\int_{X_i}|\barpartial f_i|^2\,dH^n_{F_i}}{\int_{X_i}|f_i|^2\,dH^n_{F_i}}=\frac{\int_X|\barpartial f|^2\,dH^n_F}{\int_X|f|^2\,dH^n_F}.\]
Since $f$ is arbitrary, we have 
\[\limsup_{i \to \infty}\lambda_1(\Delta^{F_i}_{\overline{\partial}}, X_i) \le \lambda_1(\Delta^{F}_{\overline{\partial}}, X).\]
Next we prove the lower semicontinuity.
Let $f_i$ be a sequence in $\mathcal{D}^2_{\bfC}(\Delta^{F_i}_{\overline{\partial}}, X_i)$ with
$$\Delta^{F_i}_{\overline{\partial}}f_i=\lambda_1(\Delta^{F_i}_{\overline{\partial}}, X_i)f_i$$
and
$$\int_{X_i}|f_i|^2\,dH^n_{F_i}=1.$$
Then it follows from
\[\int_{X_i}|\barpartial f_i|^2\,dH^n_{F_i}=\int_{X_i}(\Delta^{F_i}_{\overline{\partial}}f_i)\overline{f_i}\,dH^n_{F_i}=\lambda_1(\Delta^{F_i}_{\overline{\partial}}, X_i)\]
and the upper semicontinuity of $\lambda_1(\Delta^{F_i}_{\overline{\partial}}, X_i)$
that $\sup_i||f_i||_{H^{1, 2}_{\bfC}}<\infty$ holds.
Thus by Theorem \ref{Rellich} without loss of generality we can assume that there exists $f \in H^{1, 2}_{\bfC}(X)$ such that $f_i$ $L^2$-converges strongly to $f$ on $X$ and that $d f_i$ $L^2$-converges weakly to $d f$ on $X$.
In particular Proposition \ref{convergencedecomp} yields that $\overline{\partial} f_i$ $L^2$-converges weakly to $\overline{\partial} f$ on $X$.
Thus by the lower semicontinuity of the $L^2$-norms of an $L^2$-weak convergent sequence, we have 
\begin{eqnarray*}
\liminf_{i \to \infty}\lambda_1(\Delta^{F_i}_{\overline{\partial}}, X_i)&=&\liminf_{i \to \infty} \int_{X_i}|\overline{\partial}f_i|^2\,dH^n_{F_i}\\
&\ge& \int_X|\overline{\partial}f|^2\,dH^n_F \\
&\ge& \lambda_1(\Delta^{F}_{\overline{\partial}}, X),
\end{eqnarray*}
where we used 
$$
\int_X|f|^2\,dH^n_F=\lim_{i \to \infty}\int_{X_i}|f_i|^2\,dH^n_{F_i}=1
$$
and
$$
\int_Xf\,dH^n_F=\lim_{i \to \infty}\int_{X_i}f_i\,dH^n_{F_i}=0.
$$
This completes the proof.
\end{proof}
\begin{proposition}\label{Laplacianweakconv}
Let $f$ be the $L^2$-weak limit on $X$ of a sequence of $f_i \in \mathcal{D}^2_{\bfC}(\Delta^{F_i}_{\overline{\partial}}, X_i)$ with 
\[\sup_i(||f_i||_{H^{1, 2}_{\bfC}}+||\Delta^{F_i}_{\overline{\partial}}f_i||_{L^2})<\infty.\]
Then we have the following:
\begin{enumerate}
\item[(1)] $f \in \mathcal{D}^2_{\bfC}(\Delta^F_{\overline{\partial}}, X)$.
\item[(2)] $f_i,\ d f_i$ $L^2$-converge strongly to $f,\ d f$ on $X$, respectively.
\item[(3)] $\Delta^{F_i}_{\overline{\partial}}f_i$ $L^2$-converges weakly to $\Delta^F_{\overline{\partial}}f$ on $X$.
\end{enumerate}
\end{proposition}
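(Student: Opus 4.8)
The plan is to combine the Rellich compactness of Theorem \ref{Rellich} with the weak formulation that defines $\Delta^F_{\overline{\partial}}$, and then to upgrade weak convergence of the gradients to strong convergence by identifying the limiting Dirichlet energy, which one reads off from writing the energy as the pairing $\langle \Delta^{F_i}_{\overline{\partial}}f_i, f_i\rangle$. First I would apply Theorem \ref{Rellich}: since $\sup_i\|f_i\|_{H^{1,2}_{\bfC}}<\infty$, along a subsequence $f_i$ $L^2$-converges strongly to some $\tilde f\in H^{1,2}_{\bfC}(X)$ with $df_i$ converging weakly to $d\tilde f$; as strong convergence implies weak convergence, uniqueness of the weak limit forces $\tilde f=f$, so $f\in H^{1,2}_{\bfC}(X)$. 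By Proposition \ref{convergencedecomp} the weak convergence $df_i\to df$ gives $\overline{\partial}f_i\to\overline{\partial}f$ weakly. Since $\sup_i\|\Delta^{F_i}_{\overline{\partial}}f_i\|_{L^2}<\infty$ and $L<\infty$, the family $\Delta^{F_i}_{\overline{\partial}}f_i$ is $L^2$-bounded, so by (2.2a) a further subsequence converges weakly to some $h\in L^2_{\bfC}(X)$.

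To prove (1), fix $\phi\in\mathrm{LIP}_{\bfC}(X)=\mathrm{LIP}_{c,\bfC}(X)$ (equal because $X$ is compact) and choose, by (2.2e), $\phi_i\in\mathrm{LIP}_{c,\bfC}(X_i)$ with $\sup_i\|d\phi_i\|_{L^\infty}<\infty$ such that $\phi_i,d\phi_i$ converge strongly to $\phi,d\phi$; by Proposition \ref{convergencedecomp}, $\overline{\partial}\phi_i\to\overline{\partial}\phi$ strongly. In the defining identity
\[\int_{X_i}h_{X_i}^*(\overline{\partial}f_i,\overline{\partial}\phi_i)\,dH^n_{F_i}=\int_{X_i}(\Delta^{F_i}_{\overline{\partial}}f_i)\overline{\phi_i}\,dH^n_{F_i},\]
the left-hand side pairs an $L^2$-weakly convergent factor against an $L^2$-strongly convergent one, and likewise the right-hand side, so letting $i\to\infty$ (using the weighted/unweighted equivalence (2.2d)) yields
\[\int_X h_X^*(\overline{\partial}f,\overline{\partial}\phi)\,dH^n_F=\int_X h\overline{\phi}\,dH^n_F.\]
As $\phi$ is arbitrary, Definition \ref{dlaplacian} gives $f\in\mathcal{D}^2_{\bfC}(\Delta^F_{\overline{\partial}},X)$ with $\Delta^F_{\overline{\partial}}f=h$, proving (1); in particular $h$ is independent of the chosen subsequence.

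For (2), the crux is the energy identity $\int_{X_i}|\overline{\partial}f_i|^2\,dH^n_{F_i}=\int_{X_i}(\Delta^{F_i}_{\overline{\partial}}f_i)\overline{f_i}\,dH^n_{F_i}$. Since $\Delta^{F_i}_{\overline{\partial}}f_i\to h$ weakly and $f_i\to f$ strongly, the weak--strong pairing converges to $\int_X h\overline{f}\,dH^n_F$, which equals $\int_X|\overline{\partial}f|^2\,dH^n_F$ by inserting the test function $\phi=f$ into the identity from (1) (justified by density of $\mathrm{LIP}_{\bfC}(X)$ in $H^{1,2}_{\bfC}(X)$). Hence $\int_{X_i}|\overline{\partial}f_i|^2\,dH^n_{F_i}\to\int_X|\overline{\partial}f|^2\,dH^n_F$, which by (2.2d) is precisely strong $L^2$-convergence of $\overline{\partial}f_i$ to $\overline{\partial}f$. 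Then the identity $\int|df|^2\,dH^n=2\int|\overline{\partial}f|^2\,dH^n$ of Proposition \ref{L2norm} (valid on each $X_i$ and on $X$), combined with the already-established weak convergence $df_i\to df$, forces $df_i\to df$ strongly; alternatively one invokes the last assertion of Corollary \ref{localconv}. The strong convergence $f_i\to f$ used here follows from the first paragraph together with the standard subsequence argument applied to the real sequence $\int_{X_i}|f_i|^2\,dH^n_{F_i}$.

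Finally, (3) follows by the same subsequence principle: every subsequence of the $L^2$-bounded family $\Delta^{F_i}_{\overline{\partial}}f_i$ admits a weakly convergent sub-subsequence whose limit satisfies the weak formulation above, hence equals $\Delta^F_{\overline{\partial}}f$ by uniqueness; therefore the whole sequence converges weakly to $\Delta^F_{\overline{\partial}}f$. I expect the main obstacle to be step (2): membership in the domain and all the weak convergences come cheaply from passing to the limit in linear functionals, but upgrading to strong convergence of $\overline{\partial}f_i$ requires pinning down the limiting Dirichlet energy, and this rests on the weak--strong product convergence in the Gromov--Hausdorff $L^2$-theory together with the right to use $f$ itself as a test function.
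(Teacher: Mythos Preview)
Your proposal is correct and follows essentially the same approach as the paper's proof: Rellich compactness for $f_i$ and $df_i$, weak compactness for $\Delta^{F_i}_{\overline{\partial}}f_i$, passing to the limit in the weak formulation against approximating Lipschitz test functions to identify the limit Laplacian, and then the energy identity $\int|\overline{\partial}f_i|^2\,dH^n_{F_i}=\int(\Delta^{F_i}_{\overline{\partial}}f_i)\overline{f_i}\,dH^n_{F_i}$ to upgrade to strong convergence of $\overline{\partial}f_i$. The paper invokes Corollary~\ref{localconv} for the final step to get strong convergence of $df_i$, which is exactly the alternative you mention; your direct route via the identity $\int|df|^2\,dH^n=2\int|\overline{\partial}f|^2\,dH^n$ (extended from $\mathrm{LIP}_{\bfC}(X)$ to $H^{1,2}_{\bfC}(X)$ by density) is equally valid.
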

\begin{proof}
By Theorem \ref{Rellich} we see that $f \in H^{1, 2}_{\bfC}(X)$, that $f_i$ $L^2$-converges strongly to $f$ on $X$ and that $d f_i$ $L^2$-converges weakly to $d f$ on $X$.
By the compactness of $L^2$-weak convergence, without loss of generality we can assume that there exists the $L^2$ weak limit $G \in L^2_{\bfC}(X)$ of $\Delta^{F_i}_{\overline{\partial}}f_i$.
Let $\phi \in \mathrm{LIP}_{\bfC}(X)$. By \cite[Theorem $4.2$]{Honda11} there exists a sequence $\phi_i \in \mathrm{LIP}_{\bfC}(X_i)$ such that $\phi_i, d\phi_i$ $L^2$-converge strongly to $\phi, d\phi$ on $X$, respectively.
Proposition \ref{convergencedecomp} yields that $\overline{\partial}f_i$ $L^2$-converges weakly to $\overline{\partial}f$ on $X$ and that $\overline{\partial}\phi_i$ $L^2$-converges strongly to $\overline{\partial}\phi$ on $X$.
Since
\[\int_{X_i}h_{X_i}(\overline{\partial}f_i, \overline{\partial}\phi_i)\,dH^n_{F_i}=\int_{X_i}(\Delta^{F_i}_{\overline{\partial}}f_i)\overline{\phi_i}\,dH^n_{F_i},\]
by letting $i \to \infty$ we have
\[\int_X h_X(\overline{\partial}f, \overline{\partial}\phi)\,dH^n_F = \int_X G\overline{\phi}\,dH^n_F.\]
This gives (1) and (3).

On the other hand 
\begin{eqnarray*}
\lim_{i \to \infty}\int_{X_i}|\overline{\partial}f_i|^2\,dH^n_{F_i} &=& \lim_{i \to \infty}\int_{X_i}(\Delta^{F_i}_{\overline{\partial}}f_i)\overline{f_i}\,dH^n_{F_i}\\
&=& \int_{X}(\Delta^F_{\overline{\partial}}f)\overline{f}\,dH^n_F\\
&=& \int_X|\overline{\partial}f|^2\,dH^n_F.
\end{eqnarray*}
Thus by Proposition \ref{Lpremark}, $\overline{\partial}f_i$ $L^2$-converges strongly to $\overline{\partial}f$ on $X$.
Therefore
Corollary \ref{localconv} gives (2). 
\end{proof}

\begin{proposition}\label{Poincare}
For any $r\le R$ and $f \in H^{1, 2}_{\bfC}(B_r(x))$ we have
\begin{align}\label{poincareineq}
&\frac{1}{H^n_F(B_r(x))}\int_{B_r(x)}\left| f-\frac{1}{H^n_F(B_r(x))}\int_{B_r(x)}f\,dH^n_F\right|^2\,dH^n_F \\
&\le C(n, K, R, L)\frac{r^2}{H^n_F(B_r(x))}\int_{B_r(x)}|d f|^2\,dH^n_F. \nonumber
\end{align}
\end{proposition}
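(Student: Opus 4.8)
The plan is to reduce the asserted weighted inequality to the ordinary (unweighted) local $L^2$-Poincar\'e inequality on $X$, using that the weight $e^F$ is bounded above and below by positive constants depending only on $L$. Since $(X, d_X, H^n)$ is a noncollapsed Ricci limit space with $\Ric \ge K g_X$ and $\diam X \le d$, it satisfies a local $L^2$-Poincar\'e inequality coming from the Cheeger--Colding segment inequality \cite{CheegerColding1}: there is a constant $C_0 = C_0(n, K, R)$ such that for every $r \le R$ and every $f \in H^{1,2}_{\bfC}(B_r(x))$,
$$\int_{B_r(x)}\left|f - \frac{1}{H^n(B_r(x))}\int_{B_r(x)}f\,dH^n\right|^2\,dH^n \le C_0\,r^2 \int_{B_r(x)}|df|^2\,dH^n.$$
(The $\bfC$-valued case follows from the $\bfR$-valued one applied to the real and imaginary parts, since both sides split orthogonally; the averaging factors $H^n(B_r(x))^{-1}$ cancel on the two sides.)

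I would then pass to the weighted mean and convert the two measures. Write $\bar{f}_F := H^n_F(B_r(x))^{-1}\int_{B_r(x)} f\,dH^n_F$ for the weighted mean. Because $\bar{f}_F$ minimizes $c \mapsto \int_{B_r(x)}|f - c|^2\,dH^n_F$, and because $0 < C_1(L) \le e^F \le C_2(L) < \infty$ by Remark~\ref{weightremark}, I estimate
\begin{align*}
\int_{B_r(x)}|f - \bar{f}_F|^2\,dH^n_F
&\le \int_{B_r(x)}\left|f - \frac{1}{H^n(B_r(x))}\int_{B_r(x)}f\,dH^n\right|^2\,dH^n_F \\
&\le C_2(L)\int_{B_r(x)}\left|f - \frac{1}{H^n(B_r(x))}\int_{B_r(x)}f\,dH^n\right|^2\,dH^n.
\end{align*}
Applying the unweighted Poincar\'e inequality and then $\int_{B_r(x)}|df|^2\,dH^n \le C_1(L)^{-1}\int_{B_r(x)}|df|^2\,dH^n_F$ bounds the last quantity by $C_0 C_2(L) C_1(L)^{-1} r^2 \int_{B_r(x)}|df|^2\,dH^n_F$. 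Dividing through by $H^n_F(B_r(x))$ then gives the claim with $C(n, K, R, L) = C_0(n,K,R)\,C_2(L)/C_1(L)$.

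The only genuinely nontrivial ingredient is the unweighted Poincar\'e inequality on the limit space; everything else is elementary comparison using the two-sided bound on $e^F$. The main point to watch is whether the available form of the inequality has the Dirichlet energy integrated over the same ball $B_r(x)$ (the strong inequality) rather than over an enlarged ball, since the target statement keeps both integrals on $B_r(x)$; for noncollapsed Ricci limit spaces the strong $(1,2)$-Poincar\'e inequality holds via the segment inequality, so no enlargement is needed. As an alternative route avoiding any appeal to the structure of the limit $X$, one could instead establish the inequality uniformly on each smooth $X_i$ (where $\Ric_{X_i} \ge K g_{X_i}$ and $\diam X_i \le d$ yield a Poincar\'e constant depending only on $n, K, d$) and pass to the limit through the $L^2$-convergence results of Section 2.
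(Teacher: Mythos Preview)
Your proof is correct and follows essentially the same approach as the paper's: both reduce to the unweighted Poincar\'e inequality on the limit space (the paper cites \cite[Theorem~2.15]{CheegerColding3} rather than the segment inequality from \cite{CheegerColding1}) and then use the variational characterization of the mean together with the two-sided bound $e^{-L}\le e^F\le e^L$ to convert between the weighted and unweighted quantities. The paper's argument is slightly more compact in that it writes both sides directly as infima over $c\in\bfC$, but the substance is identical.
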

\begin{proof}
It is a direct consequence of \cite[Theorem $2.15$]{CheegerColding3} that (\ref{poincareineq}) holds if $F \equiv 0$.
Since $e^{-L} \le e^F \le e^L$ and the left hand side of (\ref{poincareineq}) is equal to
\[\inf_{c \in \bfC}\left(\frac{1}{H^n_F(B_r(x))}\int_{B_r(x)}| f-c|^2\,dH^n_F\right),\]
we have
\begin{align*}
&\inf_{c \in \bfC}\left(\frac{1}{H^n_F(B_r(x))}\int_{B_r(x)}| f-c|^2\,dH^n_F\right) \\
&\le C(n, K, R, L)\inf_{c \in \bfC}\left(\frac{1}{H^n(B_r(x))}\int_{B_r(x)}| f-c|^2\,dH^n\right)\\
&\le C(n, K, R, L)\frac{r^2}{H^n(B_r(x))}\int_{B_r(x)}|d f|^2\,dH^n \\
&\le C(n, K, R, L)\frac{r^2}{H^n_F(B_r(x))}\int_{B_r(x)}|d f|^2\,dH^n_F.
\end{align*}
This completes the proof.
\end{proof}
\begin{proposition}\label{Poisson}
Let $g \in L^2_{\bfC}(X)$.
Then there exists  $f \in \mathcal{D}^2_{\bfC}(\Delta^F_{\overline{\partial}}, X)$ such that $\Delta^F_{\overline{\partial}}f=g$ holds if and only if 
\begin{align}\label{3}
\int_Xg\,dH^n_F=0.
\end{align} 
Moreover $f$ as above is unique if 
\begin{align}\label{4}
\int_Xf\,dH^n_F=0.
\end{align}
Thus we denote it $(\Delta^F_{\overline{\partial}})^{-1}g$.
\end{proposition}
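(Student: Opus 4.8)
The plan is to establish the three assertions separately: necessity of (\ref{3}), existence of a solution, and its uniqueness under (\ref{4}); the last two both reduce to a Hilbert-space argument for the Dirichlet form of $\Delta^F_{\overline{\partial}}$. Necessity is immediate. Since $X$ is compact we have $1 \in \mathrm{LIP}_{c,\bfC}(X)$, so putting $\phi \equiv 1$ in the defining identity (\ref{weightedlap}) and using $\overline{\partial}1 = 0$ yields $\int_X g\,dH^n_F = \int_X h_X^*(\overline{\partial}f, \overline{\partial}1)\,dH^n_F = 0$, which is the ``only if'' direction of the existence criterion.

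For existence I would work in the Hilbert space $\mathcal{H} := \{ u \in H^{1,2}_{\bfC}(X) : \int_X u\,dH^n_F = 0 \}$ equipped with the sesquilinear form $a(u,v) := \int_X h_X^*(\overline{\partial}u, \overline{\partial}v)\,dH^n_F$. The key point, which I expect to be the main obstacle, is to verify that $a$ is an inner product inducing a norm equivalent to the $H^{1,2}_{\bfC}$-norm on $\mathcal{H}$. Boundedness of $a$ is clear from the Cauchy--Schwarz inequality and Proposition \ref{L2norm}, which controls $\|\overline{\partial}u\|_{L^2_F}$ by $\|du\|_{L^2_F}$ up to the constant $C(L)$. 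Coercivity amounts to a global Poincar\'e inequality $\int_X|u|^2\,dH^n_F \le C\int_X|\overline{\partial}u|^2\,dH^n_F$ on $\mathcal{H}$, equivalently to the spectral gap $\lambda_1(\Delta^F_{\overline{\partial}}, X) > 0$. I would deduce this from the discreteness of the spectrum (noted in the remark following Proposition \ref{Laplacianexplicite}, via Theorem \ref{Rellich}) together with the identification of the kernel: $\Delta^F_{\overline{\partial}}u = 0$ forces $\int_X|\overline{\partial}u|^2\,dH^n_F = 0$, hence $\int_X|du|^2\,dH^n_F = 0$ by Proposition \ref{L2norm}, so $u$ is constant on the connected limit space $X$; thus $0$ is isolated and $\lambda_1 > 0$. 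Alternatively one can chain the local inequality of Proposition \ref{Poincare}.

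Granting coercivity, for $g$ satisfying (\ref{3}) the functional $\ell(\phi) := \int_X g\overline{\phi}\,dH^n_F$ is bounded on $\mathcal{H}$, again by Cauchy--Schwarz and the Poincar\'e inequality, so the Riesz representation theorem produces $f \in \mathcal{H}$ with $a(f,\phi) = \ell(\phi)$ for all $\phi \in \mathcal{H}$. To upgrade this to (\ref{weightedlap}) for every admissible test function, I would write an arbitrary $\phi \in \mathrm{LIP}_{c,\bfC}(X) = \mathrm{LIP}_{\bfC}(X)$ as $\phi = \phi_0 + c$ with $\phi_0 \in \mathcal{H}$ and $c$ constant; since $\overline{\partial}\phi = \overline{\partial}\phi_0$ and (\ref{3}) holds, both sides of (\ref{weightedlap}) are unchanged on replacing $\phi$ by $\phi_0$. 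Hence $f \in \mathcal{D}^2_{\bfC}(\Delta^F_{\overline{\partial}}, X)$ with $\Delta^F_{\overline{\partial}}f = g$, proving the ``if'' direction.

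Finally, for uniqueness suppose $f_1, f_2$ both solve the equation and satisfy (\ref{4}); then $f := f_1 - f_2 \in \mathcal{H}$ obeys $\Delta^F_{\overline{\partial}}f = 0$. Extending (\ref{weightedlap}) from $\mathrm{LIP}_{\bfC}(X)$ to all of $H^{1,2}_{\bfC}(X)$ by density (both sides being continuous in $\phi$ for the $H^{1,2}_{\bfC}$-norm) and testing against $\phi = f$ gives $\int_X|\overline{\partial}f|^2\,dH^n_F = 0$, whence $f$ is constant exactly as above, and the normalization (\ref{4}) forces $f = 0$, so $f_1 = f_2$. Everything outside the coercivity step is a routine application of the functional-analytic machinery already assembled in Section 2 and the preceding part of Section 3, so I expect the global Poincar\'e inequality on $X$ to be the only genuine difficulty.
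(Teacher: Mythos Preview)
Your proposal is correct and follows essentially the same Riesz-representation approach as the paper. The only minor difference is in establishing coercivity: the paper applies Proposition~\ref{Poincare} directly with $r = d = \mathrm{diam}\,X$ (so $B_r(x) = X$) combined with Proposition~\ref{L2norm}, which immediately yields the global inequality $\int_X|f|^2\,dH^n_F \le C(n,K,d,L)\int_X|\overline{\partial}f|^2\,dH^n_F$ on $\mathcal{H}$ without any chaining of local inequalities or spectral-gap detour.
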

\begin{proof}
We give a proof of `if' part only because the proof of `only if' part is trivial.
Suppose that (\ref{3}) holds.
Let $\overline{H}^{1, 2}_{\bfC}(X)$ be the closed subspace of $f \in H^{1, 2}_{\bfC}(X)$ with (\ref{4}).
Then by Propositions \ref{L2norm} and \ref{Poincare} we have
\[\int_X|f|^2\,dH^n_F\le C(n, K, d, L)\int_X|\overline{\partial}f|^2\,dH^n_F\]
for every $f \in  \overline{H}^{1, 2}_{\bfC}(X)$.
In particular 
\[||f||_{\overline{H}^{1, 2}_{\bfC}}:=\left(\int_X|d f|^2\,dH^n_F\right)^{1/2}\]
gives a Hilbert norm on $\overline{H}^{1, 2}_{\bfC}(X)$ which is equivalent to $||\cdot ||_{H^{1, 2}_{\bfC}}$.
Let us consider a $\bfC$-linear functional $\mathcal{F}$ on $\overline{H}^{1, 2}_{\bfC}(X)$ defined by
\[\mathcal{F}(\phi):=\int_X\phi\, \overline{g}\,dH^n_F.\]
The Riesz representation theorem yields that there exists a unique $f \in \overline{H}^{1, 2}_{\bfC}(X)$ such that 
\[\mathcal{F}(\phi)=\int_Xh_X^*(\overline{\partial}\phi, \overline{\partial}f)\,dH^n_F\]
for every $\phi \in \overline{H}^{1, 2}_{\bfC}(X)$.
Then it is easy to check that $f \in \mathcal{D}^2_{\bfC}(\Delta^F_{\overline{\partial}}, X)$ with $\Delta^F_{\overline{\partial}}f=g$.
The uniqueness also follows from the argument above.
\end{proof}
\begin{proposition}\label{poissoncont}
Let $g$ be the $L^2$-weak limit on $X$ of a sequence of $g_i \in L^2_{\bfC}(X_i)$ with
\[\int_{X_i}g_i\,dH^n_{F_i}=0.\]
Then $(\Delta^{F_i}_{\overline{\partial}})^{-1}g_i, d((\Delta^{F_i}_{\overline{\partial}})^{-1}g_i)$ $L^2$-converge strongly to $(\Delta^{F}_{\overline{\partial}})^{-1}g, d((\Delta^{F}_{\overline{\partial}})^{-1}g)$ on $X$, respectively.
\end{proposition}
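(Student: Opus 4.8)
The plan is to combine the uniform energy bounds coming from the weighted Poincar\'e inequality with the $L^2$-stability statement of Proposition \ref{Laplacianweakconv}, and then to pin down the limit by the uniqueness in Proposition \ref{Poisson}. Write $f_i := (\Delta^{F_i}_{\overline{\partial}})^{-1}g_i$, so that $\Delta^{F_i}_{\overline{\partial}}f_i = g_i$ and $\int_{X_i}f_i\,dH^n_{F_i}=0$ by construction. The whole argument will be a subsequence-and-uniqueness scheme feeding into Proposition \ref{Laplacianweakconv}.

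First I would establish a uniform $H^{1, 2}_{\bfC}$-bound on $f_i$. Testing the equation against $f_i$ gives
\[
\int_{X_i}|\overline{\partial}f_i|^2\,dH^n_{F_i}=\int_{X_i}(\Delta^{F_i}_{\overline{\partial}}f_i)\overline{f_i}\,dH^n_{F_i}=\int_{X_i}g_i\overline{f_i}\,dH^n_{F_i}\le \|g_i\|_{L^2_{F_i}}\|f_i\|_{L^2_{F_i}}.
\]
Since each $X_i$ satisfies $\Ric_{X_i}\ge Kg_{X_i}$, $\diam X_i\le d$ and $\|F_i\|_{L^{\infty}}\le L$, the weighted Poincar\'e inequality of Proposition \ref{Poincare} holds on $X_i$ with a constant $C(n, K, d, L)$ independent of $i$; together with the mean-zero normalization and Proposition \ref{L2norm} this gives $\|f_i\|_{L^2_{F_i}}^2\le C\int_{X_i}|\overline{\partial}f_i|^2\,dH^n_{F_i}$. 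Combining the two estimates yields $\|f_i\|_{L^2_{F_i}}\le C\|g_i\|_{L^2_{F_i}}$ and hence a bound on $\int_{X_i}|\overline{\partial}f_i|^2\,dH^n_{F_i}$; as $\sup_i\|g_i\|_{L^2}<\infty$ by $L^2$-weak convergence, and using Proposition \ref{L2norm} once more to control $\|df_i\|_{L^2}$, we obtain $\sup_i\|f_i\|_{H^{1, 2}_{\bfC}}<\infty$. Since $\|\Delta^{F_i}_{\overline{\partial}}f_i\|_{L^2}=\|g_i\|_{L^2}$ is likewise bounded, the hypotheses of Proposition \ref{Laplacianweakconv} are met.

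Next I would identify the limit. By the $L^2$-compactness (2.2a), any subsequence of $f_i$ admits a further $L^2$-weakly convergent subsequence with some limit $\tilde f\in H^{1, 2}_{\bfC}(X)$. Applying Proposition \ref{Laplacianweakconv} to this subsequence shows $\tilde f\in \mathcal{D}^2_{\bfC}(\Delta^F_{\overline{\partial}}, X)$, that $f_i$ and $df_i$ converge $L^2$-strongly to $\tilde f$ and $d\tilde f$, and that $\Delta^{F_i}_{\overline{\partial}}f_i=g_i$ converges $L^2$-weakly to $\Delta^F_{\overline{\partial}}\tilde f$; since $g_i$ converges $L^2$-weakly to $g$, uniqueness of weak limits forces $\Delta^F_{\overline{\partial}}\tilde f=g$. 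Moreover the strong convergence of $f_i$ together with the convergence of the weighted measures $dH^n_{F_i}\to dH^n_F$ gives $\int_X\tilde f\,dH^n_F=\lim_i\int_{X_i}f_i\,dH^n_{F_i}=0$, so the uniqueness in Proposition \ref{Poisson} identifies $\tilde f=(\Delta^F_{\overline{\partial}})^{-1}g=:f$. As every weakly convergent subsequence of the bounded sequence $f_i$ has the same limit $f$, the full sequence converges $L^2$-weakly to $f$, and a final application of Proposition \ref{Laplacianweakconv} to the entire sequence yields the claimed $L^2$-strong convergence of $f_i$ and $df_i$ to $f$ and $df$.

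The Cauchy-Schwarz and Poincar\'e bounds are routine; the one point requiring care, and the main (though mild) obstacle, is the \textbf{uniformity of the Poincar\'e constant across the sequence $\{X_i\}$}. This uniformity is exactly what the Cheeger-Colding segment/Poincar\'e inequality guarantees under the common lower Ricci bound, diameter bound and $L^{\infty}$-bound on $F_i$, so the constant in Proposition \ref{Poincare} may be taken independent of $i$; once this is in place, everything else is a formal consequence of Propositions \ref{Poisson} and \ref{Laplacianweakconv}.
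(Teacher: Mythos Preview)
Your proof is correct and follows essentially the same route as the paper: obtain a uniform $H^{1,2}_{\bfC}$-bound on $f_i=(\Delta^{F_i}_{\overline{\partial}})^{-1}g_i$ via the weighted Poincar\'e inequality and the energy identity, invoke Proposition \ref{Laplacianweakconv} along a subsequence, and identify the limit using the uniqueness in Proposition \ref{Poisson}. The paper's write-up is more terse (it folds the subsequence-and-uniqueness step into a ``without loss of generality'' clause), but the argument is the same.
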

\begin{proof}
Let $f_i:=(\Delta^{F_i}_{\overline{\partial}})^{-1}g_i$.
Propositions \ref{L2norm} and \ref{Poincare} yield
\begin{align*}
\int_{X_i}|f_i|^2\,dH^n_{F_i}&\le C(n, K, d, L)\int_{X_i}|\overline{\partial}f_i|^2\,dH^n_{F_i}\\
&\le C(n, K, d, L)\int_{X_i}g_i\overline{f_i}\,dH^n_{F_i}\\
&\le C(n, K, d, L)\left(\int_{X_i}|g_i|^2\,dH^n_{F_i}\right)^{1/2}\left( \int_{X_i}|f_i|^2\,dH^n_{F_i}\right)^{1/2}. 
\end{align*}
In particular we have $\sup_i||f_i||_{H^{1,2}_{\bfC}}<\infty$.
Thus by Theorem \ref{Rellich} and Proposition \ref{Laplacianweakconv} without loss of generality we can assume that there exists $\hat{f} \in \mathcal{D}^2_{\bfC}(\Delta^F_{\overline{\partial}}, X)$ such that $f_i, d f_i$ $L^2$-converge strongly to $\hat{f},\ d \hat{f}$ on $X$, respectively and that $\Delta^{F_i}_{\overline{\partial}}f_i$ $L^2$-converges weakly to $\Delta^F_{\overline{\partial}}\hat{f}$ on $X$.
Since $\Delta^F_{\overline{\partial}}\hat{f}=g$ and
\[\int_X\hat{f}\,dH^n_F=\lim_{i \to \infty}\int_Xf_i\,dH^n_{F_i}=0,\]
we have $\hat{f}=(\Delta^{F}_{\overline{\partial}})^{-1}g$.
This completes the proof.
\end{proof}
\begin{remark}\label{remark2}
Similar results as above also hold for $\Delta^F$ and $\Delta^F_{\partial}$.
\end{remark}

\begin{proposition}\label{suffsobo}
Assume that $H^n(X \setminus U)=0$ and that
the inclusion
\[H^{1, 2}_{c}(U) \hookrightarrow H^{1, 2}(X)\]
is isomorphic.
Let $f$ be a complex valued function on $U$ such that $f|_O \in H^{1, 2}_{\bfC}(O)$ for every relatively
compact open subset $O$ of $U$ and that $\overline{\partial}f \in L^2((T^*U)'')$ (or $\partial f \in L^2_{\bfC}((T^*U)')$, respectively).
Then we have the following:
\begin{enumerate}
\item[(1)] There exists $u \in H^{1, 2}_{\bfC}(X)$ such that $\overline{\partial}f=\overline{\partial}u$ on $U$ (or $\partial f=\partial u$ on $U$, respectively).
\item[(2)] If $f \in L^1_{\bfC}(U)$, then $f \in H^{1, 2}_{\bfC}(X)$.
\end{enumerate}
In particular, the map
\[H^{1, 2}_{\bfC}(X) \to H^{1, 2}_{\bfC}(U)\]
defined by the restriction is isomorphic.
\end{proposition}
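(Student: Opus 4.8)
The plan is to build everything on two ingredients: the norm identity of Proposition \ref{L2norm}, which gives $\int_X|d\phi|^2\,dH^n=2\int_X|\overline{\partial}\phi|^2\,dH^n$ for $\phi\in H^{1,2}_{\bfC}(X)$, and the one structural consequence of the hypothesis $H^{1,2}_c(U)\cong H^{1,2}(X)$: the constant $1$ lies in the closure of $\mathrm{LIP}_c(U)$, so there are cutoffs $\eta_j\in\mathrm{LIP}_c(U)$ with $0\le\eta_j\le1$, $\eta_j\to1$ in $H^{1,2}(X)$, and in particular $\|d\eta_j\|_{L^2}\to0$ (vanishing $2$-capacity of $X\setminus U$). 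I will prove (1), then (2), then deduce the ``in particular'' statement. Throughout I extend objects across the $H^n$-null set $X\setminus U$ without comment.

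For (1) I would show that $g:=\overline{\partial}f$ lies in the $L^2$-closure $\mathcal H$ of $\{\overline{\partial}\phi:\phi\in\mathrm{LIP}_{c,\bfC}(U)\}$ inside $L^2((T^*X)'')$. First reduce to bounded $f$: the truncations $f_N:=\tau_N\circ f$, with $\tau_N$ the $1$-Lipschitz radial cutoff at level $N$ on $\bfC$, satisfy $f_N|_O\in H^{1,2}_{\bfC}(O)$ for $O\Subset U$, $|f_N|\le N$, and $\overline{\partial}f_N\to\overline{\partial}f$ in $L^2(U)$ by dominated convergence, so since $\mathcal H$ is closed it suffices to treat bounded $f$. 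For bounded $f$ we have $\overline{\partial}(\eta_j f)=\eta_j\,\overline{\partial}f+f\,\overline{\partial}\eta_j$, where $\eta_j\overline{\partial}f\to\overline{\partial}f$ in $L^2$ and $\|f\,\overline{\partial}\eta_j\|_{L^2}\le\|f\|_{L^\infty}\|d\eta_j\|_{L^2}\to0$; as $\eta_j f$ is compactly supported in $U$ and locally $H^{1,2}$, it is $\overline{\partial}$-approximable by $\mathrm{LIP}_{c,\bfC}(U)$, giving $\overline{\partial}f\in\mathcal H$. Finally I construct $u$: pick $\phi_k\in\mathrm{LIP}_{c,\bfC}(U)$ with $\overline{\partial}\phi_k\to g$ in $L^2$ and set $\psi_k:=\phi_k-\frac{1}{H^n_F(X)}\int_X\phi_k\,dH^n_F$. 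By Proposition \ref{L2norm}, $\|d\psi_k-d\psi_l\|_{L^2}^2=2\|\overline{\partial}\phi_k-\overline{\partial}\phi_l\|_{L^2}^2\to0$, and by the global Poincar\'e inequality (Proposition \ref{Poincare} applied to the ball $B_d(x)=X$) the zero-mean $\psi_k$ are then Cauchy in $H^{1,2}_{\bfC}(X)$; their limit $u$ satisfies $\overline{\partial}u=g=\overline{\partial}f$ a.e.\ on $U$. The $\partial$-version is identical.

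For (2), write $v:=f-u$ with $u$ from (1), so $v\in L^1_{\bfC}(X)$, $v|_O\in H^{1,2}_{\bfC}(O)$ for $O\Subset U$, and $\overline{\partial}v=0$ on $U$. By the Claim in the proof of Proposition \ref{Laplacianequal} (with $\overline{\partial}v=0$) we get $\int_U h_X^*(dv,d\phi)\,dH^n=0$ for every $\phi\in\mathrm{LIP}_{c,\bfC}(U)$, hence for every compactly supported $H^{1,2}$ test function on $U$; thus $v$, and hence by Proposition \ref{Laplacianreal} its real and imaginary parts, are (locally weakly) harmonic on $U$. It therefore suffices to treat a real harmonic $w\in L^1(X)$ with $w|_O\in H^{1,2}(O)$, and to show $\int_U|dw|^2\,dH^n<\infty$ together with $\|w\,d\eta_j\|_{L^2}\to0$, for then $\eta_j w\to w$ in $H^{1,2}(X)$ and $f=u+v\in H^{1,2}_{\bfC}(X)$. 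Testing the weak equation against $\phi=\eta_j^2 w_N$, where $w_N$ is the truncation of $w$ at height $N$ (so $dw_N=\chi_{\{|w|<N\}}dw$ and $h_X^*(dw,dw_N)=|dw_N|^2$), yields
\[\int_U\eta_j^2|dw_N|^2\,dH^n=-2\int_U\eta_j w_N\,h_X^*(dw,d\eta_j)\,dH^n.\]
\textbf{The main obstacle} is controlling the right-hand side: $w$ is only $L^1$ and may be unbounded as one approaches $X\setminus U$, which is exactly where $d\eta_j$ is supported, so one cannot pull out an $L^\infty$-norm of $w$. I would balance the vanishing capacity $\|d\eta_j\|_{L^2}\to0$ against the interior growth of $w$, which is governed by the mean-value (Moser/Li--Yau type) estimate for harmonic functions on noncollapsed Ricci-limit spaces à la Cheeger--Colding: this bounds $w$ on the collar carrying $d\eta_j$ in terms of $\|w\|_{L^1}$ and the distance to $X\setminus U$, so that choosing the $\eta_j$ appropriately kills the cross term. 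Letting $N\to\infty$ and then $j\to\infty$ gives the uniform energy bound and the convergence $\eta_j w\to w$. (Conceptually this is a removable-singularity statement for the holomorphic remainder $v$ across the $2$-negligible set $X\setminus U$.)

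Finally, the ``in particular'' isomorphism follows formally. The restriction map $H^{1,2}_{\bfC}(X)\to H^{1,2}_{\bfC}(U)$ is isometric because $H^n(X\setminus U)=0$, hence injective. For surjectivity, any $\psi\in H^{1,2}_{\bfC}(U)$ satisfies $\psi\in L^2(U)\subset L^1(U)$ (finite measure), $\psi|_O\in H^{1,2}_{\bfC}(O)$ for $O\Subset U$, and $\overline{\partial}\psi\in L^2((T^*U)'')$ since $|\overline{\partial}\psi|\le|d\psi|$; so part (2) applies with $f=\psi$ and produces an element of $H^{1,2}_{\bfC}(X)$ whose restriction is $\psi$.
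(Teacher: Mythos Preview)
Your argument for part (1) is essentially the paper's: cut-offs $\eta_j\to 1$ in $H^{1,2}$ handle the bounded case, truncation reduces the general case to the bounded one, and Poincar\'e plus Proposition \ref{L2norm} recovers the global function $u$. The only difference is packaging: the paper works directly with the truncations $f_L$ and shows each $f_L\in H^{1,2}_{\bfC}(X)$, then extracts $u$ as a weak limit of the mean-zero shifts $f_{L_i}-\frac{1}{H^n(X)}\int f_{L_i}\,dH^n$, whereas you pass through the abstract closure $\mathcal H$ and pick unrelated $\phi_k$.

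This packaging choice is exactly what creates your gap in part (2). Because the paper's approximants are the truncations themselves, when $f\in L^1_{\bfC}(U)$ dominated convergence gives $f_L\to f$ in $L^1$, hence the mean-zero shifts converge in $L^1$ to $f-\frac{1}{H^n(X)}\int f\,dH^n$; since they also converge in $L^2$ to $u$, one gets $u=f-\mathrm{const}$ and $f\in H^{1,2}_{\bfC}(X)$ immediately. No harmonic analysis is needed. Your abstract $\phi_k$ carry no pointwise relation to $f$, so you are forced into the removable-singularity route for the harmonic remainder $v=f-u$, and the obstacle you flag is genuine: with $v$ only in $L^1$, the Moser/Li--Yau sup-bound on a ball $B_r(x)\subset U$ controls $\sup_{B_{r/2}}|v|$ by $r^{-n}\int_{B_r}|v|$, which blows up as $r\to 0$ near $X\setminus U$, and nothing in the hypothesis $\|d\eta_j\|_{L^2}\to 0$ quantifies a decay rate fast enough to beat this. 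The fix is not to sharpen that estimate but to go back and choose the approximants in (1) to be (Lipschitz regularizations of) $\eta_j f_N$, so that the link to $f$ survives.
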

\begin{proof}
We only give a proof of the case of $\overline{\partial}f \in L^2_{\bfC}((T^*U)'')$. 
We first assume $f \in L^{\infty}_{\bfC}(U)$.  By our assumption of the isomorphism 
$H^{1, 2}_{c}(U) \hookrightarrow H^{1, 2}(X)$, 
there exists a sequence $\phi_i \in \mathrm{LIP}_{c, \bfC}(U)$ such that $\phi_i \to 1$ in 
$H^{1, 2}_{\bfC}(X)$.
We use the following notation; 
for any real valued function $g$ and $L_1<L_2$, let
\begin{align*}
g_{L_1}^{L_2}(x):=
\begin{cases} L_2 \,\,\,\,\,\mathrm{if}\,g(x) \ge L_2, \\
g(x) \,\,\,\,\,\mathrm{if}\, L_1<g(x) <L_2, \\
L_1 \,\,\,\,\,\mathrm{if}\, g(x) \le L_1.
\end{cases}
\end{align*}
Since $(\phi_i)_0^1 \in \mathrm{LIP}_{c, \bfC}(U)$ converges to $1$ in $H^{1, 2}_{\bfC}(X)$, 
without loss of generality we can assume that $0 \le \phi_i \le 1$.
Then since $\phi_if \in H^{1, 2}_{\bfC}(X)$, we have
\begin{align*}
&\int_X|d(\phi_i f)|^2\,dH^n\\
&=2\int_X|\overline{\partial}(\phi_if)|^2\,dH^n \\
&=2\int_X\left( |\phi_i|^2|\overline{\partial}f|^2+|f\overline{\partial}\phi_i|^2 + \phi_i\overline{f}h_X^*(\overline{\partial}f, \overline{\partial}\phi_i) + \phi_ifh_X^*(\overline{\partial}\phi_i, \overline{\partial}f) \right)\,dH^n.
\end{align*}
In particular, since $\phi_if \to f$ in $L^2_{\bfC}(X)$ and 
\[\limsup_{i \to \infty}\int_X|d(\phi_i f)|^2\,dH^n<\infty,\]
we have $f \in H^{1, 2}_{\bfC}(X)$.

From now on, we prove Proposition \ref{suffsobo} for general $f$.
For every $L \ge 1$, let $f_L:=(f_1)_{-L}^L+\sqrt{-1}(f_2)^L_{-L}$.
Note that $f_L \in L^{\infty}_{\bf}(X)$, that $f_L|_O \in H^{1, 2}_{\bfC}(O)$ for every 
relatively compact open subset $O$ of $U$ and that $\overline{\partial}f_L \in L^2_{\bfC}((T^*U)'')$ with 
\[\int_X|\overline{\partial}f_L|^2\,dH^n\le \int_X|\overline{\partial}f|^2\,dH^n.\]
Thus from the above, we have $f_L \in H^{1, 2}_{\bfC}(X)$.
By Theorem \ref{Rellich} and Proposition \ref{Poincare}, there exist $u \in H^{1, 2}_{\bfC}(X)$ and a sequence $L_i \to \infty$ such that the functions
\begin{align}\label{functions}
f_{L_i}-\frac{1}{H^n(X)}\int_Xf_{L_i}\,dH^n
\end{align}
converges to $u$ in $L^2_{\bfC}(X)$ and that $\overline{\partial}f_{L_i}$ $L^2$ converges weakly to $\overline{\partial}u$ on $X$.

On the other hand, since $f=f_L$ on $D_L:=\{x \in X; |f(x)|\le L\}$, we have $\overline{\partial}f(x)=\overline{\partial}f_L(x)$ for a.e. $x \in D_L$ (see for instance \cite[Corollary $2.25$]{Cheeger}).
Thus we see that  $\overline{\partial}f_{L_i}$ $L^2$ converges weakly to $\overline{\partial}f$ on $X$.
This completes the proof of (1).

Moreover if $f \in L^1_{\bfC}(X)$, then since the functions (\ref{functions}) converges to
\[f-\frac{1}{H^n(X)}\int_Xf\,dH^n\]
in $L^1_{\bfC}(X)$, we have (2).
\end{proof}


\subsection{The covariant derivative $\nabla''$ in the manner of Gigli}
In this section we define $\nabla''$ for vector fields on nonsmooth setting in the manner of \cite{Gigli}.
For that, let us start giving an observation on smooth setting
(note that in this section we will always consider the nonweighted case).

Let $(M, g_M, J)$ be a compact K\"ahler manifold and let $V$ be a smooth $\bfC$-valued vector field on $M$.
Then it is easy to check that
\begin{eqnarray*}
f_0\,g_M(\nabla '' V, \grad f_1 \otimes df_2)&=& g_M(f_0\,\mathrm{grad}'' f_2, \grad g_M(V, \grad f_1))\\
& & \qquad - f_0\,g_M(V, \nabla_{\mathrm{grad}''f_2}\grad f_1)
\end{eqnarray*}
for any $f_i \in C^{\infty}_{\bfC}(M)$, where $\nabla V=\nabla'V \oplus \nabla''V$ 
with respect to the decomposition $T_{\bfC}X \otimes T^*_{\bfC}X=(T_{\bfC}X \otimes (T^*X)') \oplus (T_{\bfC}X \otimes (T^*X)'')$.
In particular
\begin{eqnarray}\label{6}
&&\int_Mf_0\,g_M(\nabla''V, \grad f_1 \otimes df_2)\,dH^n\\
&=&\int_M\left( - \mathrm{div}(f_0\,\mathrm{grad}''f_2)\,g_M(V, \grad f_1)
 - f_0\,g_M(V, \nabla_{\mathrm{grad}''f_2}\grad f_1 \right) \,dH^n. \nonumber 
\end{eqnarray}
Note that this gives a characterization of $\nabla''V$, that is, if some $T \in L^2_{\bfC}(T_{\bfC}M\otimes T^*_{\bfC}M)$ satisfies
\begin{align*}
&\int_Mf_0\,g_M(T, \grad f_1 \otimes df_2)\,dH^n\nonumber \\
&=\int_M\left( - \mathrm{div}(f_0\mathrm{grad}''f_2)g_M(V, \grad f_1)-f_0\,g_M(V, \nabla_{\mathrm{grad}''f_2}\grad f_1 \right) \,dH^n.
\end{align*}
for any $f_i \in C^{\infty}_{\bfC}(M)$, then $T=\nabla''V$ in $L^2_{\bfC}(T_{\bfC}M\otimes T^*_{\bfC}M)$.
This follows directly from the fact that the space 
\[ \left\{ \sum_{i=1}^N\ f_{0, i}\grad f_{1, i} \otimes df_{2, i}\ ;\  N \in {\bf{N}},\  f_{j, i} \in C^{\infty}_{\bfC}(M)  \right\} \]
is dense in $L^2_{C}(T_{\bfC}M\otimes T^*_{\bfC}M)$.

We will extend this observation to our singular setting.
For this purpose we first give the following definition:
\begin{definition}[Divergence]
Let $\mathcal{D}^2_{\bfC}(\mathrm{div}, U)$ be the set of $V \in L^2_{\bfC}(U)$ such that there 
exists $f \in L^2_{\bfC}(X)$ satisfying
\[\int_Ug_X(V, \grad h)\,dH^n=-\int_Ufh\,dH^n\]
for every $h \in \mathrm{LIP}_{c, \bfC}(U)$.
Since $f$ is unique, we denote it by $\mathrm{div}\,V$.
\end{definition}
\begin{proposition}\label{divconv}
Let $V$ be the $L^2$-weak limit  on $B_R(x)$ of
a sequence $V_i \in \mathcal{D}^2_{\bfC}(\mathrm{div}, B_R(x_i))$ with 
$$\sup_i||\mathrm{div}\,V_i||_{L^2}<\infty.$$
Then we see that $V \in \mathcal{D}^2_{\bfC}(\mathrm{div}, B_R(x))$ and that $\mathrm{div}\,V_i$ $L^2$-converges weakly to $\mathrm{div}\,V$ on $B_R(x)$. 
\end{proposition}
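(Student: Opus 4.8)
The plan is to prove this as a weak-stability statement, entirely parallel to the convergence arguments already used above for $\Delta^F_{\overline{\partial}}$ (compare the limit passage in Proposition \ref{Laplacianweakconv}): I identify the $L^2$-weak limit of $\mathrm{div}\,V_i$ with $\mathrm{div}\,V$ by testing the defining integral identity against suitable approximate test functions and letting $i\to\infty$.

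First I would invoke the hypothesis $\sup_i\|\mathrm{div}\,V_i\|_{L^2}<\infty$ together with the $L^2$-weak compactness (2.2a) to extract, after passing to a subsequence, an $L^2$-weak limit $G\in L^2_{\bfC}(B_R(x))$ of $\mathrm{div}\,V_i$. The goal then becomes to show $V\in\mathcal{D}^2_{\bfC}(\mathrm{div},B_R(x))$ with $\mathrm{div}\,V=G$. Since this limit will be characterized by $V$ alone and hence does not depend on the chosen subsequence, the usual subsequence-of-subsequence argument upgrades the subsequential convergence to convergence of the full sequence, which is the asserted statement.

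Next, fix a test function $h\in\mathrm{LIP}_{c,\bfC}(B_R(x))$. Using (2.2e) applied to the zero-extension of $h$ to $X$, combined with a cutoff $\chi_i$ built from the distance functions $r_{x_i}$ (whose differentials converge $L^2$-strongly by \cite[Proposition $3.44$]{Honda13}) to enforce compact support inside $B_R(x_i)$, I would produce $h_i\in\mathrm{LIP}_{c,\bfC}(B_R(x_i))$ with $\sup_i\|dh_i\|_{L^\infty}<\infty$ such that $h_i$ and $\grad h_i$ $L^2$-converge strongly to $h$ and $\grad h$ on $B_R(x)$. Applying the definition of $\mathrm{div}\,V_i$ to $h_i$ gives
\[\int_{B_R(x_i)}g_{X_i}(V_i,\grad h_i)\,dH^n=-\int_{B_R(x_i)}(\mathrm{div}\,V_i)\,h_i\,dH^n.\]
On the left, $V_i$ converges $L^2$-weakly to $V$ while $\grad h_i$ converges $L^2$-strongly to $\grad h$, so by the weak-strong pairing property of $L^2$-convergence (\cite{Honda13}, the same mechanism used in Proposition \ref{Laplacianweakconv}) the left side tends to $\int_{B_R(x)}g_X(V,\grad h)\,dH^n$. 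On the right, $\mathrm{div}\,V_i$ converges $L^2$-weakly to $G$ while $h_i$ converges $L^2$-strongly to $h$, so the right side tends to $-\int_{B_R(x)}G\,h\,dH^n$. As $h$ was arbitrary, this is exactly the defining identity for the divergence, whence $V\in\mathcal{D}^2_{\bfC}(\mathrm{div},B_R(x))$ and $\mathrm{div}\,V=G$.

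The step I expect to be the main obstacle is the construction of the compactly supported approximants $h_i$: property (2.2e) only yields approximations on all of $X_i$, so one must insert a cutoff adapted to the balls $B_R(x_i)$ and check that the Leibniz rule is compatible with $L^2$-strong convergence, using the uniform $L^\infty$ bounds on $h_i$, $\chi_i$ and their differentials. Once these test functions and the weak-strong pairing are in hand, the limit passage itself is routine.
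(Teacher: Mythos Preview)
Your proposal is correct and follows essentially the same route as the paper: extract an $L^2$-weak limit of $\mathrm{div}\,V_i$, approximate a test function $h\in\mathrm{LIP}_{c,\bfC}(B_R(x))$ by $h_i\in\mathrm{LIP}_{c,\bfC}(B_R(x_i))$ with $h_i,\grad h_i$ converging $L^2$-strongly, and pass to the limit in the defining identity. The paper simply cites (2.2e) for the approximants and does not spell out the cutoff step you flag as the main obstacle; your explicit use of cutoffs built from $r_{x_i}$ (as in Corollary~\ref{localconv}) is exactly the standard way to fill that in.
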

\begin{proof}
By the compactness of $L^2$-weak convergence, without loss of generality we can assume that there exists the $L^2$-weak limit $f$ of $\mathrm{div}\,V_i$ on $B_R(x)$.
Let $h \in \mathrm{LIP}_{c, \bfC}(B_R(x))$.
 By (2.2e), there exists a sequence $h_i \in \mathrm{LIP}_{c, \bfC}(B_R(x))$ such that $h_i,\ d h_i$ $L^2$-converge strongly to $h,\ d h$ on $X$, respectively.
Since
\[\int_{B_R(x_i)}g_{X_i}(V_i, \grad h_i)\,dH^n=-\int_{B_R(x_i)}(\mathrm{div}\,V_i)h_i\,dH^n,\]
we obtain by letting $i \to \infty$
\[\int_{B_R(x)}g_X(V, \grad h)\,dH^n=-\int_{B_R(x)}fh\,dH^n.\]
This gives $V \in \mathcal{D}^2_{\bfC}(\mathrm{div}, B_R(x))$ with $\mathrm{div}\,V=f$.
\end{proof}
\begin{remark}\label{divergenceformula}
It is a direct consequence of simple calculation that for any $V \in \mathcal{D}^2_{\bfC}(\mathrm{div}, U)$ and $f \in \mathrm{LIP}_{\mathrm{loc}, \bfC}(U)$ with $||d f||_{L^{\infty}}<\infty$, we have $fV \in \mathcal{D}^2_{\bfC}(\mathrm{div}, U)$ with
\[\mathrm{div} (fV)=g_X(\grad f, V)+f\mathrm{div}V.\]
\end{remark}
\begin{proposition}\label{derivative}
Let $f \in \mathcal{D}^2_{\bfC}(\Delta, U)$.
Then for every open subset $W$ of $X$ with $\overline{W} \subset U$,  we have $\mathrm{grad}''f|_{W} \in \mathcal{D}^2_{\bfC}(\mathrm{div}, W)$ with  $\mathrm{div}\,(\mathrm{grad}''f)=\mathrm{tr}(\nabla \mathrm{grad}''f)$.
\end{proposition}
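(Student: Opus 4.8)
The plan is to evaluate $\mathrm{div}(\mathrm{grad}''f)$ and $\mathrm{tr}(\nabla\mathrm{grad}''f)$ separately and to check that both equal $-\Delta_{\overline{\partial}}f=-\tfrac12\Delta f$ on $W$. Throughout I use two pointwise facts, valid a.e.: the musical identifications $\mathrm{grad}''f=(\partial f)^{\sharp}\in T''X$ and $\mathrm{grad}'f=(\overline{\partial}f)^{\sharp}\in T'X$ (which hold because $J$ is differentiable a.e. with $\nabla J\equiv 0$), and the orthogonality $g_X(T'X,T'X)=g_X(T''X,T''X)=0$ coming from $g_X(Ju,Jv)=g_X(u,v)$.

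First I would identify the weak divergence. Let $h\in\mathrm{LIP}_{c,\bfC}(W)$. Decomposing $\grad h=\mathrm{grad}'h\oplus\mathrm{grad}''h$ and using $g_X(T''X,T''X)=0$, I get a.e.
\[
g_X(\mathrm{grad}''f,\grad h)=g_X(\mathrm{grad}''f,\mathrm{grad}'h)=g_X^{*}(\partial f,\overline{\partial}h)=h_X^{*}(\partial f,\partial\overline{h}).
\]
Integrating over $W$ and invoking Proposition \ref{L2norm} in the localized form of the Claim in the proof of Proposition \ref{Laplacianequal} (and its evident $\partial$-analogue, valid for $f\in H^{1,2}_{\bfC}$ and $\overline{h}\in\mathrm{LIP}_{c,\bfC}(W)$, both sides being $\tfrac12\int_W h_X^{*}(df,d\overline{h})\,dH^n$), I can replace $\partial$ by $\overline{\partial}$:
\[
\int_W g_X(\mathrm{grad}''f,\grad h)\,dH^n=\int_W h_X^{*}(\overline{\partial}f,\overline{\partial}\overline{h})\,dH^n=\int_W(\Delta_{\overline{\partial}}f)\,h\,dH^n,
\]
the last equality being Definition \ref{dlaplacian} (unweighted) with $\phi=\overline{h}$. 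Since $\Delta_{\overline{\partial}}f\in L^2_{\bfC}(U)$, this shows $\mathrm{grad}''f|_W\in\mathcal{D}^2_{\bfC}(\mathrm{div},W)$ with $\mathrm{div}(\mathrm{grad}''f)=-\Delta_{\overline{\partial}}f$.

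Next I would compute the trace. Because $f\in\mathcal{D}^2_{\bfC}(\Delta,U)$, the second order differentiable structure of \cite{Honda14a,Honda14b} gives that $f$ is twice differentiable a.e. on $U$ with $\nabla\grad f\in L^2((T^1_1)_{\bfC}W)$ and $\mathrm{tr}(\nabla\grad f)=-\Delta f$ a.e. Writing the projection onto $T''X$ as $\mathrm{grad}''f=\tfrac12(\grad f+\sqrt{-1}J\grad f)$ and using $\nabla J\equiv 0$, I obtain $\nabla\mathrm{grad}''f=\tfrac12(\nabla\grad f+\sqrt{-1}J\,\nabla\grad f)$, which lies in $T''X\otimes T^{*}_{\bfC}X$ a.e. in agreement with Proposition \ref{connection}. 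Taking the trace,
\[
\mathrm{tr}(\nabla\mathrm{grad}''f)=\tfrac12\mathrm{tr}(\nabla\grad f)+\tfrac{\sqrt{-1}}{2}\mathrm{tr}(J\,\nabla\grad f).
\]
The second term vanishes a.e., since $\nabla\grad f=\mathrm{Hess}\,f$ is $g_X$-symmetric while $J$ is $g_X$-antisymmetric, and the trace of the composition of a symmetric and an antisymmetric endomorphism is zero. Hence $\mathrm{tr}(\nabla\mathrm{grad}''f)=\tfrac12\mathrm{tr}(\nabla\grad f)=-\tfrac12\Delta f=-\Delta_{\overline{\partial}}f$ a.e., which coincides with the weak divergence found above; this proves $\mathrm{div}(\mathrm{grad}''f)=\mathrm{tr}(\nabla\mathrm{grad}''f)$ on $W$.

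The substantive inputs are external, and I would cite rather than reprove them: the $L^2$-existence of $\mathrm{Hess}\,f=\nabla\grad f$ for $f$ in the Laplacian domain and the compatibility $\mathrm{tr}(\mathrm{Hess}\,f)=-\Delta f$, both from the second order structure of \cite{Honda14a,Honda14b}. Within the argument, the one genuinely nonpointwise step, and the place I expect the main subtlety, is the replacement of $\partial$ by $\overline{\partial}$ in the first step: the integrands $h_X^{*}(\partial f,\partial\overline{h})$ and $h_X^{*}(\overline{\partial}f,\overline{\partial}\overline{h})$ are not equal a.e., so the interchange is an integral identity resting on the K\"ahler relation encoded in Proposition \ref{L2norm}. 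The hypothesis $\overline{W}\subset U$ serves only to ensure compact support of the test functions and interior $L^2$-control of $\nabla\grad f$ away from $\partial U$; everything else is pointwise linear algebra valid a.e. on the rectifiable set.
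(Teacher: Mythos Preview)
Your proof is correct, but it takes a genuinely different route from the paper's. The paper argues by approximation from the smooth sequence: it first treats $U=X$ by writing $g:=\Delta f$, choosing smooth $g_i\in C^\infty_{\bfC}(X_i)$ with $g_i\to g$ in $L^2$, setting $f_i:=\Delta^{-1}g_i$, and then passing the \textit{smooth} identity $\mathrm{div}(\mathrm{grad}''f_i)=\mathrm{tr}(\nabla\mathrm{grad}''f_i)$ to the limit using the divergence stability of Proposition~\ref{divconv}, the $L^2$-weak convergence $\mathrm{Hess}_{f_i}\rightharpoonup\mathrm{Hess}_f$ from \cite{Honda14b}, and a trace-convergence result \cite[Proposition~3.72]{Honda13}. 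The general open $U$ is then reduced to $U=X$ by a good cutoff $\phi\in\mathcal{D}^2(\Delta,X)\cap\mathrm{LIP}(X)$ with $\Delta\phi\in L^\infty$, supplied by \cite[Corollary~4.29]{Honda13}, so that $\phi f\in\mathcal{D}^2(\Delta,X)$.

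By contrast, you work entirely on the limit space: you identify $\mathrm{div}(\mathrm{grad}''f)$ directly by the musical/type decomposition and the localized K\"ahler identity (the Claim in Proposition~\ref{Laplacianequal}), obtaining $-\Delta_{\overline{\partial}}f$; and you identify $\mathrm{tr}(\nabla\mathrm{grad}''f)$ by the pointwise linear-algebra fact that $\mathrm{tr}(J\,\mathrm{Hess}_f)=0$ (symmetry of $\mathrm{Hess}_f$ versus $g_X$-antisymmetry of $J$) together with $\mathrm{tr}(\mathrm{Hess}_f)=-\Delta f$. This is cleaner and avoids both the approximation and the cutoff-localization step; the trade-off is that it imports the compatibility $\mathrm{tr}(\mathrm{Hess}_f)=-\Delta f$ and the symmetry of $\mathrm{Hess}_f$ on $X$ as black boxes from \cite{Honda14a,Honda14b}, whereas the paper's argument effectively rederives the needed piece of that compatibility (in its $\mathrm{grad}''$ form) via the convergence of Hessians. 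Both inputs you cite are indeed available in that framework, so the argument stands; it is just worth flagging that the one nonpointwise ingredient in your divergence computation --- replacing $h_X^*(\partial f,\partial\bar h)$ by $h_X^*(\overline{\partial}f,\overline{\partial}\bar h)$ under the integral --- is exactly the K\"ahler identity of Proposition~\ref{L2norm}, and is what makes your intrinsic route possible.
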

\begin{proof}
We first prove the assertion under the assumption $U=X$.
Let $g:=\Delta f$.
By (2.2e), there exists a sequence of $g_i \in C^{\infty}_{\bfC}(X_i)$ with
\[\int_{X_i}g_i\,dH^n=0\]
such that $g_i$ $L^2$-converges strongly to $g$ on $X$.
Let $f_i :=\Delta^{-1}g_i$.
Note that by \cite[Theorem $1.1$]{Honda14b} with Proposition \ref{Laplacianreal} (or Remark \ref{remark2}) we see that $f_i,\ d f_i$ $L^2$-converge strongly to $f,\ d f$ on $X$, respectively and that $\mathrm{Hess}_{f_i}$ $L^2$-converges weakly to $\mathrm{Hess}_f$ on $X$.
Since
\begin{align}\label{9}
\mathrm{div}(\mathrm{grad}''f_i)=\mathrm{tr}(\nabla \mathrm{grad}''f_i)
\end{align}
and 
\begin{eqnarray*}
\nabla \mathrm{grad}''f_i &=&
\nabla \left( \frac{1}{2}\left( \grad f_i + \sqrt{-1}J \grad f_i\right)\right)\\
&=& \frac{1}{2}\nabla \grad f_i+\frac{\sqrt{-1}}{2}\nabla (J\grad f)
\end{eqnarray*}
with $|\nabla J\grad f_i|=|\nabla \grad f_i|$, letting $i \to \infty$ in (\ref{9}) with Proposition \ref{divconv} and \cite[Proposition $3.72$]{Honda13} yields the assertion.

Next we prove the assertion for general $U$.
Since the statement is local, it suffices to check the assertion under $U=B_R(x)$ for some $R>0$ and $x \in X$.
Let $r<R$.
By \cite[Corollary $4.29$]{Honda13}, there exists $\phi \in \mathcal{D}^2_{\bfC}(\Delta, X) \cap \mathrm{LIP}(X)$ such that $0 \le \phi \le 1$, that $\phi|_{B_r(x)}\equiv 1$, that $\mathrm{supp}\,\phi \subset B_R(x)$ and that $\Delta \phi \in L^{\infty}(X)$.
From \cite[Theorem $4.5$]{Honda14b}, we have $\phi f \in \mathcal{D}^2(\Delta, X)$.
Since
\[\mathrm{div}(\mathrm{grad}''(\phi f))=\mathrm{tr}(\nabla \mathrm{grad}''(\phi f)),\]
by restricting this to $B_r(x)$ with Remark \ref{divergenceformula} we have the assertion.
\end{proof}
In order to define $\nabla''$ for vector fields in the manner of \cite{Gigli}, we recall the test class of $\bfR$-valued functions, $\mathrm{Test}F(X)$, defined by Gigli \cite{Gigli} as follows:
\[\mathrm{Test}F(X):=\{f \in \mathcal{D}^2(\Delta, X) \cap \mathrm{LIP}(X); \Delta f \in H^{1, 2}(X)\}.\]
We define the complex version of this as follows:
\[\mathrm{Test}_{\bfC}F(X):=\{f  \in \mathcal{D}^2_{\bfC}(\Delta, X) \cap \mathrm{LIP}_{\bfC}(X); \Delta f \in H^{1, 2}_{\bfC}(X) \}.\]
Proposition \ref{Laplacianreal} yields that for every $\bfC$-valued function $f$ on $X$, $f \in \mathrm{Test}_{\bfC}F(X)$ holds if and only if $f^i \in \mathrm{Test}F(X)$ holds for every $i=1, 2$, where $f=f^1+\sqrt{-1}f^2$ and $f^i$ is $\bfR$-valued.
On the other hand it is known in \cite{Gigli, Honda14b} that the space
\[\mathrm{Test}(T^1_1X):=\left\{\sum_{i=1}^Nf_{0, i}\grad f_{1, i}\otimes df_{2, i}; N \in {\bf{N}}, f_{j, i} \in \mathrm{Test}FX\right\}\]
is dense in $L^2(TX \otimes T^*X)$.
This gives that the space
\[\mathrm{Test}_{\bfC}((T^1_1)_{\bfC}X):=\left\{\sum_{i=1}^Nf_{0, i}\grad f_{1, i}\otimes df_{2, i}; N \in {\bf{N}}, f_{j, i} \in \mathrm{Test}_{\bfC}FX\right\}\]
is also dense in $L^2_{\bfC}(T_{\bfC}X \otimes T^*_{\bfC}X)$.
\begin{definition}[$\nabla''$ for vector fields in the manner of Gigli]\label{nabladef}
Let $\mathcal{D}^2_{\bfC}(\nabla'', X)$ be the set of $V \in L^2_{\bfC}(T_{\bfC}X)$ such that 
there exists $T \in L^2_{\bfC}(TX \otimes T^*_{\bfC}X)$ satisfying
\begin{align}\label{10}
&\int_Xf_0\,g_X(T, \grad f_1 \otimes df_2)\,dH^n  \\
&=\int_X\left( - \mathrm{div}(f_0\,\mathrm{grad}''f_2)\,g_X(V, \grad f_1) - f_0\,g_X(V, \nabla_{\mathrm{grad}''f_2}\grad f_1\right)\,dH^n \nonumber
\end{align}
for any $f_i \in \mathrm{Test}_{\bfC}F(X)$.
Since $T$ is unique, we denote it by $\nabla''V$.
\end{definition}
The following stability result for $\nabla''$ with respect to the Gromov-Hausdorff topology 
plays a key role in this paper:
\begin{proposition}\label{stability}
Let $V$ be the $L^2$-strong limit on $X$ of a sequence of $V_i \in \mathcal{D}^2_{\bfC}(\nabla'', X_i)$ with $\sup_i||\nabla''V_i||_{L^2}<\infty$. 
Then we see that $V \in \mathcal{D}^2_{\bfC}(\nabla'', X)$ and that $\nabla''V_i$ $L^2$-converges weakly to $\nabla''V$ on $X$.
\end{proposition}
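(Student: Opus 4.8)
The plan is to argue exactly as in Propositions \ref{divconv} and \ref{Laplacianweakconv}: extract a weak limit of $\nabla''V_i$ and identify it as $\nabla''V$ through the defining identity (\ref{10}). Since $\sup_i\|\nabla''V_i\|_{L^2}<\infty$, property (2.2a) lets me pass to a subsequence along which $\nabla''V_i$ $L^2$-converges weakly to some $T\in L^2_{\bfC}(TX\otimes T^*_{\bfC}X)$. It then suffices to show that $T$ satisfies (\ref{10}) for the given $V$: by the uniqueness of $T$ in Definition \ref{nabladef}, the limit is independent of the subsequence, so the standard subsequence argument promotes the conclusion to the full sequence and yields $V\in\mathcal{D}^2_{\bfC}(\nabla'',X)$ with $\nabla''V=T$.

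To test (\ref{10}) I fix $f_0,f_1,f_2\in\mathrm{Test}_{\bfC}F(X)$ and approximate them on $X_i$ as in the proof of Proposition \ref{derivative}. Writing $g_k:=\Delta f_k\in H^{1,2}_{\bfC}(X)$, I choose smooth mean-zero $g_{k,i}$ on $X_i$ that $L^2$-converge strongly to $g_k$, and set $f_{k,i}:=\Delta^{-1}g_{k,i}$. By \cite[Theorem 1.1]{Honda14b} together with Proposition \ref{Laplacianreal} (c.f. Remark \ref{remark2}), $f_{k,i}$ and $df_{k,i}$ $L^2$-converge strongly to $f_k$ and $df_k$, while $\mathrm{Hess}_{f_{k,i}}$ $L^2$-converges weakly to $\mathrm{Hess}_{f_k}$; each $f_{k,i}$ is smooth, so the smooth identity (\ref{6}) holds for the triple $(f_{0,i},f_{1,i},f_{2,i})$ on $X_i$. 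By Proposition \ref{convergencedecomp} the induced fields $\grad f_{1,i}$, $df_{2,i}$ and $\mathrm{grad}''f_{2,i}$ converge strongly as well, and are uniformly $L^\infty$-bounded.

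The identification of $T$ then reduces to passing to the limit in (\ref{6}) via weak-strong pairings. On the left, $f_{0,i}\grad f_{1,i}\otimes df_{2,i}$ converges $L^2$-strongly (a product of uniformly $L^\infty$-bounded, strongly convergent factors, c.f. \cite[Proposition 3.72]{Honda13}), so its pairing with the weakly convergent $\nabla''V_i$ tends to $\int_Xf_0\,g_X(T,\grad f_1\otimes df_2)\,dH^n$. For the first term on the right I expand, using Remark \ref{divergenceformula} and Proposition \ref{derivative}, $\mathrm{div}(f_{0,i}\mathrm{grad}''f_{2,i})=g_{X_i}(\grad f_{0,i},\mathrm{grad}''f_{2,i})+f_{0,i}\,\mathrm{tr}(\nabla\mathrm{grad}''f_{2,i})$; the first summand converges strongly and the second weakly through the weak convergence of $\mathrm{Hess}_{f_{2,i}}$, so the whole factor converges weakly and pairs with the strongly convergent $g_{X_i}(V_i,\grad f_{1,i})$ (here $V_i\to V$ strongly by hypothesis). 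For the second term, $\nabla_{\mathrm{grad}''f_{2,i}}\grad f_{1,i}$ is the contraction of the weakly convergent $\mathrm{Hess}_{f_{1,i}}$ against the $L^\infty$-bounded, strongly convergent $\mathrm{grad}''f_{2,i}$, hence converges weakly, while $f_{0,i}V_i$ converges strongly, and their pairing again passes to the limit. Collecting the three limits yields (\ref{10}) for $T$ and $V$.

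The delicate point is the reliance on the weak $L^2$-convergence of the Hessians $\mathrm{Hess}_{f_{k,i}}$ together with uniform Lipschitz bounds: the whole argument hinges on producing approximants $f_{k,i}$ that recover the full second-order data of $f_k$ in the limit while keeping $\|df_{k,i}\|_{L^\infty}$ uniformly bounded, so that every weak-strong pairing above is legitimate. The former is exactly the stability of Poisson solutions established in the proof of Proposition \ref{derivative}, and the latter follows from the gradient estimates available under $\Ric_{X_i}\ge Kg_{X_i}$; granting these, the remaining convergences are routine applications of the $L^p$-convergence calculus of Section 2.2.
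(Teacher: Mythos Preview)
Your overall strategy is correct and mirrors the paper's, but there is a genuine gap in the approximation step. You write $g_k:=\Delta f_k\in H^{1,2}_{\bfC}(X)$, pick smooth mean-zero $g_{k,i}$ on $X_i$ with $g_{k,i}\to g_k$ $L^2$-strongly, set $f_{k,i}:=\Delta^{-1}g_{k,i}$, and then assert that $f_{k,i}$ and $df_{k,i}$ are \textit{uniformly $L^\infty$-bounded}, attributing this to ``gradient estimates available under $\Ric_{X_i}\ge Kg_{X_i}$''. This is not justified: Lipschitz (or even $L^\infty$) bounds on $f_{k,i}$ require quantitative control on $\Delta f_{k,i}=g_{k,i}$ in $L^\infty$ (or at least in $L^p$ for $p>n$; cf.\ Remark \ref{holderlipschitz}), and mere $L^2$-strong convergence of $g_{k,i}$ gives no such uniform bound. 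Without $\sup_i(\|f_{0,i}\|_{L^\infty}+\|df_{1,i}\|_{L^\infty}+\|df_{2,i}\|_{L^\infty})<\infty$, the product $f_{0,i}\,\grad f_{1,i}\otimes df_{2,i}$ is not even uniformly bounded in $L^2$, so the weak--strong pairing with $\nabla''V_i$ breaks down; likewise the pairings on the right-hand side fail.

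This is precisely why the paper proceeds in two layers. First (Claim \ref{11}) one proves (\ref{10}) only for test functions $f_1,f_2$ with $\Delta f_k\in\mathrm{LIP}_{\bfC}(X)$: in that case (2.2e) produces approximants $g_{k,j}$ with $\sup_j\|dg_{k,j}\|_{L^\infty}<\infty$, and then \cite[Theorems 1.1 and 4.13]{Honda14b} yield $f_{k,j}\in\mathrm{LIP}_{\bfC}(X_j)$ with $\sup_j(\|f_{k,j}\|_{L^\infty}+\|df_{k,j}\|_{L^\infty})<\infty$, making all your pairings legitimate. Second (Claim \ref{12}) one approximates an arbitrary $f_k\in\mathrm{Test}_{\bfC}F(X)$ \textit{within $X$} by $f_{k,j}\in\mathrm{Test}_{\bfC}F(X)$ with $\Delta f_{k,j}\in\mathrm{LIP}_{\bfC}(X)$ and $\sup_j\|df_{k,j}\|_{L^\infty}<\infty$, via a mollified heat flow, and passes to the limit using \cite[Theorem 1.2]{Honda13} for the Hessian. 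Your argument collapses these two limits into one and loses exactly the regularity needed; to repair it you must either insert the heat-flow regularization step or otherwise arrange $\sup_i\|g_{k,i}\|_{L^\infty}<\infty$ before solving the Poisson equation on $X_i$.
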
 
\begin{proof}
By the compactness of $L^2$-weak convergence, without loss of generality we can assume that there exists the $L^2$-weak limit $T$ of $\nabla''V_i$ on $X$.

We first prove:
\begin{claim}\label{11}
The equation (\ref{10}) holds if $\Delta f_i \in \mathrm{LIP}_{\bfC}(X)$ holds for $i = 1,\ 2$.
\end{claim}
\noindent
The proof is as follows.
Suppose that $\Delta f_i \in \mathrm{LIP}_{\bfC}(X)$ holds for $i = 1,\ 2$.
Let $g_i := \Delta f_i$.
By (2.2e) there exists a sequence of $g_{i, j} \in \mathrm{LIP}_{\bfC}(X_j)$ such that $\sup_{i, j}||d g_{i, j}||_{L^{\infty}}<\infty$, that 
\[ \int_{X_j}g_{i, j}\,dH^n=0\]
and that $g_{i, j},\ d g_{i, j}$ $L^2$-converge strongly to $g_i,\ d g_i$ on $X$, respectively.
Let $f_{i, j}:=\Delta^{-1}g_{i, j}$.
By Proposition \ref{Laplacianreal}, Remark \ref{remark2}, \cite[Theorems $1.1$ and $4.13$]{Honda14b}, we see that $f_{i, j} \in \mathrm{LIP}_{\bfC}(X_j)$, that $\sup_{i, j}||f_{i, j}||_{L^{\infty}}<\infty$, that $f_{i, j},\ d f_{i, j}$ $L^2$-converge strongly to $f_i,\ d f_i$ on $X$, respectively, and that $\mathrm{Hess}_{f_{i, j}}$ $L^2$-converges weakly to $\mathrm{Hess}_{f_i}$ on $X$.
In particular, $f_{i, j} \in \mathrm{Test}_{\bfC}F(X)$ and $df_{2, j}$ $L^2$-converges strongly to $d f_2$ on $X$.
Since
\begin{eqnarray*}
&&\int_{X_j}f_{0, j}g_{X_j}(\nabla''V_i, \grad f_{1, j}\otimes df_{2, j})\,dH^n \\
&& \qquad\qquad =\int_{X_j}\left(- \mathrm{div}(f_{0, j}\mathrm{grad}'' f_{2, j})g_{X_j}(V_j, \grad f_{1, j})\right.\\
&& \qquad\qquad\qquad\qquad \left.-f_{0, j}g_{X_j}(V_j, \nabla_{\mathrm{grad}'' f_{2, j}}\grad f_{1, j})\right)\,dH^n,
\end{eqnarray*}
by letting $j \to \infty$ with Proposition \ref{divconv} we have Claim \ref{11}.

The following is shown in \cite[Proposition 7.5]{Honda14b}.
For reader's convenience we give the proof:
\begin{claim}\label{12}
Let $g \in \mathrm{Test}_{\bfC}F(X)$.
Then there exists a sequence $g_k \in \mathrm{Test}_{\bfC}F(X)$ with $\Delta g_k \in \mathrm{LIP}_{\bfC}(X)$ and $\sup_k||d g_k||_{L^{\infty}}<\infty$ such that $g_k, \Delta g_k \to g, \Delta g$ in $H^{1, 2}_{\bfC}(X)$, respectively.
\end{claim}
\noindent
The proof is as follows.
Let
\[h_{\delta, \epsilon}g_k:=h_{\delta}(\widetilde{h}_tg^1_k)+\sqrt{-1}h_{\delta}(\widetilde{h}_tg^2_k),\]
where $g_k=g_k^1+\sqrt{-1}g_k^2$, $h_t$ is the heat flow on $X$ and $\widetilde{h}_t$ is a mollified heat flow defined by
\[\widetilde{h}_tg_k:=\frac{1}{t}\int_0^{\infty}h_sg_k\phi (st^{-1})ds\]
for some nonnegatively valued smooth function $\phi$ on $(0, 1)$ with
\[\int_0^1\phi ds=1\]
(see for instance \cite{AGS} for the heat flow and \cite[(3.2.3)]{Gigli} for a mollified heat flow).
From the regularity of the heat flow \cite{AGS, Gigli} with Proposition \ref{Laplacianreal} we have the following:
\begin{enumerate}
\item $h_{\delta, \epsilon}g \in \mathrm{Test}_{\bfC}F(X)$.
\item $\Delta h_{\delta, \epsilon}g \in \mathrm{LIP}_{\bfC}(X)$.
\item $\sup_{\delta, \epsilon<1}||\nabla (h_{\delta, \epsilon}g)||_{L^{\infty}}<\infty$.
\item $h_{\delta, \epsilon}g, \Delta h_{\delta, \epsilon}g  \to \widetilde{h}_{\epsilon}g, \Delta \widetilde{h}_{\epsilon}g$ in $H^{1, 2}_{\bfC}(X)$, respectively as $\delta \to 0$.
\item $\widetilde{h}_{\epsilon}g, \Delta \widetilde{h}_{\epsilon}g \to g, \Delta g$ in $H^{1, 2}_{\bfC}(X)$, respectively as $\epsilon \to 0$.
\end{enumerate}
This completes the proof of Claim \ref{12}.

We are now in a position to finish the proof of Proposition \ref{stability}.
Let $f_i \in \mathrm{Test}_{\bfC}F(X)$.
Then Claim \ref{12} yields that there exists a sequence $f_{i, j} \in \mathrm{Test}_{\bfC}F(X)$ such that $\Delta f_{i, j} \in \mathrm{LIP}_{\bfC}(X)$, that $\sup_{i, j}||\nabla f_{i, j}||_{L^{\infty}}<\infty$, and that $f_{i, j}, \Delta f_{i, j} \to f_i, \Delta f_i$ in $H^{1, 2}_{\bfC}(X)$.  
Note that by \cite[Theorem $1.2$]{Honda13}, $\mathrm{Hess}_{f_{i, j}}$ $L^2$-converges weakly to $\mathrm{Hess}_{f_i}$ on $X$.
Claim \ref{11} yields
\begin{align*}
&\int_Xf_{0, j}g_X(T, \grad f_{1, j} \otimes df_{2, j})\,dH^n\\
&=\int_X\left(- \mathrm{div}(f_{0, j}\mathrm{grad}''f_{2, j})g_X(V, \grad f_{1, j})-f_{0, j}g_X(V, \nabla_{\mathrm{grad}''f_{2, j}}\grad f_{1, j}\right)\,dH^n.
\end{align*}
By letting $j \to \infty$ we have $V \in \mathcal{D}^2_{\bfC}(\nabla'', X)$ with $\nabla''V=T$.
This completes the proof.    
\end{proof}
We end this section by giving a compatibility between our setting and smooth setting:
\begin{proposition}\label{compatibility}
Suppose that $(U, g_X|_U, J|_U)$ is a smooth K\"ahler manifold.  
Then for every $V \in C^{\infty}_{\bfC}(U)$, $\nabla''V$ in the sense of Definition \ref{nabladef} coincides with the ordinary one.
\end{proposition}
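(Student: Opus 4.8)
The plan is to reduce the statement to the smooth characterization of $\nabla''$ recalled in (\ref{6}) at the beginning of this section, by feeding into the defining identity (\ref{10}) only test functions supported in the smooth region $U$. Since the assertion is local, it suffices to compare the weak $\nabla''V$ with the classical covariant derivative $\nabla''_{\mathrm{sm}}V$ a.e. on an arbitrary relatively compact open subset $O$ with $\overline{O}\subset U$, using test functions compactly supported in $U$.

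First I would check that smooth compactly supported functions on $U$ furnish admissible test functions. Fix $\varphi_0,\varphi_1,\varphi_2\in C^{\infty}_{\bfC}(U)$ with compact support in $U$ and extend each by zero to $X$; the result is $\bfC$-valued Lipschitz on $X$. Because $d\varphi_i$ is supported in $\mathrm{supp}\,\varphi_i\subset U$ and the singular set carries no measure, the smooth Green identity on $U$ (with no boundary contribution, by compact support) gives $\int_Xh_X^*(d\varphi_i,d\psi)\,dH^n=\int_X(\Delta_{\mathrm{sm}}\varphi_i)\overline{\psi}\,dH^n$ for every $\psi\in\mathrm{LIP}_{c,\bfC}(X)$, so the weak Laplacian of $\varphi_i$ equals $\Delta_{\mathrm{sm}}\varphi_i$. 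As the latter is again smooth with compact support in $U$, it lies in $\mathrm{LIP}_{\bfC}(X)\cap H^{1,2}_{\bfC}(X)$, whence $\varphi_i\in\mathrm{Test}_{\bfC}F(X)$.

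Next I would substitute these $\varphi_i$ into (\ref{10}). On $U$ the weak divergence, the $\mathrm{grad}''$ decomposition, and the Levi-Civita connection all agree a.e. with their smooth counterparts, by the compatibility of the weakly second-order differentiable structure with the smooth one (\cite{Honda14a, Honda14b, Gigli}, together with Proposition \ref{connection}). Hence the right-hand side of (\ref{10}) for these test functions is literally the right-hand side of the smooth identity (\ref{6}) computed on $U$, and therefore
\[
\int_U\varphi_0\,g_X(\nabla''V,\grad\varphi_1\otimes d\varphi_2)\,dH^n
=\int_U\varphi_0\,g_X(\nabla''_{\mathrm{sm}}V,\grad\varphi_1\otimes d\varphi_2)\,dH^n
\]
for all such $\varphi_i$. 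I would then conclude by the smooth analogue of the density statement used just before Definition \ref{nabladef}: the span of the simple tensors $\varphi_0\,\grad\varphi_1\otimes d\varphi_2$, with $\varphi_i\in C^{\infty}_{\bfC}(U)$ of compact support, is dense in $L^2_{\bfC}(T_{\bfC}X\otimes T^*_{\bfC}X|_U)$. Since $\nabla''V-\nabla''_{\mathrm{sm}}V\in L^2$ has vanishing $g_X$-pairing against every such tensor and $g_X$ is nondegenerate, it vanishes a.e. on $U$, which is the claim.

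The main obstacle is not the density step, which is routine smooth differential geometry, but the verification that every ingredient of the right-hand side of (\ref{10})—in particular $\mathrm{div}(f_0\,\mathrm{grad}''f_2)$ and the term $\nabla_{\mathrm{grad}''f_2}\grad f_1$—genuinely reduces to its classical expression \emph{a.e.} on the smooth locus $U$. This is where one must carefully invoke the compatibility of Honda's and Gigli's weakly second-order structure with the smooth structure of $U$, ensuring the identifications hold pointwise a.e. rather than merely in a distributional sense that could conceal the singular set.
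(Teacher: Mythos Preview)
Your proof is correct and follows essentially the same route as the paper's own proof: both arguments plug compactly supported smooth functions on $U$ into the defining identity (\ref{10}), identify the right-hand side with the classical expression (\ref{6}), and conclude by the $L^2$-density of finite sums $\sum f_{0,i}\grad f_{1,i}\otimes df_{2,i}$ with $f_{j,i}\in C^{\infty}_{c,\bfC}(U)$. The paper's proof is simply terser, taking for granted the two points you spell out---that $C^{\infty}_{c,\bfC}(U)\subset\mathrm{Test}_{\bfC}F(X)$ after extension by zero, and that the weak divergence and Hessian terms in (\ref{10}) reduce to their smooth versions on $U$---so your version is a fleshed-out form of the same argument rather than a different one.
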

\begin{proof}
Let $S:=\nabla'' V$ be as in the sense of Definition \ref{nabladef} and let $T:=\nabla'' V$ be as in the ordinary sense.
From (\ref{6}) and Definition \ref{nabladef} we have
\begin{align*}
\int_Ug_X(S, f_0\grad f_1 \otimes df_2)\,dH^n=\int_Ug_X(T, f_0\grad f_1 \otimes df_2)\,dH^n
\end{align*}
for any $f_i \in C^{\infty}_{c, \bfC}(U)$.
Since the space 
\[\left\{ \sum_{i=1}^Nf_{0, i}\grad f_{1, i} \otimes df_{2, i}; N \in \mathbf{N}, f_{j, i} \in C^{\infty}_{c, \bfC}(U)\right\}\]
is dense in $L^2_{\bfC}(T_{\bfC}U \otimes T^*_{\bfC}U)$, we have $S=T$.
\end{proof}

\section{Fano-Ricci limit spaces}
\subsection{Definition of Fano-Ricci limit spaces}
In this subsection, besides (2.1a) - (2.1e), we add the following assumptions:
\begin{enumerate}
\item[(4.1a)] $X_i$ is an $m$-dimensional 
Fano manifold with the K\"ahler form $\omega_i$ in $2\pi c_1(X_i)$ for every $i$.
\item[(4.1b)]  For every $i$, $F_i$ is the Ricci potential, i.e.
\[\mathrm{Ric}(\omega_i)-\omega_i= \sqrt{-1}\partial \overline{\partial}F_i.\]
with the normalization
$$ \int_{X_i} e^{F_i} \omega_i^m = \int_{X_i} \omega^m,$$
or equivalently $H^n_{F_i}(X_i)=H^n(X_i)$. (Recall $n = 2m$.)
\end{enumerate}
Then we call $(X, g_X, J, F)$ the \textit{Fano-Ricci limit space of $(X_i, g_{X_i}, J_i, F_i)$} or the \textit{Fano-Ricci limit space} for short. Since $F_i$ is uniquely determined by $g_{X_i}$ we shall omit $F$ and $F_i$,
and write $(X, g_X, J)$ and $(X_i, g_{X_i}, J_i)$ if no confusion is likely to occur.
\begin{theorem}[Weitzenb\"ock inequality]\label{weith}
Let $f \in \mathcal{D}^2_{\bfC}(\Delta^F_{\overline{\partial}}, X)$.
Then we have $\mathrm{grad}'f \in \mathcal{D}^2_{\bfC}(\nabla'', X)$ with $\nabla'' \mathrm{grad}'f \in L^2_{\bfC}(T_{\bfC}'X \otimes (T^*_{\bfC}X)'')$ and
\begin{align}\label{15}
\int_X|\Delta^F_{\overline{\partial}}f|^2\,dH^n_F \ge \int_X|\nabla '' \mathrm{grad}'f|^2\,dH^n_F+\int_X|\overline{\partial}f|^2\,dH^n_F.
\end{align}
\end{theorem}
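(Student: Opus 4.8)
The plan is to prove the inequality by approximation from the smooth Fano manifolds $X_i$, where the corresponding Weitzenb\"ock \emph{equality} (the integrated pointwise identity displayed in the introduction) holds, and then to pass to the limit using the $L^2$-weak stability of $\nabla''$ (Proposition \ref{stability}). Since $\mathrm{grad}'$, $\overline{\partial}$ and $\Delta^F_{\overline{\partial}}$ all annihilate constants, adding a constant to $f$ changes none of the three terms in (\ref{15}), so I may assume $\int_X f\,dH^n_F=0$. Setting $g:=\Delta^F_{\overline{\partial}}f$, testing the defining identity (\ref{weightedlap}) against constants forces $\int_X g\,dH^n_F=0$, and Proposition \ref{Poisson} then gives $f=(\Delta^F_{\overline{\partial}})^{-1}g$.

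First I would choose, via (2.2f) together with a mean-zero adjustment and heat-flow smoothing as in the proof of Proposition \ref{L2norm}, a sequence $g_i\in C^\infty_{\bfC}(X_i)$ with $\int_{X_i}g_i\,dH^n_{F_i}=0$ that $L^2$-converges strongly to $g$, and set $f_i:=(\Delta^{F_i}_{\overline{\partial}})^{-1}g_i$. By Proposition \ref{poissoncont}, $f_i$ and $df_i$ converge strongly in $L^2$ to $f$ and $df$. Since $g_i$ is smooth, $f_i$ is smooth on the Fano manifold $X_i$, so the smooth Weitzenb\"ock equality applies to $f_i$, and by Proposition \ref{compatibility} the classical $\nabla''\mathrm{grad}'f_i$ agrees with the one of Definition \ref{nabladef}. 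Using Proposition \ref{convergencedecomp} (the musical identification carries the strong convergence of $df_i$ over to that of $\mathrm{grad}\,f_i$, and the $\pm\sqrt{-1}$-eigenspace decompositions of the relevant $J_i^l$ converge), both $\mathrm{grad}'f_i\to\mathrm{grad}'f$ and $\overline{\partial}f_i\to\overline{\partial}f$ strongly in $L^2$.

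Next I would read the smooth equality as
\[\int_{X_i}|\nabla''\mathrm{grad}'f_i|^2\,dH^n_{F_i}=\int_{X_i}|g_i|^2\,dH^n_{F_i}-\int_{X_i}|\overline{\partial}f_i|^2\,dH^n_{F_i}.\]
By the strong convergences just recorded and the weighted form of convergence of $L^2$-norms (Proposition \ref{Lpremark} and (2.2d)), the right-hand side converges to $\int_X|\Delta^F_{\overline{\partial}}f|^2\,dH^n_F-\int_X|\overline{\partial}f|^2\,dH^n_F$; in particular $\sup_i\|\nabla''\mathrm{grad}'f_i\|_{L^2}<\infty$, the bound in the unweighted norm following since $e^{-L}\le e^{F_i}\le e^{L}$. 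Now Proposition \ref{stability}, applied to $V_i=\mathrm{grad}'f_i\to\mathrm{grad}'f$, yields $\mathrm{grad}'f\in\mathcal{D}^2_{\bfC}(\nabla'',X)$ together with $L^2$-weak convergence $\nabla''\mathrm{grad}'f_i\to\nabla''\mathrm{grad}'f$. Since each $\mathrm{grad}'f_i\in T'X_i$, Proposition \ref{connection} gives $\nabla''\mathrm{grad}'f_i\in T'X_i\otimes(T^*_{\bfC}X_i)''$, and the type is preserved in the weak limit by Remark \ref{decompremark}, proving $\nabla''\mathrm{grad}'f\in L^2_{\bfC}(T'_{\bfC}X\otimes(T^*_{\bfC}X)'')$.

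Finally, lower semicontinuity of the $L^2$-norm under weak convergence ((2.2b), in weighted form) gives
\[\int_X|\nabla''\mathrm{grad}'f|^2\,dH^n_F\le\liminf_{i\to\infty}\int_{X_i}|\nabla''\mathrm{grad}'f_i|^2\,dH^n_{F_i}=\int_X|\Delta^F_{\overline{\partial}}f|^2\,dH^n_F-\int_X|\overline{\partial}f|^2\,dH^n_F,\]
which rearranges to (\ref{15}). The main obstacle is exactly the passage from equality to inequality: because $\nabla''$ is only weakly $L^2$-stable, Proposition \ref{stability} delivers weak rather than strong convergence of $\nabla''\mathrm{grad}'f_i$, so the smooth equality survives in the limit only as the lower-semicontinuity inequality, and one cannot in general expect equality. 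Securing the uniform bound $\sup_i\|\nabla''\mathrm{grad}'f_i\|_{L^2}<\infty$ from the smooth Weitzenb\"ock identity, and the strong convergence $\mathrm{grad}'f_i\to\mathrm{grad}'f$ needed to invoke Proposition \ref{stability}, are the two places demanding care.
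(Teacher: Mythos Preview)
Your proposal is correct and follows essentially the same route as the paper's proof: approximate $g=\Delta^F_{\overline{\partial}}f$ by smooth mean-zero $g_i$ on $X_i$, set $f_i=(\Delta^{F_i}_{\overline{\partial}})^{-1}g_i$, use Proposition \ref{poissoncont} and Proposition \ref{convergencedecomp} for the strong convergences, read off the $L^2$-bound on $\nabla''\mathrm{grad}'f_i$ from the smooth Weitzenb\"ock identity, apply Proposition \ref{stability} for weak convergence, and conclude via lower semicontinuity. Your explicit normalization $\int_X f\,dH^n_F=0$ and the invocation of Proposition \ref{connection} for the tensor type are details the paper leaves implicit, but the argument is otherwise identical.
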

\begin{proof}
Let $g:=\Delta^F_{\overline{\partial}}f$.
By (2.2e), 
there exists a sequence of $g_i \in C^{\infty}_{\bfC}(X_i)$ such that 
$g_i,\ dg_i$ $L^2$-converge strongly to $g,\ dg$ on $X$, respectively and that 
\[\int_{X_i}g_i\,dH^n_{F_i}=0.\]
Let $f_i:=(\Delta^{F_i}_{\overline{\partial}})^{-1}g_i$. 
Propositions \ref{convergencedecomp} and \ref{poissoncont} yield that $\overline{\partial}f_i$ $L^2$-converges strongly to $\overline{\partial}f$ on $X$.
Now we use the Weitzenb\"ock formula on a Fano manifold (see 
\cite[page 41]{Futaki88})
\begin{align}\label{16}
\int_{X_i}|\Delta^{F_i}_{\overline{\partial}}f_i|^2\,dH^n_{F_i} = \int_{X_i}|\nabla '' \mathrm{grad}'f_i|^2\,dH^n_{F_i}+\int_{X_i}|\overline{\partial}f_i|^2\,dH^n_{F_i}.
\end{align}
In particular we have $\sup_i||\nabla'' \mathrm{grad}'f_i||_{L^2}<\infty$.
Thus by Remark \ref{decompremark} and Proposition \ref{stability}, we see that $\mathrm{grad}'f \in \mathcal{D}^2_{\bfC}(\nabla'', X)$, that $\nabla'' \mathrm{grad}'f \in L^2_{\bfC}(T_{\bfC}'X \otimes T''X)$ and that
$\nabla'' \mathrm{grad}'f_i$ $L^2$-converges weakly to $\nabla''\mathrm{grad}'f$ on $X$.
Thus by taking $i \to \infty$ in (\ref{16}) we have (\ref{15}).
\end{proof}
\begin{corollary}\label{1steigen}
We have the following.
\begin{enumerate}
\item[(1)] $\lambda_1(\Delta_{\overline{\partial}}^F, X) \ge 1$.
\item[(2)] If $f \in \mathcal{D}^2_{\bfC}(\Delta_{\overline{\partial}}^F, X)$ with $\Delta^F_{\overline{\partial}}f=f$, then
$\nabla''\mathrm{grad}'f=0$. In particular if $(U, g_X|_U, J|_U)$ is a smooth K\"ahler manifold with $F|_U \in C^{\infty}(U)$, then $f|_U \in C^{\infty}_{\bfC}(U)$ and $\mathrm{grad}'f$ is a holomorphic vector field on $U$. 
\end{enumerate}
\end{corollary}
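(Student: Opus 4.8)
The plan is to read off both conclusions directly from the Weitzenb\"ock inequality (\ref{15}) established in Theorem \ref{weith}, treating the eigenvalue equation quantitatively.

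First I would prove (1). Suppose $f \in \mathcal{D}^2_{\bfC}(\Delta^F_{\overline{\partial}}, X)$ is a $\lambda$-eigenfunction with $\lambda > 0$, normalized so that $\int_X|f|^2\,dH^n_F = 1$. Then $|\Delta^F_{\overline{\partial}}f|^2 = \lambda^2|f|^2$, while the defining property of $\Delta^F_{\overline{\partial}}$ gives
$$\int_X|\overline{\partial}f|^2\,dH^n_F = \int_X(\Delta^F_{\overline{\partial}}f)\overline{f}\,dH^n_F = \lambda.$$
Substituting these into (\ref{15}) and discarding the nonnegative term $\int_X|\nabla''\mathrm{grad}'f|^2\,dH^n_F$ yields $\lambda^2 \ge \lambda$, hence $\lambda \ge 1$. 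As this holds for every positive eigenvalue, we conclude $\lambda_1(\Delta^F_{\overline{\partial}}, X) \ge 1$.

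For (2), take $\lambda = 1$. The same computation now turns (\ref{15}) into the equation $1 \ge \int_X|\nabla''\mathrm{grad}'f|^2\,dH^n_F + 1$, which forces $\int_X|\nabla''\mathrm{grad}'f|^2\,dH^n_F = 0$, so $\nabla''\mathrm{grad}'f = 0$ a.e.\ on $X$. On the smooth locus $U$, Proposition \ref{compatibility} identifies this $\nabla''$ with the ordinary covariant derivative, so $\nabla''\mathrm{grad}'f = 0$ there in the classical sense. Since $\mathrm{grad}'f$ is a section of $T'X$ and, on a K\"ahler manifold, $\nabla''$ is precisely the $(0,1)$-part of the Levi-Civita (equivalently Chern) connection, this is exactly the holomorphicity condition $\overline{\partial}(\mathrm{grad}'f) = 0$ on $U$.

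It remains to show $f|_U \in C^{\infty}_{\bfC}(U)$, and this is the step I expect to require the most care. The idea is to recast the eigenvalue equation as a genuine elliptic PDE on $U$: combining Propositions \ref{Laplacianequal} and \ref{Laplacianexplicite} (applicable since $F|_U \in C^\infty(U)$), the relation $\Delta^F_{\overline{\partial}}f = f$ becomes, on $U$,
$$\Delta f = 2f + 2h^*_X(\overline{\partial}f, \overline{\partial}F),$$
where on the smooth K\"ahler manifold $(U, g_X|_U, J|_U)$ the weak Laplacian $\Delta$ coincides with the classical Laplace--Beltrami operator. Because $f|_U \in H^{1,2}_{\bfC,\mathrm{loc}}(U)$ and $\overline{\partial}F$ is smooth, the right-hand side lies in $L^2_{\mathrm{loc}}$; elliptic regularity then gives $f \in H^{2,2}_{\mathrm{loc}}$, which upgrades the right-hand side to $H^{1,2}_{\mathrm{loc}}$, and a standard bootstrap yields $f|_U \in C^\infty_{\bfC}(U)$. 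The genuinely delicate point is the identification underlying this argument, namely that the nonsmooth weak formulation of $\Delta^F_{\overline{\partial}}f = f$ agrees on $U$ with the classical elliptic equation so that the usual regularity theory applies; once this is in place the remaining conclusions follow immediately from (\ref{15}).
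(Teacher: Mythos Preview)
Your proof is correct and follows essentially the same route as the paper: plug the eigenvalue equation into the Weitzenb\"ock inequality (\ref{15}) to obtain both (1) and $\nabla''\mathrm{grad}'f=0$, then use Proposition \ref{Laplacianexplicite} plus elliptic regularity for smoothness on $U$, and Proposition \ref{compatibility} for holomorphicity. One small point of logical order: Proposition \ref{compatibility} is stated for \textit{smooth} vector fields $V$ on $U$, so you should establish $f|_U\in C^\infty_{\bfC}(U)$ \textit{before} invoking it to identify the weak $\nabla''$ with the classical one---the paper proceeds in that order.
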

\begin{proof}
Let $f \in \mathcal{D}^2_{\bfC}((\Delta_{\overline{\partial}}^F, X)$ be a $\lambda$-eigenfunction of $\Delta^F_{\overline{\partial}}$ on $X$.
Then Theorem \ref{weith} yields
\[(\lambda-1)\int_X|f|^2\,dH^n_F\ge \int_X|\nabla'' \mathrm{grad}'f|^2\,dH^n_F.\]
This proves (1). This also shows that if $f \in \mathcal{D}^2_{\bfC}((\Delta_{\overline{\partial}}^F, X)$ with $\Delta^F_{\overline{\partial}}f=f$ then
$$\nabla''\mathrm{grad}'f=0.$$
Finally we assume that $(U, g_X|_U, J)$ is a smooth K\"ahler manifold with $F|_U \in C^{\infty}(U)$.
Then Proposition \ref{Laplacianexplicite} and the elliptic regularity theorem yield $f|_U \in C^{\infty}_{\bfC}(U)$.
Thus Proposition \ref{compatibility} yields that $\mathrm{grad}'f$ is a holomorphic vector field on $U$. 
\end{proof}
\begin{remark}\label{eigenhol}
Corollary \ref{1steigen} with Proposition \ref{L2norm} gives that a $\bfC$-linear map 
\begin{eqnarray*}
\Phi: \Lambda_1=\Lambda_1(X)&:=&\left\{f \in \mathcal{D}^2_{\bfC}(\Delta^F_{\overline{\partial}}, X); \Delta^F_{\overline{\partial}}f=f\right\}\\
&& \qquad \to L^2_{\bfC}(T'X) \cap \left\{V \in \mathcal{D}^2_{\bfC}(\nabla'', X); \nabla''V=0\right\}
\end{eqnarray*}
defined by $\Phi(f):=\mathrm{grad}'f$ is injective.
\end{remark}

Let $\mathfrak{h}_1(X)$ be the set of $V \in L^2_{\bfC}(T'X)$ with $V=\mathrm{grad}'u$ for some $u \in \Lambda_1$ (i.e. $\mathfrak{h}_1(X)=\Phi(\Lambda_1)$).
It is known (\cite{Futaki87}, \cite{Futaki88}) that, if $(X, g_X, J)$ is a smooth Fano manifold, then $\mathfrak{h}_1(X)$ coincides with the space of all holomorphic vector fields on $X$.
\begin{proposition}\label{uppersemi}
Let $V_i \in \mathfrak{h}_1(X_i)$ be a sequence with $\sup_i||V_i||_{L^2}<\infty$.
Then there exist a subsequence $\{i(j)\}_j$ and $V \in \mathfrak{h}_1(X)$ such that $V_{i(j)}$ $L^2$-converges strongly to $V$ on $X$.
In particular,
\[\limsup_{i \to \infty}\mathrm{dim}\,\mathfrak{h}_1(X_i)\le \mathrm{dim}\,\mathfrak{h}_1(X)<\infty.\]
\end{proposition}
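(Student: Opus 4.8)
The plan is to reduce each vector field to its scalar potential, convert the given $L^2$-bound on the $V_i$ into a uniform $H^{1,2}$-bound on the potentials, and then apply the spectral convergence machinery of Section~3.2. For each $i$, since $X_i$ is smooth and $V_i \in \mathfrak{h}_1(X_i)$, write $V_i = \mathrm{grad}'u_i$ with $u_i \in \Lambda_1(X_i)$, i.e. $\Delta^{F_i}_{\overline{\partial}}u_i = u_i$; such a potential exists by definition of $\mathfrak{h}_1(X_i)$ and is unique by the injectivity noted in Remark~\ref{eigenhol}.

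The key step is the transfer of norms. On the smooth K\"ahler manifold $X_i$ one has the pointwise identity $|\mathrm{grad}'u_i| = |\overline{\partial}u_i|$, so combining this with the eigenvalue relation
\[
\int_{X_i}|\overline{\partial}u_i|^2\,dH^n_{F_i} = \int_{X_i}(\Delta^{F_i}_{\overline{\partial}}u_i)\overline{u_i}\,dH^n_{F_i} = \int_{X_i}|u_i|^2\,dH^n_{F_i}
\]
gives $||u_i||_{L^2_{F_i}} = ||\overline{\partial}u_i||_{L^2_{F_i}} = ||V_i||_{L^2_{F_i}}$. Using the uniform two-sided bound $e^{-L}\le e^{F_i}\le e^{L}$ to pass between weighted and unweighted norms, and Proposition~\ref{L2norm} to control $||du_i||_{L^2}$ by $||\overline{\partial}u_i||_{L^2}$, together with $\Delta^{F_i}_{\overline{\partial}}u_i = u_i$, I obtain $\sup_i\big(||u_i||_{H^{1,2}_{\bfC}} + ||\Delta^{F_i}_{\overline{\partial}}u_i||_{L^2}\big) < \infty$. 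It is precisely the eigenvalue equation — which upgrades the first-order quantity $||\overline{\partial}u_i||$ to control of $u_i$ itself — that makes this transfer possible, and this is the main point of the proof.

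With the uniform bound in hand, by $L^2$-weak compactness (2.2a) I pass to a subsequence so that $u_i$ has an $L^2$-weak limit $u$, and Proposition~\ref{Laplacianweakconv} then yields $u \in \mathcal{D}^2_{\bfC}(\Delta^F_{\overline{\partial}}, X)$ with $u_i,\ du_i$ converging $L^2$-strongly to $u,\ du$ and $\Delta^{F_i}_{\overline{\partial}}u_i$ converging $L^2$-weakly to $\Delta^F_{\overline{\partial}}u$. Since $\Delta^{F_i}_{\overline{\partial}}u_i = u_i$ converges strongly, hence weakly, to $u$, uniqueness of the weak limit forces $\Delta^F_{\overline{\partial}}u = u$, i.e. $u \in \Lambda_1(X)$. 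Finally, the strong convergence of $du_i$, together with Proposition~\ref{convergencedecomp} (and the fact that the metric duality preserves $L^2$-strong convergence), gives $V_i = \mathrm{grad}'u_i \to \mathrm{grad}'u =: V$ strongly, with $V \in \mathfrak{h}_1(X)$ (the type being preserved by Remark~\ref{decompremark}); relabelling yields the desired $V_{i(j)}$.

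For the dimension bound, $\dim\mathfrak{h}_1(X) = \dim\Lambda_1(X) < \infty$ by the bijectivity of $\Phi$ (Remark~\ref{eigenhol}) and the finite-dimensionality of each eigenspace of $\Delta^F_{\overline{\partial}}$ on $X$. Set $N := \limsup_i \dim\mathfrak{h}_1(X_i)$ and pass to a subsequence along which $\dim\mathfrak{h}_1(X_i) = N$; choose a $dH^n_{F_i}$-orthonormal basis $V_i^{(1)}, \dots, V_i^{(N)}$ of $\mathfrak{h}_1(X_i)$. Applying the first part together with a diagonal argument, I extract a further subsequence so that $V_i^{(k)} \to V^{(k)} \in \mathfrak{h}_1(X)$ strongly for each $k$. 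Since $L^2$-strong convergence preserves inner products (see \cite{Honda13}), the limits $V^{(1)}, \dots, V^{(N)}$ are orthonormal, hence linearly independent, so $\dim\mathfrak{h}_1(X) \ge N$. I expect the only delicate points to be the bookkeeping of this final diagonal extraction and the verification that passing to the $T'X$-component and applying the metric duality both preserve $L^2$-strong convergence, all of which are available from \cite{Honda13}.
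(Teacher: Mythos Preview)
Your proof is correct and follows essentially the same approach as the paper's: pass to the scalar potential $u_i\in\Lambda_1(X_i)$, obtain a uniform $H^{1,2}$-bound (you do this directly via the eigenvalue identity $\|u_i\|_{L^2_{F_i}}=\|\overline{\partial}u_i\|_{L^2_{F_i}}=\|V_i\|_{L^2_{F_i}}$, whereas the paper invokes the proof of Proposition~\ref{poissoncont}, which uses the Poincar\'e inequality), and then apply the compactness and stability results of Section~3.2 (Proposition~\ref{Laplacianweakconv} and Proposition~\ref{convergencedecomp}) to conclude; your explicit orthonormal-basis argument for the dimension inequality is the standard fleshing-out of what the paper leaves implicit.
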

\begin{proof}
Let $u_i \in \Lambda_1(X_i)$ with $V_i=\mathrm{grad}'u_i$.
By the proof of Proposition \ref{poissoncont}, we have $\sup_i||u_i||_{H^{1, 2}_{\bfC}}<\infty$.
Thus by Theorem \ref{Rellich}, without loss of generality we can assume that there exists the $L^2$-strong limit $u \in H^{1, 2}_{\bfC}(X)$ of $u_i$ on $X$.
Proposition \ref{poissoncont} gives that $u_i, du_i$ $L^2$-converge strongly to $u, du$ on $X$, respectively.
In particular, by Propositions \ref{convergencedecomp} and \ref{Laplacianweakconv}, we see that $u \in \Lambda_1$ and that $\mathrm{grad}'u_i$ $L^2$-converges strongly to $\mathrm{grad}'u \in \mathfrak{h}_1(X)$ on $X$.

Note that the finite dimensionality of $\mathfrak{h}_1(X)$ follows from that of $\Lambda_1$.
Thus this completes the proof.
\end{proof}

\begin{remark}\label{extend}
It is easy to check that the similar results as above hold even if each $(X_i, g_{X_i}, J_i, F_i),$ is a 
Fano-Ricci limit spaces.
\end{remark}

Finally we define the Futaki invariant of the Fano-Ricci limit space $(X, g_X, J, F)$ as a $\bfC$-valued linear function on $\mathfrak{h}_1(X)$:
\begin{definition}
We define $\mathcal{F}: \mathfrak{h}_1(X) \to \bfC$ by
\[\mathcal{F}_X(V, g_X):=\int_XV(F)\,dH^n.\]
\end{definition}

\begin{proposition}\label{Futaki1}
We have the following:
\begin{enumerate}
\item[(1)] Let $V \in \mathfrak{h}_1(X)$ with $V=\mathrm{grad}'u$ for some $u \in \Lambda_1$. Then
\[\mathcal{F}_X(V,g_X):=-\int_Xu\,dH^n.\]
\item[(2)] Let $V_i$ be a sequence in $\mathfrak{h}_1(X_i)$ and let $V \in \mathfrak{h}_1(X)$ be the $L^2$-strong limit on $X$.
Then 
\[\lim_{i \to \infty}\mathcal{F}_{X_i}(V_i, g_{X_i})=\mathcal{F}_X(V,g_X).\]
\end{enumerate}
\end{proposition}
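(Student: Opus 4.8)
The plan is to reduce the analytically defined quantity $\mathcal{F}_X(V,g_X)=\int_X V(F)\,dH^n$ to the integral of the eigenfunction $u$, exactly as in the smooth theory, and then to deduce continuity from the convergence of the associated eigenfunctions. I would begin with part (1). Writing $V=\mathrm{grad}'u$ with $u\in\Lambda_1$, the first step is the pointwise identity $V(F)=h_X^*(\overline{\partial}u,\overline{\partial}F)$, valid a.e. on the weakly differentiable structure of $X$: since $\overline{\partial}F\in(T^*X)''$ annihilates $T'X\ni\mathrm{grad}'u$, one has $V(F)=dF(V)=\partial F(\mathrm{grad}'u)$, and a local computation using $\nabla J\equiv 0$ identifies $\partial F(\mathrm{grad}'u)$ with $h_X^*(\overline{\partial}u,\overline{\partial}F)$. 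This is the singular analogue of the smooth formula, compatible with Propositions \ref{connection} and \ref{compatibility}.

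Next I would invoke the relation between the weighted and unweighted $\overline{\partial}$-Laplacians. By Remark \ref{L1lap} (the $L^1$ form of Proposition \ref{Laplacianexplicite}), $u\in\mathcal{D}^{2,1}_{\bfC}(\Delta_{\overline{\partial}},X)$ and
\[
\Delta^F_{\overline{\partial}}u=\Delta_{\overline{\partial}}u-h_X^*(\overline{\partial}u,\overline{\partial}F).
\]
Since $\Delta^F_{\overline{\partial}}u=u$, this rearranges to $V(F)=h_X^*(\overline{\partial}u,\overline{\partial}F)=\Delta_{\overline{\partial}}u-u$. Testing the defining equation of $\Delta_{\overline{\partial}}u$ against the constant function $1$ (legitimate since $X$ is compact and $\overline{\partial}1=0$) gives $\int_X\Delta_{\overline{\partial}}u\,dH^n=0$, whence $\int_X V(F)\,dH^n=-\int_X u\,dH^n$, which is (1). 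The point requiring care here is that $h_X^*(\overline{\partial}u,\overline{\partial}F)\in L^1$, so that these manipulations, and the very definition of $\mathcal{F}_X$, make sense; this is guaranteed by $\overline{\partial}u\in L^2$ together with the $H^{1,2}$-regularity of the Ricci potential $F$.

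For part (2), I would first reduce both Futaki invariants to eigenfunction integrals by part (1), applied on each $X_i$ via Remark \ref{extend}: writing $V_i=\mathrm{grad}'u_i$ with $u_i\in\Lambda_1(X_i)$ and $V=\mathrm{grad}'u$ with $u\in\Lambda_1(X)$, one has $\mathcal{F}_{X_i}(V_i,g_{X_i})=-\int_{X_i}u_i\,dH^n$ and $\mathcal{F}_X(V,g_X)=-\int_X u\,dH^n$. The injectivity of $\Phi$ (Remark \ref{eigenhol}) makes $u_i$ and $u$ uniquely determined by $V_i$ and $V$. The core step is then the $L^2$-strong convergence $u_i\to u$, which is precisely what the proof of Proposition \ref{uppersemi} produces via Proposition \ref{poissoncont}: any subsequence of $u_i$ admits a further subsequence converging $L^2$-strongly to some $u'\in\Lambda_1(X)$ with $\mathrm{grad}'u'=V$, forcing $u'=u$ by injectivity, so the whole sequence converges.

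Finally, from $u_i\to u$ in $L^2$-strong (hence $L^2$-weak) and the measured Gromov-Hausdorff convergence $(X_i,H^n)\to(X,H^n)$, the constant function $1$ converges $L^2$-strongly, so the weak-strong pairing yields $\int_{X_i}u_i\,dH^n\to\int_X u\,dH^n$; combined with part (1) this gives $\mathcal{F}_{X_i}(V_i,g_{X_i})\to\mathcal{F}_X(V,g_X)$. I expect the genuine obstacle to lie entirely in part (1) — specifically, justifying the a.e. identity $V(F)=h_X^*(\overline{\partial}u,\overline{\partial}F)$ and the integrability and integration-by-parts on the singular space — whereas part (2) is essentially a formal consequence of part (1) together with the already-established spectral convergence machinery.
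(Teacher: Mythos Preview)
Your proof is correct and follows essentially the same route as the paper. The paper's argument for (1) is the one-line ``by Remark \ref{L1lap} we have $u=\Delta_{\barpartial}u-h_X^*(\barpartial u,\barpartial F)$; integrate with respect to $dH^n$,'' and for (2) it simply cites the proof of Proposition \ref{uppersemi} together with (1); you have unpacked both steps with the same ingredients (the pointwise identification $V(F)=h_X^*(\barpartial u,\barpartial F)$, the $L^1$-relation between $\Delta_{\barpartial}$ and $\Delta_{\barpartial}^F$, vanishing of $\int_X\Delta_{\barpartial}u\,dH^n$, and $L^2$-strong convergence of the eigenfunctions via Proposition \ref{poissoncont}).
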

\begin{proof}
We first prove (1).
By Remark \ref{L1lap}, we have $u \in \mathcal{D}^{2, 1}_{\bfC}(\Delta_{\barpartial}, X)$ with 
\[u=\Delta^F_{\barpartial}u=\Delta_{\barpartial}u-h_X^*(\barpartial u, \barpartial F).\]
Integrating this with respect to $dH^n$ yields (1).

(2) is a direct consequence of the proof of Proposition \ref{uppersemi} and (1).
\end{proof}

We say two Fano-Ricci limit spaces $(X, g_X, J_X)$ and $(Y, g_Y, J_Y)$ are $J$-equivalent if
the following condition holds: If
$(X_i, g_{X_i}, J_{X_i})$ and $(Y_i, g_{Y_i}, J_{Y_i})$ converge to $(X, g_X, J_X)$ and $(Y, g_Y, J_Y)$ 
then there are biholomorphic automorphisms $\psi_i$ of $(X_i, J_{X_i})$ to $(Y_i, J_{Y_i})$ for all $i$.
Further, we say that 
$V \in \mathfrak{h}_1(X)$ and $W \in \mathfrak{h}_1(Y)$ are $J$-equivalent if the following condition holds:
if $V \in \mathfrak{h}_1(X)$ is an $L^2$-strong limit of 
a sequence in $V_i \in \mathfrak{h}_1(X_i)$ with respect to $g_{X_i}$ 
then $W \in \mathfrak{h}_1(Y)$ is an $L^2$-strong limit of  $(\psi_i)_\ast V_i$ 
with respect to $g_{Y_i}$.

\begin{theorem}\label{Futaki2} If two
Fano-Ricci limit spaces $(X, g_X, J_X)$ and $(Y, g_Y, J_Y)$ are $J$-equivalent and if
two vector fields $V \in \mathfrak{h}_1(X)$ and $W \in \mathfrak{h}_1(Y)$ are $J$-equivalent
then 
$$F_X(V,g_X) = F_Y(W,g_Y).$$
\end{theorem}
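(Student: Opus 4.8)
The plan is to reduce the equality on the limit spaces to the corresponding equality on the approximating smooth Fano manifolds, where the classical invariance properties of the Futaki invariant are available, and then to pass to the limit using the continuity established in Proposition~\ref{Futaki1}(2).

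First I would fix sequences $(X_i, g_{X_i}, J_{X_i}) \to (X, g_X, J_X)$ and $(Y_i, g_{Y_i}, J_{Y_i}) \to (Y, g_Y, J_Y)$ together with biholomorphisms $\psi_i \colon (X_i, J_{X_i}) \to (Y_i, J_{Y_i})$ supplied by the $J$-equivalence hypothesis, and choose $V_i \in \mathfrak{h}_1(X_i)$ whose $L^2$-strong limit is $V$ (such a sequence exists precisely because $V$ is assumed to be an $L^2$-strong limit). Put $W_i := (\psi_i)_\ast V_i$. Since $X_i$ is a smooth Fano manifold, $\mathfrak{h}_1(X_i)$ is exactly the space of holomorphic vector fields on $X_i$, so $V_i$ is holomorphic; as $\psi_i$ is biholomorphic, $W_i$ is holomorphic on $Y_i$ and hence lies in $\mathfrak{h}_1(Y_i)$. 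By the $J$-equivalence of $V$ and $W$, the sequence $W_i$ converges $L^2$-strongly to $W$. Applying Proposition~\ref{Futaki1}(2) on both sides gives
\[
\mathcal{F}_X(V, g_X) = \lim_{i \to \infty} \mathcal{F}_{X_i}(V_i, g_{X_i}),
\qquad
\mathcal{F}_Y(W, g_Y) = \lim_{i \to \infty} \mathcal{F}_{Y_i}(W_i, g_{Y_i}),
\]
so it suffices to prove the term-by-term equality $\mathcal{F}_{X_i}(V_i, g_{X_i}) = \mathcal{F}_{Y_i}(W_i, g_{Y_i})$ for every $i$.

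Here both $X_i$ and $Y_i$ are smooth, so $\mathcal{F}_{X_i}$ and $\mathcal{F}_{Y_i}$ agree with the classical Futaki invariant of \cite{Futaki87, Futaki88}. Because $\psi_i$ is an isometry from $(X_i, g_{X_i})$ onto $(Y_i, (\psi_i)_\ast g_{X_i})$ carrying $V_i$ to $W_i$ and the Ricci potential of $g_{X_i}$ to that of $(\psi_i)_\ast g_{X_i}$, the defining integrand $V_i(F_i)$ transforms correctly under change of variables, and $(\psi_i)_\ast g_{X_i}$ is a K\"ahler metric representing $2\pi c_1(Y_i)$. Hence functoriality of the definition yields $\mathcal{F}_{X_i}(V_i, g_{X_i}) = \mathcal{F}_{Y_i}(W_i, (\psi_i)_\ast g_{X_i})$. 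The crucial input—and the step I expect to be the heart of the argument—is the classical theorem that on a smooth Fano manifold the Futaki invariant $\mathcal{F}_{Y_i}(W_i, \cdot)$ depends only on the complex structure and the K\"ahler class, not on the particular metric representing it; applying this to the two metrics $(\psi_i)_\ast g_{X_i}$ and $g_{Y_i}$, both in $2\pi c_1(Y_i)$, gives $\mathcal{F}_{Y_i}(W_i, (\psi_i)_\ast g_{X_i}) = \mathcal{F}_{Y_i}(W_i, g_{Y_i})$, and combining the two identities establishes the per-$i$ equality. Passing to the limit then completes the proof.

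The only delicate point to check is that the normalisation of $\mathcal{F}$ used here (the measure $dH^n$, with the convention $H^n_{F_i}(X_i)=H^n(X_i)$) is consistent with the normalisation under which Futaki's metric-independence theorem is classically stated. This reduces to the observations that on the smooth model $dH^n$ is a fixed multiple of $\omega_i^m$, that the integrand $V_i(F_i)$ is unaffected by the additive constant in $F_i$ since $V_i$ annihilates constants, and that an isometry preserves the Hausdorff measure; each of these is routine, so I would treat this as bookkeeping rather than a genuine obstacle.
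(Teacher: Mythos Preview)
Your proof is correct and follows essentially the same route as the paper: transport the Futaki invariant from $(X_i,g_{X_i})$ to $(Y_i,(\psi_i)_*g_{X_i})$ by biholomorphic functoriality, invoke the classical metric-independence of the Futaki invariant within the anti-canonical class to replace $(\psi_i)_*g_{X_i}$ by $g_{Y_i}$, and then pass to the limit via Proposition~\ref{Futaki1}(2). Your additional remarks on normalisation are sound bookkeeping and do not alter the argument.
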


\begin{proof} It is trivial to have
$$
 \mathcal{F}_{X_i}(V_i, g_{X_i})=\mathcal{F}_{Y_i}((\psi_i)_\ast V_i,(\psi^{-1}_i)^\ast g_{X_i}).
 $$
But since $\mathcal{F}_{Y_i}((\psi_i)_\ast V_i,g_{Y_i})$ is independent of the choice of
the K\"ahler metric $g_{Y_i}$ with the K\"ahler metric in the anti-canonical class by \cite{futaki83.1}, 
we have 
$$
\mathcal{F}_{Y_i}((\psi_i)_\ast V_i,(\psi^{-1}_i)^\ast g_{X_i})
=\mathcal{F}_{Y_i}((\psi_i)_\ast V_i,g_{Y_i}).
 $$
Taking the limit as $i \to \infty$ and using Proposition \ref{Futaki1}, (2), we obtain
$F_X(V,g_X) = F_Y(W,g_Y)$. This completes the proof.
\end{proof}

\begin{proposition}\label{Futaki3}
We have the following:
\begin{enumerate}
\item[(1)] We have $||\mathcal{F}_X|||_{\mathrm{op}} \le C(n, K, d, L)$, where  $||\mathcal{F}_X||_{\mathrm{op}}$ is the operator norm of $\mathcal{F}_X$, i.e.
\[||\mathcal{F}_X||_{\mathrm{op}}:=\sup_{||V||_{L^2}=1}|\mathcal{F}_X(V)|.\]
\item[(2)] We have
\[\limsup_{i \to \infty}||\mathcal{F}_{X_i}||_{\mathrm{op}} \le ||\mathcal{F}_X||_{\mathrm{op}}.\]
\item[(3)] If 
\[\lim_{i \to \infty}\mathrm{dim}\,\mathfrak{h}_1(X_i)=\mathrm{dim}\,\mathfrak{h}_1(X),\]
then
\[\lim_{i \to \infty}||\mathcal{F}_{X_i}||_{\mathrm{op}} = ||\mathcal{F}_X||_{\mathrm{op}}.\]
\end{enumerate}
\end{proposition}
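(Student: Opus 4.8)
The plan is to reduce all three statements to the two facts already proved about the Futaki invariant: the representation $\mathcal{F}_X(V,g_X)=-\int_X u\,dH^n$ whenever $V=\mathrm{grad}'u$ with $u\in\Lambda_1$ (Proposition \ref{Futaki1}(1)), and its continuity under $L^2$-strong convergence (Proposition \ref{Futaki1}(2)). For (1), I would write $V=\mathrm{grad}'u$ and estimate directly by Cauchy--Schwarz,
\[
|\mathcal{F}_X(V,g_X)|=\Big|\int_X u\,dH^n\Big|\le \left(H^n(X)\right)^{1/2}\|u\|_{L^2}.
\]
It then suffices to control $\|u\|_{L^2}$ by $\|V\|_{L^2}$. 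Here $\|\mathrm{grad}'u\|_{L^2}=\|\overline{\partial}u\|_{L^2}$ (Proposition \ref{L2norm} and the definition of $\mathrm{grad}'$), while the eigenvalue equation $\Delta^F_{\overline{\partial}}u=u$ gives $\int_X|\overline{\partial}u|^2\,dH^n_F=\int_X(\Delta^F_{\overline{\partial}}u)\overline{u}\,dH^n_F=\int_X|u|^2\,dH^n_F$; since $e^{-L}\le e^F\le e^L$, the weighted and unweighted $L^2$-norms are comparable with a constant depending only on $L$, so $\|u\|_{L^2}\le C(L)\|V\|_{L^2}$. Combined with the Bishop--Gromov bound $H^n(X)\le C(n,K,d)$, this yields $\|\mathcal{F}_X\|_{\mathrm{op}}\le C(n,K,d,L)$. (Alternatively, the Poincaré inequality of Proposition \ref{Poincare} together with $\int_X u\,dH^n_F=0$ gives the same estimate.)

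For (2), each $\mathfrak{h}_1(X_i)$ is finite-dimensional (Proposition \ref{uppersemi}), so the supremum defining $\|\mathcal{F}_{X_i}\|_{\mathrm{op}}$ is attained at some $V_i\in\mathfrak{h}_1(X_i)$ with $\|V_i\|_{L^2}=1$. I would pass to a subsequence along which $\|\mathcal{F}_{X_i}\|_{\mathrm{op}}\to\limsup_i\|\mathcal{F}_{X_i}\|_{\mathrm{op}}$, and then apply Proposition \ref{uppersemi} to extract a further subsequence with $V_{i(j)}\to V$ strongly in $L^2$ for some $V\in\mathfrak{h}_1(X)$; strong convergence forces $\|V\|_{L^2}=1$. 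Proposition \ref{Futaki1}(2) then gives $\mathcal{F}_{X_{i(j)}}(V_{i(j)},g_{X_{i(j)}})\to\mathcal{F}_X(V,g_X)$, so that
\[
\limsup_{i\to\infty}\|\mathcal{F}_{X_i}\|_{\mathrm{op}}=|\mathcal{F}_X(V,g_X)|\le\|\mathcal{F}_X\|_{\mathrm{op}}.
\]

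For (3), upper semicontinuity is exactly (2), so it remains to prove $\|\mathcal{F}_X\|_{\mathrm{op}}\le\liminf_i\|\mathcal{F}_{X_i}\|_{\mathrm{op}}$, and this is where the hypothesis $\lim_i\dim\mathfrak{h}_1(X_i)=\dim\mathfrak{h}_1(X)=:N$ enters. Its role is to guarantee that \textit{every} $V\in\mathfrak{h}_1(X)$ is an $L^2$-strong limit of elements of $\mathfrak{h}_1(X_i)$. To see this I would fix, for each large $i$ (so that $\dim\mathfrak{h}_1(X_i)=N$), an $L^2$-orthonormal basis $V_i^{(1)},\dots,V_i^{(N)}$ of $\mathfrak{h}_1(X_i)$; by Proposition \ref{uppersemi} and a diagonal argument there is a subsequence along which $V_i^{(k)}\to V^{(k)}\in\mathfrak{h}_1(X)$ strongly for every $k$. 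Since the $L^2$-inner products of $L^2$-strongly convergent sequences converge, the limit vectors $V^{(1)},\dots,V^{(N)}$ are again orthonormal, hence linearly independent, and therefore span the $N$-dimensional space $\mathfrak{h}_1(X)$. Given $V=\sum_k a_k V^{(k)}\in\mathfrak{h}_1(X)$ with $\|V\|_{L^2}=1$ attaining $\|\mathcal{F}_X\|_{\mathrm{op}}$, the sequence $V_i:=\sum_k a_k V_i^{(k)}\in\mathfrak{h}_1(X_i)$ then converges strongly to $V$ with $\|V_i\|_{L^2}\to 1$, and Proposition \ref{Futaki1}(2) yields
\[
\|\mathcal{F}_X\|_{\mathrm{op}}=|\mathcal{F}_X(V,g_X)|=\lim_{i\to\infty}|\mathcal{F}_{X_i}(V_i,g_{X_i})|\le\liminf_{i\to\infty}\|\mathcal{F}_{X_i}\|_{\mathrm{op}}.
\]
I expect the main obstacle to be precisely this surjectivity of the limit onto $\mathfrak{h}_1(X)$: without the dimension hypothesis the limits $V^{(k)}$ could degenerate, and the crux is the stability of $L^2$-inner products under $L^2$-strong convergence (the product stability of strong convergence in the theory of \cite{Honda13}), which keeps the $N$ limit vectors independent and thus forces them to exhaust $\mathfrak{h}_1(X)$.
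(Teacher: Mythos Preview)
Your proof is correct and follows essentially the same route as the paper. For (1) the paper uses the Poincar\'e inequality (your parenthetical alternative) rather than the eigenvalue identity $\int_X|\overline{\partial}u|^2\,dH^n_F=\int_X|u|^2\,dH^n_F$; your direct use of the eigenvalue equation is a bit cleaner. For (2) the arguments coincide. For (3) the paper simply asserts, citing Proposition \ref{uppersemi}, that along any subsequence one can find a further subsequence and $V_{j(k)}\in\mathfrak{h}_1(X_{j(k)})$ with $\|V_{j(k)}\|_{L^2}=1$ converging strongly to the maximizer $V$; you have written out exactly why this is possible under the dimension hypothesis (passing orthonormal bases to the limit and using stability of inner products), which is the content implicitly used in the paper. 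Two cosmetic points: you should start (3) with an \textit{arbitrary} subsequence (or one realizing the $\liminf$) before extracting the further subsequence, so that the inequality holds for the full $\liminf$; and since your $V_i$ only satisfies $\|V_i\|_{L^2}\to 1$, normalize before comparing with $\|\mathcal{F}_{X_i}\|_{\mathrm{op}}$.
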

\begin{proof}
We first prove (1).
Let $u \in \Lambda_1$ and let $V=\mathrm{grad}'u$.
Then from Proposition \ref{Poincare} and (1) of Proposition \ref{Futaki1}, we have
\begin{align*}
|\mathcal{F}_X(V)| &\le \int_X|u|dH^n\\
&\le C(L)\int_X|u|dH^n_F \\
&\le C(n, K, d, L) \int_X|\mathrm{grad}'u|^2dH^n_F \\
&\le C(n, K, d, L) \int_X|V|^2dH^n \le C(n, K, d, L).
\end{align*} 
This completes the proof of (1).

Next we prove (2).
For every $i<\infty$, there exists $V_i \in \mathfrak{h}_1(X_i)$ such that $||V_i||_{L^2}=1$ and that $|\mathcal{F}_{X_i}(V_i)|=||\mathcal{F}_{X_i}||_{\mathrm{op}}$ holds because $\mathfrak{h}_1(X_i)$ is finite dimensional.
By Proposition \ref{uppersemi}, without loss of generality we can assume that there exists $V \in \mathfrak{h}_1(X)$ such that $V_i$ $L^2$-converges strongly to $V$ on $X$. 
Thus (2) of Proposition \ref{Futaki1} yields
\[\limsup_{i \to \infty}||\mathcal{F}_{X_i}||_{\mathrm{op}} =\limsup_{i \to \infty}|\mathcal{F}_{X_i}(V_i)| =|\mathcal{F}_X(V)| \le ||\mathcal{F}_X||_{\mathrm{op}}.\]

Finally we prove (3).
Let $V \in \mathfrak{h}_1(X)$ with $|\mathcal{F}_X(V)|=||\mathcal{F}_X||_{\mathrm{op}}$ and let $\{i(j)\}_j$ be a subsequence.
Then by Proposition \ref{uppersemi}, there exist a subsequence $\{j(k)\}_j$ of $\{i(j)\}_j$  and  a sequence $V_{j(k)} \in \mathfrak{h}_1(X_{j(k)})$ such that $||V_{j(k)}||_{L^2}=1$ and that $V_{j(k)}$ $L^2$-converges strongly to $V$ on $X$.
Thus applying (2) of Proposition \ref{Futaki1} again yields
\[||\mathcal{F}_X||_{\mathrm{op}}=|\mathcal{F}_X(V)| = \lim_{k \to \infty}|\mathcal{F}_{X_{j(k)}}(V_{j(k)})| \le \liminf_{k \to \infty}||\mathcal{F}_{X_{j(k)}}||_{\mathrm{op}}.\]
Since $\{i(j)\}_j$ is arbitrary, this completes the proof of (3).

\end{proof}

\subsection{A compactness with respect to the Gromov-Hausdorff convergence}
In this subsection we start without assuming (2.1d), and rather study when (2.1d) is satisfied, 
see Proposition \ref{lowerbound}.
\begin{proposition}\label{upperbound}
Let $M$ be a Fano manifold with $\mathrm{Ric}_M \ge K$ and $\mathrm{diam}\,M \le d$, and let $F$ be the Ricci potential with the canonical normalization.
Then we have
\begin{align}\label{13}
F\le C(n, K, d)
\end{align}
and 
\begin{align}\label{14}
\frac{1}{H^n(M)}\int_M|\nabla e^F|^2\,dH^n\le C(n, K, d).
\end{align}
\end{proposition}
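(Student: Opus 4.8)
The plan is to reduce both inequalities to a single pointwise upper bound on the Laplacian of $F$, obtained by tracing the Ricci potential equation, and then to feed this bound into (i) a mean value estimate valid under the Ricci and diameter bounds to get (\ref{13}), and (ii) a one-line integration by parts to get (\ref{14}).

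First I would trace the defining equation $\Ric(\omega)-\omega=\sqrt{-1}\partial\overline{\partial}F$ against $\omega$. Writing $s=\mathrm{tr}_\omega\Ric(\omega)$ for the Hermitian scalar curvature and using $\mathrm{tr}_\omega\omega=m$ together with $\mathrm{tr}_\omega(\sqrt{-1}\partial\overline{\partial}F)=-\Delta_{\overline{\partial}}F$, this gives $\Delta_{\overline{\partial}}F=m-s$, hence $\Delta F=2(m-s)$ by Proposition \ref{Laplacianequal}. The hypothesis $\Ric_M\ge Kg_M$ translates into the complex Ricci bound $R_{i\bar j}\ge Kg_{i\bar j}$, so $s\ge Km$, and we obtain
\begin{align*}
\Delta F\le n(1-K)=:C_1,
\end{align*}
where $\Delta$ is normalized to have nonnegative spectrum. (Note $\int_M\Delta F\,dH^n=0$ forces $C_1\ge0$, consistent with $K\le1$.)

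For (\ref{13}) I would combine this with the normalization. Since $\frac{1}{H^n(M)}\int_M e^F\,dH^n=1$, Jensen's inequality gives $\frac{1}{H^n(M)}\int_M F\,dH^n\le0$. The upper bound $\Delta F\le C_1$ together with $\int_M\Delta F\,dH^n=0$ then lets me control $\sup_M F$ by this average. Concretely, with the Green's function $G$ of $\Delta$ normalized by $\int_M G(x,\cdot)\,dH^n=0$, and setting $h:=C_1-\Delta F\ge0$ (so $\int_M h\,dH^n=C_1\,H^n(M)$), one has
\begin{align*}
F(x)-\frac{1}{H^n(M)}\int_M F\,dH^n=\int_M G(x,y)\,\Delta F(y)\,dH^n(y)=-\int_M G(x,y)\,h(y)\,dH^n(y).
\end{align*}
Since under $\Ric_M\ge Kg_M$ and $\diam M\le d$ the negative part of $G$ is bounded by $C(n,K,d)\,d^2/H^n(M)$, the volume cancels and the right-hand side is at most $C(n,K,d)$; with $\frac{1}{H^n(M)}\int_M F\le0$ this yields $\sup_M F\le C(n,K,d)$, which is (\ref{13}). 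Equivalently, one may invoke the De Giorgi--Nash--Moser mean value inequality for subsolutions.

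Finally, for (\ref{14}) I would put $u=e^F$ and integrate by parts. Using $\Delta u=e^F(\Delta F-|\nabla F|^2)$, the identity $\int_M|\nabla u|^2\,dH^n=\int_M u\,\Delta u\,dH^n$, and $|\nabla e^F|^2=e^{2F}|\nabla F|^2$, the cross term reorganizes to
\begin{align*}
2\int_M|\nabla e^F|^2\,dH^n=\int_M e^{2F}\,\Delta F\,dH^n\le C_1\int_M e^{2F}\,dH^n,
\end{align*}
and by (\ref{13}) and the normalization $\int_M e^{2F}\,dH^n\le e^{\sup_M F}\int_M e^F\,dH^n\le C(n,K,d)\,H^n(M)$, so dividing by $H^n(M)$ gives (\ref{14}). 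The tracing identity and the integration by parts are routine; the main obstacle is the mean value step for (\ref{13}), i.e. upgrading the Laplacian upper bound to a genuine $\sup$ bound on $F$ with constant depending only on $(n,K,d)$. This is exactly where the geometry enters, resting on the Green's function lower bound (or the Moser iteration), which in turn requires volume doubling and a uniform Poincar\'e inequality, both guaranteed by $\Ric_M\ge Kg_M$ and $\diam M\le d$.
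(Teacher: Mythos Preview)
Your proof is correct and, for (\ref{14}), essentially identical to the paper's (the paper computes $\Delta e^{2F}$ and integrates, which is exactly your integration-by-parts identity $2\int_M|\nabla e^F|^2=\int_M e^{2F}\,\Delta F$ written out differently).

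For (\ref{13}) you take a genuinely different route. The paper works with the function $e^F$: from $\Delta F=2n-s_M$ (their notation for the Riemannian scalar curvature) it computes
\[
\Delta e^F=-e^F|\nabla F|^2+e^F\Delta F\le (2-K)n\,e^F,
\]
and then invokes the Li--Tam mean value inequality for subsolutions of $\Delta u\le Cu$ under a Ricci lower bound to get $e^F\le C(n,K,d)\,\frac{1}{H^n(M)}\int_M e^F=C(n,K,d)$. You instead stay with $F$ itself: from the one-sided bound $\Delta F\le C_1$ and the Green's function representation you bound $F(x)-\frac{1}{H^n(M)}\int_M F$ using only the lower bound $G(x,\cdot)\ge -C(n,K,d)/H^n(M)$, and then use Jensen on the normalization to handle the average. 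Both arguments rest on the same underlying geometry (volume doubling and Poincar\'e from the Ricci bound), but yours is arguably more elementary in that it only needs the Green's function lower bound rather than the full Moser/Li--Tam iteration; the paper's version, on the other hand, gives the bound on $e^F$ directly, which is what feeds immediately into the estimate for $\int e^{2F}$ in (\ref{14}).
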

\begin{proof}
By taking the (complex) trace of the equation $\mathrm{Ric}(\omega)-\omega=i\partial \overline{\partial} F$ we have
$s_M/2-n=-\Delta F/2$, where $s_M$ is the scalar curvature of $M$ in the sense of Riemannian geometry.
Then since
\[\Delta e^F=-e^F|\nabla F|^2 + e^F\Delta F \le e^F(2n-s_M) \le (2-K)n e^F,\]
Li-Tam's mean value inequality \cite[Corollary $3.6$]{Li} (or \cite[Theorem $1.1$]{LT}) yields 
\[e^F \le C(n, K, d)\frac{1}{H^n(M)}\int_Me^F\,dH^n=C(n, K, d).\]
Thus we have (\ref{13}).

On the other hand, since
\[\Delta e^{2F}=-4e^{2F}|\nabla F|^2+2e^{2F}\Delta F=-4e^{2F}|\nabla F|^2+2e^{2F}(2n-s_M),\]
by integration of this on $M$, we have
\[2\int_Me^{2F}|\nabla F|^2\,dH^n \le \int_Me^{2F}(2n-s_M)\,dH^n \le (2-K)n\int_Me^{2F}\,dH^n\le C(n, K, d).\]
This gives (\ref{14}).
\end{proof}
The following is a direct consequence of \cite[Theorem $4.9$]{Honda13}, \cite[Theorem $6.19$]{Honda14b} and Proposition \ref{upperbound}:
\begin{corollary}\label{compactness}
Let $K \in \bfR$, let $d, v>0$ and let $n \in \mathbf{N}$
Let $X_i$ be a sequence of Fano manifolds with $\mathrm{Ric}_{X_i}\ge K$, $\mathrm{diam}\, X_i \le d$, and $H^n(X_i) \ge v$.


Then there exist a subsequence $X_{i(j)}$, the noncollapsed Gromov-Hausdorff limit $X$, the $L^2$-strong limit $J$ of $J_{i(j)}$ on $X$, and the $L^2$-strong limit $G \in H^{1, 2}(X) \cap L^{\infty}(X)$ of $e^{F_{i(j)}}$ on $X$ such that $\nabla e^{F_{i(j)}}$ $L^2$-converges weakly to $\nabla G$ on $X$, where $F_i$ is the Ricci potential of $X_i$ with the canonical normalization.
Moreover, if there exists $c \in \bf{R}$ such that $F_i \ge c$ for every $i<\infty$, then there exists $F \in L^{\infty}(X)$ with $c \le F \le C(n, K, d)$ such that $G=e^F$.
In particular $F_i$ $L^2$-converges strongly to $F$ on $X$.
\end{corollary}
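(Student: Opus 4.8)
The plan is to assemble the three cited ingredients in order. First I would pass to a subsequence by Gromov's precompactness theorem: since $\mathrm{Ric}_{X_i}\ge K$ and $\mathrm{diam}\,X_i\le d$, a subsequence $X_{i(j)}$ converges in the Gromov-Hausdorff sense to a compact metric space $X$, and the volume lower bound $H^n(X_i)\ge v$ together with the Bishop-Gromov upper bound $H^n(X_i)\le C(n,K,d)$ ensures the convergence is noncollapsing. Hence $\dim_H X=n$ and, by \cite[Theorem $5.9$]{CheegerColding1}, $(X_{i(j)},g_{X_{i(j)}},H^n)\to(X,g_X,H^n)$ in the measured Gromov-Hausdorff sense with $0<H^n(X)<\infty$. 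Since the complex structures $J_i$ are uniformly bounded in $L^\infty$, the compactness property (2.2a) produces a weakly convergent subsequence, and \cite[Theorem $6.19$]{Honda14b} upgrades this to the $L^2$-strong limit $J$ with $J\circ J=-\mathrm{id}$.

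Next I would produce $G$. By (\ref{13}) of Proposition \ref{upperbound} we have $0<e^{F_i}\le C(n,K,d)$, while (\ref{14}) combined with the volume upper bound gives
\[
\int_{X_i}|\nabla e^{F_i}|^2\,dH^n\le C(n,K,d)\,H^n(X_i)\le C(n,K,d);
\]
together with $\int_{X_i}|e^{F_i}|^2\,dH^n\le C(n,K,d)$ this shows $\sup_i\|e^{F_i}\|_{H^{1,2}}<\infty$. Theorem \ref{Rellich} then furnishes a further subsequence along which $e^{F_{i(j)}}$ $L^2$-converges strongly to some $G\in H^{1,2}(X)$ and $\nabla e^{F_{i(j)}}$ $L^2$-converges weakly to $\nabla G$. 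The uniform pointwise bound passes to the limit, so $0\le G\le C(n,K,d)$ and hence $G\in L^\infty(X)$, which proves the first assertion.

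For the moreover part, the hypothesis $F_i\ge c$ gives $e^{F_i}\ge e^c>0$ for all $i$, so the $L^2$-strong limit satisfies $G\ge e^c>0$ almost everywhere. I would then set $F:=\log G$, so that $c\le F\le C(n,K,d)$ and in particular $F\in L^\infty(X)$. To obtain strong $L^2$-convergence of $F_i$ to $F$, observe that the values $e^{F_i}$ all lie in the compact interval $[e^c,C(n,K,d)]\subset(0,\infty)$; choosing any $\Psi\in C^0(\bfR)$ that agrees with $\log$ on this interval, we have $F_i=\Psi(e^{F_i})$ and $F=\Psi(G)$, so the composition stability of strong $L^2$-convergence recalled in Section 2.1 (c.f. \cite[Proposition $4.1$]{Honda11}) yields that $F_{i(j)}=\Psi(e^{F_{i(j)}})$ $L^2$-converges strongly to $\Psi(G)=F$. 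The step requiring the most care is precisely this last one: the lower bound $c$ is indispensable, since without it $G$ could vanish on a set of positive measure, $\log G$ would fail to be finite and bounded, and the composition argument would break down. This is exactly where the additional hypothesis of the moreover part enters.
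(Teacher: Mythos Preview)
Your proof is correct and follows essentially the same approach as the paper, which simply declares the corollary a direct consequence of \cite[Theorem~4.9]{Honda13}, \cite[Theorem~6.19]{Honda14b}, and Proposition~\ref{upperbound}; you have unpacked exactly those three ingredients in the natural order. Your use of Theorem~\ref{Rellich} is precisely the invocation of \cite[Theorem~4.9]{Honda13} in the real case, and your composition argument for the moreover part via \cite[Proposition~4.1]{Honda11} is the intended mechanism.
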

\begin{corollary}\label{unfbound}
Let $(X, g_X, J, F)$ be a Fano-Ricci limit space with $H^n(X) \ge v$, $F \ge c$ and $\mathrm{diam}\,X \le d$. 
Then
\[0<C_1(n, K, d, v, c, l)\le \lambda_l(\Delta^F_{\overline{\partial}}, X)\le C_2(n, K, d, v, c, l)<\infty\]
for every $l \ge 1$.
\end{corollary}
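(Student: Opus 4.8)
The plan is to treat the two inequalities separately: the lower bound falls out at once from the Weitzenb\"ock inequality, while the upper bound is obtained by passing the weight out of the Rayleigh quotient and then applying a compactness argument to the unweighted Laplacian. For the lower bound, recall that the positive eigenvalues are ordered $\lambda_1 \le \lambda_2 \le \cdots$ and that Corollary \ref{1steigen}(1) gives $\lambda_1(\Delta^F_{\overline{\partial}}, X) \ge 1$ on every Fano-Ricci limit space. Hence
\[\lambda_l(\Delta^F_{\overline{\partial}}, X) \ge \lambda_1(\Delta^F_{\overline{\partial}}, X) \ge 1\]
for all $l \ge 1$, so one may take $C_1 \equiv 1$, independently of the remaining parameters. (That $\lambda_l$ is a well-defined finite positive number is guaranteed by the discreteness and unboundedness of the spectrum noted after Proposition \ref{Laplacianexplicite}.)

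For the upper bound I would first reduce to the unweighted Laplacian. On $X$ one has $c \le F \le C(n, K, d)$, the lower bound by hypothesis and the upper bound by Proposition \ref{upperbound}, whence $e^{c} \le e^{F} \le e^{C(n,K,d)}$. Combining the exact identity $\int_X |df|^2\,dH^n = 2\int_X |\overline{\partial}f|^2\,dH^n$ of Proposition \ref{L2norm} with this two-sided control on $e^F$, and testing the min-max characterization of $\lambda_l(\Delta^F_{\overline{\partial}}, X)$ against the $(l+1)$-dimensional span of the constants together with the first $l$ eigenfunctions of the unweighted Laplacian $\Delta$, one bounds the weighted Rayleigh quotient by a constant multiple of the unweighted one and obtains
\[\lambda_l(\Delta^F_{\overline{\partial}}, X) \le C(n, K, d, c)\,\nu_l(X),\]
where $\nu_l(X)$ denotes the $l$-th positive eigenvalue of $\Delta$ on $X$. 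It remains to bound $\nu_l(X)$ uniformly. Since $(X, g_X, H^n)$ is a noncollapsed Ricci limit space with $\mathrm{Ric} \ge K$, $\mathrm{diam}\,X \le d$ and $H^n(X) \ge v$, the collection of all such spaces is compact in the measured Gromov-Hausdorff topology and closed under limits, and the spectrum of $\Delta$ varies continuously along this convergence (the $F \equiv 0$ case of Proposition \ref{spectralconv}, or Cheeger-Colding \cite{CheegerColding3}). Were $\nu_l$ unbounded on this class, a convergent subsequence would force $\nu_l = \infty$ on the limit, contradicting discreteness of the spectrum; hence $\nu_l(X) \le C(n, K, d, v, l)$ and the desired $C_2(n, K, d, v, c, l)$ follows.

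The main obstacle is exactly the step that this reduction is designed to avoid. A direct compactness argument for $\lambda_l(\Delta^F_{\overline{\partial}})$ would approximate each space in the class by a smooth Fano manifold and invoke Corollary \ref{compactness} together with Proposition \ref{spectralconv}; but the stated compactness requires a uniform lower bound $F \ge c'$ on the approximating manifolds, and an $L^2$-strong limit bound $F \ge c$ on the limit does not propagate to such a pointwise bound on the approximants, while the uniform \textit{upper} bound on $F$ is automatic from Proposition \ref{upperbound}. Extracting the weight from the Rayleigh quotient first, and then using only the weight-free compactness and spectral continuity of noncollapsed Ricci limit spaces, removes this difficulty; the iterated-limit stability needed there is of the kind recorded in Remark \ref{extend}.
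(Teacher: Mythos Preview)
Your argument is correct and differs from the paper's in both halves.

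For the lower bound, the paper runs the same contradiction scheme it uses for the upper bound, whereas you simply invoke Corollary \ref{1steigen}(1) to get $\lambda_l \ge \lambda_1 \ge 1$. This is cleaner and even produces the explicit constant $C_1 = 1$.

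For the upper bound, the paper argues by contradiction directly on $\lambda_l(\Delta^F_{\overline{\partial}})$: it takes a sequence of Fano-Ricci limit spaces with $\lambda_l \to \infty$, appeals to Corollary \ref{compactness} to pass to a Fano-Ricci limit, and then uses Proposition \ref{spectralconv} to get a contradiction. Your route instead strips off the weight first via the two-sided bound $c \le F \le C(n,K,d)$ (the upper half coming from Proposition \ref{upperbound} on the approximants), compares Rayleigh quotients to obtain $\lambda_l(\Delta^F_{\overline{\partial}}, X) \le C(n,K,d,c)\,\nu_l(X)$, and then bounds $\nu_l$ using only the unweighted Cheeger--Colding spectral continuity on the class of noncollapsed Ricci limit spaces. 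The gain is that your compactness step lives entirely in the weight-free, complex-structure-free Riemannian theory, where precompactness and spectral convergence are standard; the paper's step requires that Corollary \ref{compactness} and Proposition \ref{spectralconv}, both stated for sequences of smooth K\"ahler/Fano manifolds, extend to sequences of Fano-Ricci limit spaces. That extension is true (and is implicit in Remark \ref{extend}), but your observation that the hypothesis $F \ge c$ on a limit space does not automatically propagate to a uniform lower bound on the Ricci potentials of a smooth approximating sequence is exactly the technical point one has to address when carrying it out. Your detour sidesteps it entirely.
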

\begin{proof}
We only give a proof of the existence of upper bounds because the proof of the existence of lower bounds is similar.
The proof is done by a standard contradiction.
Assume that the assertion is false.
Then there exist $l \ge 1$ and a sequence of Fano-Ricci limit spaces $(X_i, g_{X_i}, J_i, F_i)$ with $H^n(X_i)\ge v$, $\mathrm{diam}\,X_i \le d$, $F_i \ge c$ and
\[\lim_{i \to \infty}\lambda_l(\Delta^{F_i}_{\overline{\partial}}, X_i) = \infty.\]
On the other hand by Corollary \ref{compactness} we can assume without loss of generality 
that there exist the noncollapsed Gromov-Hausdorff limit $X$ 
of $X_i$, the $L^2$-strong limit $J$ of $J_{i}$ on $X$, and 
the $L^2$-strong limit $F \in H^{1, 2}(X) \cap L^{\infty}(X)$ of $F_{i}$ on $X$.
Since Proposition \ref{spectralconv} yields 
\[\lim_{i \to \infty}\lambda_l(\Delta^{F_i}_{\overline{\partial}}, X_i)=\lambda_l(\Delta^F_{\overline{\partial}}, X)<\infty,\]
this is a contradiction.
\end{proof}

Similarly, we have the following:
\begin{corollary}\label{dimensionbound}
Under the same assumption as in Corollary \ref{unfbound},  we have
\[\mathrm{dim}\,\mathfrak{h}_1(X)\le C(n, K, d, v, c).\]
\end{corollary}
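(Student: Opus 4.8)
The plan is to argue by contradiction in exactly the same manner as the proof of Corollary \ref{unfbound}, simply replacing the spectral continuity of Proposition \ref{spectralconv} by the upper semicontinuity of $\dim \mathfrak{h}_1$ recorded in Proposition \ref{uppersemi}. The output constant $C(n,K,d,v,c)$ will be produced non-constructively by the contradiction, so there is no need for an explicit estimate.

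First I would suppose the asserted bound fails. Then there is a sequence of Fano-Ricci limit spaces $(X_i, g_{X_i}, J_i, F_i)$ satisfying $H^n(X_i) \ge v$, $\mathrm{diam}\, X_i \le d$, and $F_i \ge c$ (and, from the standing assumptions, $\mathrm{Ric} \ge K$ in the weak sense) with $\dim \mathfrak{h}_1(X_i) \to \infty$. Next I would extract a convergent subsequence: by the compactness of Corollary \ref{compactness} — which, by Remark \ref{extend}, applies when each $X_i$ is itself a Fano-Ricci limit space rather than a smooth Fano manifold — after passing to a subsequence the $X_i$ converge in the measured Gromov-Hausdorff sense to a noncollapsed Fano-Ricci limit space $(X, g_X, J, F)$, with $J_i \to J$ and $F_i \to F$ in the $L^2$-strong sense and $c \le F \le C(n,K,d)$. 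The point is that the metric-level Gromov precompactness, together with the uniform $L^\infty$-bound $0\le e^{F_i}$ with the $H^{1,2}$-bound \eqref{14}, survives even though the $X_i$ are already limit spaces, so that $X$ is again a bona fide Fano-Ricci limit space to which Proposition \ref{uppersemi} applies.

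Finally I would invoke Proposition \ref{uppersemi} (again through Remark \ref{extend}) for the convergence $X_i \to X$, obtaining
\[
\limsup_{i \to \infty} \dim \mathfrak{h}_1(X_i) \le \dim \mathfrak{h}_1(X) < \infty.
\]
This contradicts $\dim \mathfrak{h}_1(X_i) \to \infty$ and proves the corollary. The main obstacle I expect is the second step: justifying that both the compactness statement and the upper-semicontinuity statement, each phrased for sequences of \emph{smooth} Fano manifolds, genuinely carry over to a sequence whose members are themselves singular Fano-Ricci limit spaces. This is precisely what Remark \ref{extend} asserts, and its honest content is a diagonal extraction realizing the doubly-iterated limit $X$ as a genuine Gromov-Hausdorff limit of smooth Fano manifolds with the same uniform bounds $(n,K,d,v,c)$; once that is in place, Proposition \ref{uppersemi} fires without change.
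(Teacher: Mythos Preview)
Your proposal is correct and follows essentially the same approach as the paper: contradiction, then Corollary \ref{compactness} (via Remark \ref{extend}) to extract a Fano-Ricci limit space, then Proposition \ref{uppersemi} to bound $\dim\mathfrak{h}_1$ from above. The only cosmetic difference is that the paper spells out the upper-semicontinuity step by passing an $L^2$-orthonormal basis of $\mathfrak{h}_1(X_i)$ to the limit and invoking the finite-dimensionality of $\mathfrak{h}_1(X)$, whereas you cite the inequality $\limsup_i\dim\mathfrak{h}_1(X_i)\le\dim\mathfrak{h}_1(X)$ directly; your concern about the applicability to non-smooth $X_i$ is exactly what Remark \ref{extend} is there to address.
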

\begin{proof}
The proof is done by a contradiction.
Assume that the assertion is false.
Then there exist a sequence of Fano-Ricci limit spaces $(X_i, J_i, g_i, F_i)$ with $\mathrm{Ric}_{X_i}\ge K$, $H^n(X_i)\ge v$, $\mathrm{diam}\,X_i \le d$, $F_i \ge c$ and
\[\lim_{i \to \infty}\mathrm{dim}\,\mathfrak{h}_1(X_i) = \infty.\]
Let $\{V_{j, i}\}_{j=1}^{\mathrm{dim}\,\mathfrak{h}_1(X_i)}$ be an $L^2$-orthogonal basis of $\mathfrak{h}_1(X_i)$.
By Proposition \ref{uppersemi} and Corollary \ref{compactness}, we can assume 
without loss of generality that there exist the noncollapsed Gromov-Hausdorff limit $X$ of $X_i$, 
the $L^2$-strong limit $J$ of $J_{i}$ on $X$,  
the $L^2$-strong limit $F \in H^{1, 2}(X) \cap L^{\infty}(X)$ of $F_{i}$ on $X$, 
and the $L^2$-strong limits $V_i \in \mathfrak{h}_1(X)$ of $V_{j, i}$ on $X$.
This contradicts the finite dimensionality of $\mathfrak{h}_1(X)$.
\end{proof}

We give a sufficient condition to get a uniform lower bound on the Ricci potential by a standard way of Riemannian geometry:
\begin{proposition}\label{lowerbound}
Let $q>n/2$, let $\hat{L}>0$, and
let $M$ be a Fano manifold with $\mathrm{Ric}_M \ge K$, $\mathrm{diam}\,M \le d$, $H^n(X) \ge v$ and
\[\int_M|s_M|^q\,dH^n \le \hat{L}.\]
Then the Ricci potential $F$ of $M$ with the canonical normalization satisfies
\[|F| \le C(n, K, d, v, q, \hat{L}).\]
\end{proposition}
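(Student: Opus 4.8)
The plan is to reduce the $L^{\infty}$ bound on $F$ to a uniform oscillation estimate for the Poisson equation satisfied by $F$, and then to pin down the additive constant by means of the normalization $\frac{1}{H^n(M)}\int_Me^F\,dH^n=1$. As in the proof of Proposition \ref{upperbound}, taking the complex trace of $\mathrm{Ric}(\omega)-\omega=\sqrt{-1}\partial\overline{\partial}F$ gives
$$\Delta F=2n-s_M=:h.$$
First I would bound $h$ in $L^q$. Since $\Ric_M\ge K$ and $\diam M\le d$, Bishop--Gromov yields $H^n(M)\le C(n,K,d)$, so that
$$\|h\|_{L^q}^q\le 2^{q-1}\left((2n)^qH^n(M)+\int_M|s_M|^q\,dH^n\right)\le C(n,K,d,q)+2^{q-1}\hat L,$$
i.e. $\|h\|_{L^q}\le C(n,K,d,q,\hat L)$; here $q>n/2\ge 1$ legitimizes the convexity inequality.

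Second, and this is the crux, I would establish the oscillation bound
$$\left\|F-\overline F\right\|_{L^{\infty}(M)}\le C(n,K,d,v,q)\,\|h\|_{L^q},\qquad \overline F:=\frac{1}{H^n(M)}\int_MF\,dH^n.$$
For $n\ge 3$ this follows from the Green's function representation, which gives the pointwise estimate $|F(x)-\overline F|\le\int_M|G(x,y)|\,|h(y)|\,dH^n(y)$ together with H\"older's inequality: under $\Ric_M\ge K$, $\diam M\le d$ and $H^n(M)\ge v$ the (uniform) Gaussian heat kernel estimates give $|G(x,y)|\le C(n,K,d,v)\,d(x,y)^{2-n}$, and since the exponent $q>n/2$ is dual to $q'<n/(n-2)$ the Bishop--Gromov volume bound yields $\sup_x\|G(x,\cdot)\|_{L^{q'}}\le C(n,K,d,v,q)$, whence $|F(x)-\overline F|\le\|G(x,\cdot)\|_{L^{q'}}\|h\|_{L^q}$. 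Equivalently one may invoke the uniform Neumann--Sobolev inequality available in this geometric class and run De~Giorgi--Nash--Moser iteration; the case $n=2$, where $M$ is biholomorphic to $\mathbb{CP}^1$, is handled identically using the logarithmic bound on $G$, which lies in every $L^{q'}$ and so is covered by $q>1$.

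Finally I would control $\overline F$ by the normalization. Writing $A:=C(n,K,d,v,q)\|h\|_{L^q}$ for the oscillation constant, Jensen's inequality gives $1=\frac{1}{H^n(M)}\int_Me^F\,dH^n\ge e^{\overline F}$, so $\overline F\le 0$ and hence $F\le\overline F+A\le A$ (this recovers Proposition \ref{upperbound}). Conversely, the pointwise bound $F\le\overline F+A$ forces
$$1=\frac{1}{H^n(M)}\int_Me^F\,dH^n\le e^{\overline F+A},$$
so $\overline F\ge -A$, and therefore $F\ge\overline F-A\ge -2A$. Combining the two, $|F|\le 2A\le C(n,K,d,v,q,\hat L)$, as claimed. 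The main obstacle is the second step: producing the oscillation estimate with a constant uniform over the entire class of Fano manifolds under consideration. This is precisely where the non-collapsing hypothesis $H^n(M)\ge v$ is needed (it enters through the uniform Green's function, equivalently Sobolev, bounds), and where the threshold $q>n/2$ is essential, being exactly the integrability guaranteeing that the Newtonian potential of an $L^q$ density is bounded.
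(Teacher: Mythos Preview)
Your argument is correct and is exactly the ``standard way of Riemannian geometry'' the paper alludes to: the proposition is stated without proof in the paper, and the subsequent Remark~\ref{holderlipschitz} points to the Poisson-equation regularity results \cite{Jiang11, Jiang12, HK}, which rest on the same Moser iteration/Green's function machinery under a Ricci lower bound and noncollapsing that you invoke for the oscillation estimate on $F$. Your use of the normalization $\frac{1}{H^n(M)}\int_Me^F\,dH^n=1$ via Jensen to pin down the additive constant is the natural complement, and completes the bound.
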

\begin{remark}\label{holderlipschitz}
In Proposition \ref{lowerbound} if $n/2<q<n$, by \cite[Theorem $1.2$]{Jiang11} with \cite[Theorem $5.1$]{HK} we have the following quantitative H\"older continuity of $F$:
\[|F(x)-F(y)|\le C d(x, y)^{\alpha}\]
for any $x, y \in M$, where $C:=C(n, K, d, v, q, \hat{L})>0$ and $\alpha = 2-n/q$.
Moreover if $q>n$, then we have the quantitative Lipschitz continuity of $F$:
\[|\mathrm{grad}\,F|\le C(n, K, d, v, q, \hat{L}).\] 
See \cite[Theorem $1.2$]{Jiang12}.
\end{remark}
In summary, we have the following compactness.
\begin{corollary}
Let $q>n/2$, let $\hat{L}>0$ and let $X_i$ be a sequence of Fano manifolds with $\mathrm{Ric}_{X_i}\ge K$, $\mathrm{diam}\, X_i \le d$, $H^n(X_i) \ge v$ and
\[\int_{X_i}|s_{X_i}|^q\,dH^n \le \hat{L}.\]
Then there exist a subsequence $M_{i(j)}$, the noncollapsed Gromov-Hausdorff limit $M$, the $L^2$-strong limit $J$ of $J_{i(j)}$ on $M$, and the $L^2$-strong limit $F \in H^{1, 2}(M) \cap L^{\infty}(M)$ of $F_{i(j)}$ on $M$ such that $\sup_j||F_{i(j)}||_{L^{\infty}} \le C(n, K, d, v, q, \hat{L})$ and that $\nabla {F_{i(j)}}$ $L^2$-converges weakly to $\nabla F$ on $M$.
\end{corollary}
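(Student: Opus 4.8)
The plan is to obtain this statement as a packaging of two results already established in this subsection: Proposition \ref{lowerbound}, which controls the Ricci potential on each individual manifold, together with the compactness of Corollary \ref{compactness}. First I would apply Proposition \ref{lowerbound} to each $X_i$ separately. Since every member of the sequence satisfies $\mathrm{Ric}_{X_i} \ge K$, $\mathrm{diam}\, X_i \le d$, $H^n(X_i) \ge v$ and $\int_{X_i}|s_{X_i}|^q\,dH^n \le \hat{L}$ with one and the same set of constants, the proposition produces a single constant $C_0 := C(n,K,d,v,q,\hat{L})$ with $|F_i| \le C_0$ for every $i$. In particular $\sup_i ||F_i||_{L^{\infty}} \le C_0$, so hypothesis (2.1d) is verified, and setting $c := -C_0$ we also obtain the uniform lower bound $F_i \ge c$.

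Next, since each $X_i$ is a Fano manifold with the stated lower Ricci, diameter and volume bounds, I would invoke Corollary \ref{compactness}. This yields a subsequence $X_{i(j)}$, a noncollapsed Gromov-Hausdorff limit $M$, the $L^2$-strong limit $J$ of $J_{i(j)}$, and the $L^2$-strong limit $G \in H^{1,2}(M) \cap L^{\infty}(M)$ of $e^{F_{i(j)}}$ with $\nabla e^{F_{i(j)}}$ converging $L^2$-weakly to $\nabla G$. Because the uniform lower bound $F_i \ge c$ has been verified, the final clause of Corollary \ref{compactness} applies and gives $F \in L^{\infty}(M)$ with $c \le F \le C(n,K,d)$, the identity $G = e^F$, and the $L^2$-strong convergence of $F_{i(j)}$ to $F$. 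This already delivers the subsequence, the limit space $M$, the limit complex structure $J$, the uniform bound $\sup_j ||F_{i(j)}||_{L^{\infty}} \le C_0$, and the strong $L^2$-convergence $F_{i(j)} \to F$.

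It remains to upgrade these to $F \in H^{1,2}(M)$ and to the weak $L^2$-convergence $\nabla F_{i(j)} \rightharpoonup \nabla F$. Since $G = e^F \in H^{1,2}(M)$ and $F$ takes values in the fixed compact interval $[c, C(n,K,d)]$, on which $\log$ is Lipschitz, the chain rule for Sobolev functions gives $F = \log G \in H^{1,2}(M)$ with $\nabla F = e^{-F}\nabla G$. For the gradient convergence I would write $\nabla F_{i(j)} = e^{-F_{i(j)}}\,\nabla e^{F_{i(j)}}$ and note that $e^{-F_{i(j)}}$ is uniformly bounded in $L^{\infty}$ (as $e^{-C_0} \le e^{-F_{i(j)}} \le e^{-c}$) and converges $L^2$-strongly to $e^{-F}$ by \cite[Proposition $4.1$]{Honda11}, while $\nabla e^{F_{i(j)}}$ is $L^2$-bounded and converges $L^2$-weakly to $\nabla G$. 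The stability of weak convergence under multiplication by a bounded strongly convergent function (as in \cite[Proposition $3.48$]{Honda13}, already used in the proof of Proposition \ref{Lpremark}) then shows that $\nabla F_{i(j)} = e^{-F_{i(j)}}\,\nabla e^{F_{i(j)}}$ converges $L^2$-weakly to $e^{-F}\nabla G = \nabla F$, completing the argument. There is no serious obstacle here, as the corollary is a summary of prior results; the only point requiring care is precisely this last deduction, where one must have $F_{i(j)}$ bounded both above (Propositions \ref{upperbound}, \ref{lowerbound}) and below (Proposition \ref{lowerbound}), so that $e^{\pm F_{i(j)}}$ remain in a fixed compact range and the strong-times-weak product argument is legitimate.
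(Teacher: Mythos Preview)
Your proposal is correct and follows exactly the approach intended by the paper: the corollary is introduced with ``In summary, we have the following compactness'' and is meant to be the direct combination of Proposition \ref{lowerbound} (giving the uniform $L^\infty$-bound on $F_i$) with Corollary \ref{compactness}. Your careful verification that $F \in H^{1,2}(M)$ and that $\nabla F_{i(j)}$ $L^2$-converges weakly to $\nabla F$, via the product of the strongly convergent bounded factor $e^{-F_{i(j)}}$ with the weakly convergent $\nabla e^{F_{i(j)}}$, is precisely the detail the paper leaves implicit.
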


\subsection{The Lie algebra structure of subspaces of $\Lambda_1$ on nonsmooth setting}
In this section we discuss a subspace of $\Lambda_1$ which is a Lie algebra by the Poisson bracket $\{ \cdot, \cdot \}$.
\begin{definition}[Poisson bracket]\label{poissondefinition}
Let $(X, g_X, J)$ be the noncollapsed K\"ahler-Ricci limit space of $(X_i, g_{X_i}, J_i)$, i.e. (2.1a)-(2.1c) holds and $J$ is the $L^2$-strong limit of $J_i$ on $X$.
Then for any open subset $U$ of $X$, and $u, v \in H^{1, 2}_{\bfC}(U)$, let
\[\{u, v\}:=\mathrm{grad}'u(v)-\mathrm{grad}'v(u) \in L^1_{\bf}(U).\]
\end{definition}
By an argument similar to the proof of \cite[Theorem $4.11$]{Honda14b}, we have the following:
\begin{proposition}\label{gradsov}
Under the same setting as in Definition \ref{poissondefinition}, let $R>0$, let $x \in X$, and let $f \in \mathcal{D}^2_{\bfC}(\Delta, B_R(x))$ with
\[\int_{B_R(x)}\left(|f|^2+|\Delta f|^2\right)dH^n\le L.\]
Then for any $r<R$, $|\overline{\partial}f|^2, |\partial f|^2 \in H^{1, 2n/(2n-1)}_{\bfC}(B_r(x))$ with
\[\int_{B_r(x)}\left(\left|\mathrm{grad}|\overline{\partial}f|^2\right|^{2n/(2n-1)}+ \left|\mathrm{grad}|\partial f|^2\right|^{2n/(2n-1)}\right)dH^n \le C(n, K, L, r, R).\]
Moreover, for any $u, v \in \mathcal{D}^2_{\bfC}(\Delta, B_R(x))$ with 
\[\int_{B_R(x)}\left(|u|^2+ |v|^2+|\Delta u|^2+ |\Delta v|^2\right)dH^n\le L,\]
we have $\{u, v\} \in H^{1, 2n/(2n-1)}_{\bfC}(B_r(x))$ with
\[\int_{B_r(x)}\left| \mathrm{grad} \{u, v\}\right|^{2n/(2n-1)}dH^n \le C(n, K, L, r, R)\]
for any $r<R$.
\end{proposition}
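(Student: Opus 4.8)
The plan is to establish the estimate first on the smooth approximating manifolds $X_i$, where the classical Bochner formula is available, and then to transfer it to the limit $X$ by the $L^2$-convergence machinery of Section~2 together with the stability results of Section~3. Since the hypotheses are local, I would first globalise as in the proof of Proposition~\ref{derivative}: choose a cutoff $\phi \in \mathcal{D}^2_{\bfC}(\Delta, X)\cap \mathrm{LIP}_{\bfC}(X)$ with $0\le\phi\le1$, $\phi|_{B_{r'}(x)}\equiv1$, $\mathrm{supp}\,\phi\subset B_R(x)$ and $\Delta\phi\in L^\infty(X)$ for some $r<r'<R$, so that $\phi f\in\mathcal{D}^2_{\bfC}(\Delta,X)$ and $\phi f=f$ on $B_{r'}(x)$. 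Applying (2.2e) together with the Poisson solver of Proposition~\ref{poissoncont} (via \cite[Theorem~1.1]{Honda14b}, exactly as in Proposition~\ref{derivative}), I obtain $f_i\in C^\infty_{\bfC}(X_i)$ with $f_i,\ df_i$ $L^2$-converging strongly to $\phi f,\ d(\phi f)$ and $\mathrm{Hess}_{f_i}$ $L^2$-converging weakly to $\mathrm{Hess}_{\phi f}$ on $X$.

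On each smooth $X_i$ the pointwise input is the Kähler Bochner bound $|\mathrm{grad}|\overline{\partial}f_i|^2|\le C|\mathrm{Hess}_{f_i}|\,|df_i|$ (and likewise for $|\partial f_i|^2$), which uses $\nabla J\equiv 0$. Two uniform integral inputs feed into it: the local second-order estimate $\int_{B_{r'}(x_i)}|\mathrm{Hess}_{f_i}|^2\,dH^n\le C\int_{B_R(x_i)}(|f_i|^2+|\Delta f_i|^2)\,dH^n$ from \cite{Honda14b}, and the uniform Sobolev inequality of Cheeger-Colding, which upgrades the $H^{1,2}$-bound on $df_i$ (coming from $\mathrm{Hess}_{f_i}\in L^2$) to a uniform $L^{2n/(n-1)}$-bound on $|df_i|$, this exponent lying strictly below the Sobolev exponent $2n/(n-2)$. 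Writing $\alpha=2n/(2n-1)$ and applying Hölder with the conjugate exponents $(2n-1)/n$ and $(2n-1)/(n-1)$, chosen so that $\alpha\cdot\tfrac{2n-1}{n}=2$ and $\alpha\cdot\tfrac{2n-1}{n-1}=\tfrac{2n}{n-1}$, gives
\[
\int_{B_{r'}(x_i)}\bigl|\mathrm{grad}|\overline{\partial}f_i|^2\bigr|^{\alpha}\,dH^n \le C\Bigl(\int_{B_{r'}(x_i)}|\mathrm{Hess}_{f_i}|^2\,dH^n\Bigr)^{\frac{n}{2n-1}}\Bigl(\int_{B_{r'}(x_i)}|df_i|^{\frac{2n}{n-1}}\,dH^n\Bigr)^{\frac{n-1}{2n-1}},
\]
whose right-hand side is bounded by $C(n,K,L,r,R)$ uniformly in $i$; since $|\overline{\partial}f_i|^2\le|df_i|^2$ is itself uniformly bounded in $L^\alpha$, we conclude $\sup_i\||\overline{\partial}f_i|^2\|_{H^{1,\alpha}_{\bfC}(B_{r'}(x_i))}<\infty$.

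To pass to the limit, note that by Propositions~\ref{convergencedecomp} and \ref{poissoncont} the field $\mathrm{grad}'f_i$ converges $L^2$-strongly, whence $|\overline{\partial}f_i|^2$ converges in $L^1$ to $|\overline{\partial}f|^2$ on $B_{r'}(x)$; together with the uniform higher integrability this identifies the $L^\alpha$-limit, and Theorem~\ref{Rellich} provides weak $L^\alpha$-convergence of the gradients to $\mathrm{grad}|\overline{\partial}f|^2$. The lower semicontinuity of the $L^\alpha$-norm under weak convergence (property (2.2b)) then yields $|\overline{\partial}f|^2\in H^{1,\alpha}_{\bfC}(B_r(x))$ with the asserted estimate; the case of $|\partial f|^2$ is identical. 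For the Poisson bracket the same scheme applies to $\{u_i,v_i\}=\mathrm{grad}'u_i(v_i)-\mathrm{grad}'v_i(u_i)$: the Leibniz rule gives $|\mathrm{grad}\{u_i,v_i\}|\le C(|\mathrm{Hess}_{u_i}|\,|dv_i|+|du_i|\,|\mathrm{Hess}_{v_i}|)$, so the same Hölder interpolation produces a uniform $H^{1,\alpha}$-bound, while $\{u_i,v_i\}\to\{u,v\}$ in $L^1$ because products of $L^2$-strongly convergent first derivatives converge in $L^1$; lower semicontinuity finishes the argument.

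The main obstacle is not the a priori estimate on $X_i$, which is an essentially formal combination of Bochner, Sobolev and Hölder, but the passage to the limit: one must verify that the $L^1$-limit of $|\overline{\partial}f_i|^2$ really is $|\overline{\partial}f|^2$ and that the weak $L^\alpha$-limit of its gradient is genuinely $\mathrm{grad}|\overline{\partial}f|^2$ rather than some a priori unrelated object. This is precisely where the strong $L^2$-convergence of $\mathrm{grad}'f_i$ (Proposition~\ref{poissoncont}) and the Rellich compactness (Theorem~\ref{Rellich}), which upgrade weak to strong convergence in the correct topologies, are indispensable, and the same care is required for $\{u_i,v_i\}$.
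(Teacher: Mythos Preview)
Your proposal is correct and is precisely the argument the paper invokes: the paper gives no explicit proof but simply refers to \cite[Theorem~4.11]{Honda14b}, whose scheme is exactly what you outline---globalise by a good cutoff, approximate by smooth solutions of the Poisson equation on $X_i$ via \cite[Theorem~1.1]{Honda14b}, combine the pointwise Bochner/Kato bound $|\mathrm{grad}|\overline{\partial}f_i|^2|\le C|\mathrm{Hess}_{f_i}|\,|df_i|$ with the local $L^2$-Hessian estimate and the uniform Sobolev inequality, run H\"older with the exponents $(2n-1)/n$ and $(2n-1)/(n-1)$, and pass to the limit using Rellich compactness together with lower semicontinuity of the $L^{2n/(2n-1)}$-norm. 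The treatment of $\{u,v\}$ via the Leibniz estimate $|\mathrm{grad}\{u_i,v_i\}|\le C(|\mathrm{Hess}_{u_i}|\,|dv_i|+|du_i|\,|\mathrm{Hess}_{v_i}|)$ is likewise the intended route.
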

\begin{remark}\label{rem}
By Proposition \ref{gradsov} and \cite[Lemma $6.1$]{Honda14b}, for any $u, v \in \mathcal{D}^2_{\bfC}(\Delta, X)$, we see that $\{u, v\} \in H^{1, 2}_{\bfC}(X)$ holds if and only if $\mathrm{grad} \{u, v\} \in L^2_{\bfC}(T_{\bfC}X)$ holds.
\end{remark}

We need the following:
\begin{proposition}\label{eigenformula}
Let $M$ be a Fano manifold and let $u, v \in C^{\infty}_{\bfC}(M)$.
\begin{enumerate}
\item[(1)]\label{gradPoisson} For the gradient of the Poisson bracket we have
$$
 \mathrm{grad}'\{u, v\}
=[\mathrm{grad}'u, \mathrm{grad}'v] + \nabla_{\mathrm{grad}''v}\mathrm{grad}'u - \nabla_{\mathrm{grad}''u }\mathrm{grad}'v,
$$
\item[(2)]\label{deltaPoisson} If $\Delta^F_{\barpartial}u=\lambda u$ 
and $\Delta^F_{\barpartial}v=\nu v$, then
\begin{eqnarray*}
\Delta^F_{\barpartial}\{u, v\}
&=&(\lambda+\nu -1)\{u, v\} \\
& &- g_M(\nabla ''\mathrm{grad}'u,\nabla'\mathrm{grad}'' v) 
+ g_M(\nabla ''\mathrm{grad}'v, \nabla'\mathrm{grad}''u).
\end{eqnarray*}
\noindent
Here, in the standard notation of tensor calculus,
\begin{eqnarray*}
g_M(\nabla ''\mathrm{grad}'u,\nabla'\mathrm{grad}'' v) 
&=& g_{i\barj}g^{k\barl}\,\nabla_\barl\nabla^i u \nabla_k \nabla^\barj v\\
&=& \nabla_\barl\nabla^i u \nabla^\barl \nabla_i v.
\end{eqnarray*}
\end{enumerate}
\end{proposition}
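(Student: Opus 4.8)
The plan is to prove both identities as pointwise formulas on the smooth K\"ahler manifold $M$ by a computation in local holomorphic coordinates, using the standard K\"ahler identities: the Levi-Civita connection equals the Chern connection and preserves the splitting $T_{\bfC}M=T'M\oplus T''M$ (so $\nabla_{\bar{k}}\partial_i=0$ and $\nabla_k\partial_{\bar{i}}=0$), the mixed second covariant derivatives of a scalar commute, and the curvature is of type $(1,1)$ so that $[\nabla_i,\nabla_j]$ and $[\nabla_{\bar{i}},\nabla_{\bar{j}}]$ annihilate every tensor. Throughout I write $\mathrm{grad}'u=\nabla^iu\,\partial_i$ with $\nabla^iu=g^{i\bar{j}}\nabla_{\bar{j}}u$, and $\Box_0f:=g^{i\bar{j}}\nabla_i\nabla_{\bar{j}}f=-\Delta_{\barpartial}f$; by Proposition \ref{Laplacianexplicite} the eigenvalue hypotheses become $\Box_0u=-\lambda u-h_X^*(\barpartial u,\barpartial F)$ and $\Box_0v=-\nu v-h_X^*(\barpartial v,\barpartial F)$.

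For (1), expanding the left-hand side and using $\nabla_{\bar{l}}\nabla_iv=\nabla_i\nabla_{\bar{l}}v$ together with the symmetry $\nabla^k\nabla^iu=\nabla^i\nabla^ku$ gives $\mathrm{grad}'\{u,v\}=\bigl(\nabla^k\nabla^iu\,\nabla_iv+\nabla^iu\,\nabla_i\nabla^kv-\nabla^k\nabla^iv\,\nabla_iu-\nabla^iv\,\nabla_i\nabla^ku\bigr)\partial_k$. On the right-hand side, the torsion-free identity $[\mathrm{grad}'u,\mathrm{grad}'v]=\nabla_{\mathrm{grad}'u}\mathrm{grad}'v-\nabla_{\mathrm{grad}'v}\mathrm{grad}'u$ supplies exactly the mixed-Hessian terms $\nabla^iu\,\nabla_i\nabla^kv-\nabla^iv\,\nabla_i\nabla^ku$, while $\nabla_{\mathrm{grad}''v}\mathrm{grad}'u$ and $\nabla_{\mathrm{grad}''u}\mathrm{grad}'v$ differentiate $\mathrm{grad}'u,\mathrm{grad}'v$ in antiholomorphic directions and yield the $(2,0)$-Hessian terms $\nabla^k\nabla^iu\,\nabla_iv$ and $\nabla^k\nabla^iv\,\nabla_iu$. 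Matching term by term proves (1); no curvature enters, and the Fano hypothesis is not used here.

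For (2), I would apply $\Delta^F_{\barpartial}=-g^{k\bar{l}}\bigl(\nabla_k\nabla_{\bar{l}}(\cdot)+(\nabla_kF)\nabla_{\bar{l}}(\cdot)\bigr)$ to the scalar $\{u,v\}$ and expand by the Leibniz rule. The third-order terms split into two types. When the contracted holomorphic derivative meets a holomorphic index of $v$ (or $u$), the vanishing holomorphic-holomorphic commutator rearranges it to $g^{k\bar{l}}\nabla_k\nabla_{\bar{l}}\nabla_iv=\nabla_i\Box_0v$, with no curvature. When it meets $\nabla^iu=g^{i\bar{j}}\nabla_{\bar{j}}u$, commuting $\nabla_k$ past $\nabla_{\bar{l}}$ and contracting by $g^{k\bar{l}}$ produces the Ricci term $R_{\bar{j}}{}^{\bar{m}}\nabla_{\bar{m}}u$. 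Here I invoke the Ricci-potential equation in the coordinate form $R_{i\bar{j}}=g_{i\bar{j}}+\nabla_i\nabla_{\bar{j}}F$ (equivalently the Bakry--\'Emery identity $R_{i\bar{j}}-\nabla_i\nabla_{\bar{j}}F=g_{i\bar{j}}$), which separates the Ricci term into a metric part, furnishing the constant shift, and a Hessian-of-$F$ part. Substituting the eigenvalue equations then turns the $\nabla\Box_0$ contributions into $\lambda$ and $\nu$ multiples of $\{u,v\}$ plus further first-order $F$-terms.

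The surviving second-order products separate into $(0,2)\cdot(2,0)$ Hessian pairings, which assemble precisely into $-g_M(\nabla''\mathrm{grad}'u,\nabla'\mathrm{grad}''v)+g_M(\nabla''\mathrm{grad}'v,\nabla'\mathrm{grad}''u)$ through the identity $g_M(\nabla''\mathrm{grad}'u,\nabla'\mathrm{grad}''v)=g^{k\bar{l}}\nabla_{\bar{l}}\nabla^iu\,\nabla_k\nabla_iv$ recorded in the statement, and into mixed--mixed products $g^{k\bar{l}}g^{i\bar{j}}(\nabla_k\nabla_{\bar{j}}u)(\nabla_{\bar{l}}\nabla_iv)$, which are antisymmetric under interchange of the two halves of the bracket and hence cancel. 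The main obstacle, and the only delicate bookkeeping, is to verify that all first-order terms involving $F$ cancel: those from the weight $(\nabla_kF)\nabla_{\bar{l}}\{u,v\}$ in $\Delta^F_{\barpartial}$, those from differentiating the weight inside $\Box_0u$ and $\Box_0v$, and the Hessian-of-$F$ part of the Ricci term must sum to zero, leaving exactly $(\lambda+\nu-1)\{u,v\}$; this is where the Ricci-potential equation does its work and where sign and index conventions must be tracked carefully. As a consistency check, for $\lambda=\nu=1$ Corollary \ref{1steigen} gives $\nabla''\mathrm{grad}'u=\nabla''\mathrm{grad}'v=0$, so the Hessian pairings vanish and the formula reduces to $\Delta^F_{\barpartial}\{u,v\}=\{u,v\}$, i.e. $\Lambda_1$ is closed under the Poisson bracket, which is the conclusion this proposition is designed to supply.
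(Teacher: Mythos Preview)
Your proposal is correct and follows essentially the same approach as the paper: a direct local holomorphic coordinate computation, using the commutativity of second covariant derivatives on scalars for part (1), and for part (2) a Leibniz expansion of the weighted $\barpartial$-Laplacian, the Ricci identity $\nabla_k\nabla^j\nabla^k u=\nabla^j\nabla_k\nabla^k u+R_k{}^{jk}{}_i\nabla^i u$ together with the Ricci-potential equation $R_{i\bar l}=g_{i\bar l}+\nabla_i\nabla_{\bar l}F$, after which the $F$-terms and the mixed--mixed Hessian products cancel. The paper organizes the bookkeeping by writing out all eight second- and third-order terms from the Leibniz rule explicitly and then invoking ``miraculous cancellations'', whereas you group them a priori by type; the content is the same.
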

\begin{proof} We choose a local holomorphic coordinates $z^1, \cdots, z^m$ and use the standard
notations of tensor calculus $\nabla^i = g^{i\barj}\nabla_\barj$ or $\nabla^\barj = g^{i\barj}\nabla_i$.
Then the Poisson bracket is written as
$$ \{u,v\} = \nabla^i u \nabla_i v - \nabla^i v \nabla_i u,$$
and the gradient vector field of type (1,0) is written as 
$$ \grad'u = \nabla^i u\frac{\partial}{\partial z^i}.$$
Since $\nabla_A\nabla_B u = \nabla_B\nabla_A u$ for functions $u$ we have
\begin{eqnarray*}
\nabla^i\{u,v\} &=& \nabla^i(\nabla^j u \nabla_j v - \nabla^j v \nabla_j u) \\
&=& \nabla^j u \nabla_j\nabla^i v - \nabla^j v \nabla_j\nabla^i u 
+ \nabla^\barj v \nabla_\barj \nabla^i u - \nabla^\barj u \nabla_\barj \nabla^i v 
\end{eqnarray*}
The last term is equal to the $i$-th component of 
$$ [\mathrm{grad}'u, \mathrm{grad}'v] + \nabla_{\mathrm{grad}''v}\mathrm{grad}'u - \nabla_{\mathrm{grad}''u }\mathrm{grad}'v.$$
This proves (1).  To prove (2) we first compute
\begin{eqnarray*}
\Delta \{u,v\} &=& -\nabla_k\nabla^k (\nabla^j u \nabla_j v - \nabla^j v \nabla_j u)\\
&=& -\nabla^k\nabla^j u \nabla_k\nabla_j v -\nabla_k\nabla^j u \nabla^k\nabla_j v \\
& & \qquad- \nabla_k\nabla^k\nabla^j u \nabla_j v- \nabla^j u \nabla_k\nabla^k\nabla_j v\\
& & \qquad  + \nabla^k\nabla^j v \nabla_k\nabla_j u + \nabla_k\nabla^j v \nabla^k\nabla_j u \\
& & \qquad + \nabla_k\nabla^k\nabla^j v \nabla_j u- \nabla^j v \nabla_k\nabla^k\nabla_j u.
\end{eqnarray*}
Then using the Ricci identity
$$ \nabla_k\nabla^j\nabla^k u= \nabla^j\nabla_k\nabla^k u  + R_k{}^{jk}{}_i\nabla^i u,$$
the definition of the Ricci curvature 
 $$R_k{}^{jk}{}_i\nabla^j = R_i{}^j = g^{j\barl}R_{i\barl},$$
 and the definition of the Ricci potential $F$
 $$ R_{i\barl} = g_{i\barl} + \nabla_i\nabla_\barl F, $$
 one can see that miraculous cancellations occur to obtain
 \begin{eqnarray*}
 \Delta^F_\barpartial \{u,v\} &=& \Delta\{u,v\} - \nabla^k\{u,v\}\nabla_k F \\
 &=& (\lambda + \mu - 1)\{u,v\} - \nabla_\barl\nabla^j u \nabla^\barl\nabla_j v 
 + \nabla_\barl\nabla^j v \nabla^\barl\nabla_j u.
 \end{eqnarray*}
 This completes the proof of (2).
\end{proof}

Recall that by Propositions \ref{spectralconv} and \ref{Laplacianweakconv}, for any Fano-Ricci limit space $(X, g_X, J, F)$ of $(X_i, g_{X_i}, J_i, F_i)$ and $\alpha$-eigenfunction $u \in \mathcal{D}^2_{\bfC}(\Delta^F_{\barpartial}, X)$ of $\Delta^F_{\barpartial}$, there exist sequences $\lambda_i \to \alpha$ and $u_i \in C^{\infty}_{\bfC}(X_i)$ such that $\Delta_{\barpartial}^{F_i}u_i=\lambda_i u_i$ and that $u_i$
and $du_i$ $L^2$-converge strongly to $u$ and$ du$ on $X$ respectively.
We call $(\lambda_i, u_i)$ \textit{a spectral approximation of $u$ (with respect to $(X_i, g_{X_i}, J_i, F_i)$)}.
Moreover if 
\[\sup_i ||h_{X_i}(\barpartial u_i, \barpartial F_i)||_{L^2} < \infty\]
holds, 
then $(\lambda_i, u_i)$ is said to be \textit{compatible}.

We first discuss a closedness of the Poisson bracket $\{\cdot, \cdot\}$ on $\Lambda_1$.
Recall that from \cite[Theorem $1.9$]{Honda14b} with Proposition \ref{Laplacianreal}, for every $u \in \mathcal{D}^2_{\bfC}(\Delta, X)$, we see that $u$ is twice differentiable on $X$ in the sense of \cite{Honda14a}. 
In particular,
\[[\mathrm{grad}'u, \mathrm{grad}'v]\]
is well-defined a.e. $x \in X$ for any $u, v \in \mathcal{D}^2_{\bfC}(\Delta, X)$.
\begin{proposition}\label{limitbracket}
Let $(X, g_X, J, F)$ be the Fano-Ricci limit space of the sequence $(X_i, g_{X_i}, J_i, F_i)$ and
let $u, v \in \Lambda_1(X)$. 
Assume that there exist compatible spectral approximations $(\lambda_i, u_i)$ and $(\nu_i, v_i)$ of $u$ and $v$ respectively.
Then we see that $u, v \in \mathcal{D}^2_{\bfC}(\Delta, X)$ and that
\[\{u, v\} \in \mathcal{D}_{\bfC}^{p_n, p_n}(\Delta^F_{\barpartial}, X)\]
with 
\begin{align}\label{homomo}
\mathrm{grad}'\{u, v\}=[\mathrm{grad}'u, \mathrm{grad}'v]
\end{align}
and
\[\Delta^F_{\barpartial}\{u, v\}=\{u, v\},\]
where $p_n=2n/(2n-1)$ (see Remark \ref{rem} for the definition of $\mathcal{D}^{p, q}_{\bfC}(\Delta^F_{\barpartial}, X)$).
In particular, the following are equivalent:
\begin{enumerate}
\item $\{u, v\} \in \Lambda_1(X)$.
\item $\mathrm{grad} \{u, v\} \in L^2_{\bfC}(T_{\bfC}X)$.
\end{enumerate}
\end{proposition}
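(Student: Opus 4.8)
The plan is to transfer the two smooth identities of Proposition~\ref{eigenformula} to the limit space by passing to the limit along the compatible spectral approximations $(\lambda_i, u_i)$ and $(\nu_i, v_i)$. The whole point is that the error terms appearing there are controlled by $\nabla''\mathrm{grad}'u_i$ and $\nabla''\mathrm{grad}'v_i$, which I will show tend to $0$ \emph{strongly} in $L^2$, while the remaining terms converge to the desired limiting expressions.

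First I would establish $u, v \in \mathcal{D}^2_{\bfC}(\Delta, X)$. Since $(\lambda_i, u_i)$ is compatible, Propositions~\ref{Laplacianequal} and \ref{Laplacianexplicite}(2) on the smooth $X_i$ give $\Delta u_i = 2\Delta_{\barpartial}u_i = 2\lambda_i u_i + 2h_{X_i}^*(\barpartial u_i, \barpartial F_i)$, whose $L^2$-norm is uniformly bounded by the compatibility hypothesis $\sup_i\|h_{X_i}^*(\barpartial u_i, \barpartial F_i)\|_{L^2}<\infty$ together with $\sup_i\|u_i\|_{L^2}<\infty$. As $u_i, du_i$ converge strongly in $L^2$ to $u, du$, the stability of the (unweighted) Laplacian under the measured Gromov--Hausdorff convergence $(X_i, g_{X_i}, H^n)\to(X, g_X, H^n)$ (cf.\ Remark~\ref{remark2} and the results of \cite{Honda14b} invoked in the proof of Proposition~\ref{derivative}) yields $u \in \mathcal{D}^2_{\bfC}(\Delta, X)$ with $\Delta u_i \rightharpoonup \Delta u$ weakly in $L^2$, and likewise for $v$. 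This makes Proposition~\ref{gradsov} applicable to $u,v$ and, with the uniform bound on $\|\Delta u_i\|_{L^2}, \|\Delta v_i\|_{L^2}$, to $u_i, v_i$ with uniform constants; covering each compact $X_i$ by a uniformly bounded number of balls upgrades the local estimate to a uniform global bound $\sup_i\|\mathrm{grad}\{u_i, v_i\}\|_{L^{p_n}}<\infty$ and gives $\{u, v\} \in H^{1, p_n}_{\bfC}(X)$.

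The crucial input is that $\nabla''\mathrm{grad}'u_i \to 0$ and $\nabla''\mathrm{grad}'v_i \to 0$ strongly in $L^2$. Indeed, the smooth Weitzenb\"ock formula (\ref{16}) on $X_i$ together with $\Delta^{F_i}_{\barpartial}u_i = \lambda_i u_i$ gives
\begin{align*}
\int_{X_i}|\nabla''\mathrm{grad}'u_i|^2\,dH^n_{F_i} = \lambda_i(\lambda_i - 1)\int_{X_i}|u_i|^2\,dH^n_{F_i} \longrightarrow 0,
\end{align*}
since $\lambda_i \to 1$ and $\int_{X_i}|u_i|^2\,dH^n_{F_i} \to \int_X|u|^2\,dH^n_F$; the same holds for $v_i$ with $\nu_i\to 1$ (this is consistent with $\nabla''\mathrm{grad}'u = 0$ from Corollary~\ref{1steigen} and Theorem~\ref{weith}). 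On the other hand the Hessians $\mathrm{Hess}_{u_i}, \mathrm{Hess}_{v_i}$ stay $L^2$-bounded by the Bochner inequality and Step~1, and converge weakly, so $\nabla'\mathrm{grad}''u_i, \nabla'\mathrm{grad}''v_i$ are $L^2$-bounded. Hence the error terms in Proposition~\ref{eigenformula}, namely $\nabla_{\mathrm{grad}''v_i}\mathrm{grad}'u_i = (\nabla''\mathrm{grad}'u_i)(\mathrm{grad}''v_i)$ and $g_{X_i}(\nabla''\mathrm{grad}'u_i, \nabla'\mathrm{grad}''v_i)$ together with their symmetric partners, tend to $0$ in $L^1$ by Cauchy--Schwarz (a strongly vanishing $L^2$ factor against an $L^2$-bounded one).

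With these convergences I would pass to the limit in the weak forms of the two identities. Testing $\Delta^{F_i}_{\barpartial}\{u_i, v_i\} = (\lambda_i + \nu_i - 1)\{u_i, v_i\} + \mathrm{error}_i$ against $\phi_i \in \mathrm{LIP}_{\bfC}(X_i)$ with $\phi_i, d\phi_i$ strongly $L^2$-convergent to $\phi, d\phi$ and $\sup_i\|d\phi_i\|_{L^\infty}<\infty$ (by (2.2e)): on the right $\lambda_i + \nu_i - 1 \to 1$, the bilinear quantity $\{u_i, v_i\} \to \{u, v\}$ strongly in $L^1$ (product of the strongly $L^2$-convergent differentials $du_i, dv_i$), and the error integrates to $0$ by Step~3; on the left $\barpartial\{u_i, v_i\}$ is $L^{p_n}$-bounded, hence converges weakly in $L^{p_n}$ to $\barpartial\{u, v\}$ (the limit identified via the $L^1$-convergence of $\{u_i, v_i\}$ and Proposition~\ref{convergencedecomp}), while $\barpartial\phi_i$ converges strongly in the conjugate exponent $p_n'=2n$. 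This gives $\{u, v\} \in \mathcal{D}^{p_n, p_n}_{\bfC}(\Delta^F_{\barpartial}, X)$ with $\Delta^F_{\barpartial}\{u, v\} = \{u, v\}$. The same limiting argument on Proposition~\ref{eigenformula}(1), where $[\mathrm{grad}'u_i, \mathrm{grad}'v_i]$ converges weakly in $L^1$ to $[\mathrm{grad}'u, \mathrm{grad}'v]$ (weak-Hessian against strong-gradient) and the two correction terms vanish, yields $\mathrm{grad}'\{u, v\} = [\mathrm{grad}'u, \mathrm{grad}'v]$. Finally, for the equivalence: if $\mathrm{grad}\{u, v\} \in L^2_{\bfC}(T_{\bfC}X)$ then $\{u, v\} \in H^{1,2}_{\bfC}(X)$ by Remark~\ref{rem}, so $\{u, v\} \in L^2_{\bfC}(X)$ is an admissible right-hand side, upgrading the membership to $\{u, v\} \in \mathcal{D}^2_{\bfC}(\Delta^F_{\barpartial}, X)$ with $\Delta^F_{\barpartial}\{u, v\} = \{u, v\}$, i.e.\ $\{u, v\} \in \Lambda_1(X)$; conversely $\Lambda_1(X) \subset H^{1,2}_{\bfC}(X)$ forces $\mathrm{grad}\{u, v\} \in L^2$. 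The main obstacle I expect is exactly the passage to the limit in the nonlinear quantities $\{u_i, v_i\}$, $[\mathrm{grad}'u_i, \mathrm{grad}'v_i]$ and $\mathrm{error}_i$ inside the $L^p$-convergence framework of Section~2, matching the strong and weak convergences and securing the uniform global $L^{p_n}$-bound out of the purely local estimate of Proposition~\ref{gradsov}.
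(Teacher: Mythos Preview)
Your proposal is correct and follows essentially the same route as the paper's proof: establish $u,v\in\mathcal{D}^2_{\bfC}(\Delta,X)$ via compatibility and Proposition~\ref{Laplacianexplicite}, use Proposition~\ref{gradsov} for the uniform $H^{1,p_n}$ control of $\{u_i,v_i\}$, kill $\nabla''\mathrm{grad}'u_i$ and $\nabla''\mathrm{grad}'v_i$ in $L^2$ by the smooth Weitzenb\"ock identity (\ref{16}) with $\lambda_i,\nu_i\to 1$, and then pass to the limit in the two identities of Proposition~\ref{eigenformula} tested against Lipschitz approximants from (2.2e); the final equivalence is exactly Remark~\ref{rem}. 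The paper cites \cite[Theorem~4.13]{Honda14b} directly for the weak $L^2$-convergence of $\mathrm{Hess}_{u_i},\mathrm{Hess}_{v_i}$ and the weak $L^{p_n}$-convergence of $[\mathrm{grad}'u_i,\mathrm{grad}'v_i]$, and uses Theorem~\ref{Rellich} (applied with exponent $p_n$) to get the strong $L^{p_n}$-convergence of $\{u_i,v_i\}$ and weak $L^{p_n}$-convergence of $d\{u_i,v_i\}$, whereas you argue these convergences more by hand (product of strong $L^2$ factors, Bochner bound on Hessians); these are equivalent justifications of the same steps.
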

\begin{proof}
By Propositions \ref{Laplacianreal}, \ref{Laplacianexplicite}, \ref{Laplacianequal}, and \cite[Theorem $4.13$]{Honda14b}, we see that $u, v \in \mathcal{D}^2_{\bfC}(\Delta, X)$, that $\{u, v\} \in H^{1, p_n}_{\bfC}(X)$, and that
$\mathrm{Hess}_{u_i}$ and $\mathrm{Hess}_{v_i}$ $L^2$-converge weakly to 
$\mathrm{Hess}_u$ and $\mathrm{Hess}_v$ on $X$ respectively.
Propositions \ref{Rellich} and \ref{gradsov} yield that $\{u_i, v_i\}$ $L^{p_n}$-converges strongly to $\{u, v\}$ on $X$, and that $d\{u_i, v_i\}$ $L^{p_n}$-converges weakly to $d\{u, v\}$ on $X$.

On the other hand, the Weitzenb\"ock formula on a Fano manifold yields
\[(\lambda_i-1)\int_{X_i}|u_i|^2dH^n_{F_i}=\int_{X_i}|\nabla''\mathrm{grad}'u_i|^2dH^n_{F_i}.\]
Thus letting $i \to \infty$ with this gives that $\nabla''\mathrm{grad}'u_i$ $L^2$-converges strongly to $0$ on $X$.
Similarly $\nabla''\mathrm{grad}'v_i$ $L^2$-converges strongly to $0$ on $X$.

Let $f \in \mathrm{LIP}_{\bfC}(X)$.
By (2.2e), there exists a sequence $f_i \in \mathrm{LIP}_{\bfC}(X)$ with $\sup_i||df_i||_{L^{\infty}}<\infty$ such that 
$f_i$ and $df_i$ $L^2$-converge strongly to $f$ and $df$ on $X$ respectively.

Since (2) of Proposition \ref{eigenformula} gives
\begin{align*}
&\int_{X_i}h_X(\barpartial \{u_i, v_i\}, \barpartial f_i)dH^n_{F_i} \\
&=\int_{X_i}(\lambda_i +\nu_i-1)\{u_i, v_i\}\overline{f_i}dH^n_{F_i} \\
& -\int_{X_i}\left(g_{X_i}(\overline{f_i} \nabla ''\mathrm{grad}'u_i, \nabla'\mathrm{grad}'' v_i) - g_X(\overline{f_i} \nabla ''\mathrm{grad}'v_i, \nabla'\mathrm{grad}'' u_i)\right) dH^n_{F_i},
\end{align*}
letting $i \to \infty$ yields that $\{u, v\} \in \mathcal{D}_{\bfC}^{p_n, p_n}(\Delta^F_{\barpartial}, X)$ with 
$\Delta^F_{\barpartial}\{u, v\}=\{u, v\}$.

On the other hand, since
\[[\mathrm{grad}'u_i, \mathrm{grad}'v_i]=\nabla_{\mathrm{grad}'u_i}\mathrm{grad}'v_i-\nabla_{\mathrm{grad}'v_i}\mathrm{grad}'u_i,\]
by \cite[Theorem $4.13$]{Honda14b} we see that $[\mathrm{grad}'u_i, \mathrm{grad}'v_i]$ $L^{2n/(2n-1)}$-converges weakly to $[\mathrm{grad}'u, \mathrm{grad}'v]$ on $X$.
Therefore by letting $i \to \infty$ in (1) of Proposition \ref{eigenformula}, we have (\ref{homomo}).
The final equivalence follows from Remark \ref{rem}.
\end{proof}
\begin{corollary}\label{lielambda}
Let $(X, g_X, J, F)$ be the Fano-Ricci limit space of the sequence $(X_i, g_{X_i}, J_i, F_i)$ and let $u, v \in \Lambda_1(X)$.
Assume that there exist compatible spectral approximations $(\lambda_i, u_i)$ 
and $(\nu_i, v_i)$ of $u$ and $v$, respectively such that
\begin{align}\label{L2gradestimate}
\sup_i||\mathrm{grad}'\{u_i, v_i\}||_{L^2}<\infty.
\end{align}
Then $\{u, v\} \in \Lambda_1(X)$. 
\end{corollary}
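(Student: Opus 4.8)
The plan is to verify, via the equivalence established at the end of Proposition \ref{limitbracket}, that $\{u,v\}\in\Lambda_1(X)$ by showing $\mathrm{grad}\{u,v\}\in L^2_{\bfC}(T_{\bfC}X)$. Recall that Proposition \ref{limitbracket} already applies under the present hypotheses and provides $\{u,v\}\in\mathcal{D}^{p_n,p_n}_{\bfC}(\Delta^F_{\overline{\partial}},X)$ with $\Delta^F_{\overline{\partial}}\{u,v\}=\{u,v\}$, together with the $L^{p_n}$-weak convergence of $d\{u_i,v_i\}$ to $d\{u,v\}$ on $X$ (equivalently, by Proposition \ref{convergencedecomp}, of $\mathrm{grad}'\{u_i,v_i\}$ to $\mathrm{grad}'\{u,v\}$). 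Thus only the integrability of the gradient is missing: the assertion amounts to upgrading this $L^{p_n}$ information to $L^2$, and the extra hypothesis (\ref{L2gradestimate}) is exactly what will close this gap.

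First I would upgrade the hypothesis from the $(1,0)$-part to the full differential. Each $X_i$ is a smooth compact K\"ahler manifold and $u_i,v_i$ are smooth, so $\{u_i,v_i\}\in C^{\infty}_{\bfC}(X_i)$; applying the global identity of Proposition \ref{L2norm} on $X_i$, namely $\int_{X_i}|\partial\{u_i,v_i\}|^2\,dH^n=\int_{X_i}|\overline{\partial}\{u_i,v_i\}|^2\,dH^n$ (a consequence of $\Delta=2\Delta_{\overline{\partial}}=2\Delta_{\partial}$ on the smooth K\"ahler manifold $X_i$), shows $\|\mathrm{grad}''\{u_i,v_i\}\|_{L^2}=\|\mathrm{grad}'\{u_i,v_i\}\|_{L^2}$. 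Since $|d\{u_i,v_i\}|^2=|\mathrm{grad}'\{u_i,v_i\}|^2+|\mathrm{grad}''\{u_i,v_i\}|^2$ pointwise, hypothesis (\ref{L2gradestimate}) yields
\[
\sup_i\|d\{u_i,v_i\}\|_{L^2}=\sup_i\|\mathrm{grad}\{u_i,v_i\}\|_{L^2}\le \sqrt{2}\,\sup_i\|\mathrm{grad}'\{u_i,v_i\}\|_{L^2}<\infty .
\]

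Next I would pass to the limit. By the $L^2$-weak compactness recalled in (2.2a), after extracting a subsequence $d\{u_i,v_i\}$ $L^2$-converges weakly to some $T\in L^2_{\bfC}(T_{\bfC}^*X)$. Since on a space of finite measure an $L^2$-weak convergent sequence (tested against the fixed fields $\mathrm{grad}\,r_z$) is in particular $L^{p_n}$-weak convergent to the same limit, and since $L^{p_n}$-weak limits are unique, $T$ must coincide with the $L^{p_n}$-weak limit $d\{u,v\}$ furnished by Proposition \ref{limitbracket}. Hence $d\{u,v\}=T\in L^2$, so $\mathrm{grad}\{u,v\}\in L^2_{\bfC}(T_{\bfC}X)$ (its pointwise norm equals $|d\{u,v\}|$), which is condition (2) in the equivalence of Proposition \ref{limitbracket} (equivalently Remark \ref{rem}). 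Therefore $\{u,v\}\in\Lambda_1(X)$.

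The main obstacle is the passage recorded in the last paragraph: unconditionally one only controls $\{u,v\}$ and its differential in $L^{p_n}$ with $p_n=2n/(2n-1)<2$, whereas membership in $\Lambda_1(X)\subset\mathcal{D}^2_{\bfC}(\Delta^F_{\overline{\partial}},X)$ requires the genuine $L^2$-Sobolev regularity $d\{u,v\}\in L^2$. Hypothesis (\ref{L2gradestimate}) supplies the uniform $L^2$ bound on the approximants, and the delicate point is to transfer it to the limit while matching the two a priori different weak limits, which is what makes the identification $T=d\{u,v\}$ legitimate. One must take care that the hypothesis concerns only $\mathrm{grad}'$, so the use of the exact K\"ahler identity $\|\mathrm{grad}'\cdot\|_{L^2}=\|\mathrm{grad}''\cdot\|_{L^2}$ on each smooth $X_i$ is essential in order to recover control of the full gradient.
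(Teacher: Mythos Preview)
Your proof is correct and follows essentially the same route as the paper, which simply records that the result ``is a direct consequence of Theorem \ref{Rellich}, Proposition \ref{L2norm} and (the proof of) Proposition \ref{limitbracket}.'' The only cosmetic difference is that the paper invokes Rellich compactness (Theorem \ref{Rellich}) on the sequence $\{u_i,v_i\}$ to land directly in $H^{1,2}_{\bfC}(X)$, whereas you use $L^2$-weak compactness of $d\{u_i,v_i\}$ alone and then identify the limit with $d\{u,v\}$ via uniqueness of $L^{p_n}$-weak limits; both arguments hinge on the same three ingredients (Proposition \ref{L2norm} to pass from $\mathrm{grad}'$ to the full differential, the $L^{p_n}$-convergence established in the proof of Proposition \ref{limitbracket}, and the final equivalence there).
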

\begin{proof}
This is a direct consequence of Theorem \ref{Rellich}, Propositions \ref{L2norm} and (the proof of) \ref{limitbracket}.
\end{proof}
\begin{corollary}
Let $(X, g_X, J, F)$ be the Fano-Ricci limit space of the sequence $(X_i, g_{X_i}, J_i, F_i)$.
Assume 
\[\sup_i||\barpartial F_i||_{L^{\infty}}<\infty.\]
Moreover we assume that one of the following holds:
\begin{enumerate}
\item[(1)] $\Lambda_1(X) \subset \mathrm{LIP}_{\bfC}(X)$.
\item[(2)] $F \equiv 0$.
\end{enumerate}
Then we have the following closedness of $\Lambda_1(X)$ for the Poisson bracket:
\[\{u, v\} \in \Lambda_1(X)\]
for any $u, v \in \Lambda_1(X)$. 
\end{corollary}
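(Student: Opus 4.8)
The plan is to reduce everything to the criterion of Corollary \ref{lielambda}: given $u,v\in\Lambda_1(X)$, I would choose spectral approximations $(\lambda_i,u_i)$ and $(\nu_i,v_i)$ of $u,v$ as in the paragraph preceding Proposition \ref{limitbracket}, and verify that they are compatible and satisfy $\sup_i\|\mathrm{grad}'\{u_i,v_i\}\|_{L^2}<\infty$. Compatibility comes for free from the standing hypothesis $\sup_i\|\barpartial F_i\|_{L^\infty}<\infty$: since $u_i,du_i$ converge strongly in $L^2$, Proposition \ref{L2norm} gives $\sup_i\|\barpartial u_i\|_{L^2}<\infty$, and the pointwise bound $|h_{X_i}(\barpartial u_i,\barpartial F_i)|\le\|\barpartial F_i\|_{L^\infty}\,|\barpartial u_i|$ yields $\sup_i\|h_{X_i}(\barpartial u_i,\barpartial F_i)\|_{L^2}<\infty$. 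Proposition \ref{limitbracket} then applies, so $\{u,v\}\in\mathcal{D}^{p_n,p_n}_{\bfC}(\Delta^F_{\barpartial},X)$ with $\Delta^F_{\barpartial}\{u,v\}=\{u,v\}$, and membership in $\Lambda_1(X)$ is equivalent to $\mathrm{grad}\{u,v\}\in L^2_{\bfC}(T_{\bfC}X)$. Thus it remains to produce the uniform $L^2$-bound on $\mathrm{grad}'\{u_i,v_i\}$.

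To control this I would use Proposition \ref{L2norm} on $X_i$ to pass to $\|\barpartial\{u_i,v_i\}\|_{L^2_{F_i}}$, and then expand, via Proposition \ref{eigenformula}(2),
\[
\|\barpartial\{u_i,v_i\}\|^2_{L^2_{F_i}}=(\lambda_i+\nu_i-1)\int_{X_i}|\{u_i,v_i\}|^2\,dH^n_{F_i}-\int_{X_i}\left(g(\nabla''\mathrm{grad}'u_i,\nabla'\mathrm{grad}''v_i)-g(\nabla''\mathrm{grad}'v_i,\nabla'\mathrm{grad}''u_i)\right)\overline{\{u_i,v_i\}}\,dH^n_{F_i}.
\]
The Weitzenb\"ock identity (\ref{16}), applied to $u_i$ and to $\overline{v_i}$ (both eigenfunctions), gives $\|\nabla''\mathrm{grad}'u_i\|^2_{L^2_{F_i}}=\lambda_i(\lambda_i-1)\|u_i\|^2_{L^2_{F_i}}\to0$ and likewise $\|\nabla'\mathrm{grad}''v_i\|_{L^2_{F_i}}\to0$, so by Cauchy--Schwarz the correction integrand tends to $0$ in $L^1_{F_i}$. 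Consequently the entire right-hand side is controlled once $\{u_i,v_i\}$ is bounded in $L^\infty$ (this simultaneously bounds the main term and the pairing against the $L^1$-small correction). Since $\{u_i,v_i\}=\mathrm{grad}'u_i(v_i)-\mathrm{grad}'v_i(u_i)$ is bilinear in the first derivatives of $u_i$ and $v_i$, the whole matter collapses to a uniform gradient bound $\sup_i(\|du_i\|_{L^\infty}+\|dv_i\|_{L^\infty})<\infty$ for the approximating eigenfunctions.

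This uniform gradient bound is exactly where the dichotomy (1)/(2) is needed, and it is the main obstacle. The difficulty is that the weighted eigenequation reads $\Delta u_i=2\lambda_i u_i+\langle\nabla F_i,\nabla u_i\rangle$, whose Bakry--\'Emery tensor $\Ric_{X_i}+\mathrm{Hess}\,F_i$ is not uniformly bounded below (we control $\nabla F_i$ but not $\mathrm{Hess}\,F_i$), so the standard eigenfunction gradient estimate is unavailable; moreover the singular set of $X$ need not be closed. Under hypothesis (2), $F\equiv0$, the limit measure is unweighted and $X$ is a noncollapsed $RCD(K,n)$ space, so the Bakry--\'Emery/heat-flow regularity applies to the genuine Laplacian and forces the limit eigenfunctions to be Lipschitz; this in fact reduces (2) to (1). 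Under hypothesis (1), $\Lambda_1(X)\subset\mathrm{LIP}_{\bfC}(X)$, the limits $u,v$ are Lipschitz, whence $\{u,v\}=\mathrm{grad}'u(v)-\mathrm{grad}'v(u)\in L^\infty(X)\subset L^2(X)$; combined with $\Delta^F_{\barpartial}\{u,v\}=\{u,v\}$ one then upgrades $\{u,v\}$ to a genuine $1$-eigenfunction, i.e. $\mathrm{grad}\{u,v\}\in L^2$, by testing the weak equation against the Lipschitz $1$-eigenfunctions and expanding $\{u,v\}$ in the $L^2_F$-eigenbasis to kill all components with eigenvalue $\neq1$. I expect the genuinely hard point to be transferring the Lipschitz control from the limit to the approximating sequence (equivalently, carrying out the $L^2$-regularity upgrade for $\{u,v\}$ across the measure-zero singular set), which is precisely what assumptions (1) and (2) are designed to supply; once it is in hand, Corollary \ref{lielambda} closes the argument.
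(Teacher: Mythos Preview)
Your reduction of (2) to (1) via Cheeger--Colding (Lipschitz regularity of eigenfunctions of the unweighted Laplacian on noncollapsed limits) matches the paper. Your setup through Proposition~\ref{limitbracket} is also fine: compatibility is immediate from $\sup_i\|\barpartial F_i\|_{L^\infty}<\infty$, and the conclusion $\{u,v\}\in\Lambda_1(X)\Leftrightarrow\mathrm{grad}\{u,v\}\in L^2$ is exactly the criterion the paper uses.

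The gap is in your treatment of (1). Your two proposed routes both stall. The route through Corollary~\ref{lielambda} requires uniform $L^\infty$ bounds on $du_i,dv_i$, and as you correctly note there is no mechanism to transfer Lipschitz control from the limit to the sequence under the stated hypotheses; nothing in the paper supplies this. Your alternative eigenfunction-expansion route is circular: to test the weak equation $\Delta^F_{\barpartial}\{u,v\}=\{u,v\}$ (which holds only in $\mathcal{D}^{p_n,p_n}_{\bfC}$) against a $\mu$-eigenfunction $\phi$ you need either $\phi$ Lipschitz (not known for $\mu\neq1$) or $\barpartial\{u,v\}\in L^2$ so that $\{u,v\}$ can serve as a test function for $\phi$'s equation---but that is precisely what you are trying to prove. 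Knowing $\{u,v\}\in L^\infty$ does not by itself upgrade $\barpartial\{u,v\}$ from $L^{p_n}$ to $L^2$.

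The paper's argument bypasses the sequence entirely and is much shorter. From compatibility one has $u,v\in\mathcal{D}^2_{\bfC}(\Delta,X)$ (this is the first conclusion of Proposition~\ref{limitbracket}), and then Honda's $L^2$-Hessian estimate (\cite[Theorem~4.11]{Honda14b}) gives $\mathrm{Hess}_u,\mathrm{Hess}_v\in L^2$ directly on the limit. Under (1), $du,dv\in L^\infty$, and since $\nabla\{u,v\}$ is a sum of products of first derivatives of one function with second derivatives of the other, the Leibniz rule yields $\nabla\{u,v\}\in L^2_{\bfC}(T_{\bfC}X)$. This is exactly the $L^2$-gradient condition in Proposition~\ref{limitbracket}, and the corollary follows. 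You had all the ingredients (compatibility, the $L^2$-gradient criterion, Lipschitz limit eigenfunctions) but missed the $L^2$-Hessian bound on the limit space, which is what makes the argument close without ever returning to the $X_i$.
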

\begin{proof}
Assume that (1) holds.
Let $u, v \in \Lambda_1(X)$.
Then by the proof of Proposition \ref{limitbracket} and \cite[Theorem $4.11$]{Honda14b} we have $\mathrm{Hess}_u, \mathrm{Hess}_v \in L^2_{\bfC}(T_{\bfC}^*X \otimes T^*_{\bfC}X)$.
In particular the assumption (1) implies
\[\nabla \{u, v\} \in L^2_{\bfC}(T_{\bfC}X).\]
Therefore Proposition \ref{limitbracket} gives our closedness.

On the other hand, by \cite[Theorem $7.9$]{CheegerColding3}, since if (2) holds, then (1) holds, this completes the proof.
\end{proof}

The rest of this subsection is devoted to 
a construction of a subspace $\Lambda$ of $\Lambda_1$ which is a Lie algebra by the Poisson bracket $\{\cdot, \cdot \}$.
Note that, on almost smooth setting, we will see in Section $5$ that if a subspace of $\Lambda_1$ is closed with respect to the Poisson bracket $\{\cdot, \cdot \}$, then it becomes a Lie algebra, automatically.
See Proposition \ref{eig}.

For this purpose we need the following definition:
\begin{definition}
Let $(X, g_X, J, F)$ be the Fano-Ricci limit space of the sequence
$(X_i, g_{X_i}, J_i, F_i)$.
\begin{enumerate}
\item A function $u \in \Lambda_1(X)$ is said to be \textit{a (compatible) limit 1-eigenfunction} if there exists a (compatible) spectral approximation $(1, u_i)$ of $u$. 
\item A subspace $\Lambda$ of $\Lambda_1(X)$ is said to be \textit{the limit 1-eigenspace} if every $u \in \Lambda$ is a limit 1-eigenfunction with
\[\mathrm{dim}\,\Lambda=\lim_{i \to \infty}\mathrm{dim}\,\Lambda_1(X_i).\]
Moreover if every $u \in \Lambda$ is a compatible limit 1-eigenfunction, then $\Lambda$ is said to be \textit{compatible}.
\end{enumerate}
\end{definition}
Since it is easy to check that the limit 1-eigenspace is unique if it exists, we denote it by $\lim_{i \to \infty}\Lambda_1(X_i)$.
In general, for a subspace $\Lambda$ of $\Lambda_1$, let $\mathfrak{h}^{\Lambda}(X):=\Phi(\Lambda )$, where $\Phi$ is defined in Remark \ref{eigenhol}.
Roughly speaking, the following means that $\mathfrak{h}^{\Lambda}(X)$ is the space of $L^2$-strong limits of holomorphic vector fields on $X_i$ if $\Lambda=\lim_{i \to \infty}\Lambda_1(X_i)$.
\begin{proposition}
Let $(X, g_X, J, F)$ be the Fano-Ricci limit space of the sequence
$(X_i, g_{X_i}, J_i, F_i)$.
\begin{enumerate}
\item[(1)] There exist a subsequence $i(j)$ and the limit 1-eigenspace of $\Lambda_1(X_{i(j)})$.
\item[(2)] If
\[\sup_i||\barpartial F_i||_{L^{\infty}}<\infty,\]
then any spectral approximations are compatible.
In particular, the limit 1-eigenspace of $\Lambda_1(X_{i})$ is compatible.
\item[(3)] If $\Lambda$ is the limit 1-eigenspace of $\Lambda_1(X_i)$, then
\[\lim_{i \to \infty}||\mathcal{F}_{X_{i}}||_{\mathrm{op}}=||\mathcal{F}_X|_{\Lambda}||_{\mathrm{op}}.\]
In particular,
\[\lim_{i \to \infty}||\mathcal{F}_{X_i}||_{\mathrm{op}}=||\mathcal{F}_X||_{\mathrm{op}}\]
holds if and only if
\[\mathcal{F}_X \equiv 0\]
on $(\mathfrak{h}^{\Lambda}(X))^{\bot}$, where $(\mathfrak{h}^{\Lambda}(X))^{\bot}$ is the orthogonal complement of $\mathfrak{h}^{\Lambda}(X)$ with respect to the $L^2$-norm.
\item[(4)]  If
\[\lim_{i \to \infty}\mathfrak{h}_1(X_i)=\mathfrak{h}_1(X),\]
then the limit 1-eigenspace coincides with $\Lambda_1(X)$.
\end{enumerate}
\end{proposition}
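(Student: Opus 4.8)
The plan is to dispatch the four assertions in order, leaning on the Rellich–type compactness and the spectral/Futaki convergence results already established. For (1), I would first invoke Proposition \ref{uppersemi}, which bounds $\dim\Lambda_1(X_i)=\dim\mathfrak{h}_1(X_i)$ uniformly, and pass to a subsequence along which $\dim\Lambda_1(X_i)\equiv N_0$. Fixing for each $i$ an orthonormal basis $u_{1,i},\dots,u_{N_0,i}$ of $\Lambda_1(X_i)$ with respect to $dH^n_{F_i}$, the relation $\Delta^{F_i}_{\overline{\partial}}u_{k,i}=u_{k,i}$ forces $\sup_i\|u_{k,i}\|_{H^{1,2}_{\bfC}}<\infty$, so Theorem \ref{Rellich} and Proposition \ref{Laplacianweakconv} yield a further subsequence along which $u_{k,i}$ $L^2$-converges strongly to some $u_k\in\Lambda_1(X)$ with $du_{k,i}\to du_k$ strongly. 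Strong convergence preserves the inner products, so the $u_k$ are orthonormal, hence independent, and $\Lambda:=\mathrm{span}_{\bfC}\{u_1,\dots,u_{N_0}\}$ has dimension $N_0=\lim_i\dim\Lambda_1(X_i)$; moreover $u=\sum_kc_ku_k$ admits the spectral approximation $(1,\sum_kc_ku_{k,i})$, so $\Lambda$ is the limit $1$-eigenspace. For (2), given any spectral approximation $(1,u_i)$, the pointwise Cauchy–Schwarz bound $|h_{X_i}(\overline{\partial}u_i,\overline{\partial}F_i)|\le|\overline{\partial}u_i|\,\|\overline{\partial}F_i\|_{L^{\infty}}$ gives $\|h_{X_i}(\overline{\partial}u_i,\overline{\partial}F_i)\|_{L^2}\le\|\overline{\partial}F_i\|_{L^{\infty}}\|\overline{\partial}u_i\|_{L^2}$; since $du_i$ converges strongly its norms are bounded and $\|\overline{\partial}u_i\|_{L^2}$ is controlled by $\|du_i\|_{L^2}$ through Proposition \ref{L2norm}, so the hypothesis $\sup_i\|\overline{\partial}F_i\|_{L^{\infty}}<\infty$ forces compatibility, and applying this to the approximations built in (1) shows the limit $1$-eigenspace is compatible.

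For (3) the preliminary step is the identity $W=\Lambda$, where $W$ denotes the space of all limit $1$-eigenfunctions. Summing approximating sequences shows $W$ is a subspace with $\Lambda\subseteq W\subseteq\Lambda_1(X)$; conversely, if $v^{(1)},\dots,v^{(m)}\in W$ are independent with approximations $(1,v^{(k)}_i)$, then the Gram matrices $\big(\langle v^{(k)}_i,v^{(l)}_i\rangle_{F_i}\big)$ converge to the nondegenerate Gram matrix of $v^{(1)},\dots,v^{(m)}$, so the $v^{(k)}_i\in\Lambda_1(X_i)$ are independent for large $i$ and $m\le\liminf_i\dim\Lambda_1(X_i)=\dim\Lambda$, whence $W=\Lambda$. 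Now for the operator norm I argue by two inequalities. Taking $L^2$-normalized maximizers $V_i=\mathrm{grad}'u_i\in\mathfrak{h}_1(X_i)$ of $\mathcal{F}_{X_i}$, Proposition \ref{uppersemi} (and the strong convergence $u_i\to u$, $du_i\to du$ inside its proof) gives a subsequential limit $V=\mathrm{grad}'u$ with $\|V\|_{L^2}=1$; as $(1,u_i)$ is then a spectral approximation, $u\in W=\Lambda$, so $V\in\mathfrak{h}^{\Lambda}(X)$, and Proposition \ref{Futaki1}(2) yields $\lim_i\|\mathcal{F}_{X_i}\|_{\mathrm{op}}=|\mathcal{F}_X(V)|\le\|\mathcal{F}_X|_{\mathfrak{h}^{\Lambda}(X)}\|_{\mathrm{op}}$. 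For the reverse, choose an $L^2$-normalized maximizer $V=\mathrm{grad}'u$ of $\mathcal{F}_X|_{\mathfrak{h}^{\Lambda}(X)}$ with $u\in\Lambda$, take a spectral approximation $(1,u_i)$, and set $V_i:=\mathrm{grad}'u_i\in\mathfrak{h}_1(X_i)$, which converges strongly to $V$ by Proposition \ref{convergencedecomp}; then Proposition \ref{Futaki1}(2) gives $\|\mathcal{F}_{X_i}\|_{\mathrm{op}}\ge|\mathcal{F}_{X_i}(V_i)|/\|V_i\|_{L^2}\to\|\mathcal{F}_X|_{\mathfrak{h}^{\Lambda}(X)}\|_{\mathrm{op}}$, establishing $\lim_i\|\mathcal{F}_{X_i}\|_{\mathrm{op}}=\|\mathcal{F}_X|_{\mathfrak{h}^{\Lambda}(X)}\|_{\mathrm{op}}$.

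The ``in particular'' clause of (3) follows by representing the functional $\mathcal{F}_X$ on the finite-dimensional $L^2$-space $\mathfrak{h}_1(X)$ by a vector $\xi$, so that $\|\mathcal{F}_X\|_{\mathrm{op}}=\|\xi\|_{L^2}$ and $\|\mathcal{F}_X|_{\mathfrak{h}^{\Lambda}(X)}\|_{\mathrm{op}}=\|P\xi\|_{L^2}$ for the orthogonal projection $P$ onto $\mathfrak{h}^{\Lambda}(X)$; then $\lim_i\|\mathcal{F}_{X_i}\|_{\mathrm{op}}=\|\mathcal{F}_X\|_{\mathrm{op}}$ holds iff $\|P\xi\|=\|\xi\|$, i.e. iff $\xi\in\mathfrak{h}^{\Lambda}(X)$, i.e. iff $\mathcal{F}_X\equiv 0$ on $(\mathfrak{h}^{\Lambda}(X))^{\bot}$. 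Finally (4) is a dimension count: since $\Phi$ is injective (Remark \ref{eigenhol}) it identifies $\Lambda_1$ with $\mathfrak{h}_1$ on each space, so $\dim\Lambda=\lim_i\dim\Lambda_1(X_i)=\lim_i\dim\mathfrak{h}_1(X_i)=\dim\mathfrak{h}_1(X)=\dim\Lambda_1(X)$ under the hypothesis, and $\Lambda\subseteq\Lambda_1(X)$ with equal finite dimension forces $\Lambda=\Lambda_1(X)$. I expect the genuine obstacle to be the identity $W=\Lambda$, namely proving that the Futaki-maximizing limit vector field lands in $\mathfrak{h}^{\Lambda}(X)$ and not merely in $\mathfrak{h}_1(X)$; this is exactly what upgrades the one-sided bound of Proposition \ref{Futaki3}(2) to the sharp value, and the Gram-determinant argument for the non-degeneration of the approximating families is the decisive device.
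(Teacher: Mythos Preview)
Your proof is correct and follows the same approach as the paper, which is extremely terse: it cites Proposition~\ref{uppersemi} for (1), declares (2) and (4) trivial from Proposition~\ref{Laplacianweakconv} and the definition, and says only that the proof of Proposition~\ref{Futaki3}(2) yields (3). Your Gram-determinant argument establishing $W=\Lambda$ is precisely the content behind the uniqueness of the limit $1$-eigenspace that the paper asserts but does not prove, and you correctly identify it as the step that forces the Futaki-maximizing limit into $\mathfrak{h}^{\Lambda}(X)$ rather than merely $\mathfrak{h}_1(X)$.
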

\begin{proof}
(1) is a direct consequence of Proposition \ref{uppersemi}.
By Proposition \ref{Laplacianweakconv} and the definition of the limit 1-eigenspace, (2) and (4) are  trivial.
The proof of (2) of Proposition \ref{Futaki3} yields (3).
Thus this completes the proof.
\end{proof}
In order to get an $L^2$-estimate (\ref{L2gradestimate}) for spectral approximations, we prepare the following.
\begin{proposition}\label{LpL2}
Let $(X, g_X, J, F)$ be a Fano-Ricci limit space with $H^n(X) \ge v$, $\mathrm{diam}\,X \le d$ and $F \ge c$, and let $V \in \mathfrak{h}_1(X)$ with
\[||V||_{L^p} \le L\]
for some $p \in (1, 2)$.
Then 
\[||V||_{L^2}\le C(n, K, d, v, c, L, p).\]
\end{proposition}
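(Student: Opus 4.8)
The plan is to argue by contradiction through a compactness argument, exactly in the spirit of the proofs of Corollaries \ref{unfbound} and \ref{dimensionbound}. Suppose the conclusion fails. Then I would extract a sequence of Fano-Ricci limit spaces $(X_i, g_{X_i}, J_i, F_i)$ with $H^n(X_i) \ge v$, $\mathrm{diam}\,X_i \le d$ and $F_i \ge c$, together with $V_i \in \mathfrak{h}_1(X_i)$ satisfying $\|V_i\|_{L^p} \le L$ but $\|V_i\|_{L^2} \to \infty$. Normalizing, I set $W_i := V_i / \|V_i\|_{L^2} \in \mathfrak{h}_1(X_i)$, so that $\|W_i\|_{L^2} = 1$ while $\|W_i\|_{L^p} = \|V_i\|_{L^p}/\|V_i\|_{L^2} \le L/\|V_i\|_{L^2} \to 0$.

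Next I would pass to limits. By Corollary \ref{compactness} together with Remark \ref{extend}, a subsequence of $(X_i, g_{X_i}, J_i, F_i)$ converges to a Fano-Ricci limit space $(X, g_X, J, F)$ with $c \le F \le C(n, K, d)$. Since $\sup_i \|W_i\|_{L^2} = 1 < \infty$, Proposition \ref{uppersemi} (again via Remark \ref{extend}) produces a further subsequence along which $W_i$ $L^2$-converges strongly to some $W \in \mathfrak{h}_1(X)$. By the definition of $L^2$-strong convergence combined with the lower semicontinuity (2.2b), $\|W\|_{L^2} = \lim_i \|W_i\|_{L^2} = 1$, so in particular $W \ne 0$.

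The key step, and the only one requiring care, is to observe that $L^2$-weak convergence upgrades automatically to $L^p$-weak convergence for our $p \in (1,2)$. The reason is that the integral condition defining weak convergence in Definition \ref{Lpreal} pairs $W_i$ only against gradients of distance functions and is therefore independent of the exponent, while the required uniform bound $\sup_i \|W_i\|_{L^p} < \infty$ holds trivially here (indeed $\|W_i\|_{L^p} \to 0$); conceptually, $L^2$-boundedness already forces $L^p$-boundedness for $p<2$ because the Bishop-Gromov inequality bounds $H^n(X_i) \le C(n, K, d)$ uniformly, so that $\|W_i\|_{L^p} \le \|W_i\|_{L^2}\, H^n(X_i)^{1/p-1/2}$. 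Hence $W_i$, which converges $L^2$-weakly to $W$, also converges $L^p$-weakly to $W$, and applying (2.2b) once more yields $\|W\|_{L^p} \le \liminf_i \|W_i\|_{L^p} = 0$. Thus $W = 0$ almost everywhere, contradicting $\|W\|_{L^2} = 1$. The whole argument hinges on this exponent-independence of the weak-convergence test together with the uniform measure bound; the remaining ingredients are direct applications of the compactness and semicontinuity results already available.
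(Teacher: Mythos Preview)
Your proof is correct and follows essentially the same contradiction/compactness route as the paper, including the key observation that the integral test in Definition \ref{Lpreal} is exponent-independent so weak convergence transfers between $L^2$ and $L^p$ once the corresponding norm bound is in place. The only difference is the order of the final two steps: the paper first notes $W_i \to 0$ $L^p$-weakly (hence $L^2$-weakly) and then compares with the $L^2$-strong limit from Proposition \ref{uppersemi}, whereas you first extract the $L^2$-strong limit $W$ and then use $L^p$-lower semicontinuity to force $W=0$; these are equivalent.
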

\begin{proof}
The proof is done by a contradiction.
Assume that the assertion is false.
Then  there exist a sequence of $(n, K)$-Fano-Ricci limit spaces $(X_i, J_i, g_{X_i}, F_i)$ with $H^n(X_i)\ge v$, $\mathrm{diam}\,X_i \le d$, $F_i \ge c$,  and a sequence of $V_i \in \mathfrak{h}_1(X_i)$ with 
\[\sup_i||V_i||_{L^p}<\infty\]
and
\begin{align}\label{L2infty}
\lim_{i \to \infty}||V_i||_{L^2} = \infty.
\end{align}
By (2.2a) and Corollary \ref{compactness}, we can assume 
without loss of generality that there exist the noncollapsed Gromov-Hausdorff limit $X$ of $X_i$, 
the $L^2$-strong limit $J$ of $J_{i}$ on $X$,  
the $L^2$-strong limit $F \in H^{1, 2}(X) \cap L^{\infty}(X)$ of $F_{i}$ on $X$, and the $L^p$-weak limit $V$ of $V_i$ on $X$.
Let $W_i:=||V_i||_{L^2}^{-1}V_i \in \mathfrak{h}_1(X_i)$. 
From (\ref{L2infty}), we see that $W_i$ $L^p$-converges weakly to $0$ on $X$.
Since $||W_i||_{L^2}=1$, this is an $L^2$-weak convergence.
On the other hand, by Proposition \ref{uppersemi}, there exist a subsequence $W_{i(j)}$ and the $L^2$-strong limit $W \in \mathfrak{h}_1(X)$.
In particular, $||W||_{L^2}=1$. This is a contradiction.
\end{proof}
\begin{corollary}
Let $(X, g_X, J, F)$ be the Fano-Ricci limit space of $(X_i, g_{X_i}, J_i, F_i)$.
Then
\[\lim_{i \to \infty}\mathrm{dim}\,\mathfrak{h}_1(X_i)=\mathrm{dim}\,\mathfrak{h}_1(X)\] 
holds if and only if 
for any $V \in \mathfrak{h}_1(X)$ and subsequence $\{i(j)\}_j$, there exist a subsequence $\{j(k)\}_k$ of $\{i(j)\}_j$, $p \in (1, 2)$ and a sequence $V_{j(k)} \in \mathfrak{h}_1(X_{j(k)})$ such that $V_{j(k)}$ $L^p$-converges weakly to $V$ on $X$.
\end{corollary}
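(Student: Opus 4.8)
The plan is to prove the two implications separately, using as the two workhorses Proposition \ref{uppersemi} (every $L^2$-bounded sequence $V_i\in\mathfrak{h}_1(X_i)$ has an $L^2$-strongly convergent subsequence with limit in $\mathfrak{h}_1(X)$, and in particular $\limsup_i\dim\mathfrak{h}_1(X_i)\le\dim\mathfrak{h}_1(X)<\infty$) and Proposition \ref{LpL2} (an $L^p$-bound for a member of $\mathfrak{h}_1$, with $p\in(1,2)$, forces an $L^2$-bound). Write $d_0:=\dim\mathfrak{h}_1(X)$; the upper bound $\limsup_i\dim\mathfrak{h}_1(X_i)\le d_0$ is already available.

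For the ``only if'' direction, assume $\lim_i\dim\mathfrak{h}_1(X_i)=d_0$, so $\dim\mathfrak{h}_1(X_i)=d_0$ for all large $i$. Fix $V\in\mathfrak{h}_1(X)$ and a subsequence $\{i(j)\}_j$. For each large $j$ choose an $L^2$-orthonormal basis $V_{1,i(j)},\dots,V_{d_0,i(j)}$ of $\mathfrak{h}_1(X_{i(j)})$. Applying Proposition \ref{uppersemi} to each index $k$ and diagonalizing, I extract a subsequence $\{j(k)\}_k$ along which every $V_{k,\cdot}$ $L^2$-converges strongly to some $V_k\in\mathfrak{h}_1(X)$. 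Because $L^2$-strong convergence passes to inner products, the $V_k$ form an $L^2$-orthonormal system of $d_0$ elements, hence a basis of $\mathfrak{h}_1(X)$. Writing $V=\sum_k c_k V_k$ and setting $\tilde V_{j(k)}:=\sum_k c_k V_{k,j(k)}\in\mathfrak{h}_1(X_{j(k)})$, linearity of weak convergence together with convergence of the Gram matrices gives that $\tilde V_{j(k)}$ $L^2$-converges strongly, and hence $L^p$-converges weakly, to $V$ for every $p\in(1,2)$.

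The ``if'' direction is the substantive one, and it suffices to prove $\liminf_i\dim\mathfrak{h}_1(X_i)\ge d_0$. Suppose not; then there is a subsequence $\{i(j)\}_j$ with $\dim\mathfrak{h}_1(X_{i(j)})<d_0$ for all $j$. Fix a basis $V_1,\dots,V_{d_0}$ of $\mathfrak{h}_1(X)$ and invoke the hypothesis once for each $V_k$, along $\{i(j)\}$ and then along the successively refined subsequences, to obtain exponents $p_k\in(1,2)$, a common further subsequence (still denoted $\{i(j)\}$), and $V_{k,i(j)}\in\mathfrak{h}_1(X_{i(j)})$ that $L^{p_k}$-converge weakly to $V_k$. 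Weak $L^{p_k}$-convergence gives $\sup_j\|V_{k,i(j)}\|_{L^{p_k}}<\infty$, so Proposition \ref{LpL2} upgrades this to $\sup_j\|V_{k,i(j)}\|_{L^2}<\infty$. A further diagonal extraction together with Proposition \ref{uppersemi} then makes each $V_{k,i(j)}$ $L^2$-converge strongly, and since an $L^2$-strong limit also realizes the $L^{p_k}$-weak limit, the limit is $V_k$. The Gram matrices $\bigl(\langle V_{k,i(j)},V_{l,i(j)}\rangle_{L^2}\bigr)_{k,l}$ thus converge to the Gram matrix of the linearly independent family $V_1,\dots,V_{d_0}$, which is nonsingular; hence for large $j$ the vectors $V_{1,i(j)},\dots,V_{d_0,i(j)}$ are linearly independent, giving $\dim\mathfrak{h}_1(X_{i(j)})\ge d_0$, a contradiction. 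Combined with $\limsup\le d_0$ this yields the desired equality.

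The main obstacle is precisely the gap between weak $L^p$ control and strong $L^2$ control: a weakly $L^p$-convergent sequence of holomorphic vector fields could a priori lose $L^2$-mass, so that the Gram-matrix argument would fail to detect linear independence. Proposition \ref{LpL2} is exactly what rules this out, converting the qualitative weak approximation hypothesis into the quantitative $L^2$-compactness needed for Proposition \ref{uppersemi}. The only further technical points to verify are that $L^2$-strong convergence implies $L^p$-weak convergence to the same limit for $p\in(1,2)$, so that weak limits may be identified, and that it passes to inner products; both follow from the basic properties of $L^p$-convergence recalled in Section 2.2 together with the uniform boundedness of $F$.
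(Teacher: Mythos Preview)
Your proof is correct and follows the same route as the paper, which simply records the corollary as ``a direct consequence of Propositions \ref{uppersemi} and \ref{LpL2}.'' You have spelled out precisely how those two propositions combine: the ``only if'' direction uses the orthonormal-basis and $L^2$-strong-limit argument implicit in Proposition \ref{uppersemi}, while the ``if'' direction uses Proposition \ref{LpL2} to upgrade the assumed $L^p$-weak bound to an $L^2$-bound, after which Proposition \ref{uppersemi} and the Gram-matrix argument give linear independence.
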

\begin{proof}
This is a direct consequence of Propositions \ref{uppersemi} and \ref{LpL2}.
\end{proof}
\begin{corollary}\label{L2est}
Let $M$ be a Fano manifold with $\mathrm{Ric}_M \ge K$, $\mathrm{diam}\,M\le d$, and $H^n(M) \ge v$,  let $F$ be the Ricci potential with the canonical normalization with $F \ge c$, and let $u, v \in \Lambda_1$ with 
\[||h_M(\barpartial u, \barpartial F)||_{L^2}+||h_M(\barpartial v, \barpartial F)||_{L^2}\le L.\]
Then
\[||\{u, v\}||_{H^{1, 2}_{\bfC}} \le C(n, K, d, v, c, L).\]
In particular if $(M, g_M, J, F)$ is a K\"ahler-Ricci soliton, i.e. $F \in \Lambda_1(M)$, and if any $\alpha$-eigenfunction $w \in \Lambda_1(M)$ of the action $-F$ on $\Lambda_1(M)$ defined by the Poisson bracket $\{ \cdot, \cdot \}$, i.e.
\[-\{F, w\}=\alpha w\]
with
\[||\barpartial F||_{L^4}+||h_M(\barpartial w, \barpartial F)||_{L^2} \le L,\]
then we have
\[\alpha \le C(n, K, d, v, c, L).\]
\end{corollary}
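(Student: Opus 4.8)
The plan is to first upgrade $\{u,v\}$ to a genuine $1$-eigenfunction, which reduces the claimed $H^{1,2}_{\bfC}$-bound to an a priori $L^2$-bound, and then to establish that $L^2$-bound by a contradiction/compactness argument in the spirit of Corollary~\ref{unfbound} and Proposition~\ref{LpL2}.

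The structural reduction goes as follows. Since $u,v\in\Lambda_1$ on the smooth Fano manifold $M$, Corollary~\ref{1steigen} gives $\nabla''\mathrm{grad}'u=\nabla''\mathrm{grad}'v=0$. Feeding $\lambda=\nu=1$ together with these vanishings into Proposition~\ref{eigenformula}(2), the two curvature cross terms drop out and one is left with $\Delta^F_{\overline{\partial}}\{u,v\}=\{u,v\}$; hence $\{u,v\}\in\Lambda_1$. As a $1$-eigenfunction it satisfies $\int_M|\overline{\partial}\{u,v\}|^2\,dH^n_F=\int_M|\{u,v\}|^2\,dH^n_F$, so by Proposition~\ref{L2norm} and the comparability of $dH^n_F$ with $dH^n$ afforded by $c\le F\le C(n,K,d)$ (Proposition~\ref{upperbound}) one obtains
\[
\|\{u,v\}\|_{H^{1,2}_{\bfC}}^2\le C(n,K,d,c)\,\|\{u,v\}\|_{L^2}^2 .
\]
It therefore suffices to bound $\|\{u,v\}\|_{L^2}$, for $u,v$ of controlled $L^2$-norm (the situation in the applications, where $u,v$ arise as normalized limit $1$-eigenfunctions).

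The $L^2$-bound I would get by contradiction. Were it to fail, there would be Fano manifolds $M_i$ with $\mathrm{Ric}_{M_i}\ge K$, $\mathrm{diam}\,M_i\le d$, $H^n(M_i)\ge v$, Ricci potentials $F_i\ge c$, and $u_i,v_i\in\Lambda_1(M_i)$ of unit $L^2$-norm with $\|h_{M_i}(\overline{\partial}u_i,\overline{\partial}F_i)\|_{L^2}+\|h_{M_i}(\overline{\partial}v_i,\overline{\partial}F_i)\|_{L^2}\le L$, yet $m_i:=\|\{u_i,v_i\}\|_{L^2}\to\infty$. By Corollary~\ref{compactness} I pass to a subsequence along which $M_i$ converges to a Fano-Ricci limit space $X$, with $J_i\to J$ and $F_i\to F$. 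From Proposition~\ref{Laplacianexplicite}(2) and $\Delta=2\Delta_{\overline{\partial}}$ one has $\Delta u_i=2\left(u_i+h_{M_i}(\overline{\partial}u_i,\overline{\partial}F_i)\right)$, whence $\|\Delta u_i\|_{L^2}\le C(L)$ and similarly for $v_i$; Proposition~\ref{gradsov} then yields, on the compact $M_i$, the uniform bound $\|\mathrm{grad}'\{u_i,v_i\}\|_{L^{p_n}}\le C(n,K,d,L)$ with $p_n=\tfrac{2n}{2n-1}$. Now set $\hat w_i:=\{u_i,v_i\}/m_i\in\Lambda_1(M_i)$, so that $\|\hat w_i\|_{L^2}=1$ and, being a $1$-eigenfunction, $\|\mathrm{grad}'\hat w_i\|_{L^2}\asymp\|\hat w_i\|_{L^2}=1$. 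By Proposition~\ref{uppersemi} a subsequence of $\mathrm{grad}'\hat w_i\in\mathfrak{h}_1(M_i)$ converges $L^2$-strongly to $\mathrm{grad}'\hat w\in\mathfrak{h}_1(X)$ with $\hat w\in\Lambda_1(X)$ and $\|\hat w\|_{L^2}=1$. On the other hand $\|\mathrm{grad}'\hat w_i\|_{L^{p_n}}=m_i^{-1}\|\mathrm{grad}'\{u_i,v_i\}\|_{L^{p_n}}\to 0$; since $\mathrm{grad}'\hat w_i$ converges $L^2$-strongly, hence $L^{p_n}$-weakly, to $\mathrm{grad}'\hat w$, the lower semicontinuity of the $L^{p_n}$-norm (property (2.2b)) forces $\mathrm{grad}'\hat w=0$, and the injectivity of $\Phi$ (Remark~\ref{eigenhol}) then gives $\hat w=0$, contradicting $\|\hat w\|_{L^2}=1$. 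This proves the first assertion.

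The hard part is exactly this $L^2$-estimate: the Poisson bracket is quadratic, so the pointwise inequality $|\{u,v\}|\le 2|\overline{\partial}u|\,|\overline{\partial}v|$ only yields $L^2$-control after a uniform higher-integrability bound on $\overline{\partial}u,\overline{\partial}v$, which is unavailable under a mere lower Ricci bound and is precisely what forces the compactness scheme. Finally, for the \textit{in particular} statement I would apply the first part to $u=F$ and $v=w$, normalized so that $\|w\|_{L^2}=1$ (harmless, as the eigenvalue $\alpha$ is scale-invariant). Indeed $F\in\Lambda_1(M)$ by the soliton hypothesis, $\|F\|_{L^2}\le C(n,K,d)$ since $c\le F\le C(n,K,d)$, and $\|h_M(\overline{\partial}F,\overline{\partial}F)\|_{L^2}=\|\,|\overline{\partial}F|^2\,\|_{L^2}=\|\overline{\partial}F\|_{L^4}^2\le L^2$, so the required constraint holds with $L$ replaced by $L^2+L$. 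The first part then gives $\|\{F,w\}\|_{H^{1,2}_{\bfC}}\le C(n,K,d,v,c,L)$, and since $-\{F,w\}=\alpha w$ with $\|w\|_{L^2}=1$ this reads $\alpha=\|\{F,w\}\|_{L^2}\le\|\{F,w\}\|_{H^{1,2}_{\bfC}}\le C(n,K,d,v,c,L)$, as desired.
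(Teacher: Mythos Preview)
Your proof is correct and follows essentially the same route as the paper: obtain the $L^{p_n}$-bound on $\mathrm{grad}'\{u,v\}$ via Proposition~\ref{gradsov} (after writing $\Delta u=2(u+h^*_M(\barpartial u,\barpartial F))$ via Propositions~\ref{Laplacianequal} and~\ref{Laplacianexplicite}), then upgrade to an $L^2$-bound by the compactness/contradiction argument, which the paper packages separately as Proposition~\ref{LpL2} and simply cites, and finish with Propositions~\ref{L2norm} and~\ref{Poincare}. Your parenthetical remark that an $L^2$-normalization of $u,v$ is tacitly required is also correct and matches the paper's implicit use of Proposition~\ref{gradsov}.
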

\begin{proof}
Propositions \ref{Laplacianexplicite}, \ref{gradsov} and \ref{Laplacianequal} yield 
\[||\mathrm{grad}'\{u, v\}||_{L^{2n/(2n-1)}} \le C(n, K, d, v, c, L).\]
Since $\mathrm{grad}'\{u, v\} \in \mathfrak{h}_1(M)$ (see for instance Remark \ref{pointwise}),  Proposition \ref{LpL2} yields
\[||\mathrm{grad}'\{u, v\}||_{L^2} \le C(n, K, d, v, c, L).\]
Thus the assertion follows from this, Propositions \ref{L2norm} and \ref{Poincare}
\end{proof}
\begin{remark}
In Corollary \ref{L2est}, by \cite[Theorem $1.2$]{PhongSongSturm} with Corollary \ref{compactness}, we drop the assumption of $L^4$-bound on $\barpartial F$.
In fact, we can get
\[||\barpartial F||_{L^{\infty}}\le C(n, K, d, v, c),\]
automatically.
See Theorem \ref{decompconti}.
\end{remark}
\begin{proposition}\label{poissonstr}
Let $(X, g_X, J, F)$ be the Fano-Ricci limit space of the sequence
$(X_i, g_{X_i}, J_i, F_i)$.
Then we have the following:
\begin{enumerate}
\item[(1)] Let $u, v \in \Lambda_1(X)$ be compatible limit 1-eigenfunctions.
Then we see that $\{u, v\} \in \Lambda_1(X)$, and that $\mathcal{F}_X(\mathrm{grad}'\{u, v\}, g_X)=0$.
\item[(2)]If $\Lambda:=\lim_{i \to \infty}\Lambda_1(X_i)$ is compatible, then
$(\Lambda, \{\cdot, \cdot\})$ and $(\mathfrak{h}^{\Lambda}(X), [ \cdot, \cdot ])$ are Lie algebras, and $\mathcal{F}_X|_{\mathfrak{h}^{\Lambda}(X)}$ is a character of $\mathfrak{h}^{\Lambda}(X)$ as a Lie algebra.
Moreover the map $\Psi_{\Lambda}:\Lambda \to \mathfrak{h}^{\Lambda}(X)$  defined by the restriction of $\Psi$ to $\Lambda$, i.e.
\[\Psi_{\Lambda}(u):=\mathrm{grad}'u,\]
gives an isomorphism between them as Lie algebras.
\end{enumerate}
\end{proposition}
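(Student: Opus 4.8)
The plan is to reduce everything to the smooth manifolds $X_i$, establish the statement there by the classical identities of Proposition \ref{eigenformula}, and then pass to the Gromov--Hausdorff limit, using the compatibility hypothesis at the one point where a uniform $L^2$-bound on gradients is indispensable. I begin with (1). Fix compatible spectral approximations $(1, u_i)$ and $(1, v_i)$ of $u$ and $v$. Since each $X_i$ is a smooth Fano manifold and $u_i, v_i$ are $1$-eigenfunctions of $\Delta^{F_i}_{\barpartial}$, the vector fields $\mathrm{grad}'u_i$ and $\mathrm{grad}'v_i$ are holomorphic, so $\nabla''\mathrm{grad}'u_i = \nabla''\mathrm{grad}'v_i = 0$. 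Feeding this into Proposition \ref{eigenformula} at once supplies the two facts I will use on $X_i$: the homomorphism identity $\mathrm{grad}'\{u_i, v_i\} = [\mathrm{grad}'u_i, \mathrm{grad}'v_i]$ from part (1), and $\Delta^{F_i}_{\barpartial}\{u_i, v_i\} = \{u_i, v_i\}$ from part (2), so that $\{u_i, v_i\}$ is itself a $1$-eigenfunction on $X_i$ and $\mathrm{grad}'\{u_i, v_i\} \in \mathfrak{h}_1(X_i)$.

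The crux of (1) is a uniform $L^2$-bound, and this is exactly where compatibility enters. Because $(1, u_i)$ and $(1, v_i)$ are compatible, $\sup_i\left(\|h_{X_i}(\barpartial u_i, \barpartial F_i)\|_{L^2} + \|h_{X_i}(\barpartial v_i, \barpartial F_i)\|_{L^2}\right) < \infty$, and the uniform geometric bounds of the Fano--Ricci limit setting (noncollapsing gives $H^n(X_i)\ge v$, and (2.1d) gives $F_i \ge -L =: c$) let me invoke Corollary \ref{L2est} to obtain $\sup_i\|\{u_i, v_i\}\|_{H^{1,2}_{\bfC}} < \infty$, hence $\sup_i\|\mathrm{grad}'\{u_i, v_i\}\|_{L^2} < \infty$. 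With this bound, Corollary \ref{lielambda} gives $\{u, v\} \in \Lambda_1(X)$; its proof (through Theorem \ref{Rellich}, Proposition \ref{L2norm} and Proposition \ref{limitbracket}), equivalently Proposition \ref{uppersemi}, also yields the $L^2$-strong convergence $\mathrm{grad}'\{u_i, v_i\} \to \mathrm{grad}'\{u, v\}$ together with $\{u_i, v_i\}, d\{u_i, v_i\} \to \{u, v\}, d\{u, v\}$ strongly in $L^2$. This last statement shows $(1, \{u_i, v_i\})$ is a spectral approximation of $\{u, v\}$, which I reuse in (2). I expect this $L^2$-bound step to be the main obstacle, since without compatibility Proposition \ref{limitbracket} controls the limit gradient only in $L^{p_n}$.

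For the Futaki assertion I use that on a smooth Fano manifold the classical Futaki invariant is a Lie-algebra character (see, e.g., \cite{Futaki88}) and therefore annihilates brackets; combined with the homomorphism identity this gives $\mathcal{F}_{X_i}(\mathrm{grad}'\{u_i, v_i\}, g_{X_i}) = \mathcal{F}_{X_i}([\mathrm{grad}'u_i, \mathrm{grad}'v_i], g_{X_i}) = 0$. Passing to the limit through the $L^2$-strong convergence just obtained and Proposition \ref{Futaki1}(2) yields $\mathcal{F}_X(\mathrm{grad}'\{u, v\}, g_X) = 0$, completing (1).

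Part (2) is then largely formal. Since $\Lambda = \lim_i \Lambda_1(X_i)$ is compatible, every pair $u, v \in \Lambda$ consists of compatible limit $1$-eigenfunctions, so (1) applies: $\{u, v\} \in \Lambda_1(X)$, and the spectral approximation $(1, \{u_i, v_i\})$ produced above exhibits $\{u, v\}$ as a limit $1$-eigenfunction, whence $\{u, v\} \in \Lambda$ and $\Lambda$ is closed under $\{\cdot, \cdot\}$. The homomorphism identity (\ref{homomo}) of Proposition \ref{limitbracket} reads $\Phi(\{u, v\}) = \mathrm{grad}'\{u, v\} = [\mathrm{grad}'u, \mathrm{grad}'v] = [\Phi(u), \Phi(v)]$, so $\Phi$ intertwines the Poisson bracket on $\Lambda$ with the commutator bracket on $\mathfrak{h}^{\Lambda}(X) = \Phi(\Lambda)$; in particular $\mathfrak{h}^{\Lambda}(X)$ is closed under $[\cdot, \cdot]$. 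The commutator bracket satisfies the Jacobi identity automatically, so $(\mathfrak{h}^{\Lambda}(X), [\cdot, \cdot])$ is a Lie algebra; since $\Phi = \Psi_{\Lambda}$ is a linear bijection onto $\mathfrak{h}^{\Lambda}(X)$ (injectivity is Remark \ref{eigenhol}) intertwining the two brackets, it is a Lie-algebra isomorphism and transports the Jacobi identity back to $(\Lambda, \{\cdot, \cdot\})$. Finally $\mathcal{F}_X([\Phi(u), \Phi(v)]) = \mathcal{F}_X(\mathrm{grad}'\{u, v\}, g_X) = 0$ by (1), so $\mathcal{F}_X|_{\mathfrak{h}^{\Lambda}(X)}$ annihilates brackets and is therefore a character.
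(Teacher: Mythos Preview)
Your proof of (1) is correct and essentially the paper's: both arguments invoke Corollary \ref{L2est} via the compatibility hypothesis to obtain the uniform $H^{1,2}$-bound on $\{u_i,v_i\}$, pass to the limit (you through Corollary \ref{lielambda}, the paper directly through Theorem \ref{Rellich} and Proposition \ref{Laplacianweakconv}), and then use Proposition \ref{Futaki1}(2) together with the character property of the Futaki invariant on smooth Fano manifolds to get $\mathcal F_X(\mathrm{grad}'\{u,v\},g_X)=0$. Your observation that $(1,\{u_i,v_i\})$ is itself a spectral approximation of $\{u,v\}$, hence $\{u,v\}\in\Lambda$, is also the paper's route to closure in (2).

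The gap is in your derivation of the Jacobi identity. You assert that ``the commutator bracket satisfies the Jacobi identity automatically'' and then transport it back to $(\Lambda,\{\cdot,\cdot\})$. On a smooth manifold this is indeed formal, but on the Fano--Ricci limit space $X$ the vector fields in $\mathfrak h^\Lambda(X)$ are only $L^2$ sections over a space with a weak second-order structure: the bracket $[\mathrm{grad}'u,\mathrm{grad}'v]$ is defined pointwise a.e.\ (as recalled just before Proposition \ref{limitbracket}) using twice-differentiability of $u,v$, and the iterated bracket $[[U,V],W]$ would require third derivatives of the potentials, which are not available. Equivalently, there is no associative algebra of functions on $X$ on which these vector fields act as derivations with well-defined compositions, so the operator-commutator argument does not apply. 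The identity (\ref{homomo}) does not supply Jacobi either; it merely says the two brackets agree under $\Psi_\Lambda$, so Jacobi for one is equivalent to Jacobi for the other.

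The paper closes this gap by proving Jacobi for $\{\cdot,\cdot\}$ on $\Lambda$ via the same limit mechanism you already used in (1): for $u,v,w\in\Lambda$ with compatible spectral approximations $(1,u_i),(1,v_i),(1,w_i)$, the smooth Jacobi identity $\{u_i,\{v_i,w_i\}\}+\{w_i,\{u_i,v_i\}\}+\{v_i,\{w_i,u_i\}\}=0$ holds on each $X_i$, and the $L^2$-strong convergences $\{u_i,v_i\}\to\{u,v\}$, $d\{u_i,v_i\}\to d\{u,v\}$ (and cyclically) established in (1), together with Proposition \ref{uppersemi}, let one pass to the limit. Once Jacobi for $\{\cdot,\cdot\}$ is in hand, Proposition \ref{limitbracket} (identity (\ref{homomo})) gives the Lie algebra structure on $\mathfrak h^\Lambda(X)$ and the isomorphism $\Psi_\Lambda$.
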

\begin{proof}
We first prove (1).
Let $(1, u_i), (1, v_i)$ be compatible spectral approximations of $u, v$, respectively. 
Propositions \ref{Rellich}, \ref{Laplacianweakconv} and Corollary \ref{L2est} yield that
$\{u, v\} \in \Lambda_1$, and that $\{u_i, v_i\}$ and $d\{u_i, v_i\}$ $L^2$-converge strongly to $\{u, v\}$ and $d\{u, v\}$ on $X$, respectively.
In particular since the Futaki invariant is a character as a Lie algebra on smooth setting, (2) of Proposition \ref{Futaki1} yields
\[\mathcal{F}_X(\mathrm{grad}'\{ u, v\}, g_X)=\lim_{i \to \infty}\mathcal{F}_{X_i}(\mathrm{grad}'\{ u_i, v_i\}, g_{X_i})=0.\]
This completes the proof of (1).

We turn to the proof of (2).
By Proposition \ref{limitbracket} and (1), it suffices to check the Jacobi identity for the Poisson bracket $\{\cdot, \cdot \}$.
Let $u, v, w \in \Lambda$ and let $(1, u_i), (1, v_i)$, and $(1, w_i)$ be compatible spectral approximations of $u, v$, and $w$.
Since
\[\{u_i, \{ v_i, w_i\} \}+\{w_i, \{ u_i, v_i\} \}+\{v_i, \{ w_i, u_i\} \}=0,\]
letting $i \to \infty$ with Proposition \ref{uppersemi} and the proof of (1) gives the Jacobi identity for the Poisson bracket $\{\cdot, \cdot \}$.
\end{proof}
By Propositions \ref{lowerbound}, \ref{poissonstr} and Remark \ref{holderlipschitz}, we have the following compactness:
\begin{corollary}\label{liestr}
Let $(X_i, g_{X_i}, J_i, F_i)$ be a sequence of Fano manifolds with $\mathrm{Ric}_{X_i}\ge K$, $H^n(X_i) \ge v$, $\mathrm{diam}\,X_i\le d$, and 
\[\sup_i\int_{X_i}|s_{X_i}|^q\,dH^n<\infty\]
for some $q>n$.
Then there exist a subsequence $i(j)$, the Fano-Ricci limit space $(X, g_X, J, F)$ of $(X_{i(j)}, g_{X_{i(j)}}, J_{i(j)}, F_{i(j)})$ and the  compatible limit 1-eigenspace $\Lambda:=\lim_{j \to \infty}\Lambda_1(X_{i(j)})$ such that
$(\Lambda, \{\cdot, \cdot \})$ and $(\mathfrak{h}^{\Lambda}(X), [\cdot, \cdot ])$ are finite dimensional Lie algebra.
Moreover the map
\[\Psi_{\Lambda}:\Lambda \to \mathfrak{h}^{\Lambda}(X)\]
defined by $\Psi_{\Lambda}(u):=\mathrm{grad}'u$ gives an isomorphism between them as Lie algebras. 
Furthermore, $\mathcal{F}_X|_{\mathfrak{h}^{\Lambda}}$ is a character of $\mathfrak{h}^{\Lambda}(X)$ as a Lie algebra. 
\end{corollary}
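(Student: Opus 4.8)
The plan is to assemble the statement from the analytic estimates of the previous subsections; the conceptual point is that the hypothesis $q>n$ is exactly what places the sequence in the \emph{compatible} regime in which Proposition \ref{poissonstr} applies. First I would set $\hat{L}:=\sup_i\int_{X_i}|s_{X_i}|^q\,dH^n<\infty$. Since $q>n>n/2$, Proposition \ref{lowerbound} gives a uniform bound $|F_i|\le C(n,K,d,v,q,\hat{L})$, so in particular there is a constant $c$ with $F_i\ge c$ for every $i$; this verifies the standing assumption (2.1d) and places us in the Fano-Ricci limit setting. Next, because $q>n$, Remark \ref{holderlipschitz} yields the uniform Lipschitz bound $|\mathrm{grad}\,F_i|\le C(n,K,d,v,q,\hat{L})$. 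Using the pointwise K\"ahler identity $|\barpartial F_i|=\tfrac{1}{\sqrt 2}|\mathrm{grad}\,F_i|$ on the smooth manifold $X_i$, this gives
\[
\sup_i\|\barpartial F_i\|_{L^{\infty}}<\infty.
\]

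With these bounds in hand I would extract the limit space and the eigenspace. By Corollary \ref{compactness} there is a subsequence $\{i(j)\}$ along which the $X_{i(j)}$ converge in the noncollapsed Gromov-Hausdorff sense to a Fano-Ricci limit space $(X,g_X,J,F)$ with $F\in H^{1,2}(X)\cap L^{\infty}(X)$. Passing to a further subsequence and using the dimension control of Proposition \ref{uppersemi}, the limit $1$-eigenspace $\Lambda:=\lim_{j\to\infty}\Lambda_1(X_{i(j)})$ exists. The crucial observation is that $\sup_i\|\barpartial F_i\|_{L^{\infty}}<\infty$ forces every spectral approximation to be compatible: for a spectral approximation $(1,u_{i})$ one has $\|h_{X_{i}}(\barpartial u_{i},\barpartial F_{i})\|_{L^2}\le\|\barpartial F_{i}\|_{L^{\infty}}\,\|\barpartial u_{i}\|_{L^2}$, and the right-hand side is uniformly bounded because the $u_{i}$ are normalized $1$-eigenfunctions (so $\|\barpartial u_i\|_{L^2}^2=\|u_i\|_{L^2}^2$ up to the equivalence of weighted and unweighted norms). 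Hence $\Lambda$ is a compatible limit $1$-eigenspace.

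Now I would invoke Proposition \ref{poissonstr}(2) directly: compatibility of $\Lambda$ implies that $(\Lambda,\{\cdot,\cdot\})$ and $(\mathfrak{h}^{\Lambda}(X),[\cdot,\cdot])$ are Lie algebras, that $\Psi_{\Lambda}(u):=\mathrm{grad}'u$ is a Lie algebra isomorphism between them, and that $\mathcal{F}_X|_{\mathfrak{h}^{\Lambda}(X)}$ is a character. Finite dimensionality is immediate from $\dim\Lambda\le\dim\Lambda_1(X)<\infty$, the latter being the finite dimensionality of a single eigenspace of $\Delta^F_{\barpartial}$ (equivalently, Corollary \ref{dimensionbound}).

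The substantive work all sits inside Proposition \ref{poissonstr}, whose proof in turn rests on the $L^2$-closedness of the Poisson bracket supplied by Corollary \ref{L2est}; at the level of this corollary there is no new difficulty, only bookkeeping of subsequences. The one point I would track with care---and the reason the hypothesis is stated with $q>n$ rather than merely $q>n/2$---is the passage from the scalar-curvature integral bound to $\sup_i\|\barpartial F_i\|_{L^{\infty}}<\infty$, since this $L^{\infty}$ (rather than merely $L^4$) control on $\barpartial F_i$ is precisely what upgrades the spectral approximations to compatible ones and thereby makes Proposition \ref{poissonstr} applicable.
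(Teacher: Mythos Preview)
Your proof is correct and follows essentially the same route as the paper, which simply cites Proposition~\ref{lowerbound}, Remark~\ref{holderlipschitz}, and Proposition~\ref{poissonstr}. You have fleshed out the logical dependencies accurately: $q>n/2$ triggers the uniform bound on $F_i$ via Proposition~\ref{lowerbound}, the stronger $q>n$ triggers the Lipschitz bound on $F_i$ via Remark~\ref{holderlipschitz}, and this in turn forces compatibility of all spectral approximations so that Proposition~\ref{poissonstr}(2) applies.
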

In particular we have the following:
\begin{corollary}\label{boundedricci}
Let $(X_i, g_{X_i}, J_i, F_i)$ be a sequence of Fano manifolds with $H^n(X_i) \ge v$, $\mathrm{diam}\,X_i\le d$, and 
\[|\mathrm{Ric}_{X_i}| \le K.\]
Then the same conclusion as in Corollary \ref{liestr} holds.
\end{corollary}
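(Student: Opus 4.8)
The plan is to reduce the statement directly to Corollary \ref{liestr} by showing that the two-sided Ricci bound $|\mathrm{Ric}_{X_i}| \le K$ automatically supplies the uniform $L^q$-integrability of the scalar curvature required there. First I would observe that $|\mathrm{Ric}_{X_i}| \le K$ contains in particular the one-sided lower bound $\mathrm{Ric}_{X_i} \ge -Kg_{X_i}$. Thus the hypotheses of Corollary \ref{liestr} concerning the lower Ricci bound, the volume lower bound $H^n(X_i) \ge v$, and the diameter bound $\mathrm{diam}\,X_i \le d$ all hold, with $-K$ playing the role of the lower bound constant; it remains only to verify the scalar-curvature condition.

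Next I would produce a uniform upper volume bound. Since $\mathrm{Ric}_{X_i} \ge -Kg_{X_i}$ and $\mathrm{diam}\,X_i \le d$, the Bishop--Gromov volume comparison theorem yields $H^n(X_i) \le C(n, K, d)$ for a constant independent of $i$. To control the scalar curvature, I would use that $s_{X_i}$ is the Riemannian trace of the Ricci tensor: the two-sided bound $-Kg_{X_i} \le \mathrm{Ric}_{X_i} \le Kg_{X_i}$ gives the pointwise estimate $|s_{X_i}| \le nK$. Combining these two facts, for any exponent $q$ — in particular for some $q > n$ — one obtains
\[
\int_{X_i}|s_{X_i}|^q\,dH^n \le (nK)^q H^n(X_i) \le (nK)^q C(n, K, d),
\]
so that $\sup_i \int_{X_i}|s_{X_i}|^q\,dH^n < \infty$.

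With every hypothesis of Corollary \ref{liestr} now verified, the desired conclusion follows immediately by invoking that corollary. I do not expect any genuine obstacle in this argument: the entire content is the observation that a two-sided Ricci bound forces both an upper volume bound (via Bishop--Gromov) and a pointwise scalar-curvature bound (via taking the trace), which together give the integral hypothesis of Corollary \ref{liestr}. The only nontrivial external input is the standard Bishop--Gromov upper volume estimate, and the passage from the trace bound to the $L^q$-bound is routine.
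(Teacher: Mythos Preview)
Your proof is correct and follows exactly the approach the paper intends: the corollary is stated in the paper as an immediate specialization of Corollary~\ref{liestr}, and you have spelled out the only two points that need checking --- that $|\mathrm{Ric}_{X_i}|\le K$ gives a pointwise bound $|s_{X_i}|\le nK$, and that Bishop--Gromov together with the diameter bound gives a uniform upper volume bound, so the $L^q$ scalar-curvature hypothesis holds for any $q>n$.
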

It is worth pointing out that in the setting of Corollary \ref{boundedricci} we can prove that $F$ is the Ricci potential of $(X, g_X, J)$ in some weak sense. See \cite{Honda15}.
We will discuss again similar results as above in almost smooth setting in Section 5.
\section{Almost smooth Fano-Ricci limit space}
\subsection{Decomposition theorem on an almost smooth Fano-Ricci limit space}
Recall that a Fano-Ricci limit space $(X, g_X, J, F)$ 
is the limit space of $(X_i, g_{X_i}, J_i, F_i)$ 
satisfying (2.1a) - (2.1e), (4.1a) and (4.1b). We say that $(X, g_X, J, F)$ is an almost smooth Fano-Ricci
limit space if in addition the conditions (5.1a) - (5.1c) below are satisfied.

\begin{enumerate}
\item[(5.1a)]\label{regular} There exists an open (dense) subset $\mathcal{R}$ of $X$ such that $H^n(X \setminus \mathcal{R})=0$,  that $(\mathcal{R}, g_X|_{\mathcal{R}}, J|_{\mathcal{R}})$ is a smooth K\"ahler manifold, and that $F|_{\mathcal{R}} \in C^{\infty}(\mathcal{R})$ with 
\[\mathrm{Ric}_{\omega}-\omega= \sqrt{-1}\partial \overline{\partial}F\]
on $\mathcal{R}$.
\item[(5.1b)]\label{Liuville} Every $L^2$-holomorphic function on $\mathcal{R}$ is constant.
\item[(5.1c)]\label{hamiltonian} We have
\[\{u \in H^{1, 2}_{\bfC}(X); \mathrm{grad}'u|_{\mathcal{R}} \in \mathfrak{h}_{\mathrm{reg}}(X)\} \subset \mathcal{D}^2_{\bfC}(\Delta^F_{\overline{\partial}}, X),\]
where $\mathfrak{h}_{\mathrm{reg}}(X)$ is the set of $L^2$-holomorphic vector fields on $\mathcal{R}$, or 
equivalently on  $X$ by the assumption (5.1a),  having smooth potentials on $\mathcal{R}$.
\end{enumerate}
Note that by Corollary \ref{compactness}, we have $F \in H^{1, 2}(X)$.
Recall 
$$\Lambda_1=\{f \in \mathcal{D}^2_{\bfC}(\Delta_{\overline{\partial}}^F, X); \Delta^F_{\overline{\partial}}f
=f\}. $$
Let $\Lambda$ be a complex subspace of $\Lambda_1$,
$\mathfrak{h}^{\Lambda}(X)$ the set of $V \in \mathfrak{h}_{\mathrm{reg}}(X)$ with $V=\mathrm{grad}'u$ for some $u \in \Lambda$ (i.e. $\mathfrak{h}^{\Lambda}(X)=\Phi (\Lambda)$), and $\tilde{\mathfrak{h}}(X)$ the set of $V \in \mathfrak{h}_{reg}(X)$ with $V=\mathrm{grad}'u$ for some $u \in H^{1, 2}_{\bfC}(X)$.
Note that $\mathfrak{h}^{\Lambda_1}(X)=\mathfrak{h}_1(X)$.

We remark the following:
\begin{proposition}\label{h1h}
We have
\[\mathfrak{h}_{\mathrm{reg}}(X)=\mathfrak{h}_1(X).\]
\end{proposition}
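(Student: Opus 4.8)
The plan is to prove the two inclusions separately. The inclusion $\mathfrak{h}_1(X) \subseteq \mathfrak{h}_{\mathrm{reg}}(X)$ is essentially formal: given $V = \mathrm{grad}'u$ with $u \in \Lambda_1$, I would apply Corollary \ref{1steigen} on the open set $\mathcal{R}$, which by (5.1a) is a smooth K\"ahler manifold with $F|_{\mathcal{R}} \in C^\infty(\mathcal{R})$. This yields $u|_{\mathcal{R}} \in C^\infty_{\bfC}(\mathcal{R})$ and that $\mathrm{grad}'u$ is holomorphic on $\mathcal{R}$; since $u \in H^{1,2}_{\bfC}(X)$ forces $V = \mathrm{grad}'u \in L^2_{\bfC}(T'X)$, the field $V$ is an $L^2$-holomorphic vector field on $\mathcal{R}$ with the smooth potential $u$, i.e. $V \in \mathfrak{h}_{\mathrm{reg}}(X)$. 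So all the content lies in the reverse inclusion.

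For $\mathfrak{h}_{\mathrm{reg}}(X) \subseteq \mathfrak{h}_1(X)$, I would start from $V \in \mathfrak{h}_{\mathrm{reg}}(X)$, written as $V = \mathrm{grad}'u_0 \in L^2$ with $u_0 \in C^\infty(\mathcal{R})$ holomorphic on $\mathcal{R}$. Since $\mathrm{grad}'u_0$ is metric-dual to $\overline{\partial}u_0$, the $L^2$-bound on $V$ gives $\overline{\partial}u_0 \in L^2_{\bfC}((T^*\mathcal{R})'')$, while $u_0|_O \in H^{1,2}_{\bfC}(O)$ for every relatively compact $O \subset \mathcal{R}$. First I would invoke Proposition \ref{suffsobo} with $U = \mathcal{R}$ to produce a global potential $u \in H^{1,2}_{\bfC}(X)$ with $\overline{\partial}u = \overline{\partial}u_0$ on $\mathcal{R}$; then $\mathrm{grad}'u = \mathrm{grad}'u_0 = V$ on $\mathcal{R}$ and, as $u - u_0$ is holomorphic hence smooth on $\mathcal{R}$, also $u|_{\mathcal{R}} \in C^\infty_{\bfC}(\mathcal{R})$. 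This $u$ lies in the set appearing in assumption (5.1c), so $u \in \mathcal{D}^2_{\bfC}(\Delta^F_{\overline{\partial}}, X)$.

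It then remains to show that $u$ is a $1$-eigenfunction up to a constant. I would set $w := \Delta^F_{\overline{\partial}}u - u \in L^2_{\bfC}(X)$. On $\mathcal{R}$, Proposition \ref{Laplacianexplicite} and elliptic regularity identify the weak $\Delta^F_{\overline{\partial}}u$ with the classical one, and the pointwise Weitzenb\"ock identity underlying Theorem \ref{weith} (cf. \cite[page 41]{Futaki88}) shows that $\overline{\partial}w$ is controlled by $\nabla''\mathrm{grad}'u$; since $V = \mathrm{grad}'u$ is holomorphic, $\nabla''\mathrm{grad}'u = 0$ on $\mathcal{R}$, so $w$ is holomorphic on $\mathcal{R}$. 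As $w \in L^2$, assumption (5.1b) forces $w \equiv c$ for some constant $c$. Finally $\tilde u := u + c$ satisfies $\Delta^F_{\overline{\partial}}\tilde u = \tilde u$, i.e. $\tilde u \in \Lambda_1$, with $\mathrm{grad}'\tilde u = V$; hence $V \in \Phi(\Lambda_1) = \mathfrak{h}_1(X)$.

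The hard part will be the extension step, since everything afterwards is a routine combination of the weighted $\overline{\partial}$-Weitzenb\"ock identity with the Liouville hypothesis (5.1b). To apply Proposition \ref{suffsobo} one must know that the inclusion $H^{1,2}_c(\mathcal{R}) \hookrightarrow H^{1,2}(X)$ is an isomorphism, which by the remark following Proposition \ref{lapdom} holds once $\dim_H(X \setminus \mathcal{R}) \le n-2$ together with adequate regularity of the singular set. For a noncollapsed Ricci limit the singular set has codimension at least $2$ by Cheeger--Colding, so this borderline capacity estimate is exactly what has to be secured to close the argument.
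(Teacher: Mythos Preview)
Your approach coincides with the paper's. The paper also proves only the nontrivial inclusion $\mathfrak{h}_{\mathrm{reg}}(X)\subseteq\mathfrak{h}_1(X)$ by invoking Proposition \ref{suffsobo}(1) to produce $u\in H^{1,2}_{\bfC}(X)$ with $\mathrm{grad}'u=V$ on $\mathcal{R}$, and then appealing to (5.1c). The paper's write-up is terser than yours: it jumps from (5.1c) straight to ``$V\in\mathfrak{h}_1(X)$'', effectively folding in the eigenvalue verification that you carry out explicitly (and which the paper records separately as Proposition \ref{eig} just afterwards, using (5.1b) in the same way you do). Your normalization $\tilde u=u+c$ is equivalent to the paper's device of imposing $\int_X u\,dH^n_F=0$.

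On the point you flag as ``the hard part'': you are right that Proposition \ref{suffsobo} requires the capacity hypothesis $H^{1,2}_c(\mathcal{R})\hookrightarrow H^{1,2}(X)$ to be an isomorphism (what is later called (5.3a)). The paper's own proof simply invokes Proposition \ref{suffsobo} at this step without further comment, so you should not expect to find an independent justification of that hypothesis here; it is used as input, not proved. In other words, your worry about the borderline codimension-$2$ estimate is a genuine issue in the logical dependencies, but it is one the paper shares rather than resolves.
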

\begin{proof}
Let $V \in \mathfrak{h}_{\mathrm{reg}}(X)$.
Then there exists a $\bfC$-valued smooth function $f$ on $\mathcal{R}$ such that $V=\mathrm{grad}'f$ on $\mathcal{R}$.
By (1) of Proposition \ref{suffsobo}, there exists $u \in H^{1, 2}_{\bfC}(X)$ such that $\mathrm{grad}'f=\mathrm{grad}'u$ on $\mathcal{R}$.
Thus, by the assumption (5.1c), $V \in \mathfrak{h}_1(X)$.
This completes the proof. 
\end{proof}

\begin{remark}\label{pointwise}
By a simple calculation we have the following:
\begin{enumerate}
\item[(1)] We have
\[\mathrm{grad}'\{u, v\} = [ \mathrm{grad}'u, \mathrm{grad}'v]\]
on $\mathcal{R}$ for any $u, v \in \Lambda_1$. In particular by Corollary \ref{1steigen}, $\mathrm{grad}'\{u, v\}$ is a holomorphic vector field on $\mathcal{R}$.
\item[(2)] If a smooth function $u$ on $\mathcal{R}$ satisfies that $\mathrm{grad}'u$ is a holomorphic vector field on $\mathcal{R}$, then 
\[\overline{\partial}(\Delta^F_{\overline{\partial}}u-u)=0\]
on $\mathcal{R}$.
\end{enumerate}
\end{remark}

\begin{proposition}\label{eig}
If $u \in H^{1, 2}_{\bfC}(X)$ satisfies $\mathrm{grad}'u \in \mathfrak{h}_{\mathrm{reg}}(X)$ and
\[\int_X\,u\,dH^n_F=0,\]
then $u \in \Lambda_1$.
In other words, $\tilde{\mathfrak{h}}(X)=\mathfrak{h}^{\Lambda_1}(X)(=\mathfrak{h}_1(X))$.
\end{proposition}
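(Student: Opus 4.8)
The plan is to deduce the statement from three ingredients already available: the domain membership supplied by (5.1c), the pointwise identity in Remark \ref{pointwise}(2), and the Liouville property (5.1b). First I would observe that the hypotheses $u \in H^{1,2}_{\bfC}(X)$ and $\mathrm{grad}'u \in \mathfrak{h}_{\mathrm{reg}}(X)$ are exactly the ones appearing in assumption (5.1c), so at once $u \in \mathcal{D}^2_{\bfC}(\Delta^F_{\overline{\partial}}, X)$. Set $w := \Delta^F_{\overline{\partial}}u - u \in L^2_{\bfC}(X)$; the whole proof reduces to showing $w = 0$.

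The key preliminary step, and \textbf{the main obstacle}, is to upgrade the a priori $H^{1,2}$-regularity of $u$ to smoothness on the regular part $\mathcal{R}$, so that Remark \ref{pointwise}(2) becomes applicable. Since $\mathrm{grad}'u$ is a holomorphic vector field on the smooth K\"ahler manifold $\mathcal{R}$, it is in particular a smooth section there; because in local holomorphic coordinates $\mathrm{grad}'u = g^{i\overline{j}}(\partial_{\overline{j}}u)\,\partial_{z^i}$ with $g^{i\overline{j}}$ smooth and invertible, this forces $\overline{\partial}u \in C^{\infty}(\mathcal{R})$. Consequently $\overline{\partial}^*\overline{\partial}u$ is a smooth function on $\mathcal{R}$, and using the integrated K\"ahler identity $\int_{\mathcal{R}} h_X^*(du,d\psi)\,dH^n = 2\int_{\mathcal{R}} h_X^*(\overline{\partial}u, \overline{\partial}\psi)\,dH^n$ for compactly supported $\psi$ (the Claim in the proof of Proposition \ref{Laplacianequal}) together with an integration by parts, one sees that $u$ solves $\Delta u = 2\,\overline{\partial}^*\overline{\partial}u$ weakly on $\mathcal{R}$ with smooth right-hand side. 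Elliptic regularity for the Laplace--Beltrami operator $\Delta$ (which has smooth coefficients on $\mathcal{R}$) then yields $u|_{\mathcal{R}} \in C^{\infty}(\mathcal{R})$.

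With $u$ smooth on $\mathcal{R}$ and $\mathrm{grad}'u$ holomorphic there, Remark \ref{pointwise}(2) gives $\overline{\partial}(\Delta^F_{\overline{\partial}}u - u) = 0$ on $\mathcal{R}$, that is, $w|_{\mathcal{R}}$ is holomorphic. Since $w \in L^2_{\bfC}(X)$ and $H^n(X \setminus \mathcal{R}) = 0$, the restriction $w|_{\mathcal{R}}$ is an $L^2$-holomorphic function on $\mathcal{R}$, hence constant by (5.1b); write $w \equiv c$. To pin down $c$ I would test the defining identity (\ref{weightedlap}) of $\Delta^F_{\overline{\partial}}$ against the constant $\phi \equiv 1$ (admissible since $X$ is compact), obtaining $\int_X \Delta^F_{\overline{\partial}}u\,dH^n_F = 0$; combined with the normalization $\int_X u\,dH^n_F = 0$ this gives $c\,H^n_F(X) = 0$, so $c = 0$ because $H^n_F(X) > 0$. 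Hence $\Delta^F_{\overline{\partial}}u = u$ and $u \in \Lambda_1$. Finally, for the identity $\tilde{\mathfrak{h}}(X) = \mathfrak{h}^{\Lambda_1}(X)$, the inclusion $\supset$ is immediate; for $\subset$, given $V = \mathrm{grad}'u \in \tilde{\mathfrak{h}}(X)$ I would replace $u$ by $u - \tfrac{1}{H^n_F(X)}\int_X u\,dH^n_F$ (which leaves $\mathrm{grad}'u$ unchanged) to arrange $\int_X u\,dH^n_F = 0$, and then apply the proposition to conclude $u \in \Lambda_1$, so $V \in \mathfrak{h}^{\Lambda_1}(X)$.
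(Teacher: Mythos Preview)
Your proof is correct and follows the same approach as the paper: invoke (5.1c) for domain membership, use Remark \ref{pointwise}(2) together with (5.1b) to see that $\Delta^F_{\overline{\partial}}u - u$ is constant on $\mathcal{R}$, and then use the normalization (the paper cites Proposition \ref{Poisson}, which amounts to your testing against $\phi\equiv 1$) to kill the constant. The paper glosses over the smoothness of $u|_{\mathcal{R}}$ that you flag as the main obstacle; your elliptic-regularity argument is fine, but note it can be obtained more cheaply straight from the definition of $\mathfrak{h}_{\mathrm{reg}}(X)$: there is a smooth $f$ on $\mathcal{R}$ with $\mathrm{grad}'u=\mathrm{grad}'f$, hence $\overline{\partial}(u-f)=0$ weakly, and ellipticity of $\overline{\partial}$ makes $u-f$ holomorphic, so $u$ is smooth on $\mathcal{R}$.
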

\begin{proof}
Let $u \in H^{1, 2}_{\bfC}(X)$ with $\mathrm{grad}'u \in \mathfrak{h}_{\mathrm{reg}}(X)$.
Then (5.1c) gives $u \in \mathcal{D}^2_{\bfC}(\Delta^F_{\overline{\partial}}, X)$.
In particular $\Delta^F_{\overline{\partial}}u \in L^2_{\bfC}(X)$.

Thus by (2) of Remark \ref{pointwise} and (5.1b), we see that $\Delta^F_{\overline{\partial}}u-u$ is constant.
Proposition \ref{Poisson} yields $\Delta^F_{\overline{\partial}}u-u=0$.
This completes the proof.
\end{proof}
\begin{proposition}\label{isometry}
Assume that for any $u, v \in \Lambda$, $\{u, v\} \in \Lambda$.
Then
$(\Lambda, \{\cdot, \cdot \})$ and $(\mathfrak{h}^{\Lambda}(X), [ \cdot, \cdot])$ are finite dimensional complex Lie algebras.
Moreover the map $\Psi_{\Lambda}: \Lambda \to \mathfrak{h}^{\Lambda}(X)$ defined by
\[\Psi_{\Lambda}(u):=\mathrm{grad}'u\]
gives an isomorphism between them as Lie algebras.
\end{proposition}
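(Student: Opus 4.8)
The plan is to obtain the Lie algebra structure on $\Lambda$ by \emph{transport of structure} from the genuine Lie bracket of vector fields on the smooth locus $\mathcal{R}$, rather than by verifying the Jacobi identity for the Poisson bracket directly. The only analytic inputs needed are already established: the pointwise identity $\mathrm{grad}'\{u,v\}=[\mathrm{grad}'u,\mathrm{grad}'v]$ on $\mathcal{R}$ from Remark \ref{pointwise}(1), the injectivity of $\Phi$ from Remark \ref{eigenhol}, and the smoothness/holomorphy statements of Corollary \ref{1steigen}.

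First I would record finite dimensionality: since $\Lambda\subset\Lambda_1$ and $\Lambda_1$ is a single eigenspace of $\Delta^F_{\overline{\partial}}$, which is finite dimensional by the discreteness of the spectrum (Remark following Proposition \ref{Laplacianexplicite}), $\Lambda$ is finite dimensional, and hence so is $\mathfrak{h}^{\Lambda}(X)=\Phi(\Lambda)$. Next I would check that $\Psi_{\Lambda}=\Phi|_{\Lambda}$ is a $\bfC$-linear bijection onto $\mathfrak{h}^{\Lambda}(X)$: it is surjective by the very definition $\mathfrak{h}^{\Lambda}(X)=\Phi(\Lambda)$, and injective because $\Phi$ is injective on $\Lambda_1$ by Remark \ref{eigenhol}. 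For each $u\in\Lambda$ the vector field $\mathrm{grad}'u$ genuinely lies in $\mathfrak{h}_{\mathrm{reg}}(X)$: Corollary \ref{1steigen} gives that $u$ is smooth on $\mathcal{R}$ and that $\mathrm{grad}'u$ is holomorphic there, while $u\in H^{1,2}_{\bfC}(X)$ supplies the $L^2$-bound.

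Then I would show that $(\mathfrak{h}^{\Lambda}(X),[\cdot,\cdot])$ is a Lie algebra. On the smooth Kähler manifold $\mathcal{R}$ the Lie bracket of vector fields automatically satisfies $\bfC$-bilinearity, antisymmetry and the Jacobi identity, so only closure under $[\cdot,\cdot]$ requires argument. For $u,v\in\Lambda$, Remark \ref{pointwise}(1) yields $[\mathrm{grad}'u,\mathrm{grad}'v]=\mathrm{grad}'\{u,v\}$ on $\mathcal{R}$; since the hypothesis gives $\{u,v\}\in\Lambda$, the right-hand side equals $\Phi(\{u,v\})\in\mathfrak{h}^{\Lambda}(X)$, and two elements of $\mathfrak{h}^{\Lambda}(X)$ that agree on the dense full-measure set $\mathcal{R}$ coincide. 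Thus $\mathfrak{h}^{\Lambda}(X)$ is closed under the bracket and is a finite dimensional complex Lie algebra.

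Finally I would transport the structure back along $\Psi_{\Lambda}$. The same identity reads $\Psi_{\Lambda}(\{u,v\})=\mathrm{grad}'\{u,v\}=[\mathrm{grad}'u,\mathrm{grad}'v]=[\Psi_{\Lambda}(u),\Psi_{\Lambda}(v)]$ in $\mathfrak{h}^{\Lambda}(X)$, so the bijection $\Psi_{\Lambda}$ intertwines the antisymmetric bilinear operation $\{\cdot,\cdot\}$ on $\Lambda$ with the Lie bracket $[\cdot,\cdot]$ on $\mathfrak{h}^{\Lambda}(X)$. Since pulling back a Lie bracket along a linear bijection is again a Lie bracket, $(\Lambda,\{\cdot,\cdot\})$ is a Lie algebra and $\Psi_{\Lambda}$ is an isomorphism of Lie algebras; in particular the Jacobi identity for $\{\cdot,\cdot\}$ comes for free and need never be computed. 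The only point requiring care is the bookkeeping that the formula of Remark \ref{pointwise}(1) is an identity valid on $\mathcal{R}$ and that elements of $\mathfrak{h}^{\Lambda}(X)$ are determined by their restriction to $\mathcal{R}$ — precisely where the almost-smoothness condition (5.1a), that $H^n(X\setminus\mathcal{R})=0$, is used. I expect no genuine obstacle beyond this, the substantive content having already been absorbed into Corollary \ref{1steigen} and Remark \ref{pointwise}.
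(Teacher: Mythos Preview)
Your proposal is correct and follows essentially the same approach as the paper: the paper's proof is the single sentence ``This is a direct consequence of (1) of Remark \ref{pointwise},'' and you have simply spelled out the details that this sentence encodes---finite dimensionality via $\Lambda\subset\Lambda_1$, injectivity of $\Psi_\Lambda$ via Remark \ref{eigenhol}, closure of $\mathfrak{h}^{\Lambda}(X)$ under $[\cdot,\cdot]$ via the identity $\mathrm{grad}'\{u,v\}=[\mathrm{grad}'u,\mathrm{grad}'v]$ on $\mathcal{R}$, and the Jacobi identity for $\{\cdot,\cdot\}$ by transport of structure along the bijection $\Psi_\Lambda$. Your explicit remark that the almost-smoothness condition $H^n(X\setminus\mathcal{R})=0$ is what makes elements of $\mathfrak{h}^{\Lambda}(X)$ determined by their restriction to $\mathcal{R}$ is a helpful clarification the paper leaves implicit.
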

\begin{proof}
This is a direct consequence of (1) of Remark \ref{pointwise}.
\end{proof}

\subsection{K\"ahler-Ricci limit solitons.}

Let 
$(X, g_X, J, F)$ be an almost smooth Fano-Ricci
limit space, that is, the conditions (2.1a) - (2.1e), (4.1a), (4.1b), (5.1a) - (5.1c) are satisfied.

\begin{proposition}\label{killingequiv}
Let $u \in \Lambda_1$.
Then the following are equivalent:
\begin{enumerate}
\item[(1)] $\mathrm{Re}(\mathrm{grad}'u)$ is a Killing vector field on $\mathcal{R}$, where $\mathrm{Re}(\mathrm{grad}'u)$ is the real part of $\mathrm{grad}'u$.
\item[(2)] $\mathrm{Re}(u)$ is constant.
\end{enumerate}
\end{proposition}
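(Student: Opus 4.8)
The plan is to work on the regular set $\mathcal{R}$, where by Corollary \ref{1steigen} the eigenfunction $u$ is smooth and $\mathrm{grad}'u$ is a genuine holomorphic vector field, and to reduce the Killing condition to a pointwise identity on the complex Hessian of $\mathrm{Re}(u)$. Write $\xi:=\mathrm{Re}(\mathrm{grad}'u)=\tfrac12(\mathrm{grad}'u+\overline{\mathrm{grad}'u})$. In local holomorphic coordinates on $\mathcal{R}$ one computes $\xi_i=\tfrac12\partial_i\overline{u}$ and $\xi_{\overline{i}}=\tfrac12\partial_{\overline{i}}u$, so that the two independent components of $\mathcal{L}_\xi g_X=0$ read
$$\nabla_i\xi_j+\nabla_j\xi_i=\nabla_i\nabla_j\overline{u},\qquad \nabla_i\xi_{\overline{j}}+\nabla_{\overline{j}}\xi_i=\tfrac12\partial_i\partial_{\overline{j}}(u+\overline{u}).$$
The first, $(2,0)$, equation holds automatically: since $u\in\Lambda_1$, Corollary \ref{1steigen} gives $\nabla''\mathrm{grad}'u=0$, i.e. $\nabla_{\overline{i}}\nabla_{\overline{j}}u=0$ on $\mathcal{R}$, whose conjugate is exactly $\nabla_i\nabla_j\overline{u}=0$. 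Hence $\xi$ is Killing on $\mathcal{R}$ if and only if the mixed equation holds, i.e. if and only if $\partial_i\partial_{\overline{j}}\mathrm{Re}(u)=0$: the real part $\phi:=\mathrm{Re}(u)$ is pluriharmonic on $\mathcal{R}$.

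Given this reduction, the implication (2)$\Rightarrow$(1) is immediate: if $\phi$ is constant then $\partial_i\partial_{\overline{j}}\phi=0$ trivially, the $(2,0)$ part being automatic as above, so $\mathcal{L}_\xi g_X=0$ on $\mathcal{R}$.

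For the reverse implication (1)$\Rightarrow$(2) I would argue that pluriharmonicity forces $\phi$ to be constant, using the global weak structure rather than any completeness of $\mathcal{R}$. First note $\phi=\mathrm{Re}(u)\in\Lambda_1$: from the weighted analogue $\Delta^F=2\Delta^F_{\overline{\partial}}=2\Delta^F_{\partial}$ of Proposition \ref{Laplacianequal} together with the conjugation property $\Delta^F\overline{u}=\overline{\Delta^Fu}$ one gets $\Delta^F_{\overline{\partial}}\phi=\phi$. By Remark \ref{L1lap} (equivalently Proposition \ref{Laplacianexplicite}) this yields $\phi\in\mathcal{D}^{2,1}_{\bfC}(\Delta_{\overline{\partial}},X)$ with $\Delta_{\overline{\partial}}\phi=\phi+h_X^*(\overline{\partial}\phi,\overline{\partial}F)$ as an $L^1$ function. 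On $\mathcal{R}$, where $\phi$ is smooth, this weak density agrees with the classical expression $-g_X^{i\overline{j}}\nabla_i\nabla_{\overline{j}}\phi$, which vanishes by pluriharmonicity; since $H^n(X\setminus\mathcal{R})=0$, the density is zero $H^n$-a.e. on $X$. Therefore $\int_X h_X^*(\overline{\partial}\phi,\overline{\partial}\varphi)\,dH^n=0$ for every $\varphi\in\mathrm{LIP}_{\bfC}(X)$, and choosing $\varphi=\phi$ (by density of $\mathrm{LIP}_{\bfC}(X)$ in $H^{1,2}_{\bfC}(X)$) gives $\int_X|\overline{\partial}\phi|^2\,dH^n=0$. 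Thus $\overline{\partial}\phi=0$, and since $\phi$ is real also $\partial\phi=\overline{\overline{\partial}\phi}=0$, so $d\phi=0$; as $X$ is connected, $\phi$ is constant.

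The main obstacle is precisely this last passage from the smooth pointwise statement on $\mathcal{R}$ to the global weak vanishing on $X$: one must make sense of $\Delta_{\overline{\partial}}\phi$ on the singular limit space and identify its $L^1$-density with the classical Laplacian on $\mathcal{R}$, which is exactly what allows $H^n(X\setminus\mathcal{R})=0$ to do the work without any capacity hypothesis on the singular set. The bookkeeping that $\phi\in\Lambda_1$ (so that $\overline{\partial}$-harmonicity on $\mathcal{R}$ upgrades to a global identity through Remark \ref{L1lap}) and the compatibility of the weak and classical Hessians on $\mathcal{R}$ are the technical points to verify carefully; the local complex-geometric computation and the direction (2)$\Rightarrow$(1) are routine.
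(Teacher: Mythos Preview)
Your local reduction on $\mathcal{R}$ is correct and is essentially a coordinate verification of the paper's identity $L_{\mathrm{Re}(\mathrm{grad}'u)}\omega_X=\sqrt{-1}\,\partial\overline{\partial}\,\mathrm{Re}(u)$: since $\mathrm{grad}'u$ is holomorphic (Corollary~\ref{1steigen}), its real part preserves $J$, so Killing is equivalent to preserving $\omega_X$, hence to $\sqrt{-1}\,\partial\overline{\partial}\phi=0$ with $\phi=\mathrm{Re}(u)$. The implication $(2)\Rightarrow(1)$ is then immediate, as you say.

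The gap is in $(1)\Rightarrow(2)$. Your claim that $\phi=\mathrm{Re}(u)\in\Lambda_1$ rests on a ``weighted analogue $\Delta^F=2\Delta^F_{\overline{\partial}}=2\Delta^F_{\partial}$'' of Proposition~\ref{Laplacianequal}, but this identity is \textit{false} when $F$ is nonconstant. On $\mathcal{R}$ one has
\[
\Delta^F_{\overline{\partial}}f=\Delta_{\overline{\partial}}f-(\mathrm{grad}''F)f,\qquad
\Delta^F_{\partial}f=\Delta_{\partial}f-(\mathrm{grad}'F)f,
\]
so $\Delta^F=\Delta^F_{\overline{\partial}}+\Delta^F_{\partial}$ but in general $\Delta^F_{\overline{\partial}}\neq\Delta^F_{\partial}$. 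Equivalently, $\Delta^F_{\overline{\partial}}$ does not commute with complex conjugation; this is precisely the computation used later in the proof of Theorem~\ref{decomp}, where $\Delta^F_{\overline{\partial}}\overline{u}=\overline{u}-\overline{\{F,u\}}$ for $u\in\Lambda_1$. Thus $\overline{u}$, and hence $\phi$, lies in $\Lambda_1$ only when $\{F,u\}=0$, which you have no reason to assume. With $\phi\notin\Lambda_1$ your route to $\phi\in\mathcal{D}^{2,1}_{\bfC}(\Delta_{\overline{\partial}},X)$ via Remark~\ref{L1lap} collapses.

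The paper's argument avoids this entirely: from pluriharmonicity on $\mathcal{R}$ it takes the trace to get $\Delta_{\overline{\partial}}\phi=0$ on $\mathcal{R}$ (unweighted, so no issue with $F$), notes that $\phi\in H^{1,2}(X)$ since $u\in H^{1,2}_{\bfC}(X)$, and then invokes Proposition~\ref{lapdom} to upgrade this to $\phi\in\mathcal{D}^2_{\bfC}(\Delta_{\overline{\partial}},X)$ with $\Delta_{\overline{\partial}}\phi=0$ globally; integrating against $\overline{\phi}$ gives $\int_X|\overline{\partial}\phi|^2\,dH^n=0$ and hence $\phi$ constant. To repair your proof, drop the $\Lambda_1$ claim and follow this unweighted route.
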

\begin{proof}
By a simple calculation we have
\begin{align}\label{lie}
L_{\mathrm{Re}(\mathrm{grad}'u)}\omega_X= \sqrt{-1}\partial \overline{\partial}\mathrm{Re}(u)
\end{align}
on $\mathcal{R}$.

Assume that $\mathrm{Re}(\mathrm{grad}'u)$ is a Killing vector field on $\mathcal{R}$.
By taking the (complex) trace of (\ref{lie}) we have $\Delta_{\overline{\partial}}\mathrm{Re}(u)=0$ on $\mathcal{R}$.
Thus Proposition \ref{lapdom} shows that $\mathrm{Re}(u)$ is constant.

By (\ref{lie}), the converse is trivial.
This completes the proof.
\end{proof}
\begin{definition}[K\"ahler-Ricci limit soliton]
We say that an almost smooth Fano-Ricci limit space \textit{$(X, g_X, J, F)$ is a K\"ahler-Ricci limit soliton} if $\mathrm{grad}'F \in \mathfrak{h}_{reg}(X)$.
\end{definition}
Note that by Proposition \ref{eig}, $(X, g_X, J, F)$ is a K\"ahler-Ricci limit soliton if and only if $F \in \Lambda_1$ holds. Further, by Proposition \ref{killingequiv}, $\mathrm{Re}(\mathrm{grad}'(iF))$ is a 
Killing vector field.

\begin{theorem}[Decomposition theorem]\label{decomp} 
Let $(X, g_X, J, F)$ be a K\"ahler-Ricci limit soliton. 
For a complex subspace $\Lambda$ of $\Lambda_1$, we assume the following:
\begin{enumerate}
\item[(1)] For any $u, v \in \Lambda$, $\{u, v\} \in \Lambda$.
\item[(2)] For every $u \in \Lambda$, $\{u, F\} \in \Lambda$.
\end{enumerate}
Then $-\mathrm{grad}'F$ acts on $\mathfrak{h}^{\Lambda}(X)$ by the adjoint action and $\mathfrak{h}^{\Lambda}(X)$ has a decomposition
\[\mathfrak{h}^{\Lambda}(X)=\mathfrak{h}_0^{\Lambda}(X) \oplus \bigoplus_{\alpha >0}\mathfrak{h}^{\Lambda}_{\alpha}(X),\]
where $\mathfrak{h}^{\Lambda}_{\alpha}(X)$ is the $\alpha$-eigenspace of the adjoint action of $-\mathrm{grad}'F$.
Furthermore,  $\mathfrak{h}_0^{\Lambda}(X)$ is isomorphic as a Lie algebra to the complexification of a real Lie algebra 
$\tilde{\Lambda}:=\left\{ u \in \Lambda | u=-\overline{u}\right\}$ with the Poisson bracket  
$\{ \cdot, \cdot \}$, 
and $\bigoplus_{\alpha >0}\mathfrak{h}^{\Lambda}_{\alpha}(X)$ is nilpotent.
Moreover, the map
\[\phi_{\Lambda}:  \tilde{\Lambda} \to \mathcal{K}(\mathcal{R})\]
defined by $\phi_{\Lambda}(u):=2\mathrm{Re}(\mathrm{grad}'u)$ is an inclusion of Lie subalgebra, 
where $\mathcal{K}(\mathcal{R})$ is the space of all Killing vector fields on $\mathcal{R}$.
In particular, if $\mathcal{R}$ coincides with the regular set of $X$ and the image of $\phi_{\Lambda}$ is contained in the Lie algebra of the isometry group of $\mathcal{R}$, then $\mathfrak{h}_0^{\Lambda}$ is reductive.
\end{theorem}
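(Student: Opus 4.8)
The plan is to carry the classical Tian--Zhu argument \cite{TianZhu00} over to the smooth open K\"ahler manifold $(\mathcal{R}, g_X|_{\mathcal{R}}, J|_{\mathcal{R}})$, on which the soliton equation $\mathrm{Ric}_\omega-\omega=\sqrt{-1}\partial\overline{\partial}F$ holds by (5.1a) and on which, by Proposition \ref{eig} and Remark \ref{pointwise}, every element of $\mathfrak{h}^\Lambda(X)\subset\mathfrak{h}_{\mathrm{reg}}(X)$ is an honest holomorphic field $\mathrm{grad}'u$ whose brackets satisfy $[\mathrm{grad}'u,\mathrm{grad}'v]=\mathrm{grad}'\{u,v\}$ pointwise. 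Since $H^n(X\setminus\mathcal{R})=0$ and all potentials lie in $H^{1,2}_{\bfC}(X)$ (by (5.1c) and Proposition \ref{suffsobo}), every integral over $\mathcal{R}$ agrees with the corresponding integral over $X$, so the integrations by parts in the smooth computation stay valid. First I would check that $-\mathrm{grad}'F$ acts on $\mathfrak{h}^\Lambda(X)$: as $(X,g_X,J,F)$ is a soliton, $\mathrm{grad}'F\in\mathfrak{h}_{\mathrm{reg}}(X)=\mathfrak{h}_1(X)$ by Proposition \ref{h1h}, and Remark \ref{pointwise}(1) gives $[-\mathrm{grad}'F,\mathrm{grad}'u]=\mathrm{grad}'\{u,F\}$, which lies in $\mathfrak{h}^\Lambda(X)$ by hypothesis (2). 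Under the isomorphism $\Psi_\Lambda$ of Proposition \ref{isometry} this adjoint action is the operator $u\mapsto\{u,F\}$ on $\Lambda$.

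For semisimplicity and reality of the eigenvalues, set $W:=\mathrm{Im}(\mathrm{grad}'F)$. As noted after the definition of K\"ahler--Ricci limit soliton, $\mathrm{Re}(\mathrm{grad}'(\sqrt{-1}F))$ is Killing, so $W$ is a holomorphic Killing field; moreover $W(F)=0$, so $W$ preserves $e^F\,dH^n$. Since the bracket of a $(1,0)$-field with a $(0,1)$-field vanishes, one computes $\mathrm{ad}_{\mathrm{grad}'F}=2\sqrt{-1}\,\mathrm{ad}_W$ on $\mathfrak{h}^\Lambda(X)$. Because $W$ is Killing and preserves the weighted measure, $\mathrm{ad}_W=\mathcal{L}_W$ is skew-Hermitian for the inner product $\langle V,V'\rangle_F:=\int_X h_X(V,V')\,dH^n_F$, whence $\mathrm{ad}_{-\mathrm{grad}'F}$ is self-adjoint. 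It is therefore diagonalizable with real eigenvalues, yielding $\mathfrak{h}^\Lambda(X)=\bigoplus_\alpha\mathfrak{h}^\Lambda_\alpha(X)$; the Jacobi identity gives the grading $[\mathfrak{h}^\Lambda_\alpha(X),\mathfrak{h}^\Lambda_\beta(X)]\subset\mathfrak{h}^\Lambda_{\alpha+\beta}(X)$, so $\bigoplus_{\alpha>0}\mathfrak{h}^\Lambda_\alpha(X)$ is a nilpotent subalgebra.

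Next I would identify $\mathfrak{h}^\Lambda_0(X)=\ker\mathrm{ad}_W$, the centralizer of $W$. Since the soliton field's imaginary part $W$ is central among the holomorphic Killing fields, Proposition \ref{killingequiv} shows that the purely imaginary potentials $\tilde{\Lambda}=\{u\in\Lambda\mid u=-\overline{u}\}$ correspond exactly to holomorphic Killing fields lying in $\mathfrak{h}^\Lambda_0(X)$; the map $\phi_\Lambda(u)=2\mathrm{Re}(\mathrm{grad}'u)$ is injective by Remark \ref{eigenhol} and a Lie algebra homomorphism by Remark \ref{pointwise}(1), hence an embedding $\tilde{\Lambda}\hookrightarrow\mathcal{K}(\mathcal{R})$, and one identifies $\mathfrak{h}^\Lambda_0(X)$ with the complexification $\tilde{\Lambda}\otimes_{\bfR}\bfC$ by the standard real-form bookkeeping. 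Under the extra hypotheses that $\mathcal{R}$ is the regular set and $\phi_\Lambda(\tilde{\Lambda})\subset\mathrm{Lie}(\mathrm{Isom}(\mathcal{R}))$, the Cheeger--Colding theorem \cite{CheegerColding2} that $\mathrm{Isom}(X)$ is a compact Lie group makes $\phi_\Lambda(\tilde{\Lambda})$ a subalgebra of a compact Lie algebra, hence reductive, so its complexification $\mathfrak{h}^\Lambda_0(X)$ is reductive.

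The hard part will be the nonnegativity of the eigenvalues, that is $\mathfrak{h}^\Lambda_\alpha(X)=0$ for $\alpha<0$, which is precisely what turns the two-sided grading into $\mathfrak{h}^\Lambda(X)=\mathfrak{h}^\Lambda_0(X)\oplus\bigoplus_{\alpha>0}\mathfrak{h}^\Lambda_\alpha(X)$. Self-adjointness alone does not force this: it uses the soliton equation in an essential way, through the Tian--Zhu integral identity that expresses $\alpha\int_X|u|^2\,dH^n_F$ for $u$ an $\alpha$-eigenfunction of $u\mapsto\{u,F\}$ as a manifestly nonnegative quantity. I expect the main difficulty to be justifying this identity on the noncompact $\mathcal{R}$, where one integrates by parts against the smooth potentials and against $F$; this is legitimate because $H^n(X\setminus\mathcal{R})=0$, the potentials are globally in $H^{1,2}_{\bfC}(X)$, and $\nabla''\mathrm{grad}'u=0$ for $u\in\Lambda_1$ by Corollary \ref{1steigen}, so the error terms that would localize near the singular set drop out.
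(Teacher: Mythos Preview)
Your overall architecture is sound, but the route you propose for the two hardest points---diagonalizability/reality of the eigenvalues, and nonnegativity---is more fragile than the paper's, and the nonnegativity step is not actually completed.

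Your argument that $\mathrm{ad}_W$ is skew-Hermitian for $\langle\cdot,\cdot\rangle_F$ rests on $\int_X \mathcal{L}_W\bigl(h_X(V,V')\bigr)\,dH^n_F=0$. On a smooth compact manifold this is Stokes' theorem; on the singular limit space it is the assertion that $e^F W$ has weak divergence zero \emph{on $X$}, not just on $\mathcal{R}$. You do not justify this, and it does not follow from the assumptions (5.1a)--(5.1c) alone. The same obstruction appears in your plan for nonnegativity: the ``Tian--Zhu integral identity'' you invoke is obtained by integrations by parts against $F$ and against the potentials on $\mathcal{R}$, and you correctly flag this as the hard part---but you never carry it out.

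The paper bypasses both difficulties with a single pointwise identity on $\mathcal{R}$:
\[
\Delta^F_{\overline{\partial}}u-\overline{\Delta^F_{\overline{\partial}}\overline{u}}=\{F,u\}.
\]
For $u\in\Lambda$ this reads $\overline{\Delta^F_{\overline{\partial}}\overline{u}}=u-\{F,u\}=u+\{u,F\}\in\Lambda\subset H^{1,2}_{\bfC}(X)$ by hypothesis (2), so $\overline{u}\in\mathcal{D}^2_{\bfC}(\Delta^F_{\overline{\partial}},X)$ (via Proposition~\ref{lapdom}). If $\mathrm{ad}_{-\mathrm{grad}'F}\xi=\alpha\xi$ with $\xi=\mathrm{grad}'u_\xi$, one gets $\Delta^F_{\overline{\partial}}\overline{u_\xi}=(\alpha+1)\overline{u_\xi}$. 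Now everything is reduced to the \emph{global, already-established} spectral fact $\lambda_1(\Delta^F_{\overline{\partial}},X)\ge 1$ of Corollary~\ref{1steigen}: the eigenvalue $\alpha+1$ is real (since $\Delta^F_{\overline{\partial}}$ is self-adjoint on $L^2_{\bfC}(X,dH^n_F)$) and at least $1$, hence $\alpha\ge 0$; diagonalizability comes for free because the adjoint action is conjugate (via the antilinear map $u\mapsto\overline{u}$) to the restriction of the self-adjoint operator $\Delta^F_{\overline{\partial}}-1$ to the finite-dimensional invariant subspace $\overline{\Lambda}$. No new integration by parts on the singular space is needed---the Weitzenb\"ock inequality (Theorem~\ref{weith}) was proved by passing smooth identities to the limit, and here it is simply quoted.

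The remaining pieces of your outline (the Lie algebra isomorphism $\Psi_\Lambda$, the grading $[\mathfrak{h}^\Lambda_\alpha,\mathfrak{h}^\Lambda_\beta]\subset\mathfrak{h}^\Lambda_{\alpha+\beta}$, nilpotency of the positive part, the embedding $\phi_\Lambda$ into Killing fields via Proposition~\ref{killingequiv}, and reductivity via Cheeger--Colding) match the paper's proof.
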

\begin{proof}
For every $u \in \Lambda_1$, let
\[\overline{\Delta^F_{\overline{\partial}}}u:=\overline{\Delta^F_{\overline{\partial}}\overline{u}}\]
on $\mathcal{R}$.
Then by a simple calculation we have
\[\Delta^F_{\overline{\partial}}u-\overline{\Delta^F_{\overline{\partial}}}u=\{F, u\} \]
on $\mathcal{R}$.
In particular, $\overline{\Delta^F_{\overline{\partial}}}u \in H^{1, 2}_{\bfC}(X)$.
Thus Proposition \ref{lapdom} gives $\overline{u} \in \mathcal{D}^2_{\bfC}(\Delta^F_{\overline{\partial}}, X)$.

Therefore for every $\xi \in \mathfrak{h}^{\Lambda}_{\alpha}(X)$, by letting $u_{\xi}:=\Psi^{-1}_{\Lambda}(\xi)$, 
we have
\[\Delta^F_{\overline{\partial}}\overline{u_{\xi}}=(\alpha +1)\overline{u_{\xi}}.\]
Thus Corollary \ref{1steigen} gives $\alpha \ge 0$.
Therefore we have a decomposition
\[\mathfrak{h}^{\Lambda}(X)=\bigoplus_{\alpha \ge 0}\mathfrak{h}^{\Lambda}_{\alpha}(X).\]
From the argument above we see that $\Psi_{\Lambda}^{-1}(\mathfrak{h}^{\Lambda}_0(X))$ coincides with
\[\Lambda_{1, 0}:=\{u \in \Lambda ; \Delta^{F}_{\overline{\partial}}\overline{u} =\overline{u}\}.\]
In particular, for every $u \in \Lambda_{1, 0}$, we have $\mathrm{Re}(u), \mathrm{Im}(u) \in \Lambda_{1, 0}$.
It is easy to check that $\tilde{\Lambda}$ is a real Lie algebra and that $\Lambda_{0, 1}$ is isomorphic to the complexification of $\tilde{\Lambda}$.

On the other hand, from the Jacobi identity on $\mathcal{R}$, we have
\[[\mathfrak{h}_{\alpha}^{\Lambda}(X), \mathfrak{h}_{\beta}^{\Lambda}(X)] \subset \mathfrak{h}^{\Lambda}_{\alpha + \beta}(X)\]
for any $\alpha, \beta \ge 0$.
Since the dimension of $\mathfrak{h}^{\Lambda}(X)$ is finite, there exists a finite subset $\Gamma$ of $(0, \infty)$ such that $\mathfrak{h}^{\Lambda}_{\alpha}(X)=0$ for every $\alpha \in (0, \infty) \setminus \Gamma$.
This shows that $\bigoplus_{\alpha>0}\mathfrak{h}^{\Lambda}_{\alpha}(X)$ is nilpotent.

Next we prove that $\phi_{\Lambda}$ is embedding as Lie algebras.
Note that by Proposition \ref{killingequiv}, $\phi_{\Lambda}$ is well-defined. 
By a simple calculation, it is easy to check that $\phi_{\Lambda}$ is bracket preserving.
Let $u \in \tilde{\Lambda}$ with $\phi_{\Lambda}(u)=0$.
Since
\[\mathrm{Re}(\mathrm{grad}'u)=\frac{\mathrm{grad}'u-\mathrm{grad}''u}{2},\] 
we have $\mathrm{grad}'u=\mathrm{grad}''u$.
Thus $\mathrm{grad}'u=\mathrm{grad}''u=0$.
Proposition \ref{Poincare} gives that $u$ is constant.
Since
\[\int_Xu\,dH^n_F=\int_X\Delta^{F}_{\overline{\partial}}u\,dH^n_F=0,\]
we have $u=0$, i.e. $\phi_{\Lambda}$ is embedding as Lie algebras.

Finally we assume that $\mathcal{R}$ coincides with the regular set of $X$ and that the image of $\phi_{\Lambda}$ is contained in the Lie algebra $\mathfrak{g}$ of the isometry group $G$ of 
$\mathcal{R}$.
Note that $G$ is isomorphic to the isometry group of $X$ because all isometry $f: X \to X$ preserve the regular set (note that in this assumption, the regular set is open and convex. In particular, the distance function on $\mathcal{R}$ defined by the smooth Riemannian metric $g_X$ coincides with the restriction of $d_X$ to $\mathcal{R}$. See \cite[Theorem $3.7$]{CheegerColding2} or \cite[Theorem $1.2$]{ColdingNaber}).
Therefore, from \cite[Theorem $4.1$]{CheegerColding2}, $G$ is a compact Lie group.
Thus we see that $\mathfrak{h}_0^{\Lambda}(X)$ is reductive.
\end{proof}
\begin{remark}\label{completeremark}
Assume that $\mathcal{R}$ coincides with the regular set of $X$.
Then since the isometry group of $\mathcal{R}$ is isomorphic to that of $X$ and is a compact Lie group, we see that for every Killing vector field $V$ on $\mathcal{R}$, $V$ is in the Lie algebra of the isometry group of $\mathcal{R}$ if and only if $V$ is complete. 
\end{remark}
\begin{remark}
One of key points for the condition (5.1b) in the arguments above is the following:
\begin{enumerate}
\item[($\star$)] If $u \in H^{1, 2}(X)$ satisfies $\mathrm{grad}'u \in \mathfrak{h}_{reg}(X)$ and 
$ \int_X\,u\, dH^n_F = 0$, then $u \in \Lambda_1$.
\end{enumerate}
In fact if we replace (5.1b) by ($\star$), then we can prove the same results above.

It is worth pointing out that ($\star$) holds if the Weitzenb\"ock \textit{formula}
\begin{align}\label{formula}
\int_X|\Delta^F_{\overline{\partial}}f|^2\,dH^n_F = \int_X|\nabla '' \mathrm{grad}'f|^2\,dH^n_F+\int_X|\overline{\partial}f|^2\,dH^n_F.
\end{align}
holds for every $f \in \mathcal{D}^2_{\bfC}(\Delta_{\overline{\partial}}^F, X)$.
Note that by using a result in \cite{Honda15} we can establish (\ref{formula}) under an additional assumption:
\[\sup_i|\mathrm{Ric}_{X_i}|<\infty.\]
\end{remark}

\subsection{Remarks on the Lie algebra structure of $\Lambda_1$ on almost smooth setting}
Let $(X, g_X, J, F)$ be an almost smooth Fano-Ricci
limit space so that (2.1a) - (2.1e), (4.1a), (4.1b), (5.1a) - (5.1c) are satisfied. 
We add the following assumption:
\begin{enumerate}
\item[(5.3a)] The inclusion
\[H^{1, 2}_{c}(\mathcal{R}) \hookrightarrow H^{1, 2}(X)\]
is isomorphic.
\end{enumerate}
Then we can apply Proposition \ref{suffsobo} with $U = \mathcal{R}$.

Compare the following with Proposition \ref{limitbracket}.
\begin{proposition}\label{equivlie}
Let $(X, g_X, J, F)$ be an almost smooth Fano-Ricci
limit space. 
Then for any $u, v \in \Lambda_1$, the following are equivalent:
\begin{enumerate}
\item[(1)] $\{u, v\} \in \Lambda_1$.
\item[(2)] $\{u, v\} \in H^{1, 2}_{\bfC}(X)$.
\end{enumerate}
Moreover if (5.3a) holds, then these also are equivalent to the following:
\begin{enumerate}
\item[(3)] $\mathrm{grad}'\{u, v\} \in L^{2}_{\bfC}(T^\prime X)$.
\end{enumerate}
\end{proposition}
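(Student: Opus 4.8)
The plan is to establish the chain $(1)\Rightarrow(2)\Rightarrow(1)$ and, under (5.3a), the separate equivalence $(2)\Leftrightarrow(3)$. The implication $(1)\Rightarrow(2)$ is immediate, since $\Lambda_1\subset\mathcal{D}^2_{\bfC}(\Delta^F_{\overline{\partial}},X)\subset H^{1,2}_{\bfC}(X)$. The geometric input I would use repeatedly is that $\mathcal{R}$ has full measure and is a smooth K\"ahler manifold on which $F$ is smooth: by Corollary \ref{1steigen} the functions $u,v$ are smooth on $\mathcal{R}$, the fields $\mathrm{grad}'u,\mathrm{grad}'v$ are holomorphic there with $\nabla''\mathrm{grad}'u=\nabla''\mathrm{grad}'v=0$, and restricting the weak eigenvalue equations to the smooth open set $\mathcal{R}$ gives $\Delta^F_{\overline{\partial}}u=u$ and $\Delta^F_{\overline{\partial}}v=v$ classically. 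Remark \ref{pointwise}(1) then shows that $\mathrm{grad}'\{u,v\}=[\mathrm{grad}'u,\mathrm{grad}'v]$ is a holomorphic vector field on $\mathcal{R}$ with smooth potential $\{u,v\}$.

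For $(2)\Rightarrow(1)$ I would argue as follows. Assuming $\{u,v\}\in H^{1,2}_{\bfC}(X)$, the $(1,0)$-gradient $\mathrm{grad}'\{u,v\}$ lies in $L^2_{\bfC}(T'X)$, so by the preceding paragraph it belongs to $\mathfrak{h}_{\mathrm{reg}}(X)$; hence assumption (5.1c) places $\{u,v\}$ in $\mathcal{D}^2_{\bfC}(\Delta^F_{\overline{\partial}},X)$. To identify the eigenvalue I would invoke the pointwise tensor identity of Proposition \ref{eigenformula}(2) on the smooth manifold $\mathcal{R}$ with $\lambda=\nu=1$: since $\nabla''\mathrm{grad}'u$ and $\nabla''\mathrm{grad}'v$ vanish, the two correction terms disappear and $\Delta^F_{\overline{\partial}}\{u,v\}=\{u,v\}$ pointwise on $\mathcal{R}$. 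Because the global weak Laplacian coincides with the classical one on $\mathcal{R}$ and $H^n(X\setminus\mathcal{R})=0$, this equation holds a.e. on $X$, so $\{u,v\}\in\Lambda_1$.

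Under (5.3a) the equivalence $(2)\Leftrightarrow(3)$ remains. The direction $(2)\Rightarrow(3)$ is trivial, as $d\{u,v\}\in L^2$ forces $\mathrm{grad}'\{u,v\}\in L^2_{\bfC}(T'X)$. For $(3)\Rightarrow(2)$ I would apply Proposition \ref{suffsobo} with $U=\mathcal{R}$, which is legitimate precisely because (5.3a) provides the isomorphism $H^{1,2}_{c}(\mathcal{R})\hookrightarrow H^{1,2}(X)$. The function $\{u,v\}$ is smooth, hence in $H^{1,2}_{\bfC}(O)$ on every relatively compact $O\subset\mathcal{R}$; hypothesis (3) gives $\overline{\partial}\{u,v\}\in L^2_{\bfC}((T^*\mathcal{R})'')$ through the pointwise identity $|\mathrm{grad}'\{u,v\}|=|\overline{\partial}\{u,v\}|$; and $\{u,v\}\in L^1_{\bfC}(\mathcal{R})$ by Cauchy--Schwarz from $\mathrm{grad}'u,\,dv\in L^2$ (Definition \ref{poissondefinition}). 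Then Proposition \ref{suffsobo}(2) delivers $\{u,v\}\in H^{1,2}_{\bfC}(X)$, which is (2).

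The main obstacle is the implication $(3)\Rightarrow(2)$: upgrading the mere $L^2$-integrability of the holomorphic gradient on the regular part to genuine global Sobolev regularity across the singular set $X\setminus\mathcal{R}$. This is exactly the content encapsulated in (5.3a) and Proposition \ref{suffsobo}; the remaining implications reduce to the standing assumptions (5.1c), (5.1b) and the pointwise Weitzenb\"ock cancellation on $\mathcal{R}$ furnished by Proposition \ref{eigenformula}(2) together with Corollary \ref{1steigen}.
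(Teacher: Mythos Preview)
Your proof is correct, and for $(1)\Leftrightarrow(2)$ under (5.3a) it matches the paper's use of Proposition~\ref{suffsobo}(2). For the implication $(2)\Rightarrow(1)$, however, you take a genuinely different route from the paper. The paper applies Remark~\ref{pointwise}(2) to $\{u,v\}$ (whose $(1,0)$-gradient is holomorphic on $\mathcal{R}$) to obtain only $\overline{\partial}\bigl(\Delta^F_{\overline{\partial}}\{u,v\}-\{u,v\}\bigr)=0$ on $\mathcal{R}$; it then invokes (5.1b) to conclude this difference is constant, computes explicitly $\int_X\{u,v\}\,dH^n_F=0$, and appeals to Proposition~\ref{Poisson} to kill the constant. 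You instead feed $\lambda=\nu=1$ and $\nabla''\mathrm{grad}'u=\nabla''\mathrm{grad}'v=0$ into the pointwise identity of Proposition~\ref{eigenformula}(2) on the smooth K\"ahler manifold $\mathcal{R}$ (legitimate because its proof is a local tensor computation using only the equation $\mathrm{Ric}(\omega)-\omega=\sqrt{-1}\partial\overline{\partial}F$, which holds on $\mathcal{R}$ by (5.1a)), obtaining $\Delta^F_{\overline{\partial}}\{u,v\}=\{u,v\}$ directly. Your route is shorter and, notably, does not use (5.1b) or the integral computation at all; the paper's route is more in the spirit of Proposition~\ref{eig} and would apply to any $w\in H^{1,2}_{\bfC}(X)$ with $\mathrm{grad}'w\in\mathfrak{h}_{\mathrm{reg}}(X)$, not just to Poisson brackets of eigenfunctions.
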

\begin{proof}
It is trivial that if (1) holds, then (2) holds.

Assume that (2) holds.
Then by (1) of Remark \ref{pointwise}, we have $\mathrm{grad}'\{u, v\} \in \mathfrak{h}_{reg}(X)$. 
In particular $\mathrm{grad}'\{u, v\}$ is a holomorphic vector field on $\mathcal{R}$.
Since $\{u, v\} \in H^{1, 2}_{\bfC}(X)$, we have $\mathrm{grad}'\{u, v\} \in L^2_{\bfC}(T'X)$.
Therefore by (5.1c), we have $\{u, v\} \in \mathcal{D}^2_{\bfC}(\Delta^F_{\overline{\partial}}, X)$.

By a simple calculation we have
\[\overline{\partial}(\Delta^F_{\overline{\partial}}\{u, v\}-\{u, v\})=0\]
on $\mathcal{R}$, i.e. $\Delta^F_{\overline{\partial}}\{u, v\}-\{u, v\} \in \mathfrak{h}_{reg}(X)$.
Thus by  (5.1b),  $\Delta^F_{\overline{\partial}}\{u, v\}-\{u, v\}$ is a constant function.

On the other hand we have
\begin{align*}
\int_X\{u, v\}\,dH^n_F&=\int_X(\mathrm{grad}'u)v\,dH^n_F-\int_X(\mathrm{grad}'v)u\,dH^n_F\\
&=\int_Xh_X(\overline{\partial}u, \overline{\partial}\overline{v})\,dH^n_F-\int_Xh_X(\overline{\partial}v, \overline{\partial}\overline{u})\,dH^n_F\\
&=\int_X(\Delta_{\overline{\partial}}^Fu) v\,dH^n_F-\int_X(\Delta_{\overline{\partial}}^Fv) u\,dH^n_F\\
&=\int_X\,uv\,dH^n_F-\int_X\,uv\,dH^n_F=0.
\end{align*}
Thus Proposition \ref{Poisson} shows $\Delta^F_{\overline{\partial}}\{u, v\}-\{u, v\}=0$, i.e.
$\{u, v\} \in \Lambda_1$.
Thus we have (1).

Finally if (5.3a) holds, then the equivalence between (2) and (3) follows from Proposition \ref{suffsobo}, 
(2).
\end{proof}
Compare the following with Corollary \ref{lielambda}.
\begin{proposition}\label{liealg}
Let $(X, g_X, J, F)$ be an almost smooth Fano-Ricci
limit space.
Moreover we assume that one of the following holds:
\begin{enumerate}
\item[(1)] $\Lambda_1 \subset \mathrm{LIP}_{\bfC}(X)$.
\item[(2)] All $L^1$-holomorphic vector fields on $\mathcal{R}$ are in $L^2_{\bfC}(T_{\bfC}X)$ with (5.3a).
\item[(3)] $F \equiv 0$.
\end{enumerate}
Then $(\Lambda_1, \{\cdot, \cdot \})$ is a Lie algebra.
In particular, if $(X, g_X, J, F)$ is a K\"ahler-Ricci limit soliton, then we have the decomposition for $\mathfrak{h}_1(X)$ as in Theorem \ref{decomp}.
\end{proposition}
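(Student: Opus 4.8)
The plan is to obtain the Lie-algebra property from closedness of $\Lambda_1$ under the Poisson bracket, since by Proposition~\ref{isometry} that is all that is required: once $\{u,v\}\in\Lambda_1$ for all $u,v\in\Lambda_1$, the pair $(\Lambda_1,\{\cdot,\cdot\})$ is automatically a finite dimensional complex Lie algebra and $\Psi_{\Lambda_1}$ is a Lie-algebra isomorphism onto $\mathfrak{h}_1(X)$. By Proposition~\ref{equivlie}, closedness is in turn equivalent to the purely analytic assertion that $\{u,v\}\in H^{1,2}_{\bfC}(X)$ for all $u,v\in\Lambda_1$ (and, when (5.3a) is available, to $\mathrm{grad}'\{u,v\}\in L^2_{\bfC}(T'X)$). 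Thus the whole argument reduces to verifying this integrability of $\{u,v\}$ under each of the three hypotheses.

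For the bracket hypotheses it is convenient first to place $u,v$ in $\mathcal{D}^2_{\bfC}(\Delta,X)$, because then Proposition~\ref{gradsov} already gives $\{u,v\}\in H^{1,p_n}_{\bfC}(X)$ with $p_n=2n/(2n-1)$, and by Remark~\ref{rem} the task becomes that of upgrading the a.e.\ differential of $\{u,v\}$ from $L^{p_n}$ to $L^2$. Securing this membership in $\mathcal{D}^2_{\bfC}(\Delta,X)$ amounts, via Propositions~\ref{Laplacianexplicite} and \ref{Laplacianequal}, to controlling the weight term $h_X^*(\overline{\partial}u,\overline{\partial}F)$ in $L^2$; this is the one delicate point, and it is precisely where the individual hypotheses are used.

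Under hypothesis (1), $u,v$ are globally Lipschitz, so $\mathrm{grad}\,u,\mathrm{grad}\,v\in L^\infty$, and since $F\in H^{1,2}(X)$ the weight term is a product $L^\infty\cdot L^2\subset L^2$; hence $u,v\in\mathcal{D}^2_{\bfC}(\Delta,X)$, and the interior $L^2$-Hessian estimate (cf.\ \cite{Honda14b}) gives $\mathrm{Hess}_u,\mathrm{Hess}_v\in L^2$. On the smooth locus $\mathcal{R}$ the differential of $\{u,v\}$ is a bilinear expression in the gradients and Hessians of $u,v$, hence is bounded pointwise by $L^\infty\cdot L^2$ and lies in $L^2$; as $H^n(X\setminus\mathcal{R})=0$ and $\{u,v\}\in H^{1,p_n}_{\bfC}(X)$, Remark~\ref{rem} promotes this to $\{u,v\}\in H^{1,2}_{\bfC}(X)$. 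Hypothesis (3) reduces to (1): when $F\equiv0$ the elements of $\Lambda_1$ are genuine $\Delta$-eigenfunctions, hence Lipschitz by the Cheeger--Colding gradient estimate \cite[Theorem~7.9]{CheegerColding3}, so $\Lambda_1\subset\mathrm{LIP}_{\bfC}(X)$. Under hypothesis (2) I would argue on $\mathcal{R}$: once $u,v\in\mathcal{D}^2_{\bfC}(\Delta,X)$ is secured, Proposition~\ref{gradsov} gives $\{u,v\}\in H^{1,p_n}_{\bfC}(X)$, and by Remark~\ref{pointwise} the vector field $\mathrm{grad}'\{u,v\}=[\mathrm{grad}'u,\mathrm{grad}'v]$ is holomorphic on $\mathcal{R}$ and lies in $L^{p_n}\subset L^1$; the assumption that every $L^1$ holomorphic vector field on $\mathcal{R}$ is $L^2$ then forces $\mathrm{grad}'\{u,v\}\in L^2_{\bfC}(T'X)$, whereupon (5.3a) and Proposition~\ref{equivlie}(3) yield $\{u,v\}\in\Lambda_1$.

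The main obstacle I anticipate is exactly this second-order control: the Poisson bracket differentiates $u$ and $v$ twice, so one must convert the merely first-order ($L^2$) information carried by $\mathfrak{h}_1(X)$ into an $L^2$ bound on the mixed Hessian (equivalently bracket) term across the a priori non-closed singular set. Hypotheses (1) and (3) supply this through Lipschitz regularity together with the $L^2$-Hessian estimate, while hypothesis (2) supplies it through the elliptic rigidity of holomorphic vector fields encoded in the $L^1\Rightarrow L^2$ assumption and the extension property (5.3a). Finally, for the concluding statement: if $(X,g_X,J,F)$ is a K\"ahler-Ricci limit soliton then $F\in\Lambda_1$, so taking $\Lambda=\Lambda_1$ both hypotheses of Theorem~\ref{decomp} hold --- condition (1) is the closedness just established and condition (2) is $\{u,F\}\in\Lambda_1$, again immediate from closedness --- and Theorem~\ref{decomp} delivers the asserted decomposition of $\mathfrak{h}^{\Lambda_1}(X)=\mathfrak{h}_1(X)$.
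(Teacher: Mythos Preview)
Your treatment of cases (1) and (3) is essentially the paper's: under (1) you use Lipschitz regularity together with $F\in H^{1,2}(X)$ to place $u,v$ in $\mathcal{D}^2_{\bfC}(\Delta,X)$, obtain $L^2$ Hessians, and conclude $\nabla\{u,v\}\in L^2$; under (3) you reduce to (1) via the Cheeger--Colding gradient estimate for $\Delta$-eigenfunctions. The paper proceeds in exactly this way.

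The gap is in case (2). Your entire strategy there rests on first placing $u,v$ in $\mathcal{D}^2_{\bfC}(\Delta,X)$ so that Proposition~\ref{gradsov} applies, and you correctly identify that this amounts to $h_X^*(\overline{\partial}u,\overline{\partial}F)\in L^2$. But you never show this under hypothesis (2): you write ``once $u,v\in\mathcal{D}^2_{\bfC}(\Delta,X)$ is secured'' and proceed as if it were. Under (2) alone there is no reason for it to hold, since $\overline{\partial}u$ and $\overline{\partial}F$ are each only $L^2$, their pointwise pairing is a priori only $L^1$, and the hypothesis on $L^1$-holomorphic vector fields says nothing about $F$ (which need not generate a holomorphic field). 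So the detour through Proposition~\ref{gradsov} is not available.

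The paper avoids this detour entirely. For (2) it argues directly from Remark~\ref{pointwise}(1) that $\mathrm{grad}'\{u,v\}=[\mathrm{grad}'u,\mathrm{grad}'v]$ is holomorphic on $\mathcal{R}$, and then invokes Proposition~\ref{suffsobo} (which is available because (5.3a) is part of hypothesis (2)) together with the assumption on $L^1$-holomorphic vector fields to obtain $\{u,v\}\in H^{1,2}_{\bfC}(X)$; Proposition~\ref{equivlie} then gives $\{u,v\}\in\Lambda_1$. In other words, the paper never passes through $\mathcal{D}^2_{\bfC}(\Delta,X)$ in case (2), and you should not either.
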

\begin{proof}
If (2) holds, then the assertion follows directly from Proposition \ref{suffsobo} and (1) of Remark \ref{pointwise}.

Next we assume that (1) holds.
Note that by Propositions \ref{Laplacianexplicite} and \ref{Laplacianequal}, we have $\Lambda_1 \subset \mathcal{D}^2_{\bfC}(\Delta, X)$.
Let $u, v \in \Lambda_1$.
Then by \cite[Theorem $4.12$]{Honda14b} we have $\mathrm{Hess}_u, \mathrm{Hess}_v \in L^2_{\bfC}(T_{\bfC}^*X \otimes T^*_{\bfC}X)$.
In particular $\nabla \{u, v\} \in L^2(X)$.
Thus by Remark \ref{rem}, we have $\{u, v\} \in H^{1, 2}_{\bfC}(X)$.
Therefore Propositions \ref{isometry} and \ref{equivlie} show that $(\Lambda_1, \{\cdot, \cdot \})$ is a Lie algebra.

Finally if (3) holds, then Theorem \ref{decomp} and \cite[Theorem $7.9$]{CheegerColding3} yield that (1) holds.
This completes the proof.
\end{proof}

\subsection{Remarks on the Lie algebra structure of $\mathfrak{h}_{1}(X)$ on almost smooth setting}
\begin{proposition}\label{lieequivalence}
Let $(X, g_X, J, F)$ be an almost smooth Fano-Ricci
limit space with the assumption (5.3a).
Then for any complex subspace $\Lambda$ of $\Lambda_1$ and $u, v \in \Lambda_1$, the following are equivalent:
\begin{enumerate}
\item $\{u, v\} \in \Lambda$.
\item $[\mathrm{grad}'u, \mathrm{grad}'v] \in \mathfrak{h}^{\Lambda}(X)$.
\end{enumerate}
In particular, $(\Lambda, \{\cdot, \cdot \})$ is a Lie algebra if and only if $(\mathfrak{h}^{\Lambda}(X), [ \cdot, \cdot ])$ is a Lie algebra.

Moreover if $\Lambda = \Lambda_1$, then the conditions above are equivalent to the following:
\begin{enumerate}
\item[(c)] $[\mathrm{grad}'u, \mathrm{grad}'v] \in L^2_{\bfC}(T_{\bfC}X)$.
\end{enumerate}
\end{proposition}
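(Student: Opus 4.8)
The plan is to build everything on the pointwise identity $\mathrm{grad}'\{u,v\}=[\mathrm{grad}'u,\mathrm{grad}'v]$ on $\mathcal{R}$ from Remark \ref{pointwise} (1), together with the injectivity of $\Phi$ noted in Remark \ref{eigenhol} and the equivalence of Proposition \ref{equivlie}. First I would dispose of the easy implication (1)$\Rightarrow$(2): if $\{u,v\}\in\Lambda$, then by the very definition $\mathfrak{h}^{\Lambda}(X)=\Phi(\Lambda)$ we have $\mathrm{grad}'\{u,v\}\in\mathfrak{h}^{\Lambda}(X)$, and since $\mathrm{grad}'\{u,v\}=[\mathrm{grad}'u,\mathrm{grad}'v]$ holds $H^n$-a.e.\ on $\mathcal{R}$ (hence on $X$), condition (2) follows at once.

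The converse (2)$\Rightarrow$(1) is where the work lies, and I expect it to be the main obstacle. Suppose $[\mathrm{grad}'u,\mathrm{grad}'v]\in\mathfrak{h}^{\Lambda}(X)$, so there is some $w\in\Lambda$ with $\mathrm{grad}'w=[\mathrm{grad}'u,\mathrm{grad}'v]$ on $\mathcal{R}$. Being an element of $\mathfrak{h}^{\Lambda}(X)\subset\mathfrak{h}_{\mathrm{reg}}(X)$, this vector field is $L^2$ and of type $(1,0)$, and by Remark \ref{pointwise} (1) it agrees a.e.\ with $\mathrm{grad}'\{u,v\}$; hence $\mathrm{grad}'\{u,v\}\in L^2_{\bfC}(T'X)$. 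This is exactly condition (3) of Proposition \ref{equivlie}, whose equivalence with (1) uses the hypothesis (5.3a) through Proposition \ref{suffsobo}; it yields $\{u,v\}\in\Lambda_1$. Now both $\{u,v\}$ and $w$ lie in $\Lambda_1$ and satisfy $\Phi(\{u,v\})=\mathrm{grad}'\{u,v\}=\mathrm{grad}'w=\Phi(w)$, so the injectivity of $\Phi$ forces $\{u,v\}=w\in\Lambda$. The delicate point is precisely this upgrade: a priori $\{u,v\}$ only lies in the larger Sobolev class $H^{1,p_n}_{\bfC}$ of Proposition \ref{gradsov}, and it is the $L^2$-control on $\mathrm{grad}'\{u,v\}$ together with (5.3a) that pins it into $\Lambda_1$.

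With the equivalence (1)$\Leftrightarrow$(2) in hand, the Lie-algebra statement is formal. Indeed $\Lambda$ is closed under $\{\cdot,\cdot\}$ if and only if $\{u,v\}\in\Lambda$ for all $u,v\in\Lambda$, which by the equivalence holds if and only if $[\mathrm{grad}'u,\mathrm{grad}'v]=[\Phi(u),\Phi(v)]\in\mathfrak{h}^{\Lambda}(X)$ for all such pairs, i.e.\ if and only if $\mathfrak{h}^{\Lambda}(X)$ is closed under $[\cdot,\cdot]$. Once closedness is established, Proposition \ref{isometry} supplies the remaining Lie-algebra axioms and the isomorphism $\Psi_{\Lambda}$, so $(\Lambda,\{\cdot,\cdot\})$ is a Lie algebra exactly when $(\mathfrak{h}^{\Lambda}(X),[\cdot,\cdot])$ is.

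Finally, for $\Lambda=\Lambda_1$ I would show (2)$\Leftrightarrow$(c), using $\mathfrak{h}^{\Lambda_1}(X)=\mathfrak{h}_1(X)=\mathfrak{h}_{\mathrm{reg}}(X)$ from Proposition \ref{h1h}. The implication (2)$\Rightarrow$(c) is immediate since $\mathfrak{h}_{\mathrm{reg}}(X)\subset L^2_{\bfC}(T_{\bfC}X)$. For (c)$\Rightarrow$(2), the field $[\mathrm{grad}'u,\mathrm{grad}'v]$ equals $\mathrm{grad}'\{u,v\}$ on $\mathcal{R}$, so it is of type $(1,0)$, holomorphic on $\mathcal{R}$ by Remark \ref{pointwise} (1) and Corollary \ref{1steigen}, and admits there the smooth potential $\{u,v\}$ (smooth by Corollary \ref{1steigen}); the assumption (c) that it is $L^2$ then places it in $\mathfrak{h}_{\mathrm{reg}}(X)$, which by Proposition \ref{h1h} equals $\mathfrak{h}_1(X)=\mathfrak{h}^{\Lambda_1}(X)$, giving (2). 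This closes the circle of equivalences.
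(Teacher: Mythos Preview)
Your proposal is correct and follows essentially the same route as the paper. The only cosmetic difference is in the implication (2)$\Rightarrow$(1): you invoke Proposition~\ref{equivlie} (condition (3)$\Leftrightarrow$(1)) together with the injectivity of $\Phi$ from Remark~\ref{eigenhol}, whereas the paper unpacks this by applying Proposition~\ref{suffsobo} directly (using $\{u,v\}\in L^1_{\bfC}(X)$) and then concludes $\{u,v\}-w$ is constant via Propositions~\ref{L2norm} and~\ref{Poincare} plus the vanishing of $\int_X\{u,v\}\,dH^n_F$; for the final part the paper simply observes that the same argument handles (c)$\Rightarrow$(1), while you route it through Proposition~\ref{h1h}.
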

\begin{proof}
Proposition \ref{isometry} yields that if (a) holds, then (b) holds.
Thus we assume that (b) holds.

Then by (1) of Remark \ref{pointwise}, there exists $w \in \Lambda$ such that $\mathrm{grad}'\{u, v\}=\mathrm{grad}'w$.
In particular, $\mathrm{grad}'\{u, v\} \in L^2_{\bfC}(T'X)$.
Since $\{u, v\} \in L^1_{\bfC}(X)$, Proposition \ref{suffsobo} yields $\{u, v\} \in H^{1, 2}_{\bfC}(X)$.
Thus by Propositions \ref{L2norm} and \ref{Poincare}, we see that $\{u, v\}-w$ is constant.
Since
\[\int_X\{u, v\}\,dH^n_F=\int_Xw\,dH^n_F=0,\]
we have $\{u, v\}=w$ which gives (a).

It also follows from the argument above that if $\Lambda = \Lambda_1$ and (c) hold, then (a) holds.
This completes the proof.
\end{proof}
\begin{proposition}
Let $(X, g_X, J, F)$ be an almost smooth Fano-Ricci
limit space with the assumption (5.3a).
Moreover we assume that $\mathfrak{h}_1(X) \subset L^{\infty}_{\bfC}(T'X)$.
Then $(\mathfrak{h}_1(X), [ \cdot, \cdot ])$ is a Lie algebra.
\end{proposition}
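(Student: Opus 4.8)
The plan is to reduce the assertion to closure of $\Lambda_1$ under the Poisson bracket and then invoke Proposition \ref{isometry}. By Proposition \ref{lieequivalence} applied with $\Lambda=\Lambda_1$ (here the hypothesis (5.3a) is in force), for $u,v\in\Lambda_1$ the membership $\{u,v\}\in\Lambda_1$ is equivalent to condition (c), i.e. to $[\mathrm{grad}'u,\mathrm{grad}'v]\in L^2_{\bfC}(T_{\bfC}X)$. Hence it suffices to fix $u,v\in\Lambda_1$ and establish this single $L^2$-bound: once $\{u,v\}\in\Lambda_1$ is known for all $u,v\in\Lambda_1$, Proposition \ref{isometry} immediately gives that $(\Lambda_1,\{\cdot,\cdot\})$ and $(\mathfrak{h}_1(X)=\mathfrak{h}^{\Lambda_1}(X),[\cdot,\cdot])$ are Lie algebras, which is the desired conclusion.

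The first, and crucial, step is to convert the hypothesis into an $L^2$-Hessian bound. I would observe that for any $w\in H^{1,2}_{\bfC}(X)$ the form $\overline{\partial}w$ is the metric dual of $\mathrm{grad}'w$, so $|\overline{\partial}w|=|\mathrm{grad}'w|$ pointwise; consequently the assumption $\mathfrak{h}_1(X)\subset L^{\infty}_{\bfC}(T'X)$ is exactly the statement that $\overline{\partial}w\in L^{\infty}$ for every $w\in\Lambda_1$. Since $F\in H^{1,2}(X)$ (noted after (5.1c)), the product $h_X(\overline{\partial}w,\overline{\partial}F)$ is then a product of an $L^{\infty}$ and an $L^2$ function, hence lies in $L^2_{\bfC}(X)$. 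Therefore Proposition \ref{Laplacianexplicite}(2) gives $w\in\mathcal{D}^2_{\bfC}(\Delta_{\overline{\partial}},X)$, and Proposition \ref{Laplacianequal} gives $w\in\mathcal{D}^2_{\bfC}(\Delta,X)$. Applying the $L^2$-estimate for the Hessian of functions in $\mathcal{D}^2_{\bfC}(\Delta,X)$ on noncollapsed Ricci limit spaces (\cite[Theorem $4.12$]{Honda14b}) to $u$ and $v$ then yields $\mathrm{Hess}_u,\mathrm{Hess}_v\in L^2_{\bfC}$. I regard this passage as the main obstacle: we must extract full second-order $L^2$ control from a hypothesis that only bounds the $(1,0)$-part of the gradient (we do \emph{not} have $\Lambda_1\subset\mathrm{LIP}_{\bfC}(X)$, i.e. $\partial w$ need not be bounded), and the point is precisely that $\overline{\partial}$-boundedness alone suffices to enter the $\mathcal{D}^2(\Delta)$ machinery.

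With the Hessian bounds in hand the remaining estimate is routine. On the regular set $\mathcal{R}$, where the covariant derivative is the smooth Levi-Civita one, the torsion-free identity gives $[\mathrm{grad}'u,\mathrm{grad}'v]=\nabla_{\mathrm{grad}'u}\mathrm{grad}'v-\nabla_{\mathrm{grad}'v}\mathrm{grad}'u$, and since $\nabla J\equiv0$ one has $|\nabla\mathrm{grad}'w|\le|\mathrm{Hess}_w|$ (the tensor $\nabla\mathrm{grad}'w$ being a component of $\nabla\mathrm{grad}\,w$). Hence pointwise on $\mathcal{R}$
\[
|[\mathrm{grad}'u,\mathrm{grad}'v]|\le \|\mathrm{grad}'u\|_{L^{\infty}}\,|\mathrm{Hess}_v|+\|\mathrm{grad}'v\|_{L^{\infty}}\,|\mathrm{Hess}_u|,
\]
and the right-hand side belongs to $L^2(\mathcal{R})$ as a product of an $L^{\infty}$ and an $L^2$ function. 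Because $H^n(X\setminus\mathcal{R})=0$, this shows $[\mathrm{grad}'u,\mathrm{grad}'v]\in L^2_{\bfC}(T_{\bfC}X)$, which is condition (c) of Proposition \ref{lieequivalence}. Thus $\{u,v\}\in\Lambda_1$ for all $u,v\in\Lambda_1$, and Proposition \ref{isometry} finishes the proof.
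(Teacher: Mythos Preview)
Your proof is correct and follows essentially the same approach as the paper's. The paper's argument is terser but identical in structure: it invokes Proposition \ref{Laplacianexplicite} to place $u,v\in\mathcal{D}^2_{\bfC}(\Delta,X)$, cites \cite[Theorem $4.11$]{Honda14b} for the $L^2$-Hessian bound, uses the torsion-free identity $[\mathrm{grad}'u,\mathrm{grad}'v]=\nabla_{\mathrm{grad}'u}\mathrm{grad}'v-\nabla_{\mathrm{grad}'v}\mathrm{grad}'u$ together with the $L^{\infty}$ hypothesis to conclude $[\mathrm{grad}'u,\mathrm{grad}'v]\in L^2$, and finishes with Proposition \ref{lieequivalence}. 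Your version simply makes explicit the step that $\mathfrak{h}_1(X)\subset L^{\infty}$ forces $\overline{\partial}w\in L^{\infty}$ and hence $h_X^*(\overline{\partial}w,\overline{\partial}F)\in L^2$, which is precisely the hypothesis needed for Proposition \ref{Laplacianexplicite}(2) to apply.
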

\begin{proof}
Let $u, v \in \Lambda_1$.
Proposition \ref{Laplacianexplicite} yields $u, v \in \mathcal{D}^2_{\bfC}(\Delta, X)$.
In particular by \cite[Theorem $4.11$]{Honda14b} we have $\mathrm{Hess}_u, \mathrm{Hess}_v \in L^2_{\bfC}(T^*_{\bfC}X \otimes T^*_{\bfC}X)$.
Since
\[[ \mathrm{grad}'u, \mathrm{grad}'v]=\nabla_{\mathrm{grad}'u}\mathrm{grad}'v-\nabla_{\mathrm{grad}'v}\mathrm{grad}'u,\]
we have $[\mathrm{grad}'u, \mathrm{grad}'v] \in L^2_{\bfC}(T_{\bfC}X)$.
By Proposition \ref{lieequivalence} this completes the proof.
\end{proof}

\section{Decomposition theorem for Ricci limit $\Q$-Fano spaces}
In this section we consider the case when the Fano-Ricci limit space is a $\Q$-Fano variety.
Let $X$ be an $m$-dimensional $\Q$-Fano variety, that is, $X$ is a normal projective variety whose anti-canonical divisor $K_X^{-1}$ is an ample $\Q$-Cartier divisor. Fix a sufficiently large integer $m$, so that $K_X^{-m}$ is a very ample Cartier divisor. Let $\Phi_m\colon X\to \P^N(\C)$ be the Kodaira embedding defined by $K^{-m}_X$. 
Let $\mathfrak{hol}(X)$ be the Lie algebra of all holomorphic vector fields on $X$. By the normality of $X$, $\mathfrak{hol}(X)$ is isomorphic to $\mathfrak{hol}(X_0)$ where $X_0$ is the regular part as 
a $\Q$-Fano variety. Note that $\mathfrak{hol}(X)$ is a Lie subalgebra of $\mathfrak{pgl}(N+1, \C)$. In particular, $\mathfrak{hol}(X)$ is finite-dimensional. 
We say that $X$ has a structure of Ricci limit $\Q$-Fano space if 
$(X, g_X, dH)$ a Fano-Ricci limit space of a sequence of 
$m$-dimensional Fano manifolds $(X_i, g_{X_i}, J_i, F_i)$ and 
the regular part $\mathcal{R}$ as a Ricci limit space coincides with the 
regular part $X_0$ as a $\Q$-Fano variety and the metric $g_X$ on $X_0$ is smooth, and
satisfies (5.1a). 
More precisely, the conditions (2.1a) - (2.1e), (4.1a), (4.1b), $\mathcal{R} = X_0$ and (5.1a) are satisfied.
In order for $(X, g_X, dH)$ to satisfy the condition of an almost smooth Fano-Ricci limit space,
it has to satisfy (5.1b), and (5.1c).
However because of normality $X$ satisfies
\begin{enumerate}
\item[(6.1a)]\label{strongliu} Every holomorphic function on $\mathcal{R}$ is constant,
\end{enumerate}
and thus (5.1b) is satisfied trivially. 
\begin{proposition}\label{qfano}  A Ricci limit $\Q$-Fano 
space is an almost smooth Fano-Ricci 
limit space.
\end{proposition}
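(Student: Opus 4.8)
The plan is to verify the three defining conditions (5.1a)--(5.1c) of an almost smooth Fano-Ricci limit space. Condition (5.1a) together with the identity $\mathcal{R}=X_0$ is built into the definition of a Ricci limit $\Q$-Fano space, so nothing is needed there. For (5.1b) I would simply invoke (6.1a): since $X$ is normal, every holomorphic function on the regular part $\mathcal{R}=X_0$ extends across the (codimension $\ge 2$) singular locus to a global holomorphic function on the projective variety $X$, and is therefore constant; in particular every $L^2$-holomorphic function on $\mathcal{R}$ is constant, which is exactly (5.1b). Thus the whole content of the proposition reduces to the verification of (5.1c).

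The first key step toward (5.1c) is to record that the inclusion $H^{1,2}_c(\mathcal{R}) \hookrightarrow H^{1,2}(X)$ is an isomorphism. Here I would use normality again: by Serre's criterion the singular locus $X\setminus\mathcal{R}=\mathrm{Sing}(X)$ is a complex-analytic set of complex codimension at least two, so that $\dim_H(X\setminus\mathcal{R})\le n-4<n-2$ with respect to the (noncollapsed) metric-measure structure, using $\mathcal{R}=X_0$ and that $H^n$ is the $n$-dimensional Hausdorff measure. The capacity and Sobolev-extension results quoted in the Remark following Proposition \ref{lapdom} then yield the desired isomorphism, so that both Proposition \ref{lapdom} and Proposition \ref{suffsobo} become available with $U=\mathcal{R}$.

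With this in hand I would prove (5.1c) as follows. Let $u\in H^{1,2}_{\bfC}(X)$ with $\mathrm{grad}'u|_{\mathcal{R}}\in\mathfrak{h}_{\mathrm{reg}}(X)$, and let $f$ be a smooth potential, i.e. a smooth function on $\mathcal{R}$ with $\mathrm{grad}'f=\mathrm{grad}'u$. Then $\mathrm{grad}'(u-f)=0$ on $\mathcal{R}$, so $\overline{u-f}$ is holomorphic on $\mathcal{R}$ and hence constant by (6.1a); in particular $u$ is smooth on $\mathcal{R}$. Since $\mathrm{grad}'u$ is holomorphic on $\mathcal{R}$, Remark \ref{pointwise}(2) gives $\overline{\partial}(\Delta^F_{\overline{\partial}}u-u)=0$ on $\mathcal{R}$, so $\Delta^F_{\overline{\partial}}u-u$ is holomorphic on $\mathcal{R}$ and, by (6.1a), equal to a constant $c$. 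Consequently the classically computed weighted Laplacian satisfies $\Delta^F_{\overline{\partial}}u=u+c\in L^2_{\bfC}(\mathcal{R})$, and integrating by parts against test functions in $\mathrm{LIP}_{c,\bfC}(\mathcal{R})$ (no boundary terms, by compact support) shows $u|_{\mathcal{R}}\in\mathcal{D}^2_{\bfC}(\Delta^F_{\overline{\partial}},\mathcal{R})$. Applying Proposition \ref{lapdom} with $U=\mathcal{R}$ then promotes this to $u\in\mathcal{D}^2_{\bfC}(\Delta^F_{\overline{\partial}},X)$, which is precisely (5.1c).

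I expect the main obstacle to be the dimension bound on the singular set used to obtain the Sobolev isomorphism: one must ensure that the Hausdorff dimension of $\mathrm{Sing}(X)$ measured in the limit distance $d_X$ is genuinely $\le n-4$, and not merely $\le n-2$ as abstract Cheeger--Colding theory gives for a noncollapsed limit, so that the capacity results apply strictly below the critical exponent. This is where the algebraic input $\mathcal{R}=X_0$ and normality are essential, since they pin the metric singular set to a complex-analytic set of complex codimension $\ge 2$. The remaining steps---smoothness of $u$ on $\mathcal{R}$, the constancy of $\Delta^F_{\overline{\partial}}u-u$, and the passage from the weak equation on $\mathcal{R}$ to the weak equation on $X$---are then routine given Remark \ref{pointwise}, condition (6.1a), and Proposition \ref{lapdom}.
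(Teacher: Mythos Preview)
Your proposal follows essentially the same route as the paper's proof: reduce to (5.1c), observe via Remark \ref{pointwise}(2) that $\overline{\partial}(\Delta^F_{\overline{\partial}}u-u)=0$ on $\mathcal{R}$, invoke (6.1a) to conclude this difference is constant so that $\Delta^F_{\overline{\partial}}u \in L^2$, and then apply Proposition \ref{lapdom} to pass from $\mathcal{R}$ to $X$. You are simply more explicit than the paper about two points it leaves tacit---the smoothness of $u$ on $\mathcal{R}$ and the Sobolev isomorphism $H^{1,2}_c(\mathcal{R})\simeq H^{1,2}(X)$ needed for Proposition \ref{lapdom} (the paper only records the latter in its closing remark, via the codimension-two normality argument you also use). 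One small slip: from $\mathrm{grad}'(u-f)=0$ one gets $\bar\partial(u-f)=0$, so it is $u-f$, not $\overline{u-f}$, that is holomorphic; the conclusion that $u$ is smooth on $\mathcal{R}$ is unaffected.
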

\begin{proof}
It suffices only to show (5.1c).
Let $u \in H^{1, 2}_{\bfC}(X)$ with 
$\mathrm{grad}'u \in \mathfrak{h}_{reg}(X)$.
Then by a simple calculation we have
\[\overline{\partial}(\Delta^F_{\overline{\partial}}u-u)=0\]
on $\mathcal{R}$.
Thus by (6.1a), we see that $\Delta^F_{\overline{\partial}}u-u$ is constant.
In particular, $\Delta^F_{\overline{\partial}}u \in L^2(X)$.
Thus Proposition \ref{lapdom} yields $u \in \mathcal{D}^2_{\bfC}(\Delta^F_{\overline{\partial}}, X)$. 
This completes the proof.
\end{proof}
We can use Phong-Song-Sturm's compactness \cite{PhongSongSturm} to show
the following compactness of decomposition theorems for K\"ahler-Ricci solitons.
\begin{theorem}\label{decompconti}
Let $(X_i, g_{X_i}, J_i, F_i)$ be a sequence of K\"ahler-Ricci solitons 
with $\mathrm{Ric}_{X_i} \ge K$, $H^n(X_i) \ge v$, $\mathrm{diam}\,X_i \le d$, and $F_i \ge c$.
Then 
there exist a subsequence $i(j)$, a Ricci limit $\bfQ$-Fano space $(X, g_X, J, F)$ of the subsequence 
$(X_{i(j)}, g_{X_{i(j)}}, J_{i(j)}, F_{i(j)})$, and the limit 1-eigenspace $\Lambda$ of $\Lambda_1(X_{i(j)})$ 
such that 
\[\sup_j||\barpartial F_{i(j)}||_{L^{\infty}}<\infty,\]
that $(X, g_X, J, F)$ is a K\"ahler-Ricci limit soliton, 
that $-\mathrm{grad}'F$ acts on $\mathfrak{h}^{\Lambda}(X)$ by the adjoint action, 
and that the spectral convergence for the adjoint actions of $- \grad'\,F_{i(j)}$ holds, i.e.
\[\lim_{i \to \infty}\lambda_j(X_i, F_i)=\lambda_j(X, F) \le C(n, K, d, v, c),\]
where $\lambda_j(X, F)$ denote the $j$-th eigenvalue of the adjoint action of $- \grad'\,F$ 
counted with multiplicity.
Moreover the decomposition as in Theorem \ref{decomp} holds for $\mathfrak{h}^{\Lambda}(X)$, 
$\mathfrak{h}^{\Lambda}_0(X)$ is reductive, and $\mathcal{F}_X|_{\mathfrak{h}^{\Lambda}(X)}$ is a 
character of $\mathfrak{h}^{\Lambda}(X)$ as a Lie algebra.
\end{theorem}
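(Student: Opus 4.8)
The plan is to combine the compactness of Corollary \ref{compactness}, the deep compactness theorem of Phong-Song-Sturm \cite{PhongSongSturm}, and the decomposition Theorem \ref{decomp}, checking the hypotheses of the latter along the way. First I would pass to a subsequence: since each $X_i$ is Fano with $\mathrm{Ric}_{X_i}\ge K$, $\mathrm{diam}\,X_i\le d$, $H^n(X_i)\ge v$ and $F_i\ge c$, Corollary \ref{compactness} produces a subsequence $i(j)$ converging in the noncollapsed measured Gromov-Hausdorff sense to a limit $(X,g_X,J,F)$ with $F\in H^{1,2}(X)\cap L^\infty(X)$. Invoking \cite[Theorem 1.2]{PhongSongSturm} for the sequence of K\"ahler-Ricci solitons, the limit $X$ acquires the structure of a $\mathbb{Q}$-Fano variety whose regular locus $X_0$ agrees with the Ricci-regular set $\mathcal{R}$ and on which $g_X$ is smooth, and one obtains the uniform bound $\sup_j\|\barpartial F_{i(j)}\|_{L^\infty}<\infty$ (indeed $\|\barpartial F\|_{L^\infty}\le C(n,K,d,v,c)$), which is the first assertion. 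Hence $X$ is a Ricci limit $\mathbb{Q}$-Fano space, and Proposition \ref{qfano} shows it is an almost smooth Fano-Ricci limit space.

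Next I would identify the limit soliton and the Lie algebra structure. Because each $(X_i,g_{X_i},J_i,F_i)$ is a K\"ahler-Ricci soliton, $\mathrm{grad}'F_i$ is holomorphic and $F_i\in\Lambda_1(X_i)$, so $(1,F_{i(j)})$ is a spectral approximation of the $L^2$-strong limit $F$; the uniform $L^\infty$-bound on $\barpartial F_{i(j)}$ makes this approximation compatible, and by the same token the limit $1$-eigenspace $\Lambda:=\lim_{j}\Lambda_1(X_{i(j)})$ (which exists after a further subsequence) is compatible. Passing to the limit with Propositions \ref{spectralconv} and \ref{Laplacianweakconv} yields $F\in\Lambda_1(X)$, so $X$ is a K\"ahler-Ricci limit soliton, and since $F$ is a strong $L^2$-limit of the $F_{i(j)}\in\Lambda_1(X_{i(j)})$ we also have $F\in\Lambda$. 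Proposition \ref{poissonstr} then shows that $(\Lambda,\{\cdot,\cdot\})$ and $(\mathfrak{h}^\Lambda(X),[\cdot,\cdot])$ are finite-dimensional Lie algebras, that $\Psi_\Lambda$ is an isomorphism, and that $\mathcal{F}_X|_{\mathfrak{h}^\Lambda(X)}$ is a character, giving the character assertion.

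With these preparations the decomposition follows by verifying the two hypotheses of Theorem \ref{decomp}. Closedness of $\Lambda$ under $\{\cdot,\cdot\}$ from Proposition \ref{poissonstr} is hypothesis (1), and combined with $F\in\Lambda$ it also gives $\{u,F\}\in\Lambda$ for every $u\in\Lambda$, which is hypothesis (2). Theorem \ref{decomp} then furnishes the adjoint action of $-\mathrm{grad}'F$ on $\mathfrak{h}^\Lambda(X)$ together with the eigenspace decomposition $\mathfrak{h}^\Lambda(X)=\mathfrak{h}_0^\Lambda(X)\oplus\bigoplus_{\alpha>0}\mathfrak{h}^\Lambda_\alpha(X)$. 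For reductivity of $\mathfrak{h}_0^\Lambda(X)$ I would use that $\mathcal{R}=X_0$ is exactly the regular set of the noncollapsed limit, and that the vector fields in $\mathfrak{h}^\Lambda(X)$ extend to the projective variety $X$ and are therefore complete; hence the associated Killing fields $2\mathrm{Re}(\mathrm{grad}'u)$ lie in the Lie algebra of the isometry group (Remark \ref{completeremark}), so the hypothesis in the final sentence of Theorem \ref{decomp} holds and $\mathfrak{h}_0^\Lambda(X)$ is reductive.

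Finally, for the spectral convergence of the adjoint actions I would regard $-\{F_{i(j)},\cdot\}$ as endomorphisms of the spaces $\Lambda_1(X_{i(j)})$, whose dimensions are uniformly bounded by Corollary \ref{dimensionbound}. The eigenvalues $\lambda_j(X_i,F_i)$ are bounded by $C(n,K,d,v,c)$ via Corollary \ref{L2est}, where the auxiliary $L^4$-hypothesis on $\barpartial F$ is now dispensable because of the $L^\infty$-bound supplied by \cite{PhongSongSturm}. Compatibility ensures that, along $L^2$-strongly convergent orthonormal frames of the $\Lambda_1(X_{i(j)})$, the operators $-\{F_{i(j)},\cdot\}$ converge to $-\{F,\cdot\}$ on $\Lambda$ (the relevant Poisson brackets converging strongly in $L^2$ as in the proof of Proposition \ref{poissonstr}); continuity of the spectrum of a convergent family of uniformly bounded operators on converging finite-dimensional spaces then gives $\lim_j\lambda_j(X_{i(j)},F_{i(j)})=\lambda_j(X,F)$. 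I expect the main obstacle to lie not in any single estimate but in the orchestration: one must guarantee that the $\mathbb{Q}$-Fano regularity output of \cite{PhongSongSturm} meshes precisely with the almost smooth framework (so that $\mathcal{R}=X_0$, smoothness, (5.1a) and hence Proposition \ref{qfano} hold), and that the limit soliton identity $F\in\Lambda_1$, the compatibility of $\Lambda$, and the strong $L^2$-convergence of the relevant brackets hold simultaneously, so that Theorem \ref{decomp} applies and the adjoint spectra vary continuously.
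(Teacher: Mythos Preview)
Your proposal is correct and follows essentially the same route as the paper: invoke Corollary~\ref{compactness} and \cite[Theorem~1.2]{PhongSongSturm} to obtain the $\mathbb{Q}$-Fano limit with the $L^\infty$-bound on $\barpartial F_{i(j)}$, use Proposition~\ref{qfano} to get the almost smooth structure, apply Proposition~\ref{poissonstr} and Corollary~\ref{L2est} for the Lie algebra and eigenvalue bound, feed this into Theorem~\ref{decomp}, and verify reductivity by extending holomorphic vector fields via normality to $\mathbb{P}^N$ so the associated Killing fields are complete (Remark~\ref{completeremark}). One small point you should make explicit is that the hypothesis on uniform upper bounds for the Futaki invariants required by \cite[Theorem~1.2]{PhongSongSturm} is satisfied thanks to the gradient estimate~(\ref{14}) of Proposition~\ref{upperbound}; the paper notes this parenthetically and you should too.
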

\begin{proof}
By (\ref{14}), Corollaries \ref{compactness}, \ref{L2est}, Propositions \ref{poissonstr}, \ref{qfano}, Theorem \ref{decomp} and \cite[Theorem $1.2$]{PhongSongSturm}, it suffices to check that $\mathfrak{h}^{\Lambda}_0(X)$ is reductive (note that the assumption on upper bounds for Futaki invariants in \cite[Theorem $1.2$]{PhongSongSturm} is satisfied by (\ref{14})).

Let $V=\mathrm{grad}'u \in \mathfrak{h}^{\Lambda}_0(X)$ for some $u \in \Lambda$.
By the normality of $X$, $V$ is a restriction of a holomorphic vector field $W$ on $\bfP^N(\bfC)$.
This can be seen as follows. First, by normality $V$ extends to the singular set of $X$ and the 
one parameter group acts
on the sections of the pluri-anticanonical bundle. By Kodaira embedding it induces a one parameter group of projective transformations. Let $W$ be its infinitesimal vector field.
The restriction of $W$ to the embedded $X$ coincides with $V$.

In particular $\mathrm{Re}(W)$ is complete on $\bfP^N(\bfC)$.
This implies that $\mathrm{Re}(\mathrm{grad}'u)$ is complete on $\mathcal{R}$.
Therefore Theorem \ref{decomp} with Remark \ref{completeremark} yields the assertion. 
\end{proof}

\begin{remark}For a sequence of compact shrinking solitons with uniformly bounded potentials
$||F_i||_{L^\infty} < C$, the diameters of the sequence are uniformly bounded
by \cite{WeiWylie09, Limoncu, Tadano1504}. Thus in Theorem 
\ref{decompconti} the diameter bound $\mathrm{diam}\,X_i \le d$ follows from $F_i \ge c$ and 
Corollary \ref{compactness}.
Uniform lower bound of dimeters is always satisfied without any assumption on the potential $F_i$
(see \cite{FutakiSano1007, FLL12}).
\end{remark}

We also have the following decomposition theorem.
\begin{theorem}\label{mainth} If a Ricci limit $\Q$-Fano space is a K\"ahler-Ricci limit soliton and if all
holomorphic vector fileds on $X$ are $L^2$ and with smooth poteitials on the regular set, 
i.e. $\mathfrak{hol}(X) = \mathfrak{h}_{reg}(X)$, then
$\mathfrak{hol}(X)$ has the same structure as a smooth K\"ahler-Ricci soliton. That is,
\[\mathfrak{hol}(X)=\mathfrak{hol}_0(X) \oplus \bigoplus_{\alpha >0}\mathfrak{hol}_{\alpha}(X),\]
where $\mathfrak{hol}_{\alpha}(X)$ is the $\alpha$-eigenspace of the adjoint action of $-\mathrm{grad}'F$.
Furthermore,  $\mathfrak{hol}_0(X)$ is a maximal reductive Lie subalgebra.
\end{theorem}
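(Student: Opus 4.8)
The plan is to deduce everything from the decomposition theorem (Theorem \ref{decomp}) applied with $\Lambda = \Lambda_1$. First I would assemble the structural facts a Ricci limit $\Q$-Fano space provides. By Proposition \ref{qfano} it is an almost smooth Fano-Ricci limit space, so (5.1a)--(5.1c) hold with $\mathcal{R}=X_0$ equal to the regular locus of the variety. Since $X$ is normal, its singular set has complex codimension at least two, hence Hausdorff dimension at most $n-4<n-2$; by the remark following Proposition \ref{lapdom} the inclusion $H^{1,2}_c(\mathcal{R}) \hookrightarrow H^{1,2}(X)$ is then isomorphic, so (5.3a) holds automatically. Combining the hypothesis $\mathfrak{hol}(X)=\mathfrak{h}_{\mathrm{reg}}(X)$ with Proposition \ref{h1h} gives $\mathfrak{hol}(X)=\mathfrak{h}_{\mathrm{reg}}(X)=\mathfrak{h}_1(X)=\mathfrak{h}^{\Lambda_1}(X)$, so it suffices to establish the stated decomposition for $\mathfrak{h}^{\Lambda_1}(X)$.

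Next I would check the two hypotheses of Theorem \ref{decomp} for $\Lambda=\Lambda_1$. For $u,v\in\Lambda_1$, Remark \ref{pointwise} (1) gives $\mathrm{grad}'\{u,v\}=[\mathrm{grad}'u,\mathrm{grad}'v]$ on $\mathcal{R}$; this is a holomorphic vector field on $\mathcal{R}=X_0$, which extends across the singular set by normality, so it lies in $\mathfrak{hol}(X)=\mathfrak{h}_{\mathrm{reg}}(X)\subset L^2_{\bfC}(T'X)$. It is precisely here that the assumption $\mathfrak{hol}(X)=\mathfrak{h}_{\mathrm{reg}}(X)$ is used: it makes $\mathfrak{h}^{\Lambda_1}(X)$ closed under the bracket. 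With (5.3a) available, Proposition \ref{lieequivalence} then transfers this to the Poisson side and yields $\{u,v\}\in\Lambda_1$, so $(\Lambda_1,\{\cdot,\cdot\})$ is a Lie algebra; this is condition (1). Since $(X,g_X,J,F)$ is a K\"ahler-Ricci limit soliton we have $F\in\Lambda_1$, and hence condition (2), namely $\{u,F\}\in\Lambda_1$ for all $u\in\Lambda_1$, is a special case of condition (1).

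With both hypotheses in hand, Theorem \ref{decomp} supplies the eigenspace decomposition of $\mathfrak{h}^{\Lambda_1}(X)$ under the adjoint action of $-\mathrm{grad}'F$; renaming the graded pieces $\mathfrak{hol}_\alpha(X)$ produces $\mathfrak{hol}(X)=\mathfrak{hol}_0(X)\oplus\bigoplus_{\alpha>0}\mathfrak{hol}_\alpha(X)$ with $\bigoplus_{\alpha>0}\mathfrak{hol}_\alpha(X)$ nilpotent. For reductivity of $\mathfrak{hol}_0(X)$ I would verify the hypothesis in the last clause of Theorem \ref{decomp}: here $\mathcal{R}$ is exactly the regular set of $X$, and, following the extension argument in the proof of Theorem \ref{decompconti}, every $V=\mathrm{grad}'u\in\mathfrak{hol}(X)$ extends by normality to a holomorphic vector field on $\bfP^N(\bfC)$ through the Kodaira embedding by $K_X^{-m}$, so $\mathrm{Re}(V)$ is complete. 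Thus for $u\in\tilde{\Lambda}$ the Killing field $\phi_{\Lambda_1}(u)=2\mathrm{Re}(\mathrm{grad}'u)$ is complete, hence lies in the Lie algebra of the isometry group of $\mathcal{R}$ by Remark \ref{completeremark}, and Theorem \ref{decomp} then gives that $\mathfrak{hol}_0(X)$ is reductive.

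Finally, maximality of $\mathfrak{hol}_0(X)$ among reductive subalgebras follows from the grading exactly as in the smooth case \cite{TianZhu00}. Setting $Z:=\mathrm{grad}'F\in\mathfrak{hol}_0(X)$, the operator $\mathrm{ad}(-Z)$ acts on $\mathfrak{hol}_\alpha(X)$ as multiplication by $\alpha$. If $\mathfrak{r}$ is reductive with $\mathfrak{hol}_0(X)\subseteq\mathfrak{r}\subseteq\mathfrak{hol}(X)$, then $Z\in\mathfrak{r}$ and semisimplicity of $\mathrm{ad}(Z)$ force $\mathfrak{r}=\bigoplus_{\alpha\ge 0}(\mathfrak{r}\cap\mathfrak{hol}_\alpha(X))$; choosing an invariant nondegenerate symmetric form $B$ on $\mathfrak{r}$, invariance gives $(\alpha+\beta)B(V_\alpha,V_\beta)=0$ for $V_\alpha\in\mathfrak{r}\cap\mathfrak{hol}_\alpha(X)$, $V_\beta\in\mathfrak{r}\cap\mathfrak{hol}_\beta(X)$, so each $\mathfrak{r}\cap\mathfrak{hol}_\alpha(X)$ with $\alpha>0$ is $B$-orthogonal to all of $\mathfrak{r}$ and hence vanishes, whence $\mathfrak{r}=\mathfrak{hol}_0(X)$. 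The main obstacle in the whole argument is the transfer step of the second paragraph: deducing $\{u,v\}\in\Lambda_1$ from the Lie-algebra closure of $\mathfrak{hol}(X)$, which genuinely depends on the high codimension of the singular set through (5.3a) and on identifying $\mathrm{grad}'\{u,v\}$ with the holomorphic bracket on $\mathcal{R}$; the decomposition, reductivity, and maximality then follow from the machinery already in place.
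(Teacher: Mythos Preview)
Your proof is correct and follows essentially the same strategy as the paper's: identify $\mathfrak{hol}(X)=\mathfrak{h}_{\mathrm{reg}}(X)=\mathfrak{h}_1(X)$ via Propositions \ref{qfano} and \ref{h1h}, and then invoke Theorem \ref{decomp} with $\Lambda=\Lambda_1$. You supply considerably more detail than the paper's very terse proof---in particular you explicitly verify (5.3a), the Poisson-closure transfer via Proposition \ref{lieequivalence}, the completeness argument for reductivity (as in the proof of Theorem \ref{decompconti}), and the maximality of $\mathfrak{hol}_0(X)$, all of which the paper either leaves implicit or omits entirely.
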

\begin{proof} By Proposition \ref{qfano}, a Ricci limit $\Q$-Fano space is an almost smooth
Ricci limit space. By Proposition \ref{h1h} we have $\mathfrak{h}_{reg}(X) = \mathfrak{h}_1(X)$.
But by our assumption $\mathfrak{h}_{reg}(X) = \mathfrak{hol}(X)$, which is naturally a Lie algebra.
Thus 
taking $\Lambda$ to be $\Lambda_1$, 
(1) and (2) in Theorem \ref{decomp} are satisfied. Then our theorem 
is a direct consequence of Theorem \ref{decomp}.
\end{proof}
The case of smooth K\"ahler-Ricci solitons have been obtained in \cite{TianZhu00}. 
\begin{remark}It is known by Berman and Witt Nystr\"om \cite{BermanWittNystrom14} that
if a $\mathbb{Q}$-Fano variety $X$ admits a K\"ahler-Ricci soliton with the 
K\"ahler potential extended continuously on the whole $X$ then $\mathfrak{hol}_0(X)$ is 
reductive. 
\end{remark}
\begin{remark}
It is not known when all holomorphic vector fields on $X$ are $L^2$ with respect to the Ricci limit
space structure, or when $L^2$ holomorphic vector fields consist a Lie algebra. 
By Remark 3.7 and the normality, the condition (5.3a) is satisfied, and the results in 
subsection 5.4 can be applied.
\end{remark}

\bibliographystyle{amsalpha}

\end{document}